\documentclass[reqno]{amsart}
\usepackage[margin=1.15in]{geometry}
\usepackage[backref=page]{hyperref}
\usepackage{cite}
\usepackage{enumitem}

\theoremstyle{plain}
\newtheorem{theorem}{Theorem}
\newtheorem{proposition}[theorem]{Proposition}
\newtheorem{corollary}[theorem]{Corollary}
\newtheorem{lemma}[theorem]{Lemma}

\theoremstyle{definition}

\theoremstyle{remark}
\newtheorem{remark}{Remark}

\allowdisplaybreaks

\begin{document}

\author{Phuong Le}
\address{Phuong Le$^{1,2}$ (ORCID: 0000-0003-4724-7118)\newline
	$^1$Faculty of Economic Mathematics, University of Economics and Law, Ho Chi Minh City, Vietnam; \newline
	$^2$Vietnam National University, Ho Chi Minh City, Vietnam}
\email{phuongl@uel.edu.vn}

\subjclass[2020]{35J92, 35B06, 35B40, 35B33}
\keywords{$p$-Laplacian, critical Hartree equation, Choquard equation, phase transition, sharp asymptotics, radial symmetry}

\title[Phase transition beyond $\sigma=2p$]{Phase transition, sharp asymptotics and symmetry for critical quasilinear Hartree equations beyond $\sigma=2p$}

\begin{abstract}
We establish the sharp asymptotic and symmetry theory for the critical quasilinear Hartree equation
\[
-\Delta_pu=\big(|x|^{-\sigma}*u^{p^*_{\sigma}}\big)u^{p^*_{\sigma}-1}
\quad\text{in }\mathbb R^N,
\qquad
p^*_{\sigma}=\frac{p(2N-\sigma)}{2(N-p)},
\]
in the complementary range $2p<\sigma<N$, beyond the quasilinear theory previously available for $\sigma\le2p$ [Math. Ann., 391(2):2653–2708, 2025]. This range is structurally different: $p^*_{\sigma}<p$, so the effective potential obtained by factoring out $u^{p-1}$ becomes singular as $u\to0$. Nevertheless, whenever $p^*_{\sigma}>1$, every nonnegative finite-energy weak solution is automatically locally bounded. Our proof provides a quasilinear regularity mechanism adapted to this complementary range, where the potential-theoretic route available for $\sigma\le2p$ breaks down. For positive finite-energy solutions with $p^*_{\sigma}>1$ we moreover uncover a sharp transition governed by the sign of $\sigma(p-2)+2p$. If this quantity is positive, the classical $p$-harmonic profile persists. At the threshold,
\[
u(x)\simeq |x|^{-\gamma_h}(\log|x|)^{1/(p-p^*_{\sigma})},
\qquad \gamma_h=\frac{N-p}{p-1},
\]
with the matching two-sided gradient rate. Below the threshold,
\[
u(x)=A_*|x|^{-\gamma_*}+o(|x|^{-\gamma_*}),
\qquad
\gamma_*=\frac{2(N-p)(\sigma-p)}{p(\sigma-2p)}<\gamma_h,
\]
and the rescaled gradients converge uniformly to the corresponding radial profile. In all three regimes the solutions are radially symmetric about some point and strictly decreasing in the radial variable. The transition is genuinely quasilinear: for $p=2$ the sign is always positive, whereas the resonant and slow phases occur only in the singular range $1<p<2$. The proof develops four mechanisms beyond the earlier potential-theoretic approach: a Morrey--Adams--Moser regularity scheme, a nonlinear decay bootstrap that detects the transition, a scale-invariant blow-down rigidity argument for the exact slow profile, and a singular moving-plane method closed at the critical tail by a strict Hardy coercivity margin.
\end{abstract}

\maketitle

\section{Introduction}\label{sec_intro}

\subsection{The equation and the complementary regime}

Let $N\ge3$, $1<p<N$ and $0<\sigma<N$. Set
\begin{equation}\label{eq:psigma}
p^{*}_{\sigma}:=\frac{p(2N-\sigma)}{2(N-p)},
\qquad
p^{*}:=\frac{Np}{N-p},
\qquad q:=p^*_{\sigma}.
\end{equation}
We study nonnegative finite-energy weak solutions of
\begin{equation}\label{eq:main}
\begin{cases}
-\Delta_{p}u=\big(|x|^{-\sigma}*u^{q}\big)u^{q-1} &\text{in }\mathbb{R}^{N},\\
u\in D^{1,p}(\mathbb{R}^{N}),\quad u\ge0 &\text{in }\mathbb{R}^{N},
\end{cases}
\end{equation}
where
\[
D^{1,p}(\mathbb R^N)=\Big\{u\in L^{p^*}(\mathbb R^N):\nabla u\in L^p(\mathbb R^N)\Big\}.
\]
The equation is the Euler--Lagrange equation associated with the critical Sobolev--Hardy--Littlewood--Sobolev inequality \cite{Lieb1983}
\begin{equation}\label{eq:hls-emb}
\Big(\int_{\mathbb{R}^{N}}\int_{\mathbb{R}^{N}}\frac{u^{q}(x)u^{q}(y)}{|x-y|^{\sigma}}\,dx\,dy\Big)^{\frac{1}{2q}}
\le C_{N,p,\sigma}\|\nabla u\|_{L^{p}(\mathbb{R}^{N})},
\end{equation}
and is invariant under translations, rotations and the critical scaling $u(x)\mapsto\lambda^{(N-p)/p}u(\lambda x)$. Writing
\[
W[u]:=|x|^{-\sigma}*u^q,
\]
we call $u\in D^{1,p}(\mathbb R^N)$ a weak solution if $W[u]u^{q-1}\in L^1_{\rm loc}$ and
\begin{equation}\label{eq:weak}
\int_{\mathbb{R}^{N}}|\nabla u|^{p-2}\langle\nabla u,\nabla\varphi\rangle\,dx
=\int_{\mathbb{R}^{N}}W[u]u^{q-1}\varphi\,dx
\end{equation}
for every $\varphi\in C_0^\infty(\mathbb R^N)$. When $q\le1$ we use this definition only for solutions positive almost everywhere, with the local integrability requirement included explicitly.

The convolution term in \eqref{eq:main} is the homogeneous Riesz--Hartree interaction underlying the Choquard family of equations. In the classical semilinear setting, related stationary Choquard--Pekar equations arise from mean-field descriptions of polarons and from Hartree--Fock-type plasma models; they are also closely related to the Schr\"odinger--Newton model of self-gravitating matter. Lieb's foundational variational analysis \cite{Lieb1977} and the later qualitative theory developed, for example, in \cite{MorozVanSchaftingen2013} established the importance of existence, symmetry and decay questions for such nonlocal equations. Equation \eqref{eq:main} is a quasilinear counterpart in which the linear Laplacian is replaced by the $p$-Laplacian, while the exponent $p^*_{\sigma}$ is selected by the Hardy--Littlewood--Sobolev critical scaling. Thus the problem combines degenerate or singular diffusion with a scale-invariant long-range interaction, making sharp regularity and asymptotic information especially delicate.

For the local critical equation $-\Delta_pu=u^{p^*-1}$, the classification of positive finite-energy solutions and the sharp asymptotic theory were developed in \cite{DamascelliMerchanMontoroSciunzi2014,Vetois2016,Sciunzi2016}; see also \cite{CiraoloFigalliRoncoroni2020,CatinoMonticelliRoncoroni2023} for related extensions. Natural semilinear precursors in the Hartree setting are the independent classification results of Du and Yang \cite{DuYang2019} and Guo, Hu, Peng and Shuai \cite{GuoHuPengShuai2019}. In the notation of Du and Yang, for $p=2$ they classified the positive solutions of
\[
-\Delta u=\big(I_\mu*u^{2^*_\mu}\big)u^{2^*_\mu-1},
\qquad 2^*_\mu=\frac{2N-\mu}{N-2},
\]
where $I_\mu(x)=C_{N,\mu}|x|^{-\mu}$. Thus $\mu=\sigma$ in our notation. Those works cover $0<\sigma<N$ when $N=3,4$ and $0<\sigma\le4$ when $N\ge5$ at the finite-energy level. The remaining semilinear range was subsequently covered in \cite{LeHartree2020}, which removes the restriction on $\sigma$ but assumes from the outset that the solution is positive and classical. We therefore do not claim a new semilinear regularity principle here: Moroz and Van Schaftingen developed a nonlocal Br\'ezis--Kato regularity scheme for weak $H^1$ solutions of semilinear Choquard equations with upper critical growth \cite{MorozVanSchaftingen2015}. What is important for the present paper is that this mechanism is tied to the linear Laplacian. In the quasilinear setting, Ambrosio proved boundedness and local $C^{1,\alpha}$ regularity for a massive Choquard equation when $p\ge2$ and the Riesz order satisfies $\alpha>(N-2p)_+$ \cite{Ambrosio2023}; when $N>2p$ and $\alpha=N-\sigma$, this condition corresponds to $\sigma<2p$. Ambrosio also observed that the semilinear Br\'ezis--Kato approach does not transfer directly to $p\ne2$ because of the nonlinear character of the $p$-Laplacian. Dai, Li and Liu later developed sharp asymptotic and symmetry theory for finite-energy weak solutions of the critical quasilinear problem and proved in \cite{DaiLiLiu2025} that positive finite-energy solutions satisfy
\begin{equation}\label{eq:sharp-u}
\frac{c_{0}}{1+|x|^{\frac{N-p}{p-1}}}\le u(x)\le\frac{C_{0}}{1+|x|^{\frac{N-p}{p-1}}},
\end{equation}
\begin{equation}\label{eq:sharp-grad}
\frac{c_{0}}{|x|^{\frac{N-1}{p-1}}}\le|\nabla u(x)|\le\frac{C_{0}}{|x|^{\frac{N-1}{p-1}}}
\qquad(|x|\gg1),
\end{equation}
together with radial symmetry and strict radial monotonicity, under
\begin{equation}\label{eq:old-restriction}
0<\sigma<N,\qquad \sigma\le2p.
\end{equation}
Since
\begin{equation}\label{eq:q-minus-p}
q-p=\frac{p(2p-\sigma)}{2(N-p)},
\end{equation}
this is exactly the range $q\ge p$. The present paper treats the complementary region
\begin{equation}\label{eq:new-range}
2p<\sigma<N,
\qquad\text{equivalently}\qquad
\frac{p^*}{2}<q<p,
\end{equation}
which is nonempty precisely when $1<p<N/2$. For $p=2$ this is the range $4<\sigma<N$, whose classical-solution theory is contained in \cite{LeHartree2020}. The focus here is the quasilinear complementary regime, where neither the effective-potential method of \cite{DaiLiLiu2025} nor the semilinear Br\'ezis--Kato mechanism provides the required regularity theory. Moreover, when $1<p<2$, this complementary range contains a second threshold at which the $p$-harmonic decay law breaks down and new resonant and slow phases emerge.

\subsection{Why the complementary range is different}

The passage from $\sigma\le2p$ to $2p<\sigma<N$ is not a routine extension of the parameter range. In the potential formulation used in \cite{DaiLiLiu2025},
\begin{equation}\label{eq:VForm}
-\Delta_pu=V(x)u^{p-1},
\qquad
V(x)=W[u](x)u^{q-p}(x),
\end{equation}
the factor $u^{q-p}$ becomes singular at small values of $u$ as soon as $q<p$. Consequently, the global $L^{N/p}$ control of the effective potential and the tail estimates underlying the general potential theorem of Dai--Li--Liu no longer follow from finite energy. Proposition \ref{prop:frozen} below shows that this is a genuine analytic obstruction rather than a defect of a particular estimate.

More importantly, the complementary range is not governed by a single continuation of the $p$-harmonic decay law. The asymptotic mechanism itself changes. Local boundedness must first be recovered directly from the nonlocal equation; we do this through a self-improving hole-filling/Morrey--Adams--Moser argument. The far-field decay is then organized by a nonlinear bootstrap map whose fixed-point geometry detects a resonance and, beyond it, a slower nonlinear balance. In the slow phase the leading coefficient is selected by a scale-invariant blow-down rigidity argument. Finally, the symmetry proof cannot be obtained by simply repeating the moving-plane decomposition used for $q\ge p$: we split at the level of $u^{q-1}$, obtain strict separation at the limiting plane from the Hartree term itself, and in the singular range $p<2$ close the critical tail by combining a sharp monotonicity inequality with a strict Hardy coercivity margin. In particular, the argument avoids the version of the strong comparison principle that would impose $p>(2N+2)/(N+2)$.

This change of mechanism produces a new asymptotic threshold inside the complementary range. Define
\begin{equation}\label{eq:gammas}
\gamma_h:=\frac{N-p}{p-1},
\qquad
\gamma_e:=\frac{N-p}{p},
\qquad
\gamma_*:=\frac{\sigma-p}{p-q}=\frac{2(N-p)(\sigma-p)}{p(\sigma-2p)},
\end{equation}
\begin{equation}\label{eq:Lambda}
\Lambda:=\sigma+(q-1)\gamma_h-N
=\frac{\sigma(p-2)+2p}{2(p-1)},
\end{equation}
and
\begin{equation}\label{eq:hdef}
h(\nu):=\nu^{p-1}\big(N-p-(p-1)\nu\big).
\end{equation}
Here $\gamma_h$ is the $p$-harmonic rate, $\gamma_e$ is the scaling-critical rate, and $\gamma_*$ is the nonlinear balance rate after the Hartree potential has saturated. The identity
\[
\Lambda>0\iff\gamma_*>\gamma_h
\]
shows that $\Lambda=0$ is precisely the collision of the nonlinear balance with the $p$-harmonic exponent.

\subsection{Main results}

Our main result shows that the complementary range has a sharp three-regime asymptotic structure rather than a single continuation of the classical profile.

\begin{theorem}[Asymptotic phase transition and symmetry]\label{thm:three-regime}
Let $N\ge3$, $p>1$ and $2p<\sigma<N$, and assume that $q:=p^*_{\sigma}>1$. Let $u$ be a positive $D^{1,p}(\mathbb{R}^{N})$-weak solution of \eqref{eq:main}. Then exactly one of the following alternatives occurs:
\begin{enumerate}[label=\textup{(\roman*)}]
\item If $\sigma(p-2)+2p>0$ (equivalently $\Lambda>0$), then
\[
u\simeq |x|^{-\gamma_h},\qquad |\nabla u|\simeq |x|^{-\gamma_h-1},
\]
and $u$ is radially symmetric about some point and strictly decreasing in the radial variable.
\item If $\sigma(p-2)+2p=0$ (equivalently $\Lambda=0$), then
\[
u\simeq |x|^{-\gamma_h}(\log|x|)^{1/(p-q)},
\qquad
|\nabla u|\simeq |x|^{-\gamma_h-1}(\log|x|)^{1/(p-q)},
\]
and $u$ is radially symmetric about some point and strictly decreasing in the radial variable.
\item If $\sigma(p-2)+2p<0$ (equivalently $\Lambda<0$), then, with $m_0=\int_{\mathbb R^N}u^q$ and
$A_*=(m_0/h(\gamma_*))^{1/(p-q)}$,
\[
r^{\gamma_*}u(r\omega)\to A_*,
\qquad
r^{\gamma_*+1}\nabla u(r\omega)\to-A_*\gamma_*\omega
\]
uniformly for $\omega\in S^{N-1}$. In particular
\[
u\simeq |x|^{-\gamma_*},
\qquad
|\nabla u|\simeq |x|^{-\gamma_*-1},
\qquad \gamma_*<\gamma_h,
\]
and $u$ is radially symmetric about some point and strictly decreasing in the radial variable.
\end{enumerate}
\end{theorem}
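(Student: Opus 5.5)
The proof proceeds in four stages: local regularity, a decay bootstrap that detects the three regimes, asymptotics within each regime, and symmetry.

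\emph{Stage 1: regularity.} Since $q>1$ but $q<p$, the potential form \eqref{eq:VForm} cannot be used directly. I would instead work with $W[u]u^{q-1}$.

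First, finite energy together with the Hardy--Littlewood--Sobolev inequality gives $u^q\in L^{2N/(2N-\sigma)}$. Hence $W[u]\in L^{2N/\sigma}$. Since $\sigma>2p$, we have $2N/\sigma<N/p$, so a Morrey-space bound is needed rather than a Lebesgue bound. A hole-filling/Caccioppoli argument shows $\int_{B_r}|\nabla u|^p\le Cr^{\epsilon}$. This self-improves to a Morrey estimate that puts $W[u]$ in a Morrey class $L^{1,N-p+\delta}$. Adams' trace inequality combined with a Moser iteration, in which $u^{q-1}\le 1+u^{p-1}$ since $q<p$, then yields $u\in L^\infty_{\rm loc}$.

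Next, $u$ is locally bounded and $W[u]$ is locally bounded because $\sigma<N$ and $u^q\in L^1$ at infinity via decay. So the right-hand side is bounded, and DiBenedetto--Tolksdorf gives $C^{1,\alpha}_{\rm loc}$. Harnack then gives $u>0$. Global boundedness follows from the same Moser scheme with uniform Morrey constants.

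\emph{Stage 2: far-field potential and bootstrap.} The first step is to establish $m_0=\int u^q<\infty$. This uses a weak-Harnack lower bound $u\gtrsim |x|^{-\gamma_h}$ together with an initial upper bound $u\lesssim|x|^{-\gamma_e}$, the latter from scaling plus the $\epsilon$-regularity of Stage 1. Given finiteness, $W[u](x)=m_0|x|^{-\sigma}(1+o(1))$ as $|x|\to\infty$, and the equation becomes asymptotically
\[
-\Delta_p u\approx m_0|x|^{-\sigma}u^{q-1}.
\]
Suppose $u\lesssim|x|^{-\beta}$. Comparing with radial super- and subsolutions $A|x|^{-\nu}$ (for which $-\Delta_p(|x|^{-\nu})=h(\nu)|x|^{-\nu(p-1)-p}$) produces an improved exponent $T(\beta)$, where
\[
T(\beta)=\min\Big\{\gamma_h,\ \frac{\sigma+(q-1)\beta-p}{p-1}\Big\}.
\]
The fixed point of the second branch is $\gamma_*$.

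If $\Lambda>0$, the inhomogeneous term is integrable against $|x|^{1-N}$, so iteration reaches $\gamma_h$ and the $p$-harmonic profile persists. This follows from a Serrin-type comparison with $|x|^{-\gamma_h}$ and gives case (i).

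If $\Lambda<0$, iteration stops at $\gamma_*<\gamma_h$. A matching lower bound comes from subsolutions $\epsilon|x|^{-\gamma_*}$, which are admissible because $q<p$ and so small multiples are genuine subsolutions.

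At $\Lambda=0$ the balance is resonant. An ODE analysis of radial barriers $|x|^{-\gamma_h}(\log|x|)^{\kappa}$ forces $\kappa(p-q)=1$, and barrier comparison gives case (ii). In each case the gradient bounds follow from rescaling $u_R(y)=u(Ry)/u(R)$ on annuli together with $C^{1,\alpha}$ estimates and a Hopf-type lower bound.

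\emph{Stage 3: exact slow profile.} For $\Lambda<0$, set $u_R(y)=R^{\gamma_*}u(Ry)$. These functions are uniformly bounded above and below on compact subsets of $\mathbb R^N\setminus\{0\}$ and satisfy
\[
-\Delta_p u_R=m_0(1+o(1))|y|^{-\sigma}u_R^{q-1}.
\]
Subsequences converge in $C^1_{\rm loc}$ to a positive entire solution of $-\Delta_p v=m_0|y|^{-\sigma}v^{q-1}$ on $\mathbb R^N\setminus\{0\}$ with $v\simeq|y|^{-\gamma_*}$.

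The key rigidity step is to show $v=A_*|y|^{-\gamma_*}$. I would do this with a sweeping argument. Set $\lambda^\pm$ to be the sup/inf of $v|y|^{\gamma_*}$. If these are attained only asymptotically, blow down again, and apply the comparison principle with the sub-homogeneous structure ($q<p$) to force $\lambda^+=\lambda^-$. The identity $h(\gamma_*)A_*^{p-q}=m_0$ then pins down the constant. Uniqueness of the limit gives full convergence, and $C^1$ convergence gives the gradient statement.

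\emph{Stage 4: symmetry.} I would run the moving-plane method on $w_\lambda=u_\lambda-u$. The nonlinearity is split as
\[
W_\lambda u_\lambda^{q-1}-Wu^{q-1}=W_\lambda(u_\lambda^{q-1}-u^{q-1})+(W_\lambda-W)u^{q-1},
\]
with test functions $(u-u_\lambda)^+$ and the weighted monotonicity inequality for $|\xi|^{p-2}\xi$. Near infinity the sharp decay from Stages 2–3 is used. In the slow phase the critical tail is closed by a Hardy inequality with a strict margin: the leading coefficient satisfies $(q-1)m_0A_*^{q-p}<$ the Hardy constant, which I would verify from $h(\gamma_*)$.

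At the limiting plane, strict separation comes from the Hartree term itself, since $W_\lambda-W>0$ unless symmetric. This avoids the strong comparison principle.

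I expect the main obstacle to be Stage 4 in the singular range $p<2$ combined with the rigidity of Stage 3. There the weights $|\nabla u|^{p-2}$ degenerate at critical points, and the tail coercivity margin must be checked exactly.
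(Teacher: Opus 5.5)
The genuine gap is at the start of Stage 2. A bound $u\le C|x|^{-\gamma_e}$ (even $|x|^{\gamma_e}u\to0$) does not give $m_0<\infty$. Since $q\gamma_e=N-\sigma/2<N$, it only yields $u^q\le C|x|^{-N+\sigma/2}$, and the lower bound $u\ge c|x|^{-\gamma_h}$ says nothing about finiteness. Without a rate $\delta$ with $q\delta>N$ you only have $W\le C|x|^{N-\sigma-q\delta}$. So the correct decay map is $T(\delta)=\big(\min\{\sigma,q\delta+\sigma-N\}+(q-1)\delta-p\big)/(p-1)$. Its unsaturated branch has slope $(2q-1)/(p-1)>1$ and fixed point exactly $\gamma_e$. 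Hence the scale-invariant bound produces no improvement at all, and your saturated formula applied at $\gamma_e$ overstates the gain by $\sigma/(2(p-1))$.

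The missing ingredient is a strict power gain $u\le C|x|^{-\gamma_e-\tau}$, which needs its own argument. The paper argues by contradiction:
\begin{itemize}
\item it takes the weighted envelope $\sup_s a(s)e^{-\varepsilon|s-t|}$ of $a(t)=e^{\gamma_e t}\max_{e^t\le|x|\le2e^t}u$;
\item it rescales to $w_k$ solving $-\Delta_pw_k=A_k^{2q-p}W[w_k]w_k^{q-1}$ with $A_k\to0$ and $2q>p$;
\item the limit is a nontrivial $p$-harmonic function on $\mathbb R^N\setminus\{0\}$, bounded by $2|x|^{-\gamma_e-\varepsilon}$ near $0$ and $2|x|^{-\gamma_e+\varepsilon}$ near infinity;
\item Serrin's removable-singularity theorem (using $\gamma_e+\varepsilon<\gamma_h$) and the maximum principle then force it to vanish.
\end{itemize}
Only after this does the repelling unsaturated branch carry the exponent past $N/q$.

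Moreover, in part of the classical phase $m_0$ is \emph{never} finite. One has $q\gamma_h>N$ iff $\sigma<2N/p$, and for $p>2$ the range $2N/p\le\sigma<N$ occurs. For example, $N=10$, $p=3$, $\sigma=8$ gives $q=18/7$, $\Lambda=7/2$ and $q\gamma_h=9<N$. There $u\simeq|x|^{-\gamma_h}$ forces $\int u^q=\infty$, so your asymptotic equation $-\Delta_pu\approx m_0|x|^{-\sigma}u^{q-1}$ is false. The paper treats $\Lambda>0$ without $m_0$. It writes $-\Delta_pu=Vu^{p-1}$ with $V=Wu^{q-p}$, controls the singular factor by $u\ge c|x|^{-\gamma_h}$ to get $V\le C|x|^{-\beta}$ with $\beta>p$ on both branches, and applies the Dai--Li--Liu potential theorem.

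Several other steps also need repair.
\begin{itemize}
\item \textbf{Barrier comparisons.} Because $t\mapsto Wt^{q-1}$ is increasing, comparing $u$ with a sub- or supersolution of the \emph{same} equation (your $\varepsilon|x|^{-\gamma_*}$, or $C|x|^{-\gamma_h}(\log|x|)^{1/(p-q)}$) presupposes the ordering you want to prove. The paper instead freezes the right-hand side using the previous bound and iterates $\delta_{k+1}=T(\delta_k)$: downward from $\gamma_h$ for the lower bound, upward to $\gamma_*$ for the upper bound. The constants stay controlled through $t\mapsto Ct^\theta$ with $\theta=(q-1)/(p-1)<1$.
\item \textbf{The resonant case.} At $\Lambda=0$, $h(\delta_k)$ degenerates like $\theta^k$ and the constants grow like $\theta^{-k/(p-q)}$. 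Choosing $k$ with $\theta^k\log|x|\simeq1$ produces exactly the factor $(\log|x|)^{1/(p-q)}$.
\item \textbf{Lower gradient bounds.} A Hopf-type argument gives no interior lower gradient bound on annuli. You need the blow-down limit to be $c|x|^{-\gamma_h}$ (Sciunzi's classification, or Dai--Li--Liu in the classical phase). For $p>2$ this must come before the moving planes, since the weight $|\nabla u|^{p-2}$ has to be bounded below at infinity.
\item \textbf{Stage 1.} Hole filling gives only a small Morrey exponent, so placing $W$ in $L^{1,N-p+\delta}$ needs an actual self-improvement mechanism. The paper uses Adams to reach $L^{\widetilde p}_{\mathrm{loc}}$ with $\widetilde p>p^*$, then a Moser iteration in which the HLS integrability of $W$ improves with that of $u$. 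Also, $W\in L^\infty_{\mathrm{loc}}$ should come from $u\in L^\infty_{\mathrm{loc}}\cap L^{p^*}$ by H\"older, not from decay you do not yet have.
\end{itemize}

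Your Stages 3 and 4 do match the paper: sup-ratio blow-down with strong comparison at a noncritical contact point, splitting at $u^{q-1}$, and strict separation from the Hartree term. Note, though, that the tail margin $\frac{q-1}{p-1}\frac{4\gamma_*d}{(\gamma_*+d)^2}<1$, with $d=N-p-(p-1)\gamma_*$, closes only with the sharp monotonicity constant $p-1$ and the weight $\max\{|\nabla u|,|\nabla u_\lambda|\}^{p-2}$.
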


A key point in Theorem \ref{thm:three-regime} is that no local boundedness or pointwise regularity is assumed. This is supplied by the following independent regularity theorem, which closes the first obstruction created by the singular factor $u^{q-p}$.

\begin{theorem}[Automatic local boundedness]\label{thm:local-boundedness}
Let $N\ge3$, $p>1$ and $2p<\sigma<N$, and assume that $q:=p^*_{\sigma}>1$. Then every nonnegative $D^{1,p}(\mathbb R^N)$-weak solution of \eqref{eq:main} belongs to $L^\infty_{\mathrm{loc}}(\mathbb R^N)$.
\end{theorem}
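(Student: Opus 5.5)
The plan is to reduce Theorem \ref{thm:local-boundedness} to a standard Serrin--Trudinger Moser iteration. The main work is to upgrade the local integrability of the Hartree potential $W[u]$ above the borderline exponent $N/p$.

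\textbf{Step 1: why finite energy is not enough, and the target.} Since $u\in L^{p^*}$, we have $u^q\in L^{p^*/q}=L^{2N/(2N-\sigma)}$. The Hardy--Littlewood--Sobolev inequality then gives
\[
W[u]\in L^{2N/\sigma}(\mathbb R^N).
\]
Because $\sigma>2p$, this exponent is strictly smaller than $N/p$, and this is exactly the obstruction described in the introduction. On the other hand, $q-1<p-1$, so
\[
u^{q-1}\le 1+u^{p-1}.
\]
Hence $-\Delta_pu\le W[u](1+u^{p-1})$ in the weak sense. By the classical Serrin/Trudinger Moser iteration, $u$ is locally bounded as soon as $W[u]\in L^{s}_{\rm loc}$ for some $s>N/p$. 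Everything therefore reduces to this local improvement of $W[u]$.

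\textbf{Step 2: Morrey decay by hole-filling.} I test \eqref{eq:weak} with $\eta^p(u-\bar u_{B_{2r}})$ on balls $B_{2r}(x_0)$. Using the $L^{2N/\sigma}$ bound on $W$, Sobolev, and the sublinearity $q-1<p-1$ (so that $u^{q-1}$ is dominated by the energy with a surplus), the right-hand side is bounded by
\[
\varepsilon(r)\int_{B_{2r}}|\nabla u|^p+Cr^{N-p+\delta}
\qquad\text{for some }\delta>0.
\]
Here $\varepsilon(r)\to0$ by absolute continuity of $\int|\nabla u|^p$ and of $\int u^{p^*}$. Widman's hole-filling trick then yields the Morrey estimate
\[
\int_{B_r(x_0)}|\nabla u|^p\le Cr^{N-p+\alpha}
\]
for some $\alpha>0$, uniformly for $x_0$ in a compact set.

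\textbf{Step 3: Adams bootstrap for $W$.} By Morrey's and Poincar\'e's inequalities, Step 2 places $u$, and hence $u^q$, in a Morrey space $L^{t,\lambda}_{\rm loc}$ strictly better than the one given by scaling. I split
\[
W[u]=|x|^{-\sigma}*(u^q\chi_{B_R})+|x|^{-\sigma}*(u^q\chi_{B_R^c}).
\]
The far part is bounded on $B_{R/2}$. The near part is a Riesz potential of order $N-\sigma$, and Adams' theorem on Riesz potentials in Morrey spaces gives $W[u]\in L^{s_1,\lambda_1}_{\rm loc}$ with $s_1$ larger than $2N/\sigma$.

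I then feed this back into the Caccioppoli/hole-filling estimate with this better potential to improve the Morrey exponent of $u$. Iterating, each round raises the exponent by an amount bounded below, because the map on exponents is affine with a gap coming from $q<p$, that is, from sublinearity. After finitely many rounds $W[u]\in L^{s}_{\rm loc}$ with $s>N/p$, and Step 1 concludes.

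\textbf{Main obstacle.} The hardest step is to show that this Morrey--Adams bootstrap gains a fixed amount at every round rather than stalling at a fixed point below $N/p$. I would first compute the exponent map explicitly. The expected mechanism is that the scaling identity
\[
\tfrac{\sigma}{2N}+\tfrac{(p-q)(N-p)}{Np}=\tfrac{\sigma-p}{N}
\]
shows the equation to be exactly critical. Any strict Morrey gain $\alpha>0$ from Step 2 should then propagate with a factor involving $q-1<p-1$, which makes the iteration contractive toward $L^\infty$.

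If this turns out to be delicate, the fallback is a De Giorgi-type argument: treat $Wu^{q-1}$ as a right-hand side in a Morrey space $L^{1,N-p+\beta}$ with $\beta>0$, and use the known boundedness results for $p$-Laplace equations with Morrey-regular data.
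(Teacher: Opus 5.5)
\textbf{There is a genuine gap.} It lies in Step~2, which is where the whole difficulty of the theorem sits. Write $\mathcal D(r)=\int_{B_r(x_0)}|\nabla u|^p$. You derive the bound $\int Wu^{q-1}\eta^p(u-\bar u_{B_{2r}})\le\varepsilon(r)\mathcal D(2r)+Cr^{N-p+\delta}$ from $W\in L^{2N/\sigma}$, Sobolev, $q<p$ and absolute continuity alone, that is, with $W$ treated as a given weight. All of these ingredients hold for the frozen example of Proposition~\ref{prop:frozen}. There $u=c|x|^{-\kappa}$ with $0<\kappa<\gamma_e$ solves $-\Delta_pu=|x|^{-a}u^{q-1}$ in $B_1$, and $|x|^{-a}\in L^{2N/\sigma}(B_1)$. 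By scaling (take $x_0=0$ and radial $\eta$), both $\mathcal D(2r)$ and $\int|x|^{-a}u^{q-1}\eta^p(u-\bar u_{B_{2r}})$ are of exact order $r^{N-p-\kappa p}$. The second has a positive constant, because the weight and $u$ are both radially decreasing. So your inequality is false there, and no argument using only those ingredients can prove it.

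Indeed, even ignoring the mean of $u$, H\"older gives a bound of order $\varepsilon(r)\mathcal D(2r)^{q/p}$. Since $q/p<1$, this cannot be absorbed as $\mathcal D\to0$ unless $\varepsilon(r)$ decays at a rate you do not have: the sublinearity hurts rather than helps. The conclusion is also too strong on its face. A bound $\int_{B_r(x_0)}|\nabla u|^p\le Cr^{N-p+\alpha}$ with $\alpha>0$ would, by Morrey's Dirichlet-growth lemma, make $u$ H\"older continuous and end the proof, so Step~3 would be superfluous. Moreover, hole-filling alone only produces the exponent $\log_2(1/\theta_0)$ set by the hole-filling constant, not one above $N-p$. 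Your fallback has the same defect, since in the frozen example $\int_{B_r}|x|^{-a}u^{q-1}$ is of order $r^{N-p-\kappa(p-1)}$.

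\textbf{The missing idea} is to use the self-consistency of $W$ already in the energy estimate, and again in the iteration. Write $\mathcal Z(r)=\|u\|_{L^{p^*}(B_r(x_0))}$. In the paper, HLS bounds the near part of $W$ through $\mathcal Z(4r)^q$. The remainder then becomes $\mathcal Z(4r)^{2q-1}\mathcal D(2r)^{1/p}$ plus dyadic and far-field pieces with explicit power decay (Lemma~\ref{lem:lb-hartree}). This is superlinear because $2q>p$, and it is absorbed by the smallness of $\mathcal Z$ on small balls, once the means of $u$ are controlled by telescoping (Lemma~\ref{lem:lb-chain}). The outcome is only $\mathcal D(r)\le Cr^{\alpha}$ with small $\alpha$ (Proposition~\ref{prop:lb-morrey}). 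That is enough for Adams' theorem to give $u\in L^{\widetilde p}_{\mathrm{loc}}$ with $\widetilde p>p^*$.

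Your Step~3 leaves exactly this quantitative core open, and the mechanism you propose is backwards. The exponent map is not made contractive by $q-1<p-1$. It has the scale-invariant fixed point $p^*$ and is expanding away from it. In Lemma~\ref{lem:lb-moser}, $u\in L^R_{\mathrm{loc}}$ gives $W\in L^t_{\mathrm{loc}}$ with $1/t=q/R-(N-\sigma)/N$. A Moser test then yields $R'-p^*\ge\min\{2q/p,p^*/p\}(R-p^*)$, and the factor exceeds $1$ because $2q>p$. The strict initial gain $\widetilde p>p^*$ is indispensable, since nothing is gained at $R=p^*$. Your Step~1 reduction to Serrin's theorem is correct and matches the paper's final step. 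But without these two self-consistent estimates the proposal never reaches its hypothesis.
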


The semilinear case also provides a useful benchmark for the asymptotic transition. When $p=2$, one has $\sigma(p-2)+2p=4>0$ for every $0<\sigma<N$, so no resonant or slow phase occurs. The new transition is therefore a genuinely quasilinear effect tied to singular diffusion.

Theorem \ref{thm:three-regime} also identifies the actual threshold governing the asymptotic behavior. When $p\ge2$, one has $\sigma(p-2)+2p>0$ throughout $2p<\sigma<N$, so the $p$-harmonic profile persists across the former boundary $\sigma=2p$. When $1<p<2$, however, the complementary range contains a second threshold,
\[
\sigma=\frac{2p}{2-p},
\]
at which the $p$-harmonic profile acquires a logarithmic correction and beyond which it is replaced by the slower power $|x|^{-\gamma_*}$. The latter regime carries an exact leading coefficient determined by the total Hartree mass $m_0$.

The automatic regularity in Theorem \ref{thm:local-boundedness} is itself part of the new quasilinear mechanism. A hole-filling estimate first yields Morrey decay of $|\nabla u|^p$; Adams' theorem then produces an integrability gain beyond $L^{p^*}$, and a self-improving Moser iteration gives arbitrary local integrability and finally local boundedness. This route is different from both the semilinear Br\'ezis--Kato scheme of \cite{MorozVanSchaftingen2015} and the quasilinear regularity argument of \cite{Ambrosio2023}, whose parameter condition lies on the opposite side of the interface $\sigma=2p$ in the critical scaling considered here. Proposition \ref{prop:frozen} explains why boundedness cannot be recovered from the natural Lebesgue class of the Hartree potential alone. Positive finite-energy weak solutions are provided, in particular, by extremals of the critical minimization problem; see \cite{Lions1985} and the discussion in \cite[Section 1.3]{DaiLiLiu2025}.

By Lemma \ref{lem:qbigger1}, the condition $q>1$ is automatic throughout the classical phase $\Lambda>0$, and it is also automatic at the resonant boundary $\Lambda=0$. Thus $q>1$ is a genuine restriction only in the strict slow phase. When $q\le1$, the local factor $u^{q-1}$ is singular and the problem falls outside the asymptotic--symmetry theory developed here.

\subsection{Proof strategy and organization}

The proof has five main ingredients. Section \ref{sec:structure} establishes a blow-up dichotomy for the possible singular set and records the frozen-weight obstruction to a purely Lebesgue-space regularity argument. Section \ref{sec:localboundedness} closes that gap by proving Theorem \ref{thm:local-boundedness}. Section \ref{sec:powergain} then gives a genuine power gain at infinity, and Section \ref{sec:bootstrap} develops the decay map that separates the three asymptotic phases. Section \ref{sec:classical} recovers the Dai--Li--Liu profile and symmetry when $\Lambda>0$, while Section \ref{sec:nonclassical} treats $\Lambda=0$ and $\Lambda<0$ by logarithmic barriers, blow-down rigidity and a singular moving-plane argument. The key new point in the slow phase is a strict Hardy coercivity margin at the critical tail.

Throughout, $C$ denotes a positive constant whose value may change from line to line, and $B_R=B_R(0)$. For positive functions $f$ and $g$ defined outside a ball, we write $f\simeq g$ as $|x|\to\infty$ if there exist constants $c,C,R>0$ such that $c\,g(x)\le f(x)\le C\,g(x)$ for every $|x|\ge R$.

\section{Preliminaries}\label{sec:prelim}

\subsection{The exponents}

\begin{lemma}\label{lem:exponents}
Let $N\ge3$, $p>1$ and $2p<\sigma<N$, and let $q,\gamma_{h},\gamma_{e},\gamma_{*},\Lambda$ be as in \eqref{eq:psigma}, \eqref{eq:gammas} and \eqref{eq:Lambda}. Then
\begin{enumerate}[label=\textup{(\alph*)}]
\item $p^{*}/2<q<p$ and $2q>p$;
\item $\gamma_{e}<\gamma_{h}$ and $\gamma_{*}>\gamma_{e}$;
\item $\Lambda>0\iff\sigma(2-p)<2p\iff\gamma_{*}>\gamma_{h}$, and $\Lambda>0$ holds whenever $p\ge2$;
\item $\sigma-(p-q)\gamma_{h}=p+\Lambda$ and $q\gamma_{h}+\sigma-N-(p-q)\gamma_{h}=p+\frac{N+p-\sigma}{p-1}$;
\item $q\gamma_{h}>N$ if and only if $\sigma<2N/p$; moreover if $\Lambda\le0$ then $\sigma<2N/p$.
\end{enumerate}
\end{lemma}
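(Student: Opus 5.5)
The plan is direct algebra from \eqref{eq:psigma}, \eqref{eq:gammas} and \eqref{eq:Lambda}. The only fact used repeatedly is the closed form \eqref{eq:q-minus-p}, i.e. $p-q=\frac{p(\sigma-2p)}{2(N-p)}$, together with the identity $\gamma_h+N-p=p\gamma_h$. I will prove (d) before (c), since (c) follows from it.

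\emph{Part (a).} From \eqref{eq:q-minus-p}, $q<p$ is equivalent to $\sigma>2p$. Next, $q>p^*/2=\frac{Np}{2(N-p)}$ is equivalent to $2N-\sigma>N$, that is $\sigma<N$. Finally, $2q>p$ is equivalent to $2N-\sigma>N-p$, that is $\sigma<N+p$, which holds because $\sigma<N$.

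\emph{Part (b).} The inequality $\gamma_e<\gamma_h$ is immediate from $p-1<p$ and $N>p$. For $\gamma_*>\gamma_e$, I multiply out using $p-q>0$ and the formula for $p-q$. This gives
\[
\gamma_*>\gamma_e\iff p(\sigma-p)>(N-p)(p-q)=\tfrac{p(\sigma-2p)}{2},
\]
which is equivalent to $\sigma-p>\frac{\sigma-2p}{2}$, i.e. $\sigma>0$.

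\emph{Part (d).} Write $\sigma-(p-q)\gamma_h=\sigma+q\gamma_h-p\gamma_h$ and use $p\gamma_h=\gamma_h+N-p$. This turns the expression into $\sigma+(q-1)\gamma_h-N+p=\Lambda+p$. For the second identity, the first one reduces it to checking
\[
\Lambda+q\gamma_h-N=\frac{N+p-\sigma}{p-1}.
\]
I will verify this by multiplying through by $2(p-1)$, using $q\gamma_h=\frac{p(2N-\sigma)}{2(p-1)}$. The same computation also confirms the closed form $\Lambda=\frac{\sigma(p-2)+2p}{2(p-1)}$ stated in \eqref{eq:Lambda}.

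\emph{Part (c).} Since $p-q>0$, we have $\gamma_*>\gamma_h$ if and only if $\sigma-p>(p-q)\gamma_h$. By (d), this is the same as $\Lambda>0$. Because $p-1>0$, the closed form of $\Lambda$ shows $\Lambda>0\iff\sigma(2-p)<2p$. If $p\ge2$, the left side $\sigma(2-p)$ is $\le0<2p$, so $\Lambda>0$.

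\emph{Part (e).} We have $q\gamma_h>N$ if and only if $p(2N-\sigma)>2N(p-1)$, which simplifies to $p\sigma<2N$. If $\Lambda\le0$, then (c) forces $p<2$, hence $2N/p>N>\sigma$.

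No step is a real obstacle. The only place needing care is keeping track of the sign of $p-q$ (positive by (a)) and of $p-1$ when dividing through the inequalities.
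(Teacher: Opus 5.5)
Your proposal is correct and is essentially the paper's argument: direct algebra from the closed form of $p-q$ and the definitions of the exponents. The only difference is in how you organize (c): you deduce $\gamma_*>\gamma_h\iff\Lambda>0$ from the first identity in (d) together with $p-q>0$, whereas the paper computes $\gamma_*-\gamma_h$ explicitly as a rational expression and reads off its sign.
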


\begin{proof}
(a) From \eqref{eq:psigma}, $q-p^{*}/2=p(N-\sigma)/(2(N-p))>0$ and $q-p^{*}=-p\sigma/(2(N-p))<0$, while $q-p<0$ by \eqref{eq:q-minus-p} and $\sigma>2p$. Since $q>p^{*}/2>p/2$ we get $2q>p$.

(b) The first is clear from $p-1<p$. For the second, $\gamma_{*}-\gamma_{e}=\sigma(p-N)/\big(p(2p-\sigma)\big)>0$ because $p<N$ and $\sigma>2p$.

(c) A computation gives
\[
\gamma_{*}-\gamma_{h}=\frac{(p-N)\big(p\sigma+2p-2\sigma\big)}{p(p-1)(2p-\sigma)} ,
\]
and both $p-N$ and $2p-\sigma$ are negative, so the sign of $\gamma_{*}-\gamma_{h}$ is that of $2p-\sigma(2-p)$. The identity $\sigma+(q-1)\gamma_{h}-N=(\sigma(p-2)+2p)/(2(p-1))$ is a direct computation from \eqref{eq:psigma}, and shows that $\Lambda$ has the same sign. If $p\ge2$ then $\sigma(2-p)\le0<2p$.

(d) Both identities follow from $p-q=p(\sigma-2p)/(2(N-p))$ and $\gamma_{h}=(N-p)/(p-1)$.

(e) $q\gamma_{h}-N=(N-p\sigma/2)/(p-1)$, which is positive if and only if $\sigma<2N/p$. If $\Lambda\le0$ then $p<2$ by (c), whence $2N/p>N>\sigma$.
\end{proof}

\begin{lemma}\label{lem:qbigger1}
Under the assumptions of Lemma \ref{lem:exponents} together with $\Lambda\ge0$, one has $q>1$.
\end{lemma}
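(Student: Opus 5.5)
The plan is to turn the statement into an elementary inequality in the parameters and check it by cases. From \eqref{eq:psigma},
\[
q-1=\frac{p(2N-\sigma)-2(N-p)}{2(N-p)},
\]
and the denominator is positive. So $q>1$ is equivalent to
\[
\sigma<\frac{2N(p-1)+2p}{p},
\qquad\text{that is,}\qquad
\sigma<2N-\frac{2(N-p)}{p}.
\]
We have to derive this from the standing hypotheses $2p<\sigma<N$, $p<N$, $N\ge3$, together with $\Lambda\ge0$. By Lemma \ref{lem:exponents}(c), the condition $\Lambda\ge0$ reads $\sigma(2-p)\le2p$.

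\emph{Case $p\ge2$.} No use of $\Lambda$ is needed. Since $\sigma<N$, it suffices to show that $N\le 2N(p-1)/p+2$. Indeed $2(p-1)/p\ge1$ when $p\ge2$, so this holds with room to spare. (Alternatively, $q>p/2\ge1$ by Lemma \ref{lem:exponents}(a).)

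\emph{Case $1<p<2$.} Here $\Lambda\ge0$ gives the upper bound $\sigma\le 2p/(2-p)$. It then suffices to show
\[
\frac{2p}{2-p}<\frac{2N(p-1)+2p}{p}.
\]
Multiplying by $p(2-p)>0$ and simplifying, this becomes
\[
p^2<(2-p)\big(N(p-1)+p\big),
\qquad\text{i.e.}\qquad
2p^2-2p<N(p-1)(2-p).
\]
Dividing by $p-1>0$, this is $2p<N(2-p)$, i.e.\ $p<2N/(N+2)$.

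The main obstacle is that this last inequality is not automatic for $1<p<2$. I have to combine it with the nonemptiness of the range, $2p<\sigma$, together with $\sigma\le 2p/(2-p)$ and $\sigma<N$. The fallback is to argue directly on $\sigma$:
\begin{itemize}
\item If $p<2N/(N+2)$, the case analysis above already gives $q>1$.
\item If $p\ge 2N/(N+2)$, then $2p/(2-p)\ge N$. So the binding constraint is $\sigma<N$, and it suffices that $N\le 2N(p-1)/p+2$, i.e.\ $N(2-p)\le 2p$, i.e.\ $p\ge 2N/(N+2)$.
\end{itemize}
Both subcases therefore close. In short, $q>1$ follows from
\[
\sigma<\min\Big\{N,\ \tfrac{2p}{2-p}\Big\}\le \frac{2N(p-1)+2p}{p}.
\]
The final step is to verify this bound by checking which of the two terms realizes the minimum. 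At the resonant boundary $\sigma=2p/(2-p)$ we also use the strict inequality $\sigma<N$ to keep $q>1$ strict.
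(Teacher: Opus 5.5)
Your argument is correct and is essentially the paper's proof: both reduce $q>1$ to $\sigma<2N(p-1)/p+2$ and split at $p=2N/(N+2)$, using $\sigma<N$ when $p\ge 2N/(N+2)$ (your separate case $p\ge2$ is subsumed there) and $\sigma\le 2p/(2-p)$ together with the strict inequality $2p/(2-p)<2N(p-1)/p+2$ when $p<2N/(N+2)$. The one slip is your summary display: when $\sigma=2p/(2-p)$ one has $\sigma=\min\{N,2p/(2-p)\}$, so the chain should read $\sigma\le\min\{N,2p/(2-p)\}$, with strictness supplied by the second inequality in that subcase, as your case analysis in fact already shows.
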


\begin{proof}
From \eqref{eq:psigma}, $q>1$ is equivalent to $\sigma<2N(p-1)/p+2$. Suppose first that $p\ge2N/(N+2)$. Then $p(N-2)\le2N(p-1)$, that is, $N\le2N(p-1)/p+2$, and the claim follows from $\sigma<N$. Suppose next that $p<2N/(N+2)$, so in particular $p<2$. Then $2p<N(2-p)$, hence $2/(2-p)<N/p$, hence
\[
\frac{2p}{2-p}-2=\frac{4(p-1)}{2-p}<\frac{2N(p-1)}{p},
\]
so $2p/(2-p)\le2N(p-1)/p+2$. Since $\Lambda\ge0$ gives $\sigma\le2p/(2-p)$ by Lemma \ref{lem:exponents}(c), while the preceding inequality is strict in the present case, the claim follows.
\end{proof}

\subsection{Imported tools}

We shall use the Hardy--Littlewood--Sobolev inequality in the form of \cite[Theorem 2.1]{DaiLiLiu2025}, and the following pointwise estimates for Riesz type convolutions, which are \cite[Lemma 2.2]{DaiLiLiu2025} and are proved there for all $0<\sigma<N$ and all $q>0$.

\begin{lemma}[{\cite[Lemma 2.2]{DaiLiLiu2025}}]\label{lem:conv}
Let $0<\sigma<N$, $q>0$ and $R>0$.
\begin{enumerate}[label=\textup{(\roman*)}]
\item If $u\ge0$ in $\mathbb{R}^{N}$ and $u\ge C_{1}|x|^{-\beta}$ in $\mathbb{R}^{N}\setminus B_{R}$ with $C_{1},\beta>0$ and $q\beta>N-\sigma$, then $|x|^{-\sigma}*u^{q}\ge C|x|^{N-\sigma-q\beta}$ in $\mathbb{R}^{N}\setminus B_{R}$.
\item If $u\in L^{\infty}(B_{R})$ and $0\le u\le C_{1}|x|^{-\beta}$ in $\mathbb{R}^{N}\setminus B_{R}$ with $q\beta>N-\sigma$, then in $\mathbb{R}^{N}\setminus B_{2R}$,
\[
|x|^{-\sigma}*u^{q}\le
\begin{cases}
C|x|^{N-\sigma-q\beta} &\text{if }N-\sigma<q\beta<N,\\
C|x|^{-\sigma}\log|x| &\text{if }q\beta=N,\\
C|x|^{-\sigma} &\text{if }q\beta>N.
\end{cases}
\]
\item If $c(1+|x|^{\beta})^{-1}\le f\le C(1+|x|^{\beta})^{-1}$ in $\mathbb{R}^{N}$ with $\beta>N-\sigma$, then $C_{1}\le|x|^{-\sigma}*f\le C_{2}$ in $B_{R}$.
\end{enumerate}
\end{lemma}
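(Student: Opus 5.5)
The plan is to prove all three estimates by splitting the convolution integral
\[
W(x)=\int_{\mathbb R^N}\frac{u^q(y)}{|x-y|^{\sigma}}\,dy
\]
into regions where either $|x-y|$ or $|y|$ is comparable to a simple quantity. On each region we then integrate the power bounds explicitly. Constants may depend on $N,\sigma,q,\beta,R,C_1$ and on $\|u\|_{L^\infty(B_R)}$.

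For (i), I fix $|x|\ge R$ and keep only the annulus $A_x=\{2|x|\le|y|\le3|x|\}$. This is legitimate because the integrand is nonnegative. On $A_x$ we have $|y|\ge R$, so $u^q(y)\ge C_1^q|y|^{-q\beta}\ge c|x|^{-q\beta}$. We also have $|x-y|\le4|x|$, so $|x-y|^{-\sigma}\ge c|x|^{-\sigma}$. Since $|A_x|=c|x|^N$, this gives $W(x)\ge c|x|^{N-\sigma-q\beta}$. The hypothesis $q\beta>N-\sigma$ is not needed for this lower bound; in the lemma it only guarantees that $W$ is finite.

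For (ii), I fix $|x|\ge2R$ and split $\mathbb R^N$ into four pieces:
\[
D_0=B_R,\quad D_1=B_{|x|/2}(x),\quad D_2=\{R\le|y|\le2|x|\}\setminus D_1,\quad D_3=\{|y|>2|x|\}.
\]
\begin{itemize}
\item On $D_0$ we have $|x-y|\ge|x|/2$ and $u$ is bounded, so the contribution is at most $C|x|^{-\sigma}$.
\item On $D_1$ we have $|y|\simeq|x|$, so $u^q\le C|x|^{-q\beta}$. Together with $\int_{B_{|x|/2}(x)}|x-y|^{-\sigma}dy=C|x|^{N-\sigma}$ (using $\sigma<N$), the contribution is $\le C|x|^{N-\sigma-q\beta}$.
\item On $D_2$ we have $|x-y|\ge|x|/2$. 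The contribution is bounded by $C|x|^{-\sigma}\int_{R\le|y|\le2|x|}|y|^{-q\beta}dy$. This integral equals $C|x|^{N-q\beta}$, $C\log|x|$ or $C$ according as $q\beta<N$, $q\beta=N$ or $q\beta>N$.
\item On $D_3$ we have $|x-y|\ge|y|/2$. The contribution is bounded by $C\int_{|y|>2|x|}|y|^{-\sigma-q\beta}dy=C|x|^{N-\sigma-q\beta}$, which is finite precisely because $q\beta>N-\sigma$.
\end{itemize}
Collecting terms, in the case $q\beta<N$ the power $|x|^{N-\sigma-q\beta}$ dominates $|x|^{-\sigma}$. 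In the case $q\beta=N$ the dominant term is $|x|^{-\sigma}\log|x|$. In the case $q\beta>N$ both $|x|^{N-\sigma-q\beta}$ and the $D_0$ term are $\le C|x|^{-\sigma}$. These are exactly the three stated bounds.

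For (iii), let $x\in B_R$. For the lower bound, on $B_{2R}$ we have $f\ge c(1+(2R)^\beta)^{-1}$ and $|x-y|\le3R$, so $W\ge c>0$. For the upper bound, split at $|y|=2R$. The inner part is bounded by $\|f\|_\infty\int_{B_{3R}(x)}|x-y|^{-\sigma}dy<\infty$, since $\sigma<N$. On the outer part $|x-y|\ge|y|/2$ and $f\le C|y|^{-\beta}$, so that part is bounded by $C\int_{|y|>2R}|y|^{-\sigma-\beta}dy<\infty$, since $\beta>N-\sigma$.

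No step is a genuine obstacle. The only care needed is in (ii): the intermediate region $D_2$ is what produces the trichotomy, and one must check that in each case the dominant term is the stated one. The decay hypothesis on $D_3$ is exactly what makes the far-field tail converge.
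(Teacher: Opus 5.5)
Your proof is correct and complete. The paper gives no proof of this lemma: it is imported verbatim from \cite[Lemma 2.2]{DaiLiLiu2025}. Your argument is therefore a self-contained replacement for a citation rather than an alternative route. The decomposition you use (the ball $B_R$, the ball $B_{|x|/2}(x)$, the intermediate shell, and the far field) is the standard way such Riesz-potential bounds are proved, and each of your region estimates checks out.

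There are two small points you should fix.

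First, consider the case $q\beta=N$ of (ii). The shell integral equals $|S^{N-1}|\log(2|x|/R)$, and the $O(|x|^{-\sigma})$ contributions from $D_0$, $D_1$ and $D_3$ must be absorbed into $C|x|^{-\sigma}\log|x|$. Both steps require $\log|x|$ to be bounded below by a positive constant on $\{|x|\ge 2R\}$, that is, $2R>1$. For $R\le 1/2$ the stated bound is actually false at $|x|=1$ (unless $u\equiv0$), since its right-hand side vanishes there. This is a defect of the statement, not of your method. It is harmless in the paper, where the lemma is applied with $R=1$ and $|x|\ge2$. Still, you should record the restriction, or simply work on $|x|\ge\max\{2R,2\}$.

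Second, your side remark in (i) is slightly inaccurate. With only a lower bound on $u$, the condition $q\beta>N-\sigma$ does not guarantee $W<\infty$. It is instead necessary for finiteness: if $q\beta\le N-\sigma$, the far-field integral forces $W\equiv+\infty$. This does not affect the validity of your estimate.
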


We use the interior regularity theory of DiBenedetto \cite{DiBenedetto1983} and Tolksdorf \cite{Tolksdorf1984}, the weak Harnack inequality of Trudinger \cite{Trudinger1967} and Serrin \cite{Serrin1964}, and the lower bound for positive $p$-superharmonic functions of Serrin and Zou \cite{SerrinZou2002}, in the form of \cite[Lemma 3.4]{DaiLiLiu2025}:
\begin{equation}\label{eq:SZ}
u>0 \text{ and } -\Delta_{p}u\ge0 \text{ in }\mathbb{R}^{N}
\quad\Longrightarrow\quad
u(x)\ge\frac{c}{|x|^{\gamma_{h}}}\quad\text{for }|x|\ \text{large}.
\end{equation}
Finally we shall quote \cite[Theorem 1.3]{DaiLiLiu2025} in Section \ref{sec:sharp}. For the degenerate moving-plane argument we also use the weighted Sobolev theory associated with the weight $|\nabla u|^{p-2}$ developed by Damascelli and Sciunzi \cite{DamascelliSciunzi2004,DamascelliSciunzi2006}. The local weighted embedding needed below is recorded in Lemma \ref{lem:weighted-sobolev-local}; its inverse-gradient hypothesis is verified explicitly in our setting.

None of the moving-plane arguments in this paper uses a strong comparison principle, a Hopf lemma, or the connectivity of the complement of the critical set: the strict separation needed at the limiting position of the plane is supplied by the nonlocal term, see Lemma \ref{lem:strict-separation}, and strict radial monotonicity follows from the radial form of the equation. A strong comparison principle is used in one place only, at a contact point where the gradient does not vanish, in Lemma \ref{lem:slow-rigidity}. There we use \cite[Lemma 2.7]{DaiLiLiu2025}, which is \cite[Theorem 1.4]{Damascelli1998}. It holds for every $1<p<\infty$ and asserts equality on the connected component, containing the contact point, of the complement of the common critical set. We never use \cite[Lemma 2.6]{DaiLiLiu2025}, which is taken there from Damascelli and Sciunzi and requires $\frac{2N+2}{N+2}<p<2$ or $p>2$: in the nonclassical regimes $\Lambda\le0$ one has $p<\frac{2N}{N+2}<\frac{2N+2}{N+2}$, so that version is never available there.

In the classical phase with $p\ge2$ we use the critical-set estimates \cite[Corollary 2.10, Remark 2.11]{DaiLiLiu2025} to obtain the local inverse-gradient control for the degenerate weight $|\nabla u|^{p-2}$ and to know that the critical set has measure zero. These are the inputs for the fixed-domain weighted Sobolev inequality recorded below. Since the cited critical-set results are stated there for the Hartree equation with $\sigma=2p$, we record explicitly why their local assumptions remain valid here.

\begin{lemma}[Local regularity of the Hartree right-hand side]\label{lem:rhs-local}
Let $q>1$ and let $u$ be a positive locally bounded $D^{1,p}(\mathbb R^N)$-weak solution of \eqref{eq:main}. Then, for every compact set $K\subset\mathbb R^N$,
\[
W[u]\in W^{1,\infty}(K),\qquad
f:=W[u]u^{q-1}\in W^{1,\infty}(K),\qquad
\inf_K f>0.
\]
In particular, if $\alpha$ is a local $C^{1,\alpha}$ exponent furnished by the interior regularity theory, then one may choose any
\[
0<\mu<\min\{\alpha(p-1),\tfrac12\}
\]
so that the local assumptions (2.7)--(2.9), including condition $(I_\alpha)$, in \cite[Remark 2.3, Lemma 2.9]{DaiLiLiu2025} hold on every bounded subdomain compactly contained in $\mathbb R^N$.
\end{lemma}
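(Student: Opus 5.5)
The plan is to establish the three claims in order: local boundedness of $W[u]$, local Lipschitz regularity of $W[u]$, and then the properties of the product $f=W[u]u^{q-1}$. The assumptions give $u$ positive, locally bounded and in $D^{1,p}$. The interior regularity theory of DiBenedetto and Tolksdorf then applies, because the right-hand side $W[u]u^{q-1}$ is locally bounded once $W[u]$ is. It yields $u\in C^{1,\alpha}_{\rm loc}$, so $u$ and $u^{q-1}$ are locally Lipschitz; here $q>1$ is exactly what makes $u^{q-1}$ Lipschitz wherever $u$ is bounded above.

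The first step is to bound $W[u]$ on a compact set $K\subset B_R$. I would split $u^q=u^q\chi_{B_{2R}}+u^q\chi_{B_{2R}^c}$. For the near part, local boundedness gives $u^q\chi_{B_{2R}}\in L^\infty$ with compact support, and $|x|^{-\sigma}$ is locally integrable because $\sigma<N$. The convolution is therefore bounded, and in fact $C^{0,\beta}$. For the far part, $|x-y|\ge R$ when $x\in K$ and $|y|\ge 2R$, so the kernel is smooth there. The task then reduces to showing $\int_{|y|\ge 2R}|y|^{-\sigma}u^q(y)\,dy<\infty$. Since $u\in L^{p^*}$ and $q<p^*$, Hölder's inequality with exponents $p^*/q$ and $(p^*/q)'$ reduces this to $|y|^{-\sigma}\in L^{p^*/(p^*-q)}(B_{2R}^c)$. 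That holds iff $\sigma p^*/(p^*-q)>N$. Using $p^*-q=p\sigma/(2(N-p))$, one gets $\sigma p^*/(p^*-q)=2N>N$, so the condition is satisfied. (Alternatively, $W[u]$ is finite a.e. by HLS.) Differentiating under the integral sign, with $|\nabla_x|x-y|^{-\sigma}|\le C|y|^{-\sigma-1}$, the far part is smooth on $K$.

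The second step is the Lipschitz bound for the near part, which I expect to be the main obstacle. Here $\nabla|x|^{-\sigma}\sim|x|^{-\sigma-1}$, which is locally integrable only when $\sigma+1<N$. My first approach would be to move the derivative onto the density: $\nabla(|\cdot|^{-\sigma}*g)=|\cdot|^{-\sigma}*\nabla g$ with $g=u^q\chi$, where $\chi$ is a smooth cutoff equal to $1$ on $B_{2R}$ rather than a sharp indicator. Then $g\in W^{1,\infty}$ with compact support, because $u\in C^{1}_{\rm loc}$ and $q>1$ make $u^q$ a $C^1$ function. The result is $\nabla(|\cdot|^{-\sigma}*g)$ bounded, and hence $W[u]\in W^{1,\infty}(K)$. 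The cutoff remainder $(1-\chi)u^q$ is handled as in the far-part estimate.

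Positivity is direct: $W[u]\ge\int_{B_1}|x-y|^{-\sigma}u^q(y)\,dy\ge c_K>0$ on $K$ because $u>0$. Moreover $u$ is continuous and positive, so $\inf_K u>0$. Therefore $\inf_K f>0$, and $f$ is a product of $W^{1,\infty}(K)$ functions, so $f\in W^{1,\infty}(K)$.

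For the last assertion I would check (2.7)--(2.9) and $(I_\alpha)$ of \cite{DaiLiLiu2025} against these facts. Those conditions ask for local $C^{1,\alpha}$ regularity of $u$, a positive locally Lipschitz right-hand side $f$, and the Hölder-type compatibility $0<\mu<\min\{\alpha(p-1),1/2\}$. Once $f\in W^{1,\infty}$ with $\inf f>0$, the conditions hold on every bounded subdomain, by the same argument as in the cited remark. That argument uses only these local properties of $f$ and not the value of $\sigma$.
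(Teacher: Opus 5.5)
Your proposal is correct and follows the paper's proof essentially step for step. Both use a smooth cutoff, move the derivative onto $\chi u^q$ in the near part, differentiate the kernel directly in the far part with H\"older in the $L^{p^*/q}$--$L^{2N/\sigma}$ pairing, and then obtain positivity from $u>0$ and the product rule. One correction and one refinement. For $1<q<2$, local Lipschitz regularity of $u^{q-1}$ comes from the positive lower bound $\inf_K u>0$, not from an upper bound on $u$ as you wrote. For the final assertion, the paper makes the check concrete rather than deferring to the cited remark: $f\ge c_K\ge c_K\rho^{-\mu}|x-x_0|^{\mu}$ on every $B_\rho(x_0)\subset K$, and $f\in W^{1,m}(\Omega)$ for every finite $m$ (in particular $m>N/[2(1-\mu)]$).
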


\begin{proof}
Interior regularity gives $u\in C^{1,\alpha}_{\mathrm{loc}}(\mathbb R^N)$. Fix a compact set $K\subset\mathbb R^N$ and choose $\chi\in C_c^\infty(\mathbb R^N)$ with $\chi=1$ on a neighborhood of $K$. Write
\[
W[u](x)=\int_{\mathbb R^N}|x-y|^{-\sigma}\chi(y)u(y)^q\,dy
      +\int_{\mathbb R^N}|x-y|^{-\sigma}(1-\chi(y))u(y)^q\,dy
      =:W_{\rm near}(x)+W_{\rm far}(x).
\]
Since $|x|^{-\sigma}\in L^1_{\rm loc}(\mathbb R^N)$ and $\chi u^q\in C_c^1(\mathbb R^N)$, differentiation in the sense of distributions and integration by parts give
\[
\partial_iW_{\rm near}=|\cdot|^{-\sigma}*\partial_i(\chi u^q),
\]
which is bounded on $K$. For the far part, set $\delta:=\operatorname{dist}(K,\operatorname{supp}(1-\chi))>0$. The kernel may then be differentiated directly. With
\[
a:=\frac{p^*}{q}=\frac{2N}{2N-\sigma},
\qquad a'=\frac{2N}{\sigma},
\]
H\"older's inequality yields, uniformly for $x\in K$,
\[
|\nabla W_{\rm far}(x)|
\le C\|u^q\|_{L^a}
\left(\int_{|x-y|\ge\delta}|x-y|^{-(\sigma+1)a'}\,dy\right)^{1/a'}<\infty,
\]
because $(\sigma+1)a'>N$. Thus $W[u]\in W^{1,\infty}(K)$. Since $u$ is positive and continuous, it is bounded below by a positive constant on $K$; moreover $W[u]$ is positive and continuous because the kernel is positive and $u\not\equiv0$. Hence $\inf_KW[u]>0$. The assertions for $f=W[u]u^{q-1}$ follow from the product rule, using again the positive lower bound for $u$ on $K$ when $1<q<2$.

It remains only to match precisely the local lower-bound assumption used in \cite[Lemma 2.9]{DaiLiLiu2025}. Let $\Omega$ be a bounded domain with compact closure in $\mathbb R^N$, choose a compact $K$ whose interior contains $\overline\Omega$, and fix
$0<\mu<\min\{\alpha(p-1),\tfrac12\}$. Since $c_K:=\inf_K f>0$, for every $x_0\in\Omega$ and every ball $B_\rho(x_0)\subset K$ one has
\[
f(x)\ge c_K\ge c_K\rho^{-\mu}|x-x_0|^\mu
\qquad (x\in B_\rho(x_0)).
\]
Moreover $f\in W^{1,\infty}(K)$, hence $f\in W^{1,m}(\Omega)$ for every finite $m$, in particular for some
$m>N/[2(1-\mu)]$, and also $f\in L^s(\Omega)$ for every $s<\infty$. Thus (2.7)--(2.9) and condition $(I_\alpha)$ of \cite[Remark 2.3]{DaiLiLiu2025} are satisfied.
\end{proof}

Consequently, on every bounded domain compactly contained in $\mathbb R^N$, \cite[Lemma 2.9]{DaiLiLiu2025} gives the local inverse-gradient estimate (2.13). The finite-covering argument used in the proof of \cite[Corollary 2.10]{DaiLiLiu2025} then yields the corresponding estimate (2.14) for the weight $\varrho=|\nabla u|^{p-2}$. For $p>2$ this local inverse-weight integrability immediately implies $|Z_u\cap\Omega|=0$ on every bounded $\Omega$ with compact closure in $\mathbb R^N$; for $p=2$ the weight is identically one. Hence $|Z_u|=0$ by exhaustion. The sharp lower gradient estimate proved later also shows that $Z_u$ is bounded, but that stronger fact is not needed for the local weighted Sobolev argument below.

\begin{lemma}[Local weighted Sobolev inequality]\label{lem:weighted-sobolev-local}
Assume $p\ge2$ and let $u$ be a positive locally bounded $D^{1,p}(\mathbb R^N)$-weak solution of \eqref{eq:main}. For every ball $B$ with compact closure in $\mathbb R^N$ there exist an exponent $r_B>2$ and a constant $C_B>0$ such that
\begin{equation}\label{eq:weighted-sobolev-local}
\|\phi\|_{L^{r_B}(B)}^2\le C_B\int_B |\nabla u|^{p-2}|\nabla\phi|^2\,dx
\qquad\text{for every }\phi\in C_c^1(B).
\end{equation}
\end{lemma}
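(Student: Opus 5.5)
The plan is to follow the Damascelli--Sciunzi route to weighted Sobolev inequalities, using the inverse-gradient estimate recorded just before the statement as the only structural input. Fix a ball $B$ with $\overline B$ compact and choose a slightly larger ball $B'\supset\overline B$, still with compact closure. By Lemma~\ref{lem:rhs-local}, the local hypotheses of \cite[Lemma 2.9]{DaiLiLiu2025} hold on $B'$. The finite-covering argument of \cite[Corollary 2.10]{DaiLiLiu2025} then gives the estimate (2.14) for the weight $\varrho=|\nabla u|^{p-2}$. Concretely, it provides some $t\in(p-2,p-1)$ (with $t>p-2$ being the relevant point), or more generally some exponent $t$, such that
\[
\int_B \frac{dx}{|\nabla u|^{t}}\le C_B<\infty ,
\]
that is, $\varrho^{-1}\in L^{s}(B)$ with $s=t/(p-2)>1$ when $p>2$. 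When $p=2$ the weight is identically one, and the claim is the classical Sobolev inequality on $B$ with $r_B=2^*>2$ (recall $N\ge3$). So from now on assume $p>2$.

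With $\varrho^{-1}\in L^{s}(B)$ in hand, I would derive the inequality by the standard two-step Hölder argument.

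\textbf{Step 1.} For $\phi\in C^1_c(B)$, Hölder's inequality gives
\[
\int_B|\nabla\phi|^{\bar p}\le\Big(\int_B\varrho|\nabla\phi|^2\Big)^{\bar p/2}\Big(\int_B\varrho^{-\bar p/(2-\bar p)}\Big)^{(2-\bar p)/2},
\]
where $\bar p<2$ is chosen so that $\bar p/(2-\bar p)=s$, that is, $\bar p=2s/(s+1)\in(1,2)$. This yields $\|\nabla\phi\|_{L^{\bar p}(B)}^2\le C\int_B\varrho|\nabla\phi|^2$.

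\textbf{Step 2.} Apply the unweighted Sobolev inequality on $W^{1,\bar p}_0(B)$. This gives $\|\phi\|_{L^{\bar p^*}(B)}\le C\|\nabla\phi\|_{L^{\bar p}(B)}$ when $\bar p<N$, with $\bar p^*=N\bar p/(N-\bar p)$. If $\bar p\ge N$, which can only occur for $N=3$ with $\bar p$ close to 2 (not actually possible since $\bar p<2<3$), one could use any finite exponent instead. Combining the two steps gives the inequality with $r_B=\bar p^*$.

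The key check is that $r_B>2$. This holds if and only if $\bar p>2N/(N+2)$, that is, $s>N/2$ after a short computation ($\bar p^*>2 \iff 2s/(s+1)>2N/(N+2)\iff s>N/2$).

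This is the main obstacle: the integrability exponent of $|\nabla u|^{-1}$ must be large enough. Damascelli--Sciunzi obtain $\int|\nabla u|^{-(p-1)r}|x-y|^{-\gamma}<C$ with $r<1$ and $\gamma<N-2$ (in the $\Omega$-uniform form (2.13)–(2.14)). The $r$ close to 1 gives $s$ close to $(p-1)/(p-2)$, but that alone need not exceed $N/2$.

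To handle this, I would instead exploit the singular weight $|x-y|^{-\gamma}$, exactly as in \cite{DamascelliSciunzi2004}. Represent $\phi$ through the Riesz potential
\[
|\phi(x)|\le C\int_B\frac{|\nabla\phi(y)|}{|x-y|^{N-1}}\,dy ,
\]
and split $|\nabla\phi|=\varrho^{1/2}|\nabla\phi|\cdot\varrho^{-1/2}$. Then use Cauchy--Schwarz against the kernel, with the uniform bound
\[
\sup_x\int_B\frac{\varrho^{-1}(y)}{|x-y|^{\gamma}}\,dy<\infty,
\qquad \gamma\ \text{close to } N-2,
\]
followed by a Hardy--Littlewood--Sobolev/Young estimate for the remaining kernel. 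This produces an $L^{r}$ bound with $r=2N/(N-2+\gamma)$-type exponents, which exceed 2 precisely because $\gamma<N-2$ can be chosen close to $N-2$ while the remaining kernel stays integrable. This is the argument of \cite[Theorem 8]{DamascelliSciunzi2004}, and I would reproduce it, checking only that (2.14) supplies the uniform singular-weighted bound on $B$.
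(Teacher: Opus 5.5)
\textbf{Gap in the final step.} Your overall route matches the paper's. For $p=2$ you use the ordinary Sobolev inequality. For $p>2$ you extract the singular-kernel inverse-weight bound (2.14) via Lemma~\ref{lem:rhs-local} and \cite[Lemma 2.9, Corollary 2.10]{DaiLiLiu2025}, then feed it into the Damascelli--Sciunzi weighted Sobolev inequality. You also correctly see that plain integrability $\varrho^{-1}\in L^{s}$ would need $s>N/2$, which is not available. However, the step you then use to close this gap fails as written.

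You work with $\sup_x\int_B\varrho^{-1}(y)|x-y|^{-\gamma}\,dy<\infty$, i.e.\ the inverse weight to the \emph{first} power, combined with Cauchy--Schwarz. Any Cauchy--Schwarz split of $|x-y|^{1-N}$ that puts at most $|x-y|^{-\gamma}$ on the $\varrho^{-1}$ factor leaves at least $|x-y|^{-(2N-2-\gamma)}$ on the factor $\varrho|\nabla\phi|^2$. Since (2.14) only allows $\gamma<N-2$, this exponent exceeds $N$. The resulting pointwise bound therefore cannot be integrated in $x$, so no HLS/Young step is possible and you get no $L^r$ estimate at all. Your exponent bookkeeping shows the same problem: $2N/(N-2+\gamma)\to N/(N-2)\le 2$ as $\gamma\to N-2$ when $N\ge4$. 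So it does not exceed $2$ ``because $\gamma$ is close to $N-2$''.

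\textbf{The missing idea.} Use a power $t>1$ of the inverse weight in the singular-kernel bound, which (2.14) provides for every $1\le t<(p-1)/(p-2)$, and balance $t$ against $\gamma$.
\begin{itemize}
\item Fix $1<t<(p-1)/(p-2)$ and choose $\max\{0,N-2t\}<\gamma<N-2$; this is possible precisely because $t>1$.
\item Apply H\"older with the three exponents $2$, $2t$, $2t/(t-1)$ to $(\varrho^{1/2}|\nabla\phi|)\cdot\varrho^{-1/2}\cdot|x-y|^{1-N}$. This gives $|\phi(x)|^2\le C\int_B\varrho|\nabla\phi|^2|x-y|^{-\alpha}\,dy$ for every $\alpha>N-2+(N-\gamma)/t$.
\item The condition $\gamma>N-2t$, i.e.\ $t>(N-\gamma)/2$, makes some such $\alpha<N$ admissible. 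Minkowski's inequality on the bounded ball then yields $\|\phi\|_{L^r(B)}^2\le C\int_B\varrho|\nabla\phi|^2$ for every $r<2N/\alpha$, in particular for some $r_B>2$.
\end{itemize}
These are exactly the hypotheses $\gamma<N$, $t>(N-\gamma)/2$, $t>1$ of \cite[Remark 3.1]{DamascelliSciunzi2006}, which the paper verifies before invoking that result. Your proposal never checks this relation between $t$ and $\gamma$, and that relation is the actual content of the lemma.
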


\begin{proof}
If $p=2$, the weight is identically one and \eqref{eq:weighted-sobolev-local} is the usual Sobolev inequality, with any $2<r_B\le2N/(N-2)$.

Assume $p>2$, and choose a slightly larger ball $\widetilde B$ with $\overline B\subset\widetilde B$ and $\overline{\widetilde B}\subset\mathbb R^N$. By Lemma \ref{lem:rhs-local}, the Hartree right-hand side $f=W[u]u^{q-1}$ satisfies on $\widetilde B$ the local assumptions used in \cite[Lemma 2.9]{DaiLiLiu2025}. Hence the inverse-gradient estimate (2.13) of that lemma and the finite-covering argument of \cite[Corollary 2.10]{DaiLiLiu2025} give the local estimate (2.14): for every
\[
1\le t<\frac{p-1}{p-2},\qquad 0\le\gamma<N-2,
\]
the quantity
\[
\sup_{x\in B}\int_B
\frac{dy}{|\nabla u(y)|^{(p-2)t}|x-y|^{\gamma}}
\]
is finite, after enlarging the constant if necessary.

Fix
\[
1<t<\frac{p-1}{p-2}
\]
and then choose
\[
\max\{0,N-2t\}<\gamma<N-2.
\]
This is possible because $t>1$. With $\varrho=|\nabla u|^{p-2}$ we therefore have
\[
\sup_{x\in B}\int_B\varrho(y)^{-t}|x-y|^{-\gamma}\,dy<\infty,
\qquad
\gamma<N,
\qquad
t>\frac{N-\gamma}{2},
\qquad
2>1+\frac1t.
\]
These are precisely the abstract hypotheses singled out in \cite[Remark 3.1]{DamascelliSciunzi2006} for the weighted Sobolev and Poincar\'e inequalities proved there; see also \cite[Theorem 2.2]{DamascelliSciunzi2006}. Applying that weighted Sobolev inequality on the fixed ball $B$ yields \eqref{eq:weighted-sobolev-local} for some exponent $r_B>2$. Notice that only the existence of one such exponent is used below.
\end{proof}

We shall repeatedly use the weak comparison principle for the $p$-Laplacian on exterior domains in the following form. Its proof is the standard monotonicity argument, and we include it because the domain is unbounded.

\begin{lemma}\label{lem:comparison}
Let $\Omega$ be either a bounded open subset of $\mathbb{R}^{N}$ or the exterior domain $\mathbb{R}^{N}\setminus\overline{B_{\rho}}$ with $\rho>0$, and let $u,v\in W^{1,p}_{\mathrm{loc}}(\overline{\Omega})$ be such that $(u-v)^{+}\in D^{1,p}(\Omega)$ and $(u-v)^{+}=0$ on $\partial\Omega$. In the exterior-domain case assume in addition that $\nabla u,\nabla v\in L^{p}(\Omega)$. If
\[
\int_{\Omega}|\nabla u|^{p-2}\langle\nabla u,\nabla\varphi\rangle\,dx
\le\int_{\Omega}|\nabla v|^{p-2}\langle\nabla v,\nabla\varphi\rangle\,dx
\]
for every nonnegative $\varphi\in C^{\infty}_{0}(\Omega)$, then $u\le v$ in $\Omega$.
\end{lemma}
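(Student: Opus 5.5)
The plan is to test the inequality with $\varphi=(u-v)^{+}$ and use the strict monotonicity of the vector field $\xi\mapsto|\xi|^{p-2}\xi$. The hypothesis is only stated for nonnegative $\varphi\in C^\infty_0(\Omega)$, so the first step is to extend it by density to every nonnegative $\varphi\in D^{1,p}(\Omega)$ vanishing on $\partial\Omega$.

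\emph{Bounded case.} Here $(u-v)^{+}\in W^{1,p}_0(\Omega)$. It is the $W^{1,p}$-limit of nonnegative smooth compactly supported functions, for instance mollifications of truncations $(u-v-\varepsilon)^{+}$, whose supports stay away from $\partial\Omega$ in the appropriate sense. Both sides of the inequality are continuous in $\varphi$ for the $W^{1,p}$ topology, because $|\nabla u|^{p-2}\nabla u$ and $|\nabla v|^{p-2}\nabla v$ belong to $L^{p'}(\Omega)$; this uses $u,v\in W^{1,p}(\overline\Omega)$ locally and the boundedness of $\Omega$.

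\emph{Exterior case.} I would additionally multiply by a cutoff $\eta_R$ with $\eta_R=1$ on $B_R$, $\eta_R=0$ outside $B_{2R}$ and $|\nabla\eta_R|\le C/R$. The extra term is bounded by
\[
\int\big||\nabla u|^{p-2}\nabla u-|\nabla v|^{p-2}\nabla v\big|\,(u-v)^{+}|\nabla\eta_R|
\le C\,\big\|(|\nabla u|+|\nabla v|)^{p-1}\big\|_{L^{p'}(B_{2R}\setminus B_R)}\,\big\|(u-v)^{+}/|x|\big\|_{L^p(B_{2R}\setminus B_R)}.
\]
The first factor tends to zero because $\nabla u,\nabla v\in L^p(\Omega)$. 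The second factor stays bounded by Hardy's inequality for $(u-v)^{+}\in D^{1,p}$, or alternatively by H\"older's inequality with the $L^{p^*}$ norm on the annulus, which also tends to zero. Hence the extra term vanishes as $R\to\infty$.

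\emph{Conclusion.} With $\varphi=(u-v)^{+}$ admissible, we obtain
\[
\int_{\{u>v\}}\big\langle|\nabla u|^{p-2}\nabla u-|\nabla v|^{p-2}\nabla v,\ \nabla u-\nabla v\big\rangle\,dx\le0 .
\]
The integrand is nonnegative and vanishes only where $\nabla u=\nabla v$. Therefore $\nabla(u-v)^{+}=0$ almost everywhere. A function in $D^{1,p}(\Omega)$ with zero gradient and zero trace on $\partial\Omega$ must vanish: in the bounded case this follows from Poincar\'e's inequality, and in the exterior case from the $L^{p^*}$ integrability of $(u-v)^{+}$, which forces the constant to be zero. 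Hence $u\le v$ almost everywhere, and everywhere for continuous representatives.

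The main obstacle I expect is the approximation step in the exterior domain: justifying the test function and controlling the cutoff term at infinity. This is exactly where the assumption $\nabla u,\nabla v\in L^p$ enters, and I would handle it with the annular H\"older estimate above.
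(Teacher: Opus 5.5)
Your proposal is correct and follows the paper's proof essentially step by step: density to admit $(u-v)^{+}$ in the bounded case, the cut-off $\eta_R$ in the exterior case with the annular error killed by H\"older's inequality against $\|(u-v)^{+}\|_{L^{p^*}(B_{2R}\setminus B_R)}$ (your Hardy alternative also works since $p<N$), and then strict monotonicity forcing $(u-v)^{+}$ to have zero gradient and hence to vanish as an element of $D^{1,p}(\Omega)$. Finishing the bounded case with Poincar\'e's inequality is slightly cleaner than the paper's ``constant, hence zero'' step, because it needs no connectedness of $\Omega$.
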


\begin{proof}
By density the inequality holds for every nonnegative $\varphi\in D^{1,p}(\Omega)$ with compact support in $\overline{\Omega}$ and $\varphi=0$ on $\partial\Omega$. If $\Omega$ is bounded this already covers $\varphi=(u-v)^{+}$ and we may pass directly to the display below. In the exterior case, let $\eta_{R}$ be cut-offs with $\eta_{R}=1$ on $B_{R}$, $\eta_{R}=0$ outside $B_{2R}$ and $|\nabla\eta_{R}|\le C/R$, and put $\varphi=(u-v)^{+}\eta_{R}$. Writing $w=(u-v)^{+}$ and $A_{R}=B_{2R}\setminus B_{R}$, the error committed in replacing $\varphi$ by $w$ is controlled by
\[
\frac{C}{R}\int_{A_{R}}\big(|\nabla u|^{p-1}+|\nabla v|^{p-1}\big)w\,dx
\le C\big\|\nabla u\big\|^{p-1}_{L^{p}(A_{R})}\|w\|_{L^{p^{*}}(A_{R})}+\cdots,
\]
where we used H\"older's inequality and $R^{-1}|A_{R}|^{\frac1p-\frac{1}{p^{*}}}\le C$, the omitted term being the same expression with $\nabla u$ replaced by $\nabla v$. Both terms tend to $0$ as $R\to\infty$ because $\nabla u,\nabla v\in L^{p}(\Omega)$ and $w\in L^{p^{*}}(\Omega)$. Hence the inequality holds for $\varphi=w$. Testing with $\varphi=(u-v)^{+}$ gives
\[
\int_{\Omega}\langle|\nabla u|^{p-2}\nabla u-|\nabla v|^{p-2}\nabla v,\nabla(u-v)^{+}\rangle\,dx\le0 .
\]
The integrand is nonnegative and vanishes only where $\nabla(u-v)^{+}=0$, so $(u-v)^{+}$ is constant, and it belongs to $D^{1,p}(\Omega)$, so it vanishes.
\end{proof}

We also record the exact $p$-Laplacian of a radial power, which we use as a barrier. For $\nu>0$ and $M>0$,
\begin{equation}\label{eq:barrier}
-\Delta_{p}\big(M|x|^{-\nu}\big)=h(\nu)M^{p-1}|x|^{-\nu(p-1)-p},
\end{equation}
where $h$ is defined in \eqref{eq:hdef}; in particular $h(\nu)>0$ exactly for $0<\nu<\gamma_h$.

For the singular range $1<p<2$ we shall also use the following sharper form of the monotonicity inequality. For every $\xi,\eta\in\mathbb R^N$,
\begin{equation}\label{eq:sharp-monotonicity-singular}
\big\langle |\xi|^{p-2}\xi-|\eta|^{p-2}\eta,\xi-\eta\big\rangle
\ge (p-1)\max\{|\xi|,|\eta|\}^{p-2}|\xi-\eta|^2.
\end{equation}
Indeed, integrating the Hessian of $\xi\mapsto |\xi|^p/p$ along the segment joining $\eta$ and $\xi$, its smallest eigenvalue is $(p-1)|z|^{p-2}$; since $p-2<0$ and every point on the segment has norm at most $\max\{|\xi|,|\eta|\}$, \eqref{eq:sharp-monotonicity-singular} follows. This form, rather than the usual estimate with $(|\xi|+|\eta|)^{p-2}$, preserves the sharp constant $p-1$ and will be useful at the critical tail in the slow phase.

We also record a logarithmically weighted Hardy inequality. Let $s>2-N$ and $\ell\in\mathbb R$. For every $\varepsilon>0$ there exists $R_{\varepsilon}>e$ such that
\begin{equation}\label{eq:log-hardy}
\int_{|x|>R}|x|^{s-2}(\log|x|)^{\ell}\varphi^2\,dx
\le \left(\frac{2}{N+s-2}+\varepsilon\right)^2
\int_{|x|>R}|x|^{s}(\log|x|)^{\ell}|\nabla\varphi|^2\,dx
\end{equation}
for $R\ge R_{\varepsilon}$ and $\varphi\in C_c^1(\mathbb R^N\setminus B_R)$. To see this, apply integration by parts to the vector field
$|x|^{s-2}(\log|x|)^\ell x$; its divergence equals
\[
\Big(N+s-2+\frac{\ell}{\log|x|}\Big)|x|^{s-2}(\log|x|)^\ell,
\]
and then use Cauchy's inequality. When $\ell=0$ the divergence equals $(N+s-2)|x|^{s-2}$ exactly, so \eqref{eq:log-hardy} holds with $\varepsilon=0$ and for every $R>0$. By density, \eqref{eq:log-hardy} is also valid for the reflected positive parts used below.

For the moving-plane argument we need a version which does not impose an artificial zero trace on the sphere $|x|=R$. We shall use the following cut-off consequence. Let $s>2-N$ and $\ell\in\mathbb R$. For every $\varepsilon>0$ there are $R_{\varepsilon}>e$ and $C_{\varepsilon}>0$ such that, for every $R\ge R_{\varepsilon}$ and every locally $W^{1,2}$ function $\varphi$ for which the right-hand side below is finite,
\begin{align}
\int_{|x|>2R}|x|^{s-2}(\log|x|)^{\ell}\varphi^{2}\,dx
&\le \left[\left(\frac{2}{N+s-2}\right)^{2}+\varepsilon\right]
\int_{|x|>R}|x|^{s}(\log|x|)^{\ell}|\nabla\varphi|^{2}\,dx \notag\\
&\quad +C_{\varepsilon}R^{s-2}(\log R)^{\ell}
\int_{R<|x|<2R}\varphi^{2}\,dx .
\label{eq:cutoff-hardy}
\end{align}
For $\ell=0$ the same estimate holds without the restriction $R>e$. Indeed, choose a radial cut-off $\eta_R$ with $\eta_R=0$ on $B_R$, $\eta_R=1$ outside $B_{2R}$ and $|\nabla\eta_R|\le C/R$, apply \eqref{eq:log-hardy} to $\eta_R\varphi$, and use Cauchy's inequality with an arbitrarily small parameter. On the annulus $R<|x|<2R$ the two weights are comparable, which gives the last term in \eqref{eq:cutoff-hardy}. An additional outer cut-off and passage to the limit justify the estimate for the noncompact reflected positive parts used below. Notice that the coefficient of the gradient term can be made arbitrarily close to the sharp Hardy constant; this will be essential in the slow phase.

In every application below, $\varphi$ is a reflected positive part $w=(u-u_\lambda)^+$ and $s=(2-p)(\gamma+1)$, where $\gamma=\gamma_h$ in the classical and resonant phases and $\gamma=\gamma_*$ in the slow phase, and the pointwise bounds $0\le w\le C|x|^{-\gamma}$ and $|\nabla w|\le C|x|^{-\gamma-1}$ hold on $\{w>0\}$ at infinity, up to logarithmic factors in the resonant case that do not affect the strict power inequalities below; these bounds are established where the inequality is applied. In all three cases $N+s-2=\gamma+\big(N-p-(p-1)\gamma\big)>0$, so $s>2-N$. A direct computation gives $N+s-2-2\gamma=p(\gamma_e-\gamma)$. The outer cut-off at radius $T$ therefore costs
\[
CT^{s-2}\int_{T<|x|<2T}w^{2}\le CT^{N+s-2-2\gamma}=CT^{p(\gamma_e-\gamma)},
\]
which tends to zero because $\gamma>\gamma_e$ for both choices of $\gamma$. The same exponent shows that $\int|x|^{s}|\nabla w|^{2}<\infty$, since the integrand is $O(|x|^{s-2\gamma-2})$. Finally, when $1<p<2$ one has $\gamma>\gamma_e>\frac{N-2}{2}$, so that $w\in D^{1,2}$ of the relevant half-space.

\section{The singular set and a blow-up dichotomy}\label{sec:structure}

We begin with the regularity issue that remains outside the asymptotic theory. Define
\begin{equation}\label{eq:Sdef}
S:=\big\{x\in\mathbb R^N:\ u\notin L^\infty(B_r(x))\text{ for every }r>0\big\},
\end{equation}
and write $d(y)=\operatorname{dist}(y,S)$, with $d\equiv+\infty$ when $S=\emptyset$.

\begin{lemma}[Positivity alternative]\label{lem:positivity-alternative}
Let $N\ge3$, $p>1$ and $2p<\sigma<N$, and assume that $q:=p^*_{\sigma}>1$. Let $u$ be a nonnegative $D^{1,p}(\mathbb R^N)$-weak solution of \eqref{eq:main}. Then either $u\equiv0$ a.e. in $\mathbb R^N$ or $u>0$ a.e. in $\mathbb R^N$. In the latter case the continuous representative of $u$ on $\mathbb R^N\setminus S$ is strictly positive at every point.
\end{lemma}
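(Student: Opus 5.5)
The plan is to exploit the strict positivity of the Riesz kernel: the Hartree potential is either identically infinite or strictly positive everywhere. Then I will combine this with the weak Harnack inequality for $p$-superharmonic functions on the open set $\mathbb R^N\setminus S$.

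First, suppose $u\not\equiv0$. Then $E:=\{u>0\}$ has positive measure, so there is a ball $B_\rho(x_1)$ with $\int_{B_\rho(x_1)}u^q\,dy=:m>0$. For every $x\in\mathbb R^N$,
\[
W[u](x)\ge\int_{B_\rho(x_1)}|x-y|^{-\sigma}u^q(y)\,dy\ge m\,(|x-x_1|+\rho)^{-\sigma}>0 .
\]
So $W[u]$ is bounded below by a positive constant on every compact set. Since $u\ge0$, $q>1$ and $W[u]u^{q-1}\in L^1_{\rm loc}$, the weak formulation \eqref{eq:weak} with nonnegative test functions shows that $-\Delta_pu\ge0$ weakly in $\mathbb R^N$. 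In other words, $u$ is a nonnegative weak supersolution.

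Next I treat the open set $\Omega:=\mathbb R^N\setminus S$ and obtain continuity there. On each ball $B\Subset\Omega$, $u$ is bounded. I would like $W[u]$ to be locally bounded on $B$ as well, so that the right-hand side lies in $L^\infty_{\rm loc}(B)$. Near part: split $W[u]$ as in Lemma \ref{lem:rhs-local}. The part with $y$ in a slightly smaller ball $B'$ is bounded because $u^q\in L^\infty(B')$ and $\sigma<N$. The far part is controlled by H\"older with $u^q\in L^{p^*/q}$, exactly as in the proof of Lemma \ref{lem:rhs-local}. The right-hand side is then locally bounded, so DiBenedetto--Tolksdorf gives $u\in C^{1,\alpha}_{\rm loc}(\Omega)$, and in particular a continuous representative exists.

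Positivity on $\Omega$ then follows from the weak Harnack inequality of Trudinger and Serrin. Fix a connected component $\Omega_0$ of $\Omega$. The set of points of $\Omega_0$ near which $\operatorname{ess\,inf}u=0$ on small balls is closed in $\Omega_0$ by continuity. By weak Harnack, $\big(\fint_{B_{2r}}u^{t}\big)^{1/t}\le C\inf_{B_r}u$ for the supersolution $u$ and some $t>0$, so $u$ vanishes a.e. near any such point; hence the set is also open. If it were nonempty, $u$ would vanish identically on $\Omega_0$. To exclude this, test the equation on $\Omega_0$ with a nonnegative $\varphi\in C_c^\infty(\Omega_0)$, $\varphi\not\equiv0$: the left side is $0$, while the right side is $\int W[u]u^{q-1}\varphi$. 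This alone is not a contradiction, because it also vanishes when $u=0$. I expect this to be the main obstacle: the nonlinearity $u^{q-1}$ with $q>1$ degenerates at $u=0$, so positivity of the potential does not immediately propagate.

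To handle it, I plan to use that $u\in D^{1,p}$ is a supersolution on all of $\mathbb R^N$, not merely on $\Omega_0$. Weak Harnack applies on any ball of $\mathbb R^N$, since it needs only the supersolution property and no boundedness. It therefore shows that the zero set $\{x:\ \operatorname{ess\,inf}_{B_r(x)}u=0\ \forall r\}$ is simultaneously open and closed in the connected space $\mathbb R^N$. It is then empty, since $u\not\equiv0$. Consequently $u$ is locally essentially bounded below by a positive constant everywhere, which gives $u>0$ a.e. Since the continuous representative on $\Omega$ agrees a.e. with $u$ and is continuous, it is at least that local positive lower bound at every point, so it is strictly positive. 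The case $u\equiv0$ a.e. is the alternative.
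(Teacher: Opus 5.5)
Your final argument is correct and is essentially the paper's proof. You show that $u$ is a nonnegative $p$-supersolution on all of $\mathbb R^N$; the weak Harnack inequality and the connectedness of $\mathbb R^N$ (your clopen set $Z$, the paper's chain of overlapping balls) then give a positive local essential lower bound everywhere, which the $C^{1,\alpha}$ representative on $\mathbb R^N\setminus S$ inherits pointwise. The strict positivity of $W[u]$ and the componentwise attempt on $\mathbb R^N\setminus S$ are unnecessary, and the obstacle you identify there disappears once weak Harnack is applied globally, as you eventually do.
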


\begin{proof}
Since the right-hand side of \eqref{eq:main} is nonnegative, $u$ is a nonnegative weak supersolution of $-\Delta_pu\ge0$. The weak Harnack inequality for nonnegative $p$-supersolutions implies that on concentric balls $\overline{B_r}\subset B_{2r}$,
\[
\left(\frac1{|B_{2r}|}\int_{B_{2r}}u^\theta\right)^{1/\theta}\le C\,\operatorname*{ess\,inf}_{B_r}u
\]
for some $\theta>0$. If $u\not\equiv0$, one ball has positive $L^\theta$ mass, hence $u$ has positive essential infimum on a smaller ball. A chain of overlapping balls propagates this positivity through the connected space $\mathbb R^N$, giving $u>0$ a.e. Finally, on $\mathbb R^N\setminus S$ the local $C^{1,\alpha}$ representative furnished by interior regularity agrees a.e. with $u$, so the same local weak-Harnack estimate gives pointwise positivity there.
\end{proof}

\begin{theorem}\label{thm:structure}
Let $N\ge3$, $p>1$ and $2p<\sigma<N$, and assume that $q:=p^*_{\sigma}>1$. Let $u$ be a nonnegative $D^{1,p}(\mathbb{R}^{N})$-weak solution of \eqref{eq:main}. Then $u\in C^{1,\alpha}_{\mathrm{loc}}(\mathbb{R}^{N}\setminus S)$ for some $0<\alpha<\min\{1,\frac{1}{p-1}\}$, and there is $\Lambda_{0}=\Lambda_{0}(u)<\infty$ with
\begin{equation}\label{eq:interior}
u(y)\,\min\{d(y),1\}^{\gamma_{e}}\le\Lambda_{0}
\qquad\text{for every }y\in\mathbb{R}^{N}\setminus S .
\end{equation}
Moreover
\begin{equation}\label{eq:vanish}
u(y)\,d(y)^{\gamma_{e}}\longrightarrow0
\qquad\text{as }d(y)\to0 .
\end{equation}
\end{theorem}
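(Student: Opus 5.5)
The plan has three parts: regularity off $S$ from a local bound on the Hartree potential, then \eqref{eq:interior} by a doubling/blow-up argument, then \eqref{eq:vanish} by a one-step rescaling. The single device used throughout is that, since rescaled balls shrink in the original variables, absolute continuity of $\int|\nabla u|^p$ and $\int u^{p^*}$ forces the local energy of the rescaled functions to vanish.

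\emph{Regularity off $S$.} Fix $x_0\notin S$ and $r>0$ with $u\in L^\infty(B_{2r}(x_0))$. Split $W[u]$ on $B_r(x_0)$ into a near part over $B_{2r}(x_0)$ and a far part, as in the proof of Lemma \ref{lem:rhs-local}.
\begin{itemize}
\item The near part is bounded by $\|u\|_{L^\infty(B_{2r})}^q\int_{B_{3r}}|z|^{-\sigma}dz$.
\item The far part is bounded by H\"older's inequality with $a=p^*/q$, $a'=2N/\sigma$. This uses only $\|u^q\|_{L^a}=\|u\|_{L^{p^*}}^q$ and the integrability of $|z|^{-\sigma a'}$ on $\{|z|\ge r\}$, since $\sigma a'=2N>N$.
\end{itemize}
Since $q>1$, the right-hand side $W[u]u^{q-1}$ is then in $L^\infty(B_r(x_0))$. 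The DiBenedetto--Tolksdorf theory gives $u\in C^{1,\alpha}(B_{r/2}(x_0))$, with $\alpha$ depending only on $N,p$ and the estimates depending only on the sup bounds. Positivity of the representative off $S$ comes from Lemma \ref{lem:positivity-alternative}. I will record the quantitative form for later use: if a solution $v$ of \eqref{eq:main} satisfies $v\le K$ on $B_{2r}(x_0)$, then $\|v\|_{C^{1,\alpha}(B_{r/2}(x_0))}\le C(N,p,\sigma,K,r,\|v\|_{L^{p^*}})$.

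\emph{Proof of \eqref{eq:interior}.} I argue by contradiction with the doubling lemma of Pol\'a\v{c}ik--Quittner--Souplet. Take $X=\mathbb R^N$, $D=\mathbb R^N\setminus S$, $\Gamma=S$ (or $\Gamma=\emptyset$ when $S=\emptyset$, using $\min\{d,1\}$ in place of $d$), and $M=u^{1/\gamma_e}$, which is continuous on $D$ by the first step. Suppose there are $y_k$ with $u(y_k)\min\{d(y_k),1\}^{\gamma_e}>(2k)^{\gamma_e}$. The doubling lemma yields $x_k\in D$ with $M(x_k)\ge M(y_k)$ and $M\le 2M(x_k)$ on $B(x_k,k\lambda_k)\subset D$, where $\lambda_k:=M(x_k)^{-1}\le 1/(2k)\to0$. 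Set $v_k(z)=\lambda_k^{\gamma_e}u(x_k+\lambda_k z)$. By the critical scaling, $v_k$ solves \eqref{eq:main}, with $\|\nabla v_k\|_{L^p}=\|\nabla u\|_{L^p}$ and $\|v_k\|_{L^{p^*}}=\|u\|_{L^{p^*}}$. Moreover $v_k(0)=1$ and $v_k\le 2^{\gamma_e}$ on $B_k$. The quantitative regularity then gives uniform $C^{1,\alpha}(B_1)$ bounds for $v_k$, so $v_k\ge1/2$ on some fixed ball $B_\delta$ with $\delta$ independent of $k$. On the other hand,
\[
\int_{B_\delta}v_k^{p^*}\,dz=\int_{B(x_k,\delta\lambda_k)}u^{p^*}\,dx\longrightarrow0,
\]
by absolute continuity of the integral, since $|B(x_k,\delta\lambda_k)|\to0$ uniformly in the location $x_k$. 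This contradicts $v_k\ge1/2$ on $B_\delta$.

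\emph{Proof of \eqref{eq:vanish}.} Suppose $d(y_k)=:d_k\to0$ and $u(y_k)d_k^{\gamma_e}\ge\varepsilon>0$. Put $\lambda_k=u(y_k)^{-1/\gamma_e}\le\varepsilon^{-1/\gamma_e}d_k\to0$ and $c=\varepsilon^{1/\gamma_e}/2$, so that $c\lambda_k\le d_k/2$. On $B(y_k,d_k/2)$ we have $d\ge d_k/2$, and for $d_k<1$ the bound \eqref{eq:interior} gives
\[
u\le\Lambda_0 2^{\gamma_e}d_k^{-\gamma_e}\le\Lambda_0 2^{\gamma_e}\varepsilon^{-1}u(y_k).
\]
The rescalings $v_k(z)=\lambda_k^{\gamma_e}u(y_k+\lambda_kz)$ therefore satisfy $v_k(0)=1$ and are uniformly bounded on $B_c$. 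Repeating the equicontinuity-versus-vanishing-energy argument on $B_c$ gives the same contradiction.

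\emph{Main obstacle.} The delicate point is the uniform $C^{1,\alpha}$ (or just equicontinuity) bound for the rescaled $v_k$, because the potential $W[v_k]$ is nonlocal and could a priori be large on the blow-up ball. The resolution is the near/far splitting. The near part is controlled by the local sup bound coming from doubling. The far part is controlled by the scale-invariant quantity $\|v_k\|_{L^{p^*}}$ through H\"older's inequality, and that quantity is the same for all $k$. Once this is in place, the contradiction follows from absolute continuity of $\int u^{p^*}$ over the shrinking balls.
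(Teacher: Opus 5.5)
Your proof is correct and follows the paper's argument. Regularity off $S$ comes from the near/far splitting of $W[u]$ (Lemmas \ref{lem:Wloc} and \ref{lem:regoffS}); \eqref{eq:interior} comes from the Pol\'a\v{c}ik--Quittner--Souplet doubling lemma, critical rescaling and the vanishing of the $L^{p^*}$ mass on shrinking balls; and \eqref{eq:vanish} comes from a one-step rescaling. The differences are only cosmetic: you apply the doubling lemma globally with $\Gamma=S$ and get $\lambda_k\to0$ from $M(x_k)\ge M(y_k)>2k$, where the paper localizes to $\overline{B_1(y_k)}$; you use a direct lower bound on a fixed ball instead of Arzel\`a--Ascoli plus Fatou; and you normalize by $u(y_k)$ rather than by $d_k/2$ in the last step.
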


\begin{corollary}\label{cor:structure}
Under the assumptions of Theorem \ref{thm:structure}, suppose $S=\emptyset$, that is, $u\in L^{\infty}_{\mathrm{loc}}(\mathbb{R}^{N})$. Then
\[
u\in L^{\infty}(\mathbb{R}^{N})\cap C^{1,\alpha}(\mathbb{R}^{N}),
\qquad
\|u\|_{L^{\infty}(\mathbb{R}^{N})}\le\Lambda_{0},
\qquad
u(x)\longrightarrow0\ \text{ as }|x|\to\infty .
\]
\end{corollary}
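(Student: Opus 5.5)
Since $S=\emptyset$, we have $d\equiv+\infty$, so $\min\{d(y),1\}=1$ for every $y\in\mathbb R^N$. Estimate \eqref{eq:interior} of Theorem \ref{thm:structure} then reads $u(y)\le\Lambda_0$ for all $y$, which is the global bound $\|u\|_{L^\infty(\mathbb R^N)}\le\Lambda_0$. Likewise, Theorem \ref{thm:structure} gives $u\in C^{1,\alpha}_{\mathrm{loc}}(\mathbb R^N\setminus S)=C^{1,\alpha}_{\mathrm{loc}}(\mathbb R^N)$.

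To upgrade to a global $C^{1,\alpha}$ bound, I would show that the right-hand side $f=W[u]u^{q-1}$ is bounded on $\mathbb R^N$ uniformly, and then apply the interior estimates of DiBenedetto and Tolksdorf on unit balls $B_1(x_0)$. Those estimates bound $\|u\|_{C^{1,\alpha}(B_{1/2}(x_0))}$ in terms of $\|u\|_{L^\infty(B_1(x_0))}$ and $\|f\|_{L^\infty(B_1(x_0))}$ only, so the bound is independent of $x_0$.

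Since $q>1$ and $u\le\Lambda_0$, we have $u^{q-1}\le\Lambda_0^{q-1}$. For $W[u](x)$ I would split the integral at $|x-y|=1$.
\begin{itemize}
\item The near part is at most $\Lambda_0^q\int_{B_1}|z|^{-\sigma}\,dz<\infty$, using $\sigma<N$.
\item The far part is handled exactly as $W_{\rm far}$ in the proof of Lemma \ref{lem:rhs-local}. With $a=p^*/q$ and $a'=2N/\sigma$, H\"older gives the bound $\|u^q\|_{L^a}\big(\int_{|z|\ge1}|z|^{-\sigma a'}dz\big)^{1/a'}$. This is finite because $\sigma a'=2N>N$ and $u^q\in L^{p^*/q}$ with $u\in D^{1,p}$.
\end{itemize}
Hence $\sup_{\mathbb R^N}f<\infty$, and the uniform $C^{1,\alpha}$ bound follows, in particular $\sup|\nabla u|<\infty$.

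For the decay I would argue by contradiction. Suppose $u(x_k)\ge\varepsilon>0$ along a sequence with $|x_k|\to\infty$. The uniform gradient bound gives a radius $r>0$, independent of $k$, such that $u\ge\varepsilon/2$ on $B_r(x_k)$. Passing to a subsequence we can make these balls pairwise disjoint. Then
\[
\int_{\mathbb R^N}u^{p^*}\ge\sum_k|B_r|(\varepsilon/2)^{p^*}=\infty,
\]
which contradicts $u\in L^{p^*}(\mathbb R^N)$. Therefore $u(x)\to0$ as $|x|\to\infty$.

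The only step requiring care is the uniform bound on $W[u]$, which is needed to make the regularity constants independent of the base point. The far-field H\"older estimate already used in Lemma \ref{lem:rhs-local} handles it, so I expect no genuine obstacle.
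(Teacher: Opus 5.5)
Your proposal is correct and matches the paper's proof: $u\le\Lambda_0$ follows from \eqref{eq:interior} with $\min\{d,1\}\equiv1$, your splitting of $W[u]$ at $|x-y|=1$ is exactly Lemma \ref{lem:Wloc} with $r=1$, and the DiBenedetto--Tolksdorf estimates on unit balls then give the global $C^{1,\alpha}$ bound. For the decay, the paper translates and uses Arzel\`a--Ascoli to contradict $\int_{B_1(y_j)}u^{p^*}\to0$, whereas you use the uniform gradient bound with disjoint balls; this is the same mechanism (equicontinuity plus $u\in L^{p^*}$) in a slightly more elementary form.
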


Throughout this section $N\ge3$, $p>1$, $2p<\sigma<N$, $q>1$, and $u$ is a nonnegative $D^{1,p}(\mathbb{R}^{N})$-weak solution of \eqref{eq:main}. No further assumption is made. Recall the closed set $S$ of \eqref{eq:Sdef} and $d(y)=\mathrm{dist}(y,S)$.

\begin{lemma}\label{lem:Wloc}
Let $u\in L^{p^{*}}(\mathbb{R}^{N})$ be nonnegative, let $x_{0}\in\mathbb{R}^{N}$ and $0<r\le1$, and suppose $u\in L^{\infty}(B_{2r}(x_{0}))$. Then
\[
\|W\|_{L^{\infty}(B_{r}(x_{0}))}
\le C\Big(\|u\|^{q}_{L^{\infty}(B_{2r}(x_{0}))}\,r^{N-\sigma}
+\|u\|^{q}_{L^{p^{*}}(\mathbb{R}^{N})}\,r^{-\sigma/2}\Big),
\]
with $C=C(N,\sigma,q)$. In particular $W\in L^{\infty}_{\mathrm{loc}}(\mathbb{R}^{N}\setminus S)$.
\end{lemma}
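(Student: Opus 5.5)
We split the convolution defining $W=|x|^{-\sigma}*u^q$ at the scale $r$. For $x\in B_r(x_0)$ write
\[
W(x)=\int_{B_{2r}(x_0)}\frac{u(y)^q}{|x-y|^{\sigma}}\,dy+\int_{\mathbb R^N\setminus B_{2r}(x_0)}\frac{u(y)^q}{|x-y|^{\sigma}}\,dy=:I_1(x)+I_2(x),
\]
and estimate the near and far pieces separately.

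\textbf{Near part.} For $x\in B_r(x_0)$ we have $B_{2r}(x_0)\subset B_{3r}(x)$. Hence
\[
I_1(x)\le\|u\|_{L^\infty(B_{2r}(x_0))}^q\int_{B_{3r}(0)}|z|^{-\sigma}\,dz=C\|u\|_{L^\infty(B_{2r}(x_0))}^q\,r^{N-\sigma}.
\]
This uses only $\sigma<N$.

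\textbf{Far part.} If $|y-x_0|\ge2r$ and $|x-x_0|<r$, then $|x-y|\ge r$. We apply H\"older's inequality with the exponents already used in Lemma \ref{lem:rhs-local}, namely $a=p^*/q=2N/(2N-\sigma)$, so that $u^q\in L^a$ with $\|u^q\|_{L^a}=\|u\|_{L^{p^*}}^q$, and $a'=2N/\sigma$. This gives
\[
I_2(x)\le\|u\|_{L^{p^*}}^q\Big(\int_{|z|\ge r}|z|^{-\sigma a'}\,dz\Big)^{1/a'}.
\]
Since $\sigma a'=2N>N$, the integral converges and equals $Cr^{N-2N}=Cr^{-N}$. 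Raising it to the power $1/a'=\sigma/(2N)$ gives the factor $r^{-\sigma/2}$, which is the second term of the claimed bound.

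\textbf{Consequence for $W$ off $S$.} Let $x_0\notin S$. Since $S$ is closed, there is $r\in(0,1]$ with $u\in L^\infty(B_{2r}(x_0))$. The bound then gives $W\in L^\infty(B_r(x_0))$, so $W\in L^\infty_{\rm loc}(\mathbb R^N\setminus S)$.

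No step is a genuine obstacle. The only point requiring care is the exponent bookkeeping in the far part: the choice $a'=2N/\sigma$ is exactly what makes the tail integrable and produces precisely the power $r^{-\sigma/2}$. The restriction $r\le1$ plays no role in the estimate itself; it is kept only for the later application.
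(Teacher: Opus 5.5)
Your proof is correct and is essentially the paper's argument: an $L^\infty$ bound on the near part of the convolution (using $\sigma<N$), and H\"older's inequality with the conjugate exponents $p^*/q$ and $2N/\sigma$ on the far part, which produces the factor $r^{-\sigma/2}$. The only differences are cosmetic. You split according to $y\in B_{2r}(x_0)$ rather than $|x-y|<r$, which only changes the constant. Also, the existence of $r$ with $u\in L^\infty(B_{2r}(x_0))$ for $x_0\notin S$ comes directly from the definition of $S$, not from $S$ being closed.
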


\begin{proof}
Fix $x\in B_{r}(x_{0})$ and split $W(x)=W_{1}(x)+W_{2}(x)$ according to $|x-y|<r$ and $|x-y|\ge r$. If $|x-y|<r$ then $y\in B_{2r}(x_{0})$, so
\[
W_{1}(x)\le\|u\|^{q}_{L^{\infty}(B_{2r}(x_{0}))}\int_{|z|<r}|z|^{-\sigma}dz
=C\|u\|^{q}_{L^{\infty}(B_{2r}(x_{0}))}r^{N-\sigma},
\]
the integral being finite because $\sigma<N$. For $W_{2}$, H\"older's inequality with the exponents $p^{*}/q$ and $2N/\sigma$, which are conjugate because $q/p^{*}=(2N-\sigma)/(2N)$, gives
\[
W_{2}(x)\le\|u\|^{q}_{L^{p^{*}}(\mathbb{R}^{N})}\Big(\int_{|z|\ge r}|z|^{-2N}dz\Big)^{\frac{\sigma}{2N}}
=C\|u\|^{q}_{L^{p^{*}}(\mathbb{R}^{N})}r^{-\sigma/2}.
\]
Both bounds are independent of $x\in B_{r}(x_{0})$.
\end{proof}

\begin{lemma}\label{lem:regoffS}
We have $u\in C^{1,\alpha}_{\mathrm{loc}}(\mathbb{R}^{N}\setminus S)$ for some $0<\alpha<\min\{1,\frac1{p-1}\}$.
\end{lemma}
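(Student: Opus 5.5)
Since $S$ is closed, $\mathbb R^N\setminus S$ is open, and by definition every point $x_0\notin S$ has a radius $r\in(0,1]$ with $u\in L^\infty(B_{2r}(x_0))$. It therefore suffices to prove that $u\in C^{1,\alpha}(B_{r/2}(x_0))$, with an exponent $\alpha$ that does not depend on $x_0$. The plan is to reduce the equation locally to a $p$-Laplace equation with a bounded right-hand side, and then apply the interior regularity theory of DiBenedetto \cite{DiBenedetto1983} and Tolksdorf \cite{Tolksdorf1984}.

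First, Lemma \ref{lem:Wloc} gives $W=W[u]\in L^\infty(B_r(x_0))$ together with an explicit bound. Since $q>1$, we have $u^{q-1}\le\|u\|^{q-1}_{L^\infty(B_{2r}(x_0))}$, so
\[
f:=W[u]u^{q-1}\in L^\infty(B_r(x_0)).
\]
This is the only place where the hypothesis $q>1$ is used; it is what keeps the factor $u^{q-1}$ away from a singularity at the zeros of $u$. The weak formulation \eqref{eq:weak}, restricted to test functions supported in $B_r(x_0)$, then says exactly that $u\in W^{1,p}(B_r(x_0))\cap L^\infty(B_r(x_0))$ is a weak solution of
\[
-\Delta_p u=f,\qquad f\in L^\infty(B_r(x_0)).
\]

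Second, the DiBenedetto--Tolksdorf interior estimates apply to bounded weak solutions of $-\Delta_pu=f$ with $f\in L^\infty$. They give $u\in C^{1,\beta}(B_{r/2}(x_0))$ for some $\beta=\beta(N,p)\in(0,1)$, with a norm controlled by $\|u\|_{L^\infty}$, $\|f\|_{L^\infty}$ and $r$. Because $\beta$ depends only on $N$ and $p$, it is uniform over all points of $\mathbb R^N\setminus S$. Decreasing the exponent only weakens the Hölder statement, so we may take
\[
\alpha:=\min\Big\{\beta,\tfrac12\min\{1,\tfrac1{p-1}\}\Big\},
\]
which lies strictly inside $(0,\min\{1,\frac1{p-1}\})$ as the lemma requires. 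Covering any compact $K\subset\mathbb R^N\setminus S$ by finitely many such balls yields $u\in C^{1,\alpha}(K)$.

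The only delicate point is technical: one must check that the cited estimates accept a weak solution that is merely in $W^{1,p}_{\mathrm{loc}}\cap L^\infty_{\mathrm{loc}}$ on the ball. This is exactly their setting, and local boundedness of $u$ on $B_{2r}(x_0)$ is given by the definition of $S$.
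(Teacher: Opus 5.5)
Your proposal is correct and follows the paper's proof essentially verbatim: for $x_0\notin S$ you use Lemma \ref{lem:Wloc} to get $W\in L^\infty(B_r(x_0))$, use $q>1$ to conclude $W u^{q-1}\in L^\infty(B_r(x_0))$, and then invoke the DiBenedetto--Tolksdorf interior $C^{1,\alpha}$ theory. Your extra step of shrinking $\alpha$ to lie strictly below $\min\{1,\frac1{p-1}\}$ is harmless bookkeeping that the paper leaves implicit.
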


\begin{proof}
Let $x_{0}\notin S$, so $u\in L^{\infty}(B_{2r}(x_{0}))$ for some $0<r\le1$. By Lemma \ref{lem:Wloc}, $W\in L^{\infty}(B_{r}(x_{0}))$, hence $Wu^{q-1}\in L^{\infty}(B_{r}(x_{0}))$ because $q>1$. The regularity results of \cite{DiBenedetto1983,Tolksdorf1984} apply.
\end{proof}

The scaling that leaves \eqref{eq:main} invariant is
\begin{equation}\label{eq:scaling}
u_{y,\lambda}(z):=\lambda^{\gamma_{e}}u(y+\lambda z),
\qquad y\in\mathbb{R}^{N},\ \lambda>0 .
\end{equation}

\begin{lemma}\label{lem:scaling}
If $u$ solves \eqref{eq:main}, so does $u_{y,\lambda}$, and for every $R>0$
\begin{equation}\label{eq:scaleinv}
\int_{B_{R}}|\nabla u_{y,\lambda}|^{p}dz=\int_{B_{\lambda R}(y)}|\nabla u|^{p}dx,
\qquad
\int_{B_{R}}u_{y,\lambda}^{p^{*}}dz=\int_{B_{\lambda R}(y)}u^{p^{*}}dx .
\end{equation}
In particular $\|u_{y,\lambda}\|_{L^{p^{*}}(\mathbb{R}^{N})}=\|u\|_{L^{p^{*}}(\mathbb{R}^{N})}$.
\end{lemma}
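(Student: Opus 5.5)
The statement is a direct change-of-variables computation, which I would organise in three steps: the gradient energy identity, the $L^{p^*}$ identity, and the weak formulation. Throughout, write $x=y+\lambda z$, so that $dx=\lambda^N dz$, and note that $\nabla u_{y,\lambda}(z)=\lambda^{\gamma_e+1}(\nabla u)(y+\lambda z)$.

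\emph{Norm identities.} The exponents are fixed by the definitions $\gamma_e=(N-p)/p$ and $p^*=Np/(N-p)$. For the gradient term,
\[
\int_{B_R}|\nabla u_{y,\lambda}|^p\,dz=\lambda^{p(\gamma_e+1)-N}\int_{B_{\lambda R}(y)}|\nabla u|^p\,dx ,
\]
and $p(\gamma_e+1)=N$, so the power of $\lambda$ is zero. For the $L^{p^*}$ term the power is $p^*\gamma_e-N=0$. This gives \eqref{eq:scaleinv}. Letting $R\to\infty$ shows $u_{y,\lambda}\in D^{1,p}(\mathbb R^N)$ and $\|u_{y,\lambda}\|_{L^{p^*}(\mathbb R^N)}=\|u\|_{L^{p^*}(\mathbb R^N)}$.

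\emph{Convolution scaling.} Substituting $y'=y+\lambda w$ in the convolution gives
\[
W[u_{y,\lambda}](z)=\lambda^{q\gamma_e+\sigma-N}\,W[u](y+\lambda z).
\]
Hence
\[
W[u_{y,\lambda}]u_{y,\lambda}^{q-1}(z)=\lambda^{(2q-1)\gamma_e+\sigma-N}\,\big(W[u]u^{q-1}\big)(y+\lambda z).
\]
For $u_{y,\lambda}$ to satisfy the same equation, this exponent must equal $\gamma_e(p-1)+p$, the exponent by which $\Delta_p u_{y,\lambda}$ rescales. Using $2q\gamma_e=2N-\sigma$, which is the definition of $q$, we get $(2q-1)\gamma_e+\sigma-N=N-\gamma_e$. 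We also have $\gamma_e(p-1)+p=N-\gamma_e$, since $p\gamma_e=N-p$. So the two exponents agree. The local integrability condition $W[u]u^{q-1}\in L^1_{\rm loc}$ is preserved, because the rescaled right-hand side is a constant multiple of a translated and dilated $L^1_{\rm loc}$ function.

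\emph{Weak formulation.} Given a test function $\varphi\in C_0^\infty(\mathbb R^N)$, set $\psi(x)=\varphi((x-y)/\lambda)$, which is again in $C_0^\infty(\mathbb R^N)$. Changing variables on both sides of \eqref{eq:weak} for $u_{y,\lambda}$ tested with $\varphi$ turns them into the two sides of \eqref{eq:weak} for $u$ tested with $\psi$, each multiplied by the same power of $\lambda$. This is exactly the exponent identity from the convolution step. Nonnegativity of $u_{y,\lambda}$ is immediate.

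There is no real obstacle here. The only point needing care is the exponent bookkeeping in the convolution step, which reduces to the identity $2q\gamma_e=2N-\sigma$.
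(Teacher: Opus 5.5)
Your proposal is correct and follows the paper's proof: the same change of variables gives $W[u_{y,\lambda}](z)=\lambda^{q\gamma_e+\sigma-N}W[u](y+\lambda z)$, and both sides carry the exponent $N-\gamma_e$. The paper states this as $\gamma_e(2q-p)=N+p-\sigma$, which is equivalent to your $2q\gamma_e=2N-\sigma$ together with $p\gamma_e=N-p$. The norm identities come from $p(\gamma_e+1)=N$ and $p^*\gamma_e=N$, exactly as you have them; your explicit check in the weak formulation with the rescaled test function $\psi$ is a detail the paper leaves implicit.
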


\begin{proof}
A change of variables gives $W[u_{y,\lambda}](z)=\lambda^{q\gamma_{e}-N+\sigma}W[u](y+\lambda z)$, so the right hand side of \eqref{eq:main} evaluated at $u_{y,\lambda}$ equals $\lambda^{(2q-1)\gamma_{e}+\sigma-N}\,(W[u]u^{q-1})(y+\lambda z)$, while $-\Delta_{p}u_{y,\lambda}=\lambda^{\gamma_{e}(p-1)+p}(-\Delta_{p}u)(y+\lambda z)$. The two exponents agree because $\gamma_{e}(2q-p)=N+p-\sigma$, which is the definition of $\gamma_{e}$ rewritten. The identities \eqref{eq:scaleinv} follow from $(\gamma_{e}+1)p=N$ and $\gamma_{e}p^{*}=N$.
\end{proof}

We shall use the doubling lemma of Pol\'a\v{c}ik, Quittner and Souplet, in the following form, see \cite[Lemma 5.1]{PolacikQuittnerSouplet2007}.

\begin{lemma}[{\cite[Lemma 5.1]{PolacikQuittnerSouplet2007}}]\label{lem:doubling}
Let $(X,\rho)$ be a complete metric space, let $\emptyset\ne D\subset\Sigma\subset X$ with $\Sigma$ closed, and set $\Gamma:=\Sigma\setminus D$. Let $M:D\to(0,\infty)$ be bounded on compact subsets of $D$, and let $k>0$. If $y_{0}\in D$ satisfies $M(y_{0})\,\mathrm{dist}(y_{0},\Gamma)>2k$, then there is $x\in D$ with
\[
M(x)\,\mathrm{dist}(x,\Gamma)>2k,
\qquad
M(x)\ge M(y_{0}),
\qquad
M(z)\le2M(x)\ \text{ for all }z\in D\cap \overline{B_{k/M(x)}(x)} .
\]
\end{lemma}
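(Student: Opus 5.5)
The plan is to argue by contradiction and build a sequence along which $M$ at least doubles at each step while the points stay close to each other. Set $x_0:=y_0$. Suppose that for some $j\ge0$ we have a point $x_j\in D$ with
\[
M(x_j)\ge 2^{j}M(y_0)
\qquad\text{and}\qquad
M(x_j)\,\mathrm{dist}(x_j,\Gamma)>2k .
\]
Then $x_j$ already satisfies the first two conclusions. If it also satisfies the third, we stop. Otherwise there is $x_{j+1}\in D\cap\overline{B_{k/M(x_j)}(x_j)}$ with $M(x_{j+1})>2M(x_j)$. Assuming the lemma fails, this produces an infinite sequence $(x_j)\subset D$ with
\[
M(x_{j+1})>2M(x_j),
\qquad
\rho(x_{j+1},x_j)\le \frac{k}{M(x_j)}\le \frac{k}{2^{j}M(y_0)} .
\]
Provided the induction hypothesis propagates, the sequence is well defined.

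The main point to check is the propagation of $M(x_{j+1})\,\mathrm{dist}(x_{j+1},\Gamma)>2k$. By the triangle inequality,
\[
\mathrm{dist}(x_{j+1},\Gamma)\ge\mathrm{dist}(y_0,\Gamma)-\sum_{i=0}^{j}\frac{k}{2^{i}M(y_0)}
>\frac{2k}{M(y_0)}-\frac{k}{M(y_0)}\big(2-2^{-j}\big)=\frac{k\,2^{-j}}{M(y_0)} .
\]
Multiplying by $M(x_{j+1})>2^{j+1}M(y_0)$ gives $M(x_{j+1})\,\mathrm{dist}(x_{j+1},\Gamma)>2k$. This closes the induction; the key is that the geometric growth of $M$ exactly compensates for the shrinking distance to $\Gamma$.

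Next, the bound $\rho(x_{j+1},x_j)\le k\,2^{-j}/M(y_0)$ makes $(x_j)$ a Cauchy sequence. It therefore converges in the complete space $X$ to some $x_\infty$, and $x_\infty\in\Sigma$ because $\Sigma$ is closed. The estimate above gives
\[
\mathrm{dist}(x_j,\Gamma)\ge\mathrm{dist}(y_0,\Gamma)-\frac{2k}{M(y_0)}>0
\]
uniformly in $j$, so $x_\infty\notin\Gamma$, and hence $x_\infty\in D$. The set $K:=\{x_j\}\cup\{x_\infty\}$ is then a compact subset of $D$, so $M$ is bounded on $K$. This contradicts $M(x_j)\ge 2^{j}M(y_0)\to\infty$.

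The only delicate step is the distance bookkeeping in the induction. One has to use the cumulative bound from $y_0$ rather than the one-step bound from $x_j$: the one-step bound only yields $\mathrm{dist}(x_{j+1},\Gamma)>k/M(x_j)$, which is too weak.
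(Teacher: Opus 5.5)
Your proof is correct and is essentially the standard argument of Pol\'a\v{c}ik--Quittner--Souplet, which the paper cites without proof: you build a doubling sequence, pass to a Cauchy limit that the cumulative distance bound keeps off $\Gamma$, and contradict the boundedness of $M$ on the compact set $\{x_j\}\cup\{x_\infty\}\subset D$. One correction to your closing remark: the one-step bound is not too weak, since $\mathrm{dist}(x_{j+1},\Gamma)>k/M(x_j)$ combined with $M(x_{j+1})>2M(x_j)$ already gives $M(x_{j+1})\,\mathrm{dist}(x_{j+1},\Gamma)>2k$; the cumulative estimate from $y_0$ is really needed only later, to show that $x_\infty\notin\Gamma$.
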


Finally we record the uniform smallness of the $L^{p^{*}}$ mass on small balls, which is where the finiteness of the energy enters.

\begin{lemma}\label{lem:unifsmall}
Let $u\in L^{p^{*}}(\mathbb{R}^{N})$. Then $\omega(\rho):=\sup_{x\in\mathbb{R}^{N}}\int_{B_{\rho}(x)}u^{p^{*}}dy\to0$ as $\rho\to0$.
\end{lemma}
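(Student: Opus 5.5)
The plan is to deduce the statement from the absolute continuity of the Lebesgue integral, using an argument by contradiction combined with a tail cut-off. First I will reduce to a compactly supported piece. Fix $\varepsilon>0$ and choose $R>0$ such that
\[
\int_{\mathbb R^N\setminus B_R}u^{p^*}\,dy<\varepsilon/2,
\]
which is possible since $u^{p^*}\in L^1(\mathbb R^N)$ and by dominated convergence. For every $x$ and every $\rho\le1$,
\[
\int_{B_\rho(x)}u^{p^*}\le \int_{B_\rho(x)\cap B_R}u^{p^*}+\varepsilon/2 .
\]
So it suffices to show that $\sup_x\int_{B_\rho(x)}f\to0$ for the single function $f:=u^{p^*}\chi_{B_R}\in L^1(\mathbb R^N)$.

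For this I will use absolute continuity directly. Since $f\in L^1$, there is $\delta>0$ such that $\int_E f<\varepsilon/2$ for every measurable $E$ with $|E|<\delta$. If $|B_1|\rho^N<\delta$, then every ball $B_\rho(x)$ has measure less than $\delta$, uniformly in $x$. Hence $\int_{B_\rho(x)}f<\varepsilon/2$ for all $x$, and therefore $\omega(\rho)<\varepsilon$ for all such $\rho$.

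Absolute continuity alone already handles all of $u^{p^*}$ without the tail split, since the bound depends only on $|B_\rho(x)|$; the truncation step can therefore be dropped. Absolute continuity itself is standard: write $u^{p^*}=\min\{u^{p^*},M\}+(u^{p^*}-M)^+$, choose $M$ so that the second part has integral below $\varepsilon/2$, and bound the first part by $M|E|$.

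Monotonicity of $\omega$ in $\rho$ is immediate, so $\omega(\rho)\to0$ as $\rho\to0$. I expect no real obstacle here. The only point to stress is uniformity in the center $x$, and this follows because the bound depends only on $|B_\rho|$.
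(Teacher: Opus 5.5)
Your argument is correct and is essentially the paper's: both split $u^{p^*}$ into a bounded piece, whose contribution is at most a constant times $|B_\rho|$ uniformly in the center, plus a piece of small $L^1$ norm. The paper obtains this split by approximating $u$ in $L^{p^*}$ by some $g\in C_0(\mathbb R^N)$, whereas you truncate $u^{p^*}$ at height $M$, which is an equally valid and slightly more elementary variant; as you note yourself, the preliminary tail cut-off is unnecessary.
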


\begin{proof}
Given $\varepsilon>0$, pick $g\in C_{0}(\mathbb{R}^{N})$ with $\|u-g\|_{L^{p^{*}}}^{p^{*}}<\varepsilon$. Then $\int_{B_{\rho}(x)}u^{p^{*}}\le2^{p^{*}}(\varepsilon+\|g\|^{p^{*}}_{\infty}|B_{\rho}|)$ for every $x$, and the last term tends to $0$.
\end{proof}

\begin{proof}[Proof of Theorem \ref{thm:structure}]
The regularity is Lemma \ref{lem:regoffS}. If $u\equiv0$, both \eqref{eq:interior} and \eqref{eq:vanish} are immediate. Hence assume $u\not\equiv0$. By Lemma \ref{lem:positivity-alternative}, the continuous representative of $u$ is strictly positive on $D:=\mathbb R^N\setminus S$.

\emph{Step 1: the interior bound.} Set $M:=u^{1/\gamma_{e}}$. Then $M:D\to(0,\infty)$ is continuous by Lemma \ref{lem:regoffS}, hence bounded on compact subsets of $D$. Suppose, for contradiction, that
\[
\sup_{y\in D}\ M(y)\,\min\{\mathrm{dist}(y,S),1\}=\infty .
\]
Fix $k\in\mathbb{N}$ and choose $y_k\in D$ so that
\[
M(y_k)\min\{\operatorname{dist}(y_k,S),1\}>2k.
\]
Set
\[
\Sigma_k:=\overline{B_1(y_k)},\qquad
D_k:=B_1(y_k)\setminus S,
\qquad
\Gamma_k:=\Sigma_k\setminus D_k=(S\cap\Sigma_k)\cup\partial B_1(y_k).
\]
Then $\Sigma_k$ is closed, $y_k\in D_k$, and
\[
\operatorname{dist}(y_k,\Gamma_k)
=\min\{\operatorname{dist}(y_k,S),1\},
\]
so $M(y_k)\operatorname{dist}(y_k,\Gamma_k)>2k$. Lemma \ref{lem:doubling} produces $x_{k}\in D_{k}$ with
\[
M(x_{k})\,\mathrm{dist}(x_{k},\Gamma_{k})>2k,
\qquad
M(z)\le2M(x_{k})\ \text{ for }z\in D_{k}\cap \overline{B_{k/M(x_{k})}(x_{k})} .
\]
Since $\mathrm{dist}(x_{k},\Gamma_{k})\le1$ we get $\lambda_{k}:=1/M(x_{k})<1/(2k)\to0$, and $\overline{B_{k\lambda_{k}}(x_{k})}\subset D_{k}$.

Let $v_{k}:=u_{x_{k},\lambda_{k}}$ as in \eqref{eq:scaling}. Then $v_{k}$ solves \eqref{eq:main}, and
\[
v_{k}(0)=\lambda_{k}^{\gamma_{e}}u(x_{k})=1,
\qquad
0\le v_{k}(z)=\lambda_{k}^{\gamma_{e}}u(x_{k}+\lambda_{k}z)\le2^{\gamma_{e}}
\quad\text{for }|z|\le k .
\]
By Lemma \ref{lem:Wloc} applied to $v_{k}$ with $r=1$ and centers in $B_{k-2}$, and by $\|v_{k}\|_{L^{p^{*}}(\mathbb{R}^{N})}=\|u\|_{L^{p^{*}}(\mathbb{R}^{N})}$ from Lemma \ref{lem:scaling}, the potentials $W[v_{k}]$ are bounded on $B_{k-2}$ by a constant independent of $k$. Since $q>1$ and $\|v_{k}\|_{L^{\infty}(B_{k})}\le2^{\gamma_{e}}$, the right hand sides $W[v_{k}]v_{k}^{q-1}$ are bounded in $L^{\infty}(B_{k-2})$ uniformly in $k$, so \cite{DiBenedetto1983,Tolksdorf1984} give, for each fixed $R>0$, a bound for $\|v_{k}\|_{C^{1,\alpha}(\overline{B_{R}})}$ independent of $k$.

On the other hand, by \eqref{eq:scaleinv} and Lemma \ref{lem:unifsmall},
\[
\int_{B_{R}}v_{k}^{p^{*}}dz=\int_{B_{R\lambda_{k}}(x_{k})}u^{p^{*}}dx\le\omega(R\lambda_{k})\longrightarrow0
\qquad(k\to\infty)
\]
for every fixed $R$. By the Arzel\`a--Ascoli theorem and a diagonal argument, a subsequence of $(v_{k})$ converges in $C^{1}_{\mathrm{loc}}(\mathbb{R}^{N})$ to some $v_{\infty}\ge0$ with $v_{\infty}(0)=1$. Fatou's lemma then gives $\int_{B_{R}}v_{\infty}^{p^{*}}=0$ for every $R$, so $v_{\infty}\equiv0$, contradicting $v_{\infty}(0)=1$. This proves \eqref{eq:interior} with some finite $\Lambda_{0}$.

\emph{Step 2: the vanishing factor.} Suppose \eqref{eq:vanish} fails. Then there are $\delta>0$ and $y_{j}\in D$ with $d_{j}:=d(y_{j})\to0$ and $u(y_{j})d_{j}^{\gamma_{e}}\ge\delta$. Put $\lambda_{j}:=d_{j}/2$ and $v_{j}:=u_{y_{j},\lambda_{j}}$. For $|z|\le1$ we have $\mathrm{dist}(y_{j}+\lambda_{j}z,S)\ge d_{j}-\lambda_{j}=\lambda_{j}$, so \eqref{eq:interior} gives $u(y_{j}+\lambda_{j}z)\le\Lambda_{0}\lambda_{j}^{-\gamma_{e}}$ for $j$ large, that is, $v_{j}\le\Lambda_{0}$ on $B_{1}$. Exactly as in Step 1, the $v_{j}$ are bounded in $C^{1,\alpha}(\overline{B_{1/2}})$, while
\[
\int_{B_{1}}v_{j}^{p^{*}}dz=\int_{B_{\lambda_{j}}(y_{j})}u^{p^{*}}dx\le\omega(\lambda_{j})\to0 .
\]
A subsequence converges uniformly on $\overline{B_{1/2}}$ to a continuous $v_{\infty}\ge0$ with $\int_{B_{1/2}}v_{\infty}^{p^{*}}=0$, hence $v_{\infty}\equiv0$. But $v_{j}(0)=\lambda_{j}^{\gamma_{e}}u(y_{j})=2^{-\gamma_{e}}d_{j}^{\gamma_{e}}u(y_{j})\ge2^{-\gamma_{e}}\delta$, a contradiction.
\end{proof}

\begin{proof}[Proof of Corollary \ref{cor:structure}]
Assume $S=\emptyset$, so $d\equiv+\infty$, $\min\{d,1\}\equiv1$, and \eqref{eq:interior} reads $u\le\Lambda_{0}$ on $\mathbb{R}^{N}$. Hence $u\in L^{\infty}(\mathbb{R}^{N})$. Lemma \ref{lem:Wloc}, applied with $r=1$ and arbitrary centers, gives $W\in L^{\infty}(\mathbb{R}^{N})$; since $q>1$, $Wu^{q-1}\in L^{\infty}(\mathbb{R}^{N})$, and therefore $u\in C^{1,\alpha}(\mathbb{R}^{N})$ by \cite{DiBenedetto1983,Tolksdorf1984}.

For the decay, suppose $|y_{j}|\to\infty$ and $u(y_{j})\ge\delta>0$. The translates $v_{j}:=u(y_{j}+\cdot)$ solve \eqref{eq:main}, since $W[v_{j}](z)=W[u](y_{j}+z)$, and they satisfy $\|v_{j}\|_{L^{\infty}}\le\Lambda_{0}$ and $\|v_{j}\|_{L^{p^{*}}(\mathbb{R}^{N})}=\|u\|_{L^{p^{*}}(\mathbb{R}^{N})}$. As in Step 1 of the proof of Theorem \ref{thm:structure}, $(v_{j})$ is bounded in $C^{1,\alpha}(\overline{B_{1/2}})$, while
\[
\int_{B_{1}}v_{j}^{p^{*}}dz=\int_{B_{1}(y_{j})}u^{p^{*}}dx\longrightarrow0
\]
because $u\in L^{p^{*}}(\mathbb{R}^{N})$ and the balls escape to infinity. A subsequence converges uniformly on $\overline{B_{1/2}}$ to a continuous limit with vanishing $L^{p^{*}}$ norm, hence to $0$, contradicting $v_{j}(0)\ge\delta$.
\end{proof}

\subsection{A frozen-weight obstruction}

The following frozen-weight example explains why local boundedness cannot be deduced from the natural integrability class of the Hartree potential alone; the self-consistency of the Hartree potential must be used.

\begin{proposition}\label{prop:frozen}
Let $p>1$ and $2p<\sigma<N$, and let $q$ be as in \eqref{eq:psigma}. For every $a\in(p,\sigma/2)$ put
\[
\kappa:=\frac{a-p}{p-q}\in\Big(0,\tfrac{N-p}{p}\Big),
\qquad
c:=\big[\kappa^{p-1}\big(N-p-(p-1)\kappa\big)\big]^{-\frac{1}{p-q}} .
\]
Then $u(x)=c|x|^{-\kappa}$ belongs to $W^{1,p}(B_{1})\cap L^{p^{*}}(B_{1})$, is unbounded near the origin, and solves
\[
-\Delta_{p}u=\omega(x)u^{q-1}\quad\text{in }B_{1},
\qquad \omega(x)=|x|^{-a}\in L^{\frac{2N}{\sigma}}(B_{1}).
\]
The interval $(p,\sigma/2)$ is nonempty precisely when $\sigma>2p$.
\end{proposition}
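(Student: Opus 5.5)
The plan is a direct verification built on the radial barrier identity \eqref{eq:barrier}. I will first check the range of $\kappa$, then verify the equation pointwise away from the origin, and finally justify the weak formulation across the singular point.

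\emph{Step 1: range of $\kappa$ and positivity of $c$.} Since $q<p$ by Lemma \ref{lem:exponents}(a), $a>p$ gives $\kappa>0$. Using $p-q=p(\sigma-2p)/(2(N-p))$, the condition $\kappa<(N-p)/p=\gamma_e$ is equivalent to $a-p<(\sigma-2p)/2$, that is, to $a<\sigma/2$. Since $\gamma_e<\gamma_h$, we then have $h(\kappa)=\kappa^{p-1}(N-p-(p-1)\kappa)>0$, so $c$ is a well-defined positive number. The interval $(p,\sigma/2)$ is nonempty exactly when $\sigma>2p$.

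\emph{Step 2: integrability and unboundedness.} Clearly $u$ is unbounded near $0$, because $\kappa>0$. Next, $u\in L^{p^*}(B_1)$ because $\kappa p^*<N$, which is equivalent to $\kappa<\gamma_e$. Similarly $|\nabla u|=c\kappa|x|^{-\kappa-1}\in L^p(B_1)$ because $(\kappa+1)p<N$ is again equivalent to $\kappa<\gamma_e$; hence $u\in W^{1,p}(B_1)$. Finally, $\omega\in L^{2N/\sigma}(B_1)$ because $2Na/\sigma<N$ is equivalent to $a<\sigma/2$.

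\emph{Step 3: the equation away from the origin.} By \eqref{eq:barrier}, on $B_1\setminus\{0\}$
\[
-\Delta_p u=h(\kappa)c^{p-1}|x|^{-\kappa(p-1)-p},
\qquad
\omega u^{q-1}=c^{q-1}|x|^{-a-\kappa(q-1)}.
\]
The exponents coincide, since $\kappa(p-1)+p=a+\kappa(q-1)$ is exactly $\kappa(p-q)=a-p$. The constants coincide, since $h(\kappa)c^{p-q}=1$ by the definition of $c$.

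\emph{Step 4: the weak formulation across $0$.} This is the only point requiring care. First, the right-hand side is locally integrable, because $\kappa(p-1)+p<N$ follows from $\kappa<\gamma_h$. Now fix $\varphi\in C_0^\infty(B_1)$ and integrate by parts on $B_1\setminus B_\varepsilon$. The boundary term is bounded by
\[
C\|\varphi\|_\infty\,\varepsilon^{N-1-(\kappa+1)(p-1)},
\]
and this tends to $0$ as $\varepsilon\to0$, because $(\kappa+1)(p-1)<N-1$ is again equivalent to $\kappa<\gamma_h$. Passing to the limit in both integrals by dominated convergence, using the integrability from Steps 2 and 4, yields the weak identity on all of $B_1$.

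I expect no real obstacle. The only subtle step is this removal of the origin, and the flux estimate above handles it.
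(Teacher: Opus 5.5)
Your proposal is correct and follows the paper's proof essentially step by step. It uses the same range computation for $\kappa$, the barrier identity \eqref{eq:barrier} with matching exponents and constants, and the same integrability criteria $\kappa<\gamma_e$ and $a<\sigma/2$; your flux estimate on $\partial B_\varepsilon$ is the explicit form of the paper's remark that the singularity carries no Dirac mass because $\kappa<\gamma_h$.
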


\begin{proof}[Proof of Proposition \ref{prop:frozen}]
Let $a\in(p,\sigma/2)$, so that $\kappa=(a-p)/(p-q)>0$ because $q<p$ and $a>p$. From $p-q=p(\sigma-2p)/(2(N-p))$ we get
\[
\kappa<\frac{N-p}{p}\iff a-p<\frac{(N-p)(p-q)}{p}=\frac{\sigma-2p}{2}\iff a<\frac{\sigma}{2},
\]
so $\kappa\in(0,\gamma_{e})$. In particular $\kappa<\gamma_{h}$, so $h(\kappa)=\kappa^{p-1}(N-p-(p-1)\kappa)>0$ and, by \eqref{eq:barrier},
\[
-\Delta_{p}\big(c|x|^{-\kappa}\big)=h(\kappa)c^{p-1}|x|^{-\kappa(p-1)-p} .
\]
Since $\kappa(p-q)=a-p$, we have $\kappa(p-1)+p=a+(q-1)\kappa$, so the right hand side equals $h(\kappa)c^{p-q}\cdot|x|^{-a}\big(c|x|^{-\kappa}\big)^{q-1}$, and the stated choice of $c$ makes $h(\kappa)c^{p-q}=1$. The singularity is not a Dirac mass because $\kappa<\gamma_{h}$, and the right hand side is in $L^{1}(B_{1})$ because $\kappa(p-1)+p<N$, so $u$ is a weak solution in $B_{1}$. Finally $\int_{B_{1}}|\nabla u|^{p}\simeq\int_{0}^{1}r^{N-1-(\kappa+1)p}dr<\infty$ and $\int_{B_{1}}u^{p^{*}}<\infty$, both because $\kappa<\gamma_{e}$, while $|x|^{-a}\in L^{2N/\sigma}(B_{1})$ because $2Na/\sigma<N$ is equivalent to $a<\sigma/2$. The interval $(p,\sigma/2)$ is nonempty if and only if $\sigma>2p$.
\end{proof}

\section{Automatic local boundedness}\label{sec:localboundedness}

Throughout this section, $N\ge3$, $p>1$, $2p<\sigma<N$, $q:=p^*_{\sigma}>1$, and $u$ is a nonnegative $D^{1,p}(\mathbb R^N)$-weak solution of \eqref{eq:main}. We now close the local-regularity gap left open by the blow-up discussion above. The proof exploits the self-consistency of the Hartree potential twice: first in a hole-filling estimate for the Dirichlet energy and then in a Moser iteration in which the integrability of the potential improves together with that of the solution.

We shall use the following elementary consequences of the critical exponent identities:
\begin{equation}\label{eq:lb-identities}
\frac{q}{p^*}=\frac{2N-\sigma}{2N},\qquad
q\gamma_e=N-\frac\sigma2>\frac\sigma2,
\qquad
2q-p^*=\frac{p(N-\sigma)}{N-p}>0,
\end{equation}
\begin{equation}\label{eq:lb-identities2}
\frac{2q-1}{p-1}>1,
\qquad
p-q=\frac{p(\sigma-2p)}{2(N-p)}>0.
\end{equation}
In particular, if $t':=2N/(2N-\sigma)$, then $qt'=p^*$.

\begin{lemma}\label{lem:lb-admissible}
Let $u\in D^{1,p}(\mathbb R^N)$ be a nonnegative weak solution of \eqref{eq:main}, with $q>1$, and put $f=W[u]u^{q-1}$. Then
\[
W[u]\in L^{2N/\sigma}(\mathbb R^N),
\qquad
f\in L^{(p^*)'}(\mathbb R^N).
\]
Consequently the weak formulation \eqref{eq:weak} extends by density to every compactly supported $\varphi\in L^{p^*}$ with $\nabla\varphi\in L^p$.
\end{lemma}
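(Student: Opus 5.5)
The plan is to obtain both integrability statements from the Hardy--Littlewood--Sobolev inequality and H\"older's inequality, and then derive the extension of the weak formulation by a mollification argument.

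\emph{Step 1: the potential.} Since $u\in L^{p^*}(\mathbb R^N)$, we have $u^q\in L^{t'}(\mathbb R^N)$ with $t'=2N/(2N-\sigma)$, because $qt'=p^*$. The Riesz potential $|x|^{-\sigma}*g$ maps $L^{s}$ into $L^{r}$ whenever $1<s<N/(N-\sigma)$ and $\frac1r=\frac1s-\frac{N-\sigma}{N}$; this is the Hardy--Littlewood--Sobolev inequality in the form of \cite[Theorem 2.1]{DaiLiLiu2025}. We apply it with $s=t'$. Then $1<t'<N/(N-\sigma)$, because $2N-\sigma>2(N-\sigma)$. Moreover
\[
\frac1r=\frac{2N-\sigma}{2N}-\frac{N-\sigma}{N}=\frac{\sigma}{2N},
\]
which gives $W[u]\in L^{2N/\sigma}(\mathbb R^N)$ with $\|W[u]\|_{L^{2N/\sigma}}\le C\|u\|_{L^{p^*}}^q$.

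\emph{Step 2: the right-hand side.} Since $q>1$, we have $u^{q-1}\in L^{p^*/(q-1)}$. By H\"older's inequality, $f=W[u]u^{q-1}\in L^{m}$, where
\[
\frac1m=\frac{\sigma}{2N}+\frac{q-1}{p^*}=\frac{\sigma}{2N}+\frac{2N-\sigma}{2N}-\frac1{p^*}=1-\frac1{p^*}.
\]
Here we used $q/p^*=(2N-\sigma)/(2N)$ from \eqref{eq:lb-identities}. Hence $m=(p^*)'$, with the estimate $\|f\|_{L^{(p^*)'}}\le C\|u\|_{L^{p^*}}^{2q-1}$.

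\emph{Step 3: density.} Let $\varphi$ be compactly supported, with $\varphi\in L^{p^*}$ and $\nabla\varphi\in L^p$. Set $\varphi_\varepsilon=\rho_\varepsilon*\varphi\in C_0^\infty(\mathbb R^N)$, where $\rho_\varepsilon$ is a standard mollifier. The supports of the $\varphi_\varepsilon$ lie in a fixed compact set, and as $\varepsilon\to0$ we have $\varphi_\varepsilon\to\varphi$ in $L^{p^*}$ and $\nabla\varphi_\varepsilon\to\nabla\varphi$ in $L^p$. On the left side of \eqref{eq:weak}, $|\nabla u|^{p-2}\nabla u\in L^{p'}$, so the left side converges. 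On the right side, $f\in L^{(p^*)'}$ by Step 2, so the right side converges as well. Passing to the limit in \eqref{eq:weak} for $\varphi_\varepsilon$ yields the identity for $\varphi$.

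The only point requiring care is the exponent bookkeeping in Step 1, where the HLS conditions $1<t'$ and $r<\infty$ must be checked; both reduce to $0<\sigma<N$. The assumption $q>1$ enters only in Step 2, to ensure that $u^{q-1}$ is a genuine nonnegative power lying in $L^{p^*/(q-1)}$.
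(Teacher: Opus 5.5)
Your proof is correct and follows the paper's route. The paper applies the Hardy--Littlewood--Sobolev inequality to $u^q\in L^{2N/(2N-\sigma)}$ to get $W[u]\in L^{2N/\sigma}$, then uses H\"older's inequality with $\frac{\sigma}{2N}+\frac{q-1}{p^*}=1-\frac1{p^*}$ to get $f\in L^{(p^*)'}$, and finally invokes a density argument, which you make explicit by mollifying with supports in a fixed compact set.
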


\begin{proof}
The Hardy--Littlewood--Sobolev inequality applied to $u^q$, with $p^*/q=2N/(2N-\sigma)$, gives
\[
\|W[u]\|_{L^{2N/\sigma}}\le C\|u\|_{L^{p^*}}^q.
\]
Moreover
\[
\frac{\sigma}{2N}+\frac{q-1}{p^*}=1-\frac1{p^*},
\]
so H\"older's inequality gives $f\in L^{(p^*)'}$. The extension of the test class follows from density in $D^{1,p}$ on compact subsets.
\end{proof}

\begin{lemma}[Far-field bound]\label{lem:lb-far}
For every $x_0\in\mathbb R^N$, $\rho>0$ and $x\in B_{\rho/2}(x_0)$,
\[
\int_{|y-x_0|\ge\rho}\frac{u^q(y)}{|x-y|^\sigma}\,dy
\le C\|u\|_{L^{p^*}(\mathbb R^N)}^q\rho^{-\sigma/2}.
\]
\end{lemma}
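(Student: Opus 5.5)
The estimate follows from Hölder's inequality on the region $|y-x_0|\ge\rho$, exactly as for $W_2$ in Lemma \ref{lem:Wloc}. The one point to check is that the kernel, evaluated at $x$, is comparable to the kernel centred at $x_0$.

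\emph{Comparison of kernels.} Fix $x\in B_{\rho/2}(x_0)$ and $y$ with $|y-x_0|\ge\rho$. Then
\[
|x-y|\ge|y-x_0|-|x-x_0|\ge|y-x_0|-\tfrac{\rho}{2}\ge\tfrac12|y-x_0|,
\]
so $|x-y|^{-\sigma}\le 2^\sigma|y-x_0|^{-\sigma}$ on the whole integration region.

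\emph{Hölder with the critical pair.} Use the exponents $t'=p^*/q=2N/(2N-\sigma)$ and $t=2N/\sigma$. They are conjugate by \eqref{eq:lb-identities}, since $q/p^*+\sigma/(2N)=1$. This gives
\[
\int_{|y-x_0|\ge\rho}\frac{u^q(y)}{|x-y|^\sigma}\,dy
\le 2^\sigma\|u^q\|_{L^{p^*/q}}\Big(\int_{|z|\ge\rho}|z|^{-\sigma\cdot 2N/\sigma}\,dz\Big)^{\sigma/(2N)}.
\]
The first factor equals $\|u\|_{L^{p^*}(\mathbb R^N)}^q$.

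\emph{The tail integral.} The remaining integral is $\int_{|z|\ge\rho}|z|^{-2N}\,dz=C_N\rho^{-N}$, which is finite because $2N>N$. Raising it to the power $\sigma/(2N)$ gives $C\rho^{-\sigma/2}$.

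Combining the three steps yields the claim with $C=C(N,\sigma)$, uniformly in $x\in B_{\rho/2}(x_0)$ and with no restriction on $\rho$. None of the steps is a real obstacle; the only care needed is the factor $\tfrac12$ in the kernel comparison, which is what forces $x$ to stay in $B_{\rho/2}(x_0)$.
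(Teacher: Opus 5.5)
Your proposal is correct and matches the paper's proof: the same kernel comparison $|x-y|\ge|y-x_0|/2$, followed by H\"older's inequality with the conjugate pair $p^*/q$ and $2N/\sigma$ and the evaluation of the tail integral $\int_{|z|\ge\rho}|z|^{-2N}\,dz$. Your write-up simply spells out the constants, including the factor $2^\sigma$, in a bit more detail.
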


\begin{proof}
For such $x,y$ one has $|x-y|\ge |y-x_0|/2$. H\"older's inequality with the conjugate exponents $p^*/q$ and $2N/\sigma$ yields
\[
\int_{|y-x_0|\ge\rho}\frac{u^q(y)}{|x-y|^\sigma}\,dy
\le C\|u\|_{L^{p^*}}^q
\left(\int_{|z|\ge\rho}|z|^{-2N}\,dz\right)^{\sigma/(2N)},
\]
which is the asserted estimate.
\end{proof}

Fix $x_0\in\mathbb R^N$ and write
\[
\mathcal D(r):=\int_{B_r(x_0)}|\nabla u|^p,
\qquad
\mathcal Z(r):=\|u\|_{L^{p^*}(B_r(x_0))}.
\]

\begin{lemma}[Hole filling]\label{lem:lb-hole}
There are $\theta_0\in(0,1)$ and $C>0$, depending only on $N,p$, such that
\begin{equation}\label{eq:lb-hole}
\mathcal D(r)\le\theta_0\mathcal D(2r)+C\,T(r),
\end{equation}
where $A_r=B_{2r}(x_0)\setminus B_r(x_0)$, $\bar u_{A_r}$ denotes the mean of $u$ over $A_r$, and
\[
T(r):=\int_{B_{2r}(x_0)}W[u]u^{q-1}(u-\bar u_{A_r})^+.
\]
Moreover
\begin{equation}\label{eq:lb-SP}
\|(u-\bar u_{A_r})^+\|_{L^{p^*}(B_{2r}(x_0))}
\le C\mathcal D(2r)^{1/p}.
\end{equation}
\end{lemma}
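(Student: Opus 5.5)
Testing the weak formulation with $\varphi=\eta^p(u-\bar u_{A_r})^+$, where $\eta$ is a cut-off with $\eta=1$ on $B_r(x_0)$, $\eta=0$ outside $B_{2r}(x_0)$ and $|\nabla\eta|\le C/r$, should give \eqref{eq:lb-hole}. This $\varphi$ is compactly supported, lies in $L^{p^*}$ and has gradient in $L^p$, so Lemma \ref{lem:lb-admissible} allows it as a test function. Write $w=(u-\bar u_{A_r})^+$ and $E=\{u>\bar u_{A_r}\}$. On $E$ we have $\nabla w=\nabla u$, and the weak formulation yields
\[
\int\eta^p|\nabla u|^p\chi_E\le p\int\eta^{p-1}|\nabla u|^{p-1}|\nabla\eta|\,w+\int W[u]u^{q-1}\eta^p w .
\]
Since $0\le\eta\le1$ and $\operatorname{supp}\eta\subset B_{2r}(x_0)$, the last term is at most $T(r)$. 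The gradient of $\eta$ is supported in $A_r$. Young's inequality therefore bounds the first term by $\tfrac12\int\eta^p|\nabla u|^p\chi_E+C r^{-p}\int_{A_r}w^p$. Poincaré's inequality on the annulus $A_r$ (scale invariant, constant depending only on $N,p$) gives $r^{-p}\int_{A_r}w^p\le r^{-p}\int_{A_r}|u-\bar u_{A_r}|^p\le C\int_{A_r}|\nabla u|^p$.

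The left-hand side only controls $\nabla u$ on $E\cap B_r(x_0)$, which is a genuine gap. To close it, repeat the argument with $\varphi=-\eta^p(u-\bar u_{A_r})^-$. That gives the same kind of estimate on $\{u<\bar u_{A_r}\}$, except that the source term now enters with a favorable sign, because $W[u]u^{q-1}\ge0$ and $\varphi\le0$; it can simply be dropped. Adding the two estimates, and using that $\nabla u=0$ a.e. on the level set $\{u=\bar u_{A_r}\}$, we obtain
\[
\mathcal D(r)\le C_1\int_{A_r}|\nabla u|^p+C\,T(r)=C_1\big(\mathcal D(2r)-\mathcal D(r)\big)+C\,T(r).
\]
This is the Widman hole-filling step. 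Rearranging gives \eqref{eq:lb-hole} with $\theta_0=C_1/(C_1+1)\in(0,1)$, depending only on $N,p$.

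For \eqref{eq:lb-SP}, split $\|w\|_{L^{p^*}(B_{2r})}\le\|\eta w\|_{L^{p^*}}+\|(1-\eta)w\|_{L^{p^*}(A_r)}$. Apply the Sobolev inequality in $D^{1,p}$ to $\eta w$:
\[
\|\eta w\|_{L^{p^*}}\le C\|\nabla(\eta w)\|_{L^p}\le C\mathcal D(2r)^{1/p}+Cr^{-1}\|w\|_{L^p(A_r)},
\]
and the annulus Poincaré inequality controls the last term by $C\mathcal D(2r)^{1/p}$. The remaining piece is supported in $A_r$. By the Sobolev--Poincaré inequality on the annulus, which is also scale invariant since $p^*$ is the critical exponent, $\|(1-\eta)w\|_{L^{p^*}(A_r)}\le\|u-\bar u_{A_r}\|_{L^{p^*}(A_r)}\le C\|\nabla u\|_{L^p(A_r)}$. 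Combining the two pieces gives \eqref{eq:lb-SP}.

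The main point requiring care is the negative-part test function: it is what gives coercivity on all of $B_r(x_0)$ rather than only on the superlevel set. The other step needing justification is that $C_1$ depends only on $N,p$, which follows from the scale invariance of the annulus Poincaré and Sobolev--Poincaré constants.
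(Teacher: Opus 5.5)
Your proof is correct and is essentially the paper's argument. Testing with $\eta^p(u-\bar u_{A_r})^+$ and with $-\eta^p(u-\bar u_{A_r})^-$ and then adding is exactly testing with the signed function $\eta^p(u-\bar u_{A_r})$. The paper does this in one step, bounding the source term by $T(r)$ because $W[u]u^{q-1}\ge0$.

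For \eqref{eq:lb-SP}, the paper applies the Sobolev--Poincar\'e inequality on $B_{2r}(x_0)$ with the ball mean, then uses H\"older to compare $\bar u_{A_r}$ with that mean. This is a bit shorter than your cut-off plus annular Sobolev--Poincar\'e route, but both are valid.
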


\begin{proof}
Choose $\eta\in C_c^\infty(B_{2r}(x_0))$ with $0\le\eta\le1$, $\eta=1$ on $B_r(x_0)$ and $|\nabla\eta|\le2/r$. By Lemma \ref{lem:lb-admissible}, $(u-\bar u_{A_r})\eta^p$ is an admissible test function. Since the right-hand side of the equation is nonnegative,
\[
\int\eta^p|\nabla u|^p
+p\int\eta^{p-1}(u-\bar u_{A_r})|\nabla u|^{p-2}\nabla u\cdot\nabla\eta
\le T(r).
\]
Young's inequality and the scale-invariant Poincar\'e inequality on $A_r$ give
\[
\mathcal D(r)\le C_2\big(\mathcal D(2r)-\mathcal D(r)\big)+2T(r),
\]
which yields \eqref{eq:lb-hole} after rearrangement.  To prove \eqref{eq:lb-SP}, let $\bar u_{B_{2r}}$ be the mean on $B_{2r}(x_0)$. The Sobolev--Poincar\'e inequality gives
\[
\|u-\bar u_{B_{2r}}\|_{L^{p^*}(B_{2r})}\le C\mathcal D(2r)^{1/p},
\]
and H\"older's inequality controls $|\bar u_{A_r}-\bar u_{B_{2r}}||B_{2r}|^{1/p^*}$ by the same quantity. This proves the claim.
\end{proof}

\begin{lemma}[Hartree remainder]\label{lem:lb-hartree}
Fix $\rho_0>0$. For $0<r\le\rho_0/4$,
\begin{equation}\label{eq:lb-TB}
T(r)\le C\mathcal B(r)\mathcal D(2r)^{1/p},
\end{equation}
where, with $C_u:=\|u\|_{L^{p^*}(\mathbb R^N)}^q$,
\begin{align*}
\mathcal B(r):={}&\mathcal Z(4r)^{2q-1}
+\left(\sum_{\substack{j\ge1\\2^{j+1}r<\rho_0}}
2^{-j\sigma/2}\mathcal Z(2^{j+2}r)^q\right)\mathcal Z(2r)^{q-1}\\
&+C_u\left(\frac r{\rho_0}\right)^{\sigma/2}\mathcal Z(2r)^{q-1}.
\end{align*}
Consequently, for every $\varepsilon>0$,
\begin{equation}\label{eq:lb-TY}
T(r)\le\varepsilon\mathcal D(2r)+C_\varepsilon\mathcal B(r)^{p'}.
\end{equation}
\end{lemma}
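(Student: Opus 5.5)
We split $W[u]$ on $B_{2r}(x_0)$ into a near part, a dyadic middle part and a far part. For $x\in B_{2r}(x_0)$ write
\[
W[u](x)=\int_{B_{4r}(x_0)}\frac{u^q(y)}{|x-y|^\sigma}dy+\sum_{j\ge1,\,2^{j+1}r<\rho_0}\int_{A_j}\frac{u^q(y)}{|x-y|^\sigma}dy+\int_{|y-x_0|\ge\rho_0/2}\frac{u^q(y)}{|x-y|^\sigma}dy,
\]
where $A_j=\{2^{j+1}r\le|y-x_0|<2^{j+2}r\}$; the leftover dyadic shells with $2^{j+1}r\ge\rho_0$ are absorbed into the last integral after adjusting $\rho_0$ by a harmless factor. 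Set $w=(u-\bar u_{A_r})^+$. Then $T(r)=T_1+T_2+T_3$, and in every term we estimate $w$ in $L^{p^*}(B_{2r})$ by \eqref{eq:lb-SP}, which gives $C\mathcal D(2r)^{1/p}$. What remains is to bound the other factors by the corresponding piece of $\mathcal B(r)$.

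\emph{Near part.} $T_1=\int_{B_{2r}}\big(|\cdot|^{-\sigma}*(\chi_{B_{4r}}u^q)\big)u^{q-1}w$. The Hardy--Littlewood--Sobolev inequality (as in Lemma \ref{lem:lb-admissible}) gives $\||\cdot|^{-\sigma}*(\chi_{B_{4r}}u^q)\|_{L^{2N/\sigma}}\le C\mathcal Z(4r)^q$. Hölder with exponents $2N/\sigma$, $p^*/(q-1)$ and $p^*$ then closes, because $\frac{\sigma}{2N}+\frac{q-1}{p^*}+\frac1{p^*}=1$ by the identity in Lemma \ref{lem:lb-admissible}. This yields $T_1\le C\mathcal Z(4r)^q\mathcal Z(2r)^{q-1}\mathcal D(2r)^{1/p}\le C\mathcal Z(4r)^{2q-1}\mathcal D(2r)^{1/p}$.

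\emph{Dyadic shells.} For $x\in B_{2r}$ and $y\in A_j$ we have $|x-y|\ge 2^{j}r$. Hölder on $A_j$ with exponents $p^*/q$ and $2N/\sigma$ gives
\[
\int_{A_j}u^q|x-y|^{-\sigma}\le (2^jr)^{-\sigma}\mathcal Z(2^{j+2}r)^q|B_{2^{j+2}r}|^{\sigma/(2N)}\le C2^{-j\sigma/2}r^{-\sigma/2}\mathcal Z(2^{j+2}r)^q.
\]
This bound is uniform in $x$. Multiplying by $\int_{B_{2r}}u^{q-1}w\le \mathcal Z(2r)^{q-1}\|w\|_{L^{p^*}}|B_{2r}|^{1-q/p^*}$ brings in the factor $r^{N(1-q/p^*)}=r^{\sigma/2}$ (using $q/p^*=(2N-\sigma)/(2N)$ from \eqref{eq:lb-identities}). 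The powers of $r$ therefore cancel exactly, and summing over $j$ gives the second term of $\mathcal B(r)$.

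\emph{Far part.} Lemma \ref{lem:lb-far} with $\rho\simeq\rho_0$, which applies since $B_{2r}\subset B_{\rho_0/4}$, bounds the kernel integral by $C\,C_u\rho_0^{-\sigma/2}$. Multiplying by the same $r^{\sigma/2}\mathcal Z(2r)^{q-1}\mathcal D(2r)^{1/p}$ gives the third term of $\mathcal B(r)$. Finally, \eqref{eq:lb-TY} follows from \eqref{eq:lb-TB} by Young's inequality with exponents $p$ and $p'$.

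The only delicate point is the scaling bookkeeping: checking that the factor $r^{\sigma/2}$ coming from Hölder on $B_{2r}$ exactly cancels the $r^{-\sigma/2}$ from the shell kernels. This cancellation rests on the critical identity $q/p^*=(2N-\sigma)/(2N)$.
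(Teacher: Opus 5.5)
Your proposal follows the paper's proof in all essentials. It uses the same splitting into $B_{4r}(x_0)$, the dyadic shells with $2^{j+1}r<\rho_0$, and a far region. The near part is handled by HLS plus H\"older; your three-exponent form is equivalent to the paper's bilinear bound $\|g\|_{L^{t'}}\|u^q\|_{L^{t'}(B_{4r})}$. The shell bound $C(2^jr)^{-\sigma/2}\mathcal Z(2^{j+2}r)^q$ is multiplied by the common factor $\mathcal Z(2r)^{q-1}\mathcal D(2r)^{1/p}r^{\sigma/2}$, so the powers of $r$ cancel. Lemma~\ref{lem:lb-far} handles the tail, and Young's inequality finishes.

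The one thing to fix is the far cutoff. With the far region $\{|y-x_0|\ge\rho_0/2\}$, Lemma~\ref{lem:lb-far} (with $\rho=\rho_0/2$) requires $x\in B_{\rho_0/4}(x_0)$. Your claim $B_{2r}(x_0)\subset B_{\rho_0/4}(x_0)$ is false for $\rho_0/8<r\le\rho_0/4$. For $r$ near $\rho_0/4$ the far region comes arbitrarily close to $B_{2r}(x_0)$, so no bound of the form $C\,C_u\rho_0^{-\sigma/2}$ is available. This range is not avoidable, since the lemma is applied at $r=\rho_0/4$ in Proposition~\ref{prop:lb-morrey}.

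The remedy is simply to cut at $\rho_0$, as the paper does. The largest admissible shell has outer radius $2^{j+2}r\ge\rho_0$, so the shells already cover $B_{\rho_0}(x_0)\setminus B_{4r}(x_0)$ and no adjustment of $\rho_0$ is needed. Lemma~\ref{lem:lb-far} with $\rho=\rho_0$ then applies because $B_{2r}(x_0)\subset B_{\rho_0/2}(x_0)$. Finally, since your regions overlap, the displayed decomposition of $W[u]$ should be an inequality rather than an equality. This is harmless because the integrand is nonnegative.
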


\begin{proof}
Split $W=W_1+W_2+W_3$ according to the regions $B_{4r}(x_0)$, $B_{\rho_0}(x_0)\setminus B_{4r}(x_0)$ and $\mathbb R^N\setminus B_{\rho_0}(x_0)$, and let $T_i$ denote the corresponding parts of $T(r)$. Put
\[
g:=u^{q-1}(u-\bar u_{A_r})^+\chi_{B_{2r}(x_0)}.
\]
For the near part, HLS and $qt'=p^*$ give
\[
T_1\le C\|g\|_{L^{t'}}\|u^q\|_{L^{t'}(B_{4r})}
\le C\mathcal Z(4r)^{2q-1}\mathcal D(2r)^{1/p},
\]
where we used \eqref{eq:lb-SP} and $q>1$.

We shall also use
\begin{equation}\label{eq:lb-common}
\int_{B_{2r}}u^{q-1}(u-\bar u_{A_r})^+
\le C\mathcal Z(2r)^{q-1}\mathcal D(2r)^{1/p}r^{\sigma/2},
\end{equation}
which follows from H\"older's inequality, \eqref{eq:lb-SP}, and
$N(1-q/p^*)=\sigma/2$.

For the middle part, cover $B_{\rho_0}(x_0)\setminus B_{4r}(x_0)$ by the dyadic annuli
\[
D_j=\{2^{j+1}r\le |y-x_0|<2^{j+2}r\},
\qquad j\ge1,
\]
with $2^{j+1}r<\rho_0$. For $x\in B_{2r}(x_0)$, H\"older's inequality gives
\[
\int_{D_j}\frac{u^q(y)}{|x-y|^\sigma}\,dy
\le C(2^jr)^{-\sigma/2}\mathcal Z(2^{j+2}r)^q.
\]
Combining this with \eqref{eq:lb-common} gives the second term in \eqref{eq:lb-TB}. The far part follows from Lemma \ref{lem:lb-far} and \eqref{eq:lb-common}. Summing the three contributions proves \eqref{eq:lb-TB}; Young's inequality gives \eqref{eq:lb-TY}.
\end{proof}

\begin{lemma}[Control of the local $L^{p^*}$ norm]\label{lem:lb-chain}
Let $r_k=2^{-k}\rho_0$, $e_k=\mathcal D(r_k)$ and $z_k=\mathcal Z(r_k)$. Then
\begin{equation}\label{eq:lb-chain}
z_k\le C\left(\sum_{i=0}^k2^{-(k-i)\gamma_e}e_i^{1/p}
+2^{-k\gamma_e}z_0\right).
\end{equation}
\end{lemma}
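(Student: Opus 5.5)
We establish a one-step recursion for the sequence $(z_k)$ and then unroll it. Fix $k\ge0$ and compare $u$ on $B_{r_{k+1}}(x_0)$ with its mean over $B_{r_k}(x_0)$. Let $\bar u_k$ denote the mean of $u$ on $B_{r_k}(x_0)$. Then
\[
z_{k+1}\le\|u-\bar u_k\|_{L^{p^*}(B_{r_{k+1}})}+\bar u_k\,|B_{r_{k+1}}|^{1/p^*}.
\]
The Sobolev--Poincar\'e inequality on $B_{r_k}(x_0)$ is scale invariant, because $(\gamma_e+1)p=N$ and $\gamma_e p^*=N$. It gives
\[
\|u-\bar u_k\|_{L^{p^*}(B_{r_k})}\le C e_k^{1/p}.
\]
For the constant term, we do not bound $\bar u_k$ against $z_k$ with a factor $1$; that would give no decay. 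Instead we compare $\bar u_k$ with $\bar u_{k-1}$.

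Set $a_k:=\bar u_k\,r_k^{N/p^*}=\bar u_k r_k^{\gamma_e}$, up to a dimensional constant. By H\"older's inequality and Poincar\'e on $B_{r_{k-1}}$,
\[
|\bar u_k-\bar u_{k-1}|\le C r_{k-1}^{-N/p^*}\|u-\bar u_{k-1}\|_{L^{p^*}(B_{r_{k-1}})}\le C r_k^{-\gamma_e}e_{k-1}^{1/p}.
\]
Multiplying by $r_k^{\gamma_e}$ and using $r_k=r_{k-1}/2$ yields
\[
a_k\le 2^{-\gamma_e}a_{k-1}+Ce_{k-1}^{1/p}.
\]
The base case is $a_0\le C z_0$ by H\"older. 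Iterating the recursion gives
\[
a_k\le C\Big(\sum_{i=0}^{k-1}2^{-(k-1-i)\gamma_e}e_i^{1/p}+2^{-k\gamma_e}z_0\Big).
\]

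Finally, for $k\ge1$,
\[
z_k\le\|u-\bar u_{k-1}\|_{L^{p^*}(B_{r_{k-1}})}+C a_{k-1}\le Ce_{k-1}^{1/p}+Ca_{k-1}.
\]
Inserting the bound for $a_{k-1}$ and absorbing the factors $2^{\gamma_e}$ into $C$ produces \eqref{eq:lb-chain}. In fact this gives a slightly stronger form, with indices only up to $k-1$; since $e_i\ge0$, adding the $i=k$ term is harmless. The case $k=0$ is trivial.

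No step is a genuine obstacle. The only point requiring care is to iterate the averages $a_k$, which are contracted by the factor $2^{-\gamma_e}<1$, rather than $z_k$ itself. Applying H\"older directly to $z_k$ would lose the geometric factor, because $z_{k+1}\le z_k$ carries no decay. We also check that all constants, namely the Poincar\'e, Sobolev and H\"older constants on balls, are independent of $k$ by scaling, so that the final $C$ depends only on $N$ and $p$.
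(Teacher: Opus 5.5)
Your proof is correct and is essentially the paper's argument: the scale-invariant Sobolev--Poincar\'e inequality at each dyadic radius, combined with telescoping the ball averages via $|\bar u_{i+1}-\bar u_i|\le Cr_i^{-\gamma_e}e_i^{1/p}$ and $\bar u_0\le C\rho_0^{-\gamma_e}z_0$. Your recursion for $a_k=\bar u_k r_k^{\gamma_e}$ is this telescoping sum written inductively, and comparing $u$ on $B_{r_k}$ with $\bar u_{k-1}$ instead of $\bar u_k$ only shifts the index by one; this is harmless since $e_k\le e_{k-1}$.
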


\begin{proof}
Let $\bar u_k$ be the mean of $u$ on $B_{r_k}(x_0)$. Sobolev--Poincar\'e gives
\[
z_k\le Ce_k^{1/p}+C|\bar u_k|r_k^{\gamma_e}.
\]
Moreover
\[
|\bar u_{i+1}-\bar u_i|\le Cr_i^{-\gamma_e}e_i^{1/p},
\qquad
|\bar u_0|\le C\rho_0^{-\gamma_e}z_0.
\]
Telescoping the means and using $r_k/r_i=2^{-(k-i)}$ yields \eqref{eq:lb-chain}.
\end{proof}

\begin{lemma}[Discrete decay]\label{lem:lb-discrete}
Let $(e_k)$ be nonnegative, $c_*>0$, and
\[
\widetilde E_k:=\max_{0\le i\le k}2^{-c_*(k-i)}e_i.
\]
If for some $\theta\in(0,1)$, $\beta>0$ and $A\ge0$,
\[
e_{k+1}\le\theta\widetilde E_k+A2^{-k\beta}\qquad(k\ge1),
\]
then there is $\alpha>0$ and $C>0$ such that $e_k\le\widetilde E_k\le C2^{-k\alpha}$.
\end{lemma}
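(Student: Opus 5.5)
The plan is to turn the hypothesis into a linear recursion for $\widetilde E_k$ and then iterate it. The key observation is a one-step identity for $\widetilde E$. Separating the term $i=k+1$ from the terms with $i\le k$ gives
\[
\widetilde E_{k+1}=\max\Big\{e_{k+1},\ \max_{0\le i\le k}2^{-c_*(k+1-i)}e_i\Big\}
=\max\big\{e_{k+1},\ 2^{-c_*}\widetilde E_k\big\}.
\]
The trivial inequality $e_k\le\widetilde E_k$ comes from taking $i=k$ in the definition.

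\emph{Step 1: a contraction.} Set $\theta':=\max\{\theta,2^{-c_*}\}\in(0,1)$. Insert the hypothesis $e_{k+1}\le\theta\widetilde E_k+A2^{-k\beta}$ into the identity and use $\max\{a+b,c\}\le\max\{a,c\}+b$ for $b\ge0$. This gives, for $k\ge1$,
\[
\widetilde E_{k+1}\le\theta'\widetilde E_k+A2^{-k\beta}.
\]

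\emph{Step 2: iteration.} Iterating from $k=1$ yields
\[
\widetilde E_k\le\theta'^{\,k-1}\widetilde E_1+A\sum_{j=1}^{k-1}\theta'^{\,k-1-j}2^{-j\beta}.
\]
Here $\widetilde E_1=\max\{2^{-c_*}e_0,e_1\}$ is a finite number, since the $e_i$ are real and nonnegative. Put $\tau:=\max\{\theta',2^{-\beta}\}<1$. Each summand is at most $\tau^{k-1}\cdot\tau^{-1}$ up to a harmless constant, so the whole right-hand side is bounded by $C(1+k)\tau^{k}$. Now choose any $\alpha>0$ with $2^{-\alpha}\in(\tau,1)$. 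Then $(1+k)\tau^k\le C2^{-k\alpha}$, and this gives $e_k\le\widetilde E_k\le C2^{-k\alpha}$ for $k\ge1$. The case $k=0$ is absorbed by enlarging $C$.

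\emph{Main obstacle.} The only delicate point is Step 1. The raw hypothesis bounds $e_{k+1}$ by the running weighted maximum $\widetilde E_k$, not by $e_k$, so it does not by itself close up into a recursion. The identity $\widetilde E_{k+1}=\max\{e_{k+1},2^{-c_*}\widetilde E_k\}$ is what closes it. The geometric discount factor $2^{-c_*}<1$ is essential here: without it, $\widetilde E$ would be a plain running maximum and could never decay. Once the contraction with factor $\theta'<1$ is established, Step 2 is a routine geometric summation, with the polynomial factor $k$ absorbed by slightly decreasing $\alpha$.
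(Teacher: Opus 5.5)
Your proof is correct and is essentially the paper's own argument. Both use the identity $\widetilde E_{k+1}=\max\{e_{k+1},2^{-c_*}\widetilde E_k\}$ to get the contraction $\widetilde E_{k+1}\le\max\{\theta,2^{-c_*}\}\widetilde E_k+A2^{-k\beta}$, then iterate with $\tau=\max\{\theta',2^{-\beta}\}$ to obtain $\widetilde E_k\le(\widetilde E_1+Ak)\tau^{k-1}$, so any $0<\alpha<-\log_2\tau$ works. Each summand in your Step 2 is in fact directly bounded by $\tau^{k-1}$, so the extra factor $\tau^{-1}$ is unnecessary, though harmless.
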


\begin{proof}
Since
\[
\widetilde E_{k+1}
=\max\{e_{k+1},2^{-c_*}\widetilde E_k\}
\le\theta_2\widetilde E_k+A2^{-k\beta},
\qquad
\theta_2:=\max\{\theta,2^{-c_*}\}<1,
\]
iteration gives
\[
\widetilde E_k\le (\widetilde E_1+Ak)\Lambda^{k-1},
\qquad
\Lambda:=\max\{\theta_2,2^{-\beta}\}<1.
\]
Any $0<\alpha<-\log_2\Lambda$ gives the conclusion.
\end{proof}

\begin{proposition}[Morrey decay of the energy]\label{prop:lb-morrey}
For every compact $K\subset\mathbb R^N$ there are
\[
\alpha\in(0,N-p),\qquad \rho_0\in(0,1],\qquad C>0,
\]
where $\alpha$ may be chosen depending only on $N,p,\sigma$, such that
\begin{equation}\label{eq:lb-morrey}
\int_{B_r(x_0)}|\nabla u|^p\le Cr^\alpha
\end{equation}
for every $x_0\in K$ and $0<r\le\rho_0$.
\end{proposition}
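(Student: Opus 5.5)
The plan is to turn the hole-filling inequality \eqref{eq:lb-hole} into a discrete recursion for $e_k=\mathcal D(r_k)$ along $r_k=2^{-k}\rho_0$, and then apply Lemma \ref{lem:lb-discrete}. Apply \eqref{eq:lb-hole} with $r=r_{k+1}$, so that $2r=r_k$, and insert \eqref{eq:lb-TY}. This gives
\[
e_{k+1}\le(\theta_0+C\varepsilon)e_k+C_\varepsilon\mathcal B(r_{k+1})^{p'}.
\]
We fix $\varepsilon$ so that $\theta_1:=\theta_0+C\varepsilon<1$. What remains is to show that
\[
\mathcal B(r_{k+1})^{p'}\le\delta\,\widetilde E_k+A2^{-k\beta}
\]
for some small $\delta$, with $\widetilde E_k=\max_{i\le k}2^{-c_*(k-i)}e_i$ and a suitable small $c_*>0$, for example $c_*<\min\{p\gamma_e,\sigma p'/2\}$. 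With $\theta=\theta_1+\delta<1$, Lemma \ref{lem:lb-discrete} then yields $e_k\le C2^{-k\alpha}$. This is \eqref{eq:lb-morrey} at dyadic radii, and monotonicity of $\mathcal D$ extends it to all $0<r\le\rho_0$. Decreasing $\alpha$ if necessary, we may assume $\alpha<N-p$.

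To estimate $\mathcal B$, we express every $\mathcal Z$ through energies by Lemma \ref{lem:lb-chain}. The radius $4r_{k+1}=r_{k-1}$ and the larger radii $2^{j+2}r_{k+1}=r_{k-j-1}$ correspond to earlier indices. Since $2^{-(k-i)\gamma_e}\le 2^{-(k-i)c_*/p}$, the sum in \eqref{eq:lb-chain} is bounded by $C\widetilde E_k^{1/p}$ (with a harmless factor $2^{c_*}$ for the shifted index), up to the term $2^{-k\gamma_e}z_0$, which is already a power-decay contribution. For the annular sum in $\mathcal B$, the weight $2^{-j\sigma/2}$ beats the growth $2^{jc_*/p}$ incurred when $\widetilde E_{k-j-1}$ is compared with $\widetilde E_k$, so the series converges to $C\widetilde E_k^{q/p}$ plus power terms. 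The far term gives $C_u2^{-k\sigma/2}\mathcal Z^{q-1}$, which is pure power decay since $\mathcal Z\le\|u\|_{L^{p^*}}$.

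Collecting these bounds,
\[
\mathcal B^{p'}\le C\,\widetilde E_k^{\frac{2q-1}{p-1}}+C(\text{mixed terms})+A2^{-k\beta}.
\]
Here each mixed term is a product of a power of $\widetilde E_k$ with a positive power of $2^{-k}$, and we split it by Young's inequality into $\delta\widetilde E_k+A2^{-k\beta}$. The decisive point is the superlinearity $\frac{2q-1}{p-1}>1$ from \eqref{eq:lb-identities2}. It gives
\[
C\widetilde E_k^{\frac{2q-1}{p-1}}\le C\Big(\sup_i e_i\Big)^{\frac{2q-1}{p-1}-1}\widetilde E_k,
\]
and the prefactor becomes smaller than $\delta$ once $\sup_{x_0\in K}\int_{B_{\rho_0}(x_0)}|\nabla u|^p$ is small. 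We ensure this, uniformly in $x_0$, by choosing $\rho_0$ small via the uniform absolute continuity argument of Lemma \ref{lem:unifsmall} applied to $|\nabla u|^p$ and to $u^{p^*}$. In the same way the factors $\mathcal Z(\cdot)^{q-1}$ can be made small uniformly.

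All constants then depend only on $N,p,\sigma$, on $\|u\|_{D^{1,p}}$ and on $\rho_0$. This gives uniformity over $x_0\in K$, and $\alpha$ depends only on $N,p,\sigma$ because $\theta$, $c_*$ and $\beta$ do. I expect the main obstacle to be the bookkeeping in the second paragraph: the annular sum and the chain estimate must be fitted into the single quantity $\widetilde E_k$ with one choice of $c_*$. If some exponent fails to match, I would first decrease $c_*$ and use the geometric factors $2^{-j\sigma/2}$ and $2^{-(k-i)\gamma_e}$ to absorb the loss.
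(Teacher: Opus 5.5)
Your proposal is correct and is essentially the paper's proof. Both run hole filling at $r=r_{k+1}$ combined with \eqref{eq:lb-TY}, use Lemma~\ref{lem:lb-chain} to bound every $\mathcal Z$ in $\mathcal B(r_{k+1})$ by $\widetilde E_k^{1/p}$ plus dyadic powers, absorb the superlinear term $\widetilde E_k^{(2q-1)/(p-1)}$ by making $\mathcal D(\rho_0)$ small uniformly on $K$, and conclude with Lemma~\ref{lem:lb-discrete}.

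The one slip is in the annular sum. There $z_{k-1-j}$ enters to the power $q$, so the loss is $2^{jc_*q/p}$ and you need $c_*<p\sigma/(2q)$ rather than $c_*<\sigma p'/2$; the paper takes $c_*=\frac12\min\{p\gamma_e,\,p\sigma/(2q)\}$. This is exactly the decrease of $c_*$ you anticipated.
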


\begin{proof}
By the uniform absolute continuity of the integrals of $|\nabla u|^p$ and $u^{p^*}$, for every $\varepsilon_0>0$ one may choose $\rho_0\in(0,1]$, uniformly for $x_0\in K$, so that
\[
\mathcal D(\rho_0)\le\varepsilon_0^p,
\qquad
\mathcal Z(\rho_0)\le\varepsilon_0.
\]
Set
\[
c_*:=\frac12\min\left\{p\gamma_e,\frac{p\sigma}{2q}\right\},
\qquad
\beta_1:=\min\left\{\gamma_e,\frac\sigma{2q}\right\},
\qquad
P_k:=2^{-k\beta_1},
\]
and define $\widetilde E_k$ as in Lemma \ref{lem:lb-discrete}. Since $e_i\le2^{c_*(k-i)}\widetilde E_k$, Lemma \ref{lem:lb-chain} and $c_*/p<\gamma_e$ give, for $0\le m\le k$,
\begin{equation}\label{eq:lb-zm}
z_m\le C\left(2^{c_*(k-m)/p}\widetilde E_k^{1/p}+2^{-m\gamma_e}\right).
\end{equation}

Apply Lemma \ref{lem:lb-hartree} with $r=r_{k+1}$. Then $2r=r_k$, $4r=r_{k-1}$, and the dyadic sum has $1\le j\le k-1$. From \eqref{eq:lb-zm},
\[
z_k+z_{k-1}\le C\big(\widetilde E_k^{1/p}+P_k\big).
\]
For the middle sum, \eqref{eq:lb-zm} with $m=k-1-j$ gives
\begin{align*}
\sum_{j=1}^{k-1}2^{-j\sigma/2}z_{k-1-j}^q
&\le C\widetilde E_k^{q/p}
\sum_{j\ge1}2^{-j(\sigma/2-c_*q/p)}\\
&\quad+C\sum_{j=1}^{k-1}
2^{-j\sigma/2-(k-1-j)q\gamma_e}.
\end{align*}
The first series converges because $c_*q/p<\sigma/2$. Since $q\gamma_e-\sigma/2>0$, the second is a geometric sum dominated by its last term and is bounded by $C2^{-k\sigma/2}$. By the definition of $\beta_1$, $2^{-k\sigma/2}\le P_k^q$. Therefore
\[
\sum_{j=1}^{k-1}2^{-j\sigma/2}z_{k-1-j}^q
\le C\big(\widetilde E_k^{1/p}+P_k\big)^q.
\]
The final term in $\mathcal B(r_{k+1})$ is bounded by $C_u2^{-k\sigma/2}$ because $z_k\le z_0\le\varepsilon_0\le1$. Hence
\[
\mathcal B(r_{k+1})
\le C\big(\widetilde E_k^{1/p}+P_k\big)^{2q-1}
+C_u2^{-k\sigma/2}.
\]
Raising this inequality to $p'$ and separating the pure dyadic powers gives
\begin{equation}\label{eq:lb-Bpower}
\mathcal B(r_{k+1})^{p'}
\le C\widetilde E_k^{(2q-1)/(p-1)}+C2^{-k\beta_2}
\end{equation}
for some $\beta_2>0$, where the last constant may depend on $u$. By \eqref{eq:lb-identities2},
\[
\widetilde E_k^{(2q-1)/(p-1)}
=\widetilde E_k^{(2q-p)/(p-1)}\widetilde E_k
\le\varepsilon_0^{p(2q-p)/(p-1)}\widetilde E_k.
\]
Combining \eqref{eq:lb-hole}, \eqref{eq:lb-TY} and \eqref{eq:lb-Bpower}, first choose the Young parameter so that the coefficient inherited from \eqref{eq:lb-hole} remains $<1$, and then choose $\varepsilon_0$ so small that the superlinear term above is absorbed. We obtain
\[
e_{k+1}\le\theta\widetilde E_k+A2^{-k\beta}
\]
with $\theta\in(0,1)$ and $\beta>0$, uniformly in $x_0\in K$. Lemma \ref{lem:lb-discrete} therefore yields $e_k\le C2^{-k\alpha}$ for some $\alpha>0$. Decreasing $\alpha$ if necessary, we may assume $\alpha<N-p$. Interpolation between consecutive dyadic radii gives \eqref{eq:lb-morrey}.
\end{proof}

\begin{proposition}[First integrability gain]\label{prop:lb-first-gain}
There exists $\widetilde p>p^*$ such that
\[
u\in L^{\widetilde p}_{\mathrm{loc}}(\mathbb R^N).
\]
\end{proposition}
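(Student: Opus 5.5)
The plan is to turn the Morrey decay of Proposition \ref{prop:lb-morrey} into higher integrability of $u$ via Adams' theorem on Riesz potentials acting on Morrey spaces. Fix a compact $K$ and a ball $B=B_{\rho_0}(x_1)$ with $\overline B$ close to $K$. Proposition \ref{prop:lb-morrey}, applied on a slightly larger compact set, gives $\int_{B_r(x_0)}|\nabla u|^p\le Cr^{\alpha}$ for all $x_0$ in a neighborhood of $\overline B$ and all $r\le\rho_0$, with $0<\alpha<N-p$. In other words, $|\nabla u|\chi_{2B}$ lies in the Morrey space $L^{p,\lambda}$ with $\lambda=\alpha$, meaning that the $p$-th power mass on $r$-balls is at most $Cr^{\lambda}$. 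For radii larger than $\rho_0$ this bound holds trivially with a larger constant, since $\nabla u\in L^p$.

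Next I would represent $u$ locally through the gradient. Take a cutoff $\eta\in C_c^\infty(2B)$ with $\eta=1$ on $B$. Then pointwise $|\eta u|\le C\,I_1(|\nabla(\eta u)|)$, where $I_1$ is the Riesz potential of order one. Moreover $|\nabla(\eta u)|\le|\nabla u|\chi_{2B}+Cu\chi_{2B}$. The first piece is in the Morrey space above. The second piece, $u\chi_{2B}\in L^{p^*}$, satisfies $\int_{B_r}u^p\le Cr^{N(1-p/p^*)}\|u\|_{p^*}^p=Cr^{p}$. Shrinking $\alpha$ if necessary so that $\alpha\le p$, this piece also belongs to $L^{p,\alpha}$. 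Adams' theorem then states that $I_1:L^{p,\lambda}\to L^{s}$ with $s=\frac{(N-\lambda)p}{N-\lambda-p}$, where here $\lambda$ measures the decay, so the effective dimension is $N-\lambda$. Since $\alpha>0$, we get $N-\alpha<N$, and hence $\widetilde p:=\frac{(N-\alpha)p}{N-\alpha-p}>\frac{Np}{N-p}=p^*$. This yields $\eta u\in L^{\widetilde p}$, and therefore $u\in L^{\widetilde p}(B)$. Covering $K$ by finitely many such balls completes the argument.

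I expect the main obstacle to be the bookkeeping that makes Adams' theorem directly applicable. Adams' estimate is global, so the function fed to $I_1$ must satisfy the Morrey bound at every center in $\mathbb R^N$ and at every scale. Because $|\nabla(\eta u)|$ has compact support in $2B$, the bound at centers far from $2B$ or at large radii reduces to a bound by the total $L^p$ mass times $r^{0}\le Cr^{\alpha}\rho_0^{-\alpha}$ for $r\ge\rho_0$. Centers near $2B$ are covered by the uniformity in Proposition \ref{prop:lb-morrey}. If one prefers to avoid the pointwise potential representation, an alternative is the Morrey-type Sobolev embedding, which states that $\nabla v\in L^{p,\alpha}$ with compact support implies $v\in L^{\widetilde p}$. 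That embedding is itself a consequence of Adams' theorem combined with $|v|\le CI_1|\nabla v|$. Either way, the only quantitative input is that $\alpha>0$ is strictly positive, and this is what produces the strict gain $\widetilde p>p^*$.
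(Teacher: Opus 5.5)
Your proposal is correct and is essentially the paper's argument: Morrey decay of $|\nabla u|^p$ from Proposition \ref{prop:lb-morrey}, a pointwise bound of $u$ by $I_1$ of a localized gradient, Adams' theorem $I_1:M^{p,\alpha}\to M^{\widetilde p,\alpha}\subset L^{\widetilde p}_{\mathrm{loc}}$ with $1/\widetilde p=1/p-1/(N-\alpha)$, and a finite covering. The only difference is that the paper uses the Poincar\'e-type estimate $|u-\bar u_B|\le C\,I_1(|\nabla u|\chi_B)$ on the ball (Gilbarg--Trudinger, Lemma 7.16) instead of a cutoff; this avoids your lower-order term $u\nabla\eta$ and the accompanying reduction to $\alpha\le p$.
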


\begin{proof}
Fix $z\in\mathbb R^N$ and apply Proposition \ref{prop:lb-morrey} with $K=\overline{B_2(z)}$. Let $B=B_{\rho_0}(z)$ and put $g=|\nabla u|\chi_B$. We verify the global Morrey bound needed below. If $B_r(y)\cap B=\emptyset$, there is nothing to prove. If $B_r(y)\cap B\ne\emptyset$ and $r<\rho_0$, then $|y-z|<r+\rho_0<2$, hence $y\in K$, and Proposition \ref{prop:lb-morrey} gives
\[
\int_{B_r(y)}g^p\le Cr^\alpha.
\]
If $r\ge\rho_0$, then
\[
\int_{B_r(y)}g^p\le\|\nabla u\|_{L^p(\mathbb R^N)}^p
\le\|\nabla u\|_{L^p}^p\rho_0^{-\alpha}r^\alpha.
\]
Thus
\[
\sup_{y\in\mathbb R^N,\,r>0}r^{-\alpha}\int_{B_r(y)}g^p<\infty,
\]
so $g$ belongs to the Morrey class $M^{p,\alpha}$. The pointwise potential estimate \cite[Lemma 7.16]{GilbargTrudinger2001} gives, for a.e. $x\in B$,
\[
|u(x)-\bar u_B|
\le C\int_B\frac{|\nabla u(y)|}{|x-y|^{N-1}}\,dy
\le C I_1g(x).
\]
Adams' theorem \cite{Adams1975}, in the Morrey normalization
\[
\|g\|_{M^{p,\alpha}}^p
=\sup_{y,r}r^{-\alpha}\int_{B_r(y)}|g|^p,
\]
states that, for $0<\alpha<N-p$,
\[
I_1:M^{p,\alpha}\longrightarrow M^{\widetilde p,\alpha},
\qquad
\frac1{\widetilde p}=\frac1p-\frac1{N-\alpha}.
\]
Since $\alpha>0$,
\[
\frac1{\widetilde p}<\frac1p-\frac1N=\frac1{p^*},
\]
and hence $\widetilde p>p^*$. Thus $u\in L^{\widetilde p}(B)$, and a finite covering proves the local assertion.
\end{proof}

Set
\begin{equation}\label{eq:lb-rH}
r_H:=\frac{qN}{N-\sigma},
\qquad
\mu:=\min\left\{\frac{2q}{p},\frac{p^*}{p}\right\}>1.
\end{equation}
Notice that $r_H>p^*$.

\begin{lemma}[One Moser step]\label{lem:lb-moser}
Assume $u\in L^R_{\mathrm{loc}}(\mathbb R^N)$ for some $R>p^*$ with $R\ne r_H$. Then $u\in L^{R'}_{\mathrm{loc}}(\mathbb R^N)$, where
\[
R'-p^*\ge\mu(R-p^*)\quad\text{if }R<r_H,
\qquad
R'=\frac{p^*}{p}R\quad\text{if }R>r_H.
\]
\end{lemma}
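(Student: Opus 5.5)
The plan is a standard Moser step: test the equation with a truncated power of $u$ and a cutoff, bound the Hartree potential locally in a Lebesgue space fixed by the current integrability $u\in L^R_{\rm loc}$, and absorb via Sobolev. Fix a ball $B_{2\rho}(x_0)$, a cutoff $\eta\in C_c^\infty(B_{2\rho})$ with $\eta=1$ on $B_\rho$, and $\beta:=R-p^*\ge0$ up to the scaling below. Test with $\varphi=\eta^p u\min\{u,L\}^{\beta'}$, where $\beta'$ is chosen so that $(u\min\{u,L\}^{\beta'/p})^{p^*}\sim u^{R'}$. Lemma \ref{lem:lb-admissible} makes this admissible. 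The left side controls $\int|\nabla(\eta u_L^{1+\beta'/p})|^p$ up to a cutoff error $\int|\nabla\eta|^p u^{p+\beta'}$, which is finite because $p+\beta'<R$ in the regime considered. By Sobolev this yields $\|\eta u_L^{1+\beta'/p}\|_{L^{p^*}}^p$, that is, a bound on $\int_{B_\rho}u^{(p+\beta')p^*/p}$ after letting $L\to\infty$.

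For the right side, write $W=W_{\rm near}+W_{\rm far}$, splitting $y$ in $B_{4\rho}$ and outside. Lemma \ref{lem:lb-far} gives $W_{\rm far}\in L^\infty(B_{2\rho})$. For the near part, $u^q\chi_{B_{4\rho}}\in L^{R/q}$, so by HLS (or, if $R/q>N/(N-\sigma)$, i.e.\ $R>r_H$, by Young/Hölder) $W_{\rm near}\in L^{s}(B_{2\rho})$. Here $1/s=q/R-(N-\sigma)/N$ when $R<r_H$, and $s=\infty$ when $R>r_H$. This explains why $R=r_H$ is excluded: it is the HLS endpoint where only BMO/exp-type control is available.

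The term to control is then $\int \eta^p W u^{q+\beta'}$. I split $u^{q+\beta'}=u^{q-p}\cdot u^{p+\beta'}$. Since $q<p$ the factor $u^{q-p}$ is singular, so instead I keep it as $u^{q+\beta'}$ and apply Hölder with $W\in L^s$ and $u\in L^R$ directly. If $R>r_H$, then $W\in L^\infty_{\rm loc}$ and the term is $\le C\int_{B_{2\rho}}u^{q+\beta'}\le C\int u^{p+\beta'}+C$, which is finite whenever $p+\beta'\le R$. Choosing $p+\beta'=R$ gives $R'=(p^*/p)R$, the classical Moser gain.

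If $R<r_H$, I choose $\beta'$ maximal subject to $(q+\beta')s'\le R$, i.e.\ $q+\beta'=R(1-1/s)=R-qR/R\cdot\ldots$; explicitly $q+\beta'=R\bigl(1-q/R+(N-\sigma)/N\bigr)=R-q+R(N-\sigma)/N$. The new exponent is $R'=(p+\beta')p^*/p$. A short computation then reduces the claim $R'-p^*\ge\mu(R-p^*)$ to linear inequalities in $R$. These should follow from the identities \eqref{eq:lb-identities}, in particular $2q-p^*>0$ and the fact that the map is affine in $R$ with fixed point $p^*$ and slope $\frac{p^*}{p}\cdot\frac{2N-\sigma}{N}=\frac{p^*}{p}\cdot\frac{2q}{p^*}=\frac{2q}{p}$. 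So the gain is exactly slope $2q/p\ge\mu$ about the fixed point $p^*$; the minimum with $p^*/p$ in $\mu$ just handles the truncation constraint $p+\beta'\le R$.

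I expect the main obstacle to be verifying that the fixed point is exactly $p^*$, with the critical identities making the affine map pass through $(p^*,p^*)$, and keeping the cutoff term and the constraint $p+\beta'\le R$ compatible. Where the constraint binds, I cap $p+\beta'$ at $R$, which yields the $p^*/p$ alternative in $\mu$.
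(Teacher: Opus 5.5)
Your proposal is correct and follows essentially the same route as the paper. Both use the truncated test function $\eta^p u\min\{u,L\}^{\beta'}$ and split $W$ into near and far parts, applying HLS when $R<r_H$ and H\"older (so $W\in L^\infty_{\mathrm{loc}}$) when $R>r_H$. Both also make the exponent choice $\beta'=\min\{\frac{2N-\sigma}{N}(R-p^*),\,R-p\}$, which gives slope $2q/p$ about the fixed point $p^*$, or the capped gain $R'=\frac{p^*}{p}R$.

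One correction: your first paragraph asserts $p+\beta'<R$, but the cap $p+\beta'\le R$ must be imposed from the outset. For $R$ just below $r_H$ the uncapped choice violates it; this uses $\sigma>2p$. Your later capping handles this, as you anticipated.
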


\begin{proof}
Fix a ball $B_\rho=B_\rho(x_1)$ and split $W=W_1+W_2$, where $W_1$ is the convolution restricted to $B_{2\rho}(x_1)$. Lemma \ref{lem:lb-far} gives $W_2\in L^\infty(B_\rho)$. If $R<r_H$, HLS gives
\[
W_1\in L^t(B_\rho),
\qquad
\frac1t=\frac qR-\frac{N-\sigma}{N}>0.
\]
If $R>r_H$, H\"older's inequality gives $W_1\in L^\infty(B_\rho)$; in this case set $1/t=0$.

Let $s>1$, $k>0$, $u_k=\min\{u,k\}$, and $\beta=(s-1)p$. To avoid any chain-rule issue at the zero set when $0<\beta<1$, define
\[
F_{k,\beta}(t):=t\min\{t,k\}^{\beta},\qquad t\ge0.
\]
This function is globally Lipschitz on $[0,\infty)$ and satisfies
\[
F_{k,\beta}'(t)
=\min\{t,k\}^{\beta}
+\beta t\min\{t,k\}^{\beta-1}\chi_{\{t<k\}}
\quad\text{for a.e. }t>0.
\]
Choose $\eta\in C_c^\infty(B_\rho)$ with $0\le\eta\le1$ and $\eta=1$ on $B_{\rho/2}$. The test function $\varphi=\eta^pF_{k,\beta}(u)$ is admissible by Lemma \ref{lem:lb-admissible}. Testing and using Young's inequality gives
\begin{equation}\label{eq:lb-moser1}
\int\eta^pu_k^\beta|\nabla u|^p
+\beta\int\eta^pu_k^\beta|\nabla u_k|^p
\le C\int|\nabla\eta|^pu^pu_k^\beta
+2\int\eta^pWu^qu_k^\beta.
\end{equation}
Define likewise $G_{k,s}(t)=t\min\{t,k\}^{s-1}$ and $w_k=G_{k,s}(u)$. Then $G_{k,s}$ is globally Lipschitz and
\[
|\nabla w_k|
\le u_k^{s-1}|\nabla u|+(s-1)u_k^{s-1}|\nabla u_k|.
\]
Using \eqref{eq:lb-moser1} and the Sobolev inequality for $\eta w_k$ yields
\begin{equation}\label{eq:lb-moser2}
\left(\int(\eta w_k)^{p^*}\right)^{p/p^*}
\le Cs^{p-1}\left(
\int_{B_\rho}|\nabla\eta|^pu^{sp}
+\int_{B_\rho}\eta^pWu^{q+(s-1)p}
\right).
\end{equation}
The right-hand side is finite, uniformly in $k$, provided
\begin{equation}\label{eq:lb-moser-cond}
sp\le R,
\qquad
\frac1t+\frac{q+(s-1)p}{R}\le1.
\end{equation}
Then monotone convergence gives $u\in L^{sp^*}(B_{\rho/2})$.

If $R<r_H$, the second condition in \eqref{eq:lb-moser-cond} becomes
\[
(s-1)p\le\frac{2N-\sigma}{N}(R-p^*).
\]
Choose
\[
s=1+\min\left\{
\frac{2N-\sigma}{Np}(R-p^*),\frac Rp-1
\right\}.
\]
Then
\[
sp^*=\min\left\{
p^*+\frac{2q}{p}(R-p^*),\frac{p^*}{p}R
\right\},
\]
so $sp^*-p^*\ge\mu(R-p^*)$. If $R>r_H$, take $s=R/p$. Since $q<p$ by $\sigma>2p$, the second condition in \eqref{eq:lb-moser-cond} holds, and $sp^*=(p^*/p)R$. This proves the lemma.
\end{proof}

\begin{proof}[Proof of Theorem \ref{thm:local-boundedness}]
By Proposition \ref{prop:lb-first-gain}, $u\in L^{\widetilde p}_{\mathrm{loc}}$ for some $\widetilde p>p^*$. Choose $R_0\in(p^*,\widetilde p]$ with $R_0\ne r_H$ and set
\[
\mu_1:=\frac{1+\mu}{2}>1.
\]
Iterate Lemma \ref{lem:lb-moser}. If at a step with $R_j<r_H$ the exponent furnished by the lemma equals $r_H$, then $u\in L^{r_H}_{\mathrm{loc}}$, hence also $u\in L^{r_H-\delta}_{\mathrm{loc}}$ for every sufficiently small $\delta>0$. Because
\[
r_H-p^*\ge\mu(R_j-p^*)>\mu_1(R_j-p^*),
\]
we may choose $\delta$ so small that
\[
r_H-\delta-p^*\ge\mu_1(R_j-p^*)
\]
and continue from $R_{j+1}=r_H-\delta$. Thus the endpoint $r_H$ causes no loss of a uniform expansion factor. While $R_j<r_H$, the quantity $R_j-p^*$ therefore grows at least geometrically with ratio $\mu_1>1$, so after finitely many steps one obtains an exponent $R_j>r_H$. Thereafter Lemma \ref{lem:lb-moser} gives
\[
R_{j+1}=\frac{p^*}{p}R_j,
\]
and hence $R_j\to\infty$. Therefore
\[
u\in L^s_{\mathrm{loc}}(\mathbb R^N)
\qquad\text{for every }s<\infty.
\]

Fix a ball $B_\rho$. Taking $R>r_H$ in the first part of the proof of Lemma \ref{lem:lb-moser} gives $W\in L^\infty(B_\rho)$. Since $q>1$, it follows that $Wu^{q-1}\in L^m(B_\rho)$ for every finite $m$. In particular choose $m>N/p$. Serrin's local boundedness theorem for quasilinear equations \cite{Serrin1964} then yields
\[
u\in L^\infty_{\mathrm{loc}}(\mathbb R^N),
\]
as claimed.
\end{proof}

\section{Automatic power gain at infinity}\label{sec:powergain}

\begin{theorem}\label{thm:powergain}
Let $N\ge3$, $p>1$ and $2p<\sigma<N$, and assume that $q:=p^*_{\sigma}>1$. Let $u$ be a nonnegative $D^{1,p}(\mathbb R^N)$-weak solution of \eqref{eq:main}. Then there are $C>0$ and $\tau>0$ such that
\begin{equation}\label{eq:automatic-power}
u(x)\le C|x|^{-\gamma_{e}-\tau}\qquad\text{for }|x|\ge1.
\end{equation}
In fact,
\begin{equation}\label{eq:critical-vanish-infinity}
|x|^{\gamma_{e}}u(x)\longrightarrow0\qquad\text{as }|x|\to\infty.
\end{equation}
\end{theorem}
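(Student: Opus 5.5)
The plan has two stages. First, prove the qualitative statement \eqref{eq:critical-vanish-infinity} by a scaling and compactness argument in the spirit of Section \ref{sec:structure}. Second, upgrade it to the power gain \eqref{eq:automatic-power} through an exterior version of the hole-filling and Morrey decay scheme of Section \ref{sec:localboundedness}, followed by a rescaled local $L^\infty$ estimate.

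\emph{Step 1: scale-invariant bound and critical vanishing.} By Theorem \ref{thm:local-boundedness} we have $S=\emptyset$, and by Corollary \ref{cor:structure} $u$ is bounded, $C^{1,\alpha}$ and tends to $0$ at infinity. We first show $\sup_{|x|\ge1}|x|^{\gamma_e}u(x)<\infty$. Repeat Step 1 of the proof of Theorem \ref{thm:structure} with $M=u^{1/\gamma_e}$, but now take the set $\Gamma$ in Lemma \ref{lem:doubling} to be $\{0\}$ (together with an outer sphere $\partial B_{2|y_k|}$, or any other boundary, to keep the domain bounded). If $M(y)|y|$ were unbounded, the doubling lemma would give rescalings $v_k=u_{x_k,\lambda_k}$ with $v_k(0)=1$ and $v_k\le 2^{\gamma_e}$ on $B_k$. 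These would be bounded in $C^{1,\alpha}_{\rm loc}$ by Lemma \ref{lem:Wloc} and the DiBenedetto--Tolksdorf estimates. At the same time $\int_{B_R}v_k^{p^*}\to0$, using Lemma \ref{lem:unifsmall} when $\lambda_k\to0$, or the fact that the balls escape to infinity otherwise. This contradiction gives the bound. Now suppose $|y_j|^{\gamma_e}u(y_j)\ge\delta$ along $|y_j|\to\infty$, and set $v_j=u_{y_j,\lambda_j}$ with $\lambda_j=|y_j|/2$. Then $v_j\le C$ on $B_1$, so $v_j$ is $C^{1,\alpha}$-bounded on $B_{1/2}$. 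On the other hand $\int_{B_1}v_j^{p^*}=\int_{B_{\lambda_j}(y_j)}u^{p^*}\to0$, because the balls lie in $\{|x|\ge|y_j|/2\}$. Since $v_j(0)\ge\delta$, this is a contradiction, and \eqref{eq:critical-vanish-infinity} follows.

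\emph{Step 2: exterior energy decay.} Let $E(R):=\int_{|x|>R}|\nabla u|^p$. Test the equation with $u\,\eta_R^p$, where $\eta_R=0$ on $B_R$, $\eta_R=1$ outside $B_{2R}$ and $|\nabla\eta_R|\le C/R$. Handling the annulus by Poincar\'e and hole filling exactly as in Lemma \ref{lem:lb-hole} should give
\[
E(2R)\le\theta_0E(R)+C\int_{|x|>R}W[u]u^q .
\]
The Hartree term is the delicate point, because $q<p$ means it is not superlinear in $E$ if one uses HLS naively. I will split $W=W_{\rm in}+W_{\rm out}$ according to $|y|<R/2$ and $|y|\ge R/2$.

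For $W_{\rm out}$, HLS gives a contribution bounded by $C\|u\|_{L^{p^*}(|x|>R/2)}^{2q}$. This norm is controlled by $E(R/4)^{1/p}$ plus an annular term, via Sobolev for a cut-off of $u$ together with Step 1. Since $2q>p$, the term is superlinear in the energy and can be absorbed by smallness, as in Proposition \ref{prop:lb-morrey}.

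For $W_{\rm in}$, boundedness of $u$ near the origin and $\|u\|_{L^{p^*}}<\infty$ give $W_{\rm in}(x)\le C|x|^{-\sigma/2}$ for $|x|>R$ (by the H\"older argument of Lemma \ref{lem:Wloc}), or even $C|x|^{-\sigma}$ on bounded pieces. Combined with $u\le\varepsilon|x|^{-\gamma_e}$ and the identity $q\gamma_e=N-\sigma/2$ from \eqref{eq:lb-identities}, this contribution is bounded by a pure power $CR^{-\beta}$ with $\beta>0$. The dyadic middle region will be organized as in Lemma \ref{lem:lb-hartree}, now running outward.

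A discrete iteration as in Lemmas \ref{lem:lb-chain} and \ref{lem:lb-discrete}, along $R_k=2^kR_0$, should then give $E(R)\le CR^{-\alpha}$ and, through the chain estimate, $\|u\|_{L^{p^*}(|x|>R)}\le CR^{-\alpha/p}$.

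\emph{Step 3: pointwise gain.} For $|y|=2R$, apply Serrin's local sup estimate to the rescaling $v=u_{y,R/2}$ on $B_1$. Its right-hand side $W[v]v^{q-1}$ is uniformly bounded by Step 1 and Lemma \ref{lem:Wloc}. In the sub/supersolution form, the sup of $v$ is controlled by its $L^{p^*}(B_1)$ norm plus a term from the right-hand side that is small to a positive power. This yields $R^{\gamma_e}u(y)\le CR^{-\tau}$ for some $\tau>0$, which is \eqref{eq:automatic-power}. I expect the main obstacle to be the absorption in Step 2, namely keeping the sublinear factor $u^{q-1}$ from destroying superlinearity; the splitting of $W$ and the use of Step 1 on the inner contribution are designed precisely for that.
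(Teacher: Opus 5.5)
Your Step 1 is the paper's Lemma \ref{lem:critical-infinity}. For the power gain you take a genuinely different route, and it can be made to work, but Step 2 as written contains two claims that fail.

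\textbf{Comparison with the paper.} The paper argues by contradiction through a blow-down. It selects scales $r_k$ with the log-scale envelope $\sup_s a(s)e^{-\varepsilon|s-t|}$ and normalizes $w_k=r_k^{\gamma_e}u(r_k\cdot)/A_k$, so that $\max_{1\le|x|\le2}w_k=1$ and $w_k\le2|x|^{-\gamma_e\mp\varepsilon}$. The equation for $w_k$ carries the factor $A_k^{2q-p}\to0$, so a nontrivial nonnegative $p$-harmonic limit on $\mathbb R^N\setminus\{0\}$ appears. That limit must vanish by Serrin's removable-singularity theorem (since $\gamma_e+\varepsilon<\gamma_h$) together with decay at infinity. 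You instead run the hole-filling/Morrey machinery of Section \ref{sec:localboundedness} outward to get $\int_{|x|>R}|\nabla u|^p\le CR^{-\alpha}$, then convert it into a pointwise gain by a rescaled sup estimate. Both routes use $2q>p$ at the same essential point: vanishing prefactor in the paper, superlinearity in yours. Yours is quantitative and avoids the isolated-singularity classification. The paper's is much shorter and needs no bookkeeping of the Hartree interaction across scales. Your Step 3 is fine.

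\textbf{First problem: hole filling.} Doing it ``exactly as in Lemma \ref{lem:lb-hole}'' needs the mean-subtracted test function $(u-\bar u_{A_R})\eta_R^p$. This is not admissible when $\eta_R\equiv1$ near infinity. Poincar\'e without the mean leaves a term $R^{-p}|A_R|\,\bar u_{A_R}^p$, which Step 1 only makes $O(\varepsilon^p)$, with no decay. Use instead the exterior Hardy inequality
$\int_{|x|>R}|x|^{-p}u^p\le(p/(N-p))^p\int_{|x|>R}|\nabla u|^p$.
It needs no condition on $\partial B_R$, because the boundary term has a favorable sign. With Young's inequality it gives $E(2R)\le\theta_0E(R)+C\int_{|x|>R}W[u]u^q$. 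It also gives $\|u\|_{L^{p^*}(|x|>2R)}\le CE(R)^{1/p}$, which replaces Lemma \ref{lem:lb-chain}.

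\textbf{Second and more serious problem: the inner Hartree term.} It is \emph{not} a pure power by Step 1.
\begin{itemize}
\item With $W_{\rm in}\le C|x|^{-\sigma/2}$ and $u\le\varepsilon|x|^{-\gamma_e}$, the integrand is $C\varepsilon^q|x|^{-N}$, whose integral over $|x|>R$ diverges.
\item Even the sharper bound $W_{\rm in}(x)\le C|x|^{-\sigma}\int_{B_{R/2}}u^q$ gives only $O(\varepsilon^{2q})$, because the configuration is exactly scale invariant.
\item Only a \emph{fixed} core $|y|<R_0$ yields a genuine power, namely $CR^{-\sigma/2}$.
\item On the dyadic annuli $R_0\le|y|<R/2$, the Step 1 smallness produces a term of size $\varepsilon^qE^{q/p}$. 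This is sublinear and cannot be absorbed.
\end{itemize}
What closes the argument is to bound each inner annular norm by its own exterior energy:
$z_j:=\|u\|_{L^{p^*}(R_j<|y|<R_{j+1})}\le CE(R_{j-1})^{1/p}$.
Then the cross-scale sum $\sum_{j<k}2^{-(k-j)\sigma/2}z_j^qz_k^q$ is bounded by $C\widetilde E_k^{2q/p}$ through the envelope of Lemma \ref{lem:lb-discrete}, provided $c_*q/p<\sigma/2$. Now $2q>p$ and the smallness of $E(R_0)$ allow absorption, and $e_{k+1}\le\theta\widetilde E_k+A2^{-k\beta}$ follows.

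So the sublinear factor $u^{q-1}$ is overcome by pairing each $u^q$ with the energy at its own scale, not by Step 1. With these two corrections your iteration closes and Step 3 finishes the proof.
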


By Theorem \ref{thm:local-boundedness}, the singular set $S$ of Section \ref{sec:structure} is empty for every solution considered in this section. We first show that this automatic local boundedness forces the critical scale to vanish at infinity.

\begin{lemma}\label{lem:critical-infinity}
Under the assumptions of Theorem \ref{thm:powergain}, either $u\equiv0$, or $u>0$ in $\mathbb{R}^{N}$ and
\begin{equation}\label{eq:critical-bound-infinity}
\sup_{|x|\ge1}|x|^{\gamma_{e}}u(x)<\infty,
\end{equation}
and \eqref{eq:critical-vanish-infinity} holds.
\end{lemma}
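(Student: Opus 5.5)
By Theorem \ref{thm:local-boundedness} we have $S=\emptyset$. Lemma \ref{lem:positivity-alternative} then gives the dichotomy: either $u\equiv0$, or the continuous representative is strictly positive everywhere. The latter is a representative in $C^{1,\alpha}(\mathbb R^N)$, by Corollary \ref{cor:structure}. In the nontrivial case I will prove the bound \eqref{eq:critical-bound-infinity} and the vanishing \eqref{eq:critical-vanish-infinity} together by a blow-down contradiction argument using the critical scaling \eqref{eq:scaling}, exactly parallel to Step 1 and Step 2 of the proof of Theorem \ref{thm:structure}. The role of the singular set there is now played by the point at infinity.

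\emph{Boundedness.} Suppose \eqref{eq:critical-bound-infinity} fails. Set $M:=u^{1/\gamma_e}$ on $D=\mathbb R^N\setminus\overline{B_{1/2}}$, so that $M(y)|y|$ is unbounded on $\{|y|\ge1\}$. I will apply the doubling Lemma \ref{lem:doubling} with $\Sigma_k=\overline{B_{2|y_k|}}\setminus B_{|y_k|/2}$, $D_k$ its interior, and $\Gamma_k=\partial D_k$. This choice gives $\operatorname{dist}(y_k,\Gamma_k)\simeq|y_k|$. It produces points $x_k$ with $M(x_k)\operatorname{dist}(x_k,\Gamma_k)>2k$ and the doubling bound $M\le2M(x_k)$ on $B_{k/M(x_k)}(x_k)\subset D_k$. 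Put $\lambda_k=1/M(x_k)$ and $v_k=u_{x_k,\lambda_k}$. Then $v_k(0)=1$ and $v_k\le2^{\gamma_e}$ on $B_k$. Lemma \ref{lem:Wloc} (with $r=1$) together with $\|v_k\|_{L^{p^*}}=\|u\|_{L^{p^*}}$ bounds $W[v_k]$ uniformly on $B_{k-2}$. Since $q>1$, the DiBenedetto--Tolksdorf estimates then give uniform $C^{1,\alpha}_{\rm loc}$ bounds.

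On the other hand, by \eqref{eq:scaleinv} we have $\int_{B_R}v_k^{p^*}=\int_{B_{R\lambda_k}(x_k)}u^{p^*}$, and this tends to $0$. There are two cases. If $\lambda_k\to0$, this follows from Lemma \ref{lem:unifsmall}. If not, note that $B_{R\lambda_k}(x_k)\subset\{|x|\ge|x_k|/4\}$ once $k\ge R$, because $R\lambda_k\le k\lambda_k<\operatorname{dist}(x_k,\Gamma_k)$. Moreover $|x_k|\ge|y_k|/2\to\infty$, since $|y_k|\to\infty$ is forced: $u$ is bounded, so $M(y_k)|y_k|\to\infty$ requires it. The integral therefore vanishes as the tail of $u^{p^*}\in L^1$. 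In either case, Arzel\`a--Ascoli yields a limit $v_\infty$ with $v_\infty(0)=1$ but zero $L^{p^*}$ mass on every ball, which is a contradiction.

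\emph{Vanishing.} Suppose $|y_j|\to\infty$ and $|y_j|^{\gamma_e}u(y_j)\ge\delta>0$. Put $\lambda_j=|y_j|/2$ and $v_j=u_{y_j,\lambda_j}$. For $|z|\le1$ we have $|y_j+\lambda_jz|\ge\lambda_j$, so \eqref{eq:critical-bound-infinity} gives $v_j\le C$ on $B_1$. The same compactness argument applies, while $\int_{B_1}v_j^{p^*}=\int_{B_{\lambda_j}(y_j)}u^{p^*}\le\int_{|x|\ge|y_j|/2}u^{p^*}\to0$. But $v_j(0)=2^{-\gamma_e}|y_j|^{\gamma_e}u(y_j)\ge2^{-\gamma_e}\delta$, which is the desired contradiction.

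The main obstacle I expect is in the first step: making the doubling construction keep the rescaled balls inside a region escaping to infinity. This requires that $\lambda_k k$ be comparable to at most the distance to $\Gamma_k\sim|x_k|$, which the doubling lemma guarantees. One must also check that the positivity and continuity of $M$ on $D$ hold, which Corollary \ref{cor:structure} provides.
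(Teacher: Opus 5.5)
Your proof is correct and follows essentially the paper's route. Boundedness comes from a doubling-lemma blow-up at the critical scale whose rescalings have vanishing $L^{p^*}$ mass, and vanishing comes from rescaling at $\lambda_j=|y_j|/2$. The only differences are minor: you localize the doubling lemma to annuli with $\Gamma_k=\partial D_k$, whereas the paper takes $D=\mathbb R^N\setminus\{0\}$, $\Gamma=\{0\}$ and uses $\lambda_k/|x_k|<1/(2k)$ to push the rescaled balls to infinity; and your case $\lambda_k\to0$ is actually vacuous, since $|x_k|\to\infty$ and $u\to0$ force $\lambda_k\to\infty$, though this is harmless because your second argument covers every $k\ge R$.
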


\begin{proof}
By Corollary \ref{cor:structure}, $u\in L^{\infty}(\mathbb{R}^{N})\cap C^{1,\alpha}(\mathbb{R}^{N})$ and $u(x)\to0$ as $|x|\to\infty$. Since $-\Delta_{p}u\ge0$, the weak Harnack inequality implies that either $u\equiv0$ or $u>0$ everywhere. We assume the latter and put $M:=u^{1/\gamma_{e}}$.

Suppose first that $\sup_{x\ne0}M(x)|x|=\infty$. Fix $k\in\mathbb{N}$ and choose $y_{k}\ne0$ with $M(y_{k})|y_{k}|>2k$. Apply Lemma \ref{lem:doubling} with $X=\Sigma=\mathbb{R}^{N}$, $D=\mathbb{R}^{N}\setminus\{0\}$ and $\Gamma=\{0\}$. We obtain $x_{k}\in D$ such that
\[
M(x_{k})|x_{k}|>2k,
\qquad
M(z)\le2M(x_{k})\quad\text{if }|z-x_{k}|\le\frac{k}{M(x_{k})}.
\]
Let $\lambda_{k}:=M(x_{k})^{-1}$ and
\[
v_{k}(z):=\lambda_{k}^{\gamma_{e}}u(x_{k}+\lambda_{k}z).
\]
Then $v_{k}$ solves \eqref{eq:main}, $v_{k}(0)=1$, and $0<v_{k}\le2^{\gamma_{e}}$ on $B_{k}$. Since $u$ is globally bounded, $|x_{k}|\to\infty$, while
\[
\frac{\lambda_{k}}{|x_{k}|}<\frac1{2k}.
\]
For every fixed $R>0$ and all large $k$, Lemma \ref{lem:Wloc}, the scaling identity \eqref{eq:scaleinv}, and the local regularity estimates of \cite{DiBenedetto1983,Tolksdorf1984} give a uniform $C^{1,\alpha}(\overline{B_{R}})$ bound for $v_{k}$. On the other hand,
\[
\int_{B_{R}}v_{k}^{p^{*}}dz
=\int_{B_{R\lambda_{k}}(x_{k})}u^{p^{*}}dx\longrightarrow0,
\]
because
\[
B_{R\lambda_{k}}(x_{k})\subset
\left\{|x|\ge |x_{k}|\left(1-\frac{R}{2k}\right)\right\}
\]
and the right-hand side escapes to infinity. A subsequence therefore converges locally uniformly to a continuous function which is identically zero, contradicting $v_{k}(0)=1$. This proves \eqref{eq:critical-bound-infinity}.

We next prove \eqref{eq:critical-vanish-infinity}. If it fails, there are $\delta>0$ and $y_{j}$ with $r_{j}:=|y_{j}|\to\infty$ such that $r_{j}^{\gamma_{e}}u(y_{j})\ge\delta$. Put $\lambda_{j}:=r_{j}/2$ and
\[
v_{j}(z):=\lambda_{j}^{\gamma_{e}}u(y_{j}+\lambda_{j}z).
\]
For $|z|\le3/2$ one has $|y_{j}+\lambda_{j}z|\ge r_{j}/4$, so \eqref{eq:critical-bound-infinity} gives a uniform $L^{\infty}(B_{3/2})$ bound for $v_{j}$. Lemma \ref{lem:Wloc} and the local regularity theory then give a uniform $C^{1,\alpha}(\overline{B_{1/2}})$ bound, whereas
\[
\int_{B_{1}}v_{j}^{p^{*}}dz
=\int_{B_{\lambda_{j}}(y_{j})}u^{p^{*}}dx\longrightarrow0
\]
because $B_{\lambda_{j}}(y_{j})\subset\{|x|\ge r_{j}/2\}$. Hence $v_{j}\to0$ uniformly on $B_{1/2}$ along a subsequence, while
\[
v_{j}(0)=2^{-\gamma_{e}}r_{j}^{\gamma_{e}}u(y_{j})\ge2^{-\gamma_{e}}\delta,
\]
a contradiction.
\end{proof}

\begin{proof}[Proof of Theorem \ref{thm:powergain}]
By Theorem \ref{thm:local-boundedness} and Corollary \ref{cor:structure}, $u\in L^\infty(\mathbb R^N)\cap C^{1,\alpha}(\mathbb R^N)$ and $u(x)\to0$ as $|x|\to\infty$. If $u\equiv0$ there is nothing to prove, so by Lemma \ref{lem:critical-infinity} we may assume $u>0$. For $r\ge1$ set
\[
A(r):=r^{\gamma_{e}}\max_{r\le|x|\le2r}u(x).
\]
By Lemma \ref{lem:critical-infinity}, $A$ is bounded and $A(r)\to0$ as $r\to\infty$.

Suppose, for contradiction, that \eqref{eq:automatic-power} fails for every $\tau>0$. Then
\begin{equation}\label{eq:no-power}
\sup_{r\ge1}r^{\tau}A(r)=\infty\qquad\text{for every }\tau>0.
\end{equation}
Choose
\begin{equation}\label{eq:eps-choice}
0<\varepsilon<\min\left\{\gamma_{e},\ \gamma_{h}-\gamma_{e},\ \frac{\sigma}{2q}\right\}.
\end{equation}
Write $a(t):=A(e^{t})$ for $t\ge0$ and define the weighted envelope
\[
E(t):=\sup_{s\ge0}a(s)e^{-\varepsilon|s-t|}.
\]
Since $a$ is bounded and tends to zero, $E(t)\to0$ as $t\to\infty$. By \eqref{eq:no-power} with $\tau=\varepsilon/2$, we can choose $t_{k}\to\infty$ so that
\[
e^{\varepsilon t_{k}/2}a(t_{k})\longrightarrow\infty.
\]
Choose $s_{k}\ge0$ such that
\[
a(s_{k})e^{-\varepsilon|s_{k}-t_{k}|}\ge\frac12E(t_{k}).
\]
Then $s_{k}\to\infty$. Indeed, a bounded subsequence of $(s_{k})$ would give $a(t_{k})\le C e^{-\varepsilon t_{k}}$, contradicting the preceding display. Put
\[
r_{k}:=e^{s_{k}},\qquad A_{k}:=a(s_{k})=A(r_{k}).
\]
Then $A_{k}\to0$, and for every $s\ge0$,
\begin{equation}\label{eq:envelope-ratio}
\frac{a(s)}{A_{k}}
\le2e^{\varepsilon|s-s_{k}|}.
\end{equation}
Taking $s=0$ also gives
\begin{equation}\label{eq:Ak-lower}
A_{k}\ge\frac{a(0)}{2}r_{k}^{-\varepsilon}.
\end{equation}

Define
\begin{equation}\label{eq:wk-blowdown}
w_{k}(x):=\frac{r_{k}^{\gamma_{e}}}{A_{k}}u(r_{k}x).
\end{equation}
Since $r_{k}^{\gamma_{e}}u(r_{k}\cdot)$ solves \eqref{eq:main}, homogeneity gives
\begin{equation}\label{eq:wk-equation}
-\Delta_{p}w_{k}
=A_{k}^{2q-p}\big(|x|^{-\sigma}*w_{k}^{q}\big)w_{k}^{q-1}.
\end{equation}
Moreover,
\begin{equation}\label{eq:wk-normalization}
\max_{1\le|x|\le2}w_{k}(x)=1.
\end{equation}
If $r_{k}|x|\ge1$, then \eqref{eq:envelope-ratio} yields
\begin{equation}\label{eq:wk-growth}
w_{k}(x)
\le2|x|^{-\gamma_{e}}e^{\varepsilon|\log|x||}
\le
\begin{cases}
2|x|^{-\gamma_{e}-\varepsilon},&0<|x|\le1,\\
2|x|^{-\gamma_{e}+\varepsilon},&|x|\ge1.
\end{cases}
\end{equation}

We claim that $W[w_{k}]:=|x|^{-\sigma}*w_{k}^{q}$ is uniformly bounded on every compact annulus. Fix $0<\rho<R<\infty$ and $x$ with $\rho\le|x|\le R$. The contribution of $|y|<r_{k}^{-1}$ is bounded, using Theorem \ref{thm:local-boundedness}, by
\[
C\int_{|y|<r_{k}^{-1}}w_{k}^{q}dy
=C A_{k}^{-q}r_{k}^{q\gamma_{e}-N}\int_{B_{1}}u^{q}dz
=C A_{k}^{-q}r_{k}^{-\sigma/2}
\le C r_{k}^{q\varepsilon-\sigma/2},
\]
which is uniformly bounded, indeed tends to zero, by \eqref{eq:eps-choice} and \eqref{eq:Ak-lower}. On the region $r_{k}^{-1}\le|y|\le\rho/2$, \eqref{eq:wk-growth} is integrable because
\[
q(\gamma_{e}+\varepsilon)<q\gamma_{e}+\frac{\sigma}{2}=N.
\]
On a fixed annulus containing $x$, \eqref{eq:wk-growth} gives a uniform bound for $w_{k}$ and the kernel is locally integrable because $\sigma<N$. Finally, for large $|y|$,
\[
|x-y|^{-\sigma}w_{k}^{q}(y)
\le C|y|^{-\sigma-q(\gamma_{e}-\varepsilon)},
\]
and this is integrable at infinity because
\[
\sigma+q(\gamma_{e}-\varepsilon)>N.
\]
The claim follows.

Since $2q-p>0$ by Lemma \ref{lem:exponents}(a), the right-hand side of \eqref{eq:wk-equation} tends to zero uniformly on compact annuli. The local regularity theory therefore yields, after passing to a subsequence,
\[
w_{k}\longrightarrow w\qquad\text{in }C^{1}_{\mathrm{loc}}(\mathbb{R}^{N}\setminus\{0\}),
\]
where $w\ge0$ is $p$-harmonic in $\mathbb{R}^{N}\setminus\{0\}$. Choose $z_{k}$ with $1\le|z_{k}|\le2$ and $w_{k}(z_{k})=1$. Passing to a further subsequence, $z_{k}\to z_{\infty}$, and local uniform convergence gives $w(z_{\infty})=1$. Thus $w\not\equiv0$. Passing to the limit in \eqref{eq:wk-growth},
\begin{equation}\label{eq:w-limit-growth}
w(x)\le
\begin{cases}
2|x|^{-\gamma_{e}-\varepsilon},&0<|x|\le1,\\
2|x|^{-\gamma_{e}+\varepsilon},&|x|\ge1.
\end{cases}
\end{equation}
Because $\gamma_{e}+\varepsilon<\gamma_{h}$, Serrin's classification of isolated singularities for nonnegative $p$-harmonic functions \cite{Serrin1965} implies that the origin is removable: a nonremovable singularity is comparable to the fundamental $p$-harmonic profile $|x|^{-\gamma_{h}}$. Thus $w$ extends to a nonnegative entire $p$-harmonic function. Since $\varepsilon<\gamma_{e}$, \eqref{eq:w-limit-growth} gives $w(x)\to0$ as $|x|\to\infty$. The maximum principle on expanding balls then yields $w\equiv0$, contradicting \eqref{eq:wk-normalization}. Hence \eqref{eq:automatic-power} holds for some $\tau>0$.
\end{proof}

\section{The decay bootstrap}\label{sec:bootstrap}

\begin{theorem}\label{thm:bootstrap}
Let $N\ge3$, $p>1$ and $2p<\sigma<N$, and assume that $q:=p^*_{\sigma}>1$ (this is automatic when $\Lambda>0$ by Lemma \ref{lem:qbigger1}). Let $u$ be a positive $D^{1,p}(\mathbb{R}^{N})$-weak solution of \eqref{eq:main}. Then for every $\nu<\min\{\gamma_{h},\gamma_{*}\}$ there is $C_{\nu}>0$ with
\[
u(x)\le C_{\nu}|x|^{-\nu}
\qquad\text{for }|x|\ge1 .
\]
\end{theorem}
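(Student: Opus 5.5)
The plan is an iterative improvement of the decay exponent, started from Theorem \ref{thm:powergain}. That theorem gives $u(x)\le C|x|^{-\nu_0}$ for $|x|\ge1$ with $\nu_0=\gamma_e+\tau$. By Theorem \ref{thm:local-boundedness} and Corollary \ref{cor:structure}, $u$ is bounded and $C^{1,\alpha}$. I will show that a bound $u\le C|x|^{-\nu}$ with $\gamma_e<\nu<\min\{\gamma_h,\gamma_*\}$ upgrades to $u\le C'|x|^{-T(\nu)}$ with an explicit map $T$. The map will satisfy $T(\nu)>\nu$, with increments bounded below on compact subintervals of $(\gamma_e,\min\{\gamma_h,\gamma_*\})$. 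Finitely many iterations then reach any prescribed $\nu<\min\{\gamma_h,\gamma_*\}$.

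\emph{One step.} Assume $u\le C|x|^{-\nu}$, and perturb $\nu$ down slightly if necessary so that $q\nu\ne N$; this avoids the logarithmic case of Lemma \ref{lem:conv}(ii). Since $q\nu>q\gamma_e=N-\sigma/2>N-\sigma$, Lemma \ref{lem:conv}(ii) applies. It gives $W[u]\le C|x|^{N-\sigma-q\nu}$ if $q\nu<N$, and $W[u]\le C|x|^{-\sigma}$ if $q\nu>N$. Hence $-\Delta_pu\le C|x|^{-a(\nu)}$ for large $|x|$, where
\[
a(\nu)=\sigma-N+(2q-1)\nu\quad\text{or}\quad a(\nu)=\sigma+(q-1)\nu
\]
in the two cases respectively. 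Set $T(\nu):=\min\{(a(\nu)-p)/(p-1),\,\gamma_h-\delta\}$ for a small fixed $\delta>0$, and take the barrier $v=M|x|^{-T(\nu)}$. By \eqref{eq:barrier}, $-\Delta_pv=h(T)M^{p-1}|x|^{-T(p-1)-p}$, and $T(p-1)+p\le a$ with $h(T)>0$ because $T<\gamma_h$. Choosing $M$ large makes $v$ a supersolution dominating the right-hand side, and also makes $v\ge u$ on $\partial B_R$. Since $T>\gamma_e$, we have $\nabla v\in L^p$ and $(u-v)^+\in D^{1,p}$ of the exterior domain. Lemma \ref{lem:comparison} on $\mathbb R^N\setminus\overline{B_R}$ then gives $u\le v$.

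\emph{Dynamics of $T$.} In the regime $q\nu<N$, the identity $(2q-p)\gamma_e=N+p-\sigma$ gives
\[
\frac{a(\nu)-p}{p-1}-\nu=\frac{(2q-p)(\nu-\gamma_e)}{p-1}.
\]
Since $2q>p$, the gap $\nu-\gamma_e$ grows geometrically, so starting from $\nu_0>\gamma_e$ the exponent leaves any neighbourhood of $\gamma_e$ quickly. In the regime $q\nu>N$, the uncapped map is $\nu\mapsto(\sigma-p+(q-1)\nu)/(p-1)$. It is affine with slope $(q-1)/(p-1)<1$ and has unique fixed point $(\sigma-p)/(p-q)=\gamma_*$. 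Hence $T(\nu)-\nu=\frac{p-q}{p-1}(\gamma_*-\nu)>0$ for $\nu<\gamma_*$. In both regimes the increment is bounded below by a positive constant on any interval $[\nu_0,\bar\nu]$ with $\bar\nu<\min\{\gamma_h,\gamma_*\}$. The cap at $\gamma_h-\delta$ only stops the iteration near $\gamma_h$. Letting $\delta\downarrow0$ gives every $\nu<\min\{\gamma_h,\gamma_*\}$.

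\emph{Main obstacle.} The delicate point is uniformity across the switch between the two regimes and near the endpoints. Lemma \ref{lem:conv}(ii) has a logarithmic case at $q\nu=N$, which I will avoid by the small downward perturbation of the exponent. Near $\gamma_h$ the constant $h(T)\to0$, which is handled by the $\delta$-cap. Whether $\gamma_h$ or $\gamma_*$ is the binding limit is not needed here, since only the minimum enters. The comparison step also needs some care: $u$ is $C^1$ and $-\Delta_pu\le C|x|^{-a}$ holds in the weak sense on the exterior domain, so Lemma \ref{lem:comparison} applies directly.
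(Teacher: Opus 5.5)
Your proposal is correct and follows essentially the same route as the paper. Like the paper, you start from Theorem~\ref{thm:powergain}, use Lemma~\ref{lem:conv}(ii) to turn a decay rate $\delta$ into the bound $Wu^{q-1}\lesssim|x|^{-(p-1)T(\delta)-p}$, compare with the power barrier $M|x|^{-\nu}$ on an exterior domain via Lemma~\ref{lem:comparison} (this is Lemma~\ref{lem:onestep}), and iterate.

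The differences are only technical and both versions work:
\begin{itemize}
\item \emph{Logarithmic case.} The paper absorbs the case $q\delta=N$ by taking $a'<\sigma$ arbitrarily close to $\sigma$. You instead perturb the exponent downward.
\item \emph{Termination.} The paper proves termination by a fixed-point contradiction for the averaged iteration $\delta_{k+1}=(\delta_k+\min\{T(\delta_k),\gamma_h\})/2$. You use explicit positive lower bounds on the increments, namely $(2q-p)(\nu-\gamma_e)/(p-1)$ and $(p-q)(\gamma_*-\nu)/(p-1)$.
\end{itemize}
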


\subsection{Boundedness of the potential and positivity}

Throughout this section we assume
\begin{equation}\label{eq:standing}
N\ge3,\qquad 1<p<\frac{N}{2},\qquad 2p<\sigma<N,\qquad q>1,
\end{equation}
and $u$ is a nonnegative $D^{1,p}(\mathbb{R}^{N})$-weak solution of \eqref{eq:main}. By Lemma \ref{lem:qbigger1} the last condition in \eqref{eq:standing} is implied by the others together with $\Lambda>0$, so it is not an extra assumption in Theorem \ref{thm:main}. By Theorem \ref{thm:local-boundedness}, Corollary \ref{cor:structure}, and Theorem \ref{thm:powergain}, we have $u\in L^{\infty}(\mathbb{R}^{N})\cap C^{1,\alpha}(\mathbb{R}^{N})$ and, for some $\tau>0$, $u(x)\le C|x|^{-\gamma_{e}-\tau}$ at infinity.

\begin{lemma}\label{lem:Wbounded}
If in addition $u\in L^{\infty}(\mathbb{R}^{N})$, then $W\in L^{\infty}(\mathbb{R}^{N})$ with
\[
\|W\|_{L^{\infty}(\mathbb{R}^{N})}\le C\big(\|u\|^{q}_{L^{\infty}(\mathbb{R}^{N})}+\|u\|^{q}_{L^{p^{*}}(\mathbb{R}^{N})}\big).
\]
\end{lemma}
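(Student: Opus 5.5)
The plan is to reuse the near/far splitting of Lemma \ref{lem:Wloc}, but now with a fixed radius $r=1$ and an arbitrary center, so that the bound becomes global. Fix any $x\in\mathbb R^N$ and write
\[
W(x)=\int_{|x-y|<1}\frac{u^q(y)}{|x-y|^\sigma}\,dy+\int_{|x-y|\ge1}\frac{u^q(y)}{|x-y|^\sigma}\,dy=:W_1(x)+W_2(x).
\]

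For the near part, the global bound on $u$ gives
\[
W_1(x)\le\|u\|_{L^\infty(\mathbb R^N)}^q\int_{|z|<1}|z|^{-\sigma}\,dz=C(N,\sigma)\|u\|_{L^\infty(\mathbb R^N)}^q,
\]
which is finite because $\sigma<N$.

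For the far part, apply H\"older's inequality with the conjugate exponents $p^*/q$ and $2N/\sigma$; these are conjugate by \eqref{eq:lb-identities}. This yields
\[
W_2(x)\le\|u\|_{L^{p^*}}^q\Big(\int_{|z|\ge1}|z|^{-2N}\,dz\Big)^{\sigma/(2N)}=C\|u\|_{L^{p^*}}^q,
\]
and the integral converges since $2N>N$. Equivalently, Lemma \ref{lem:lb-far} with $\rho=1$ gives the same estimate.

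Both constants depend only on $N,\sigma,q$ and not on $x$. Taking the supremum over $x$ gives the stated bound. There is no real obstacle here: the only point to check is that the constant is uniform in the center, which holds because $W$ commutes with translations and both estimates are translation invariant. Finiteness of $\|u\|_{L^{p^*}}$ comes from $u\in D^{1,p}(\mathbb R^N)$.
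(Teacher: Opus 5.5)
Your proof is correct and is essentially the paper's argument. The paper simply applies Lemma \ref{lem:Wloc} with $r=1$ at an arbitrary center; you have written out that lemma's near/far splitting (the $L^\infty$ bound on $|x-y|<1$, and H\"older with exponents $p^*/q$ and $2N/\sigma$ on $|x-y|\ge1$), with constants independent of the center.
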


\begin{proof}
Apply Lemma \ref{lem:Wloc} with $r=1$ and an arbitrary center.
\end{proof}

\begin{lemma}\label{lem:regularity}
We have $u\in C^{1,\alpha}_{\mathrm{loc}}(\mathbb{R}^{N})$ for some $0<\alpha<\min\{1,\frac{1}{p-1}\}$. Moreover either $u\equiv0$ or $u>0$ in $\mathbb{R}^{N}$, and in the latter case $\inf_{K}u>0$ for every compact $K\subset\mathbb{R}^{N}$.
\end{lemma}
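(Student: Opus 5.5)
The plan is to assemble this lemma directly from results already established, in three steps.

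First, the regularity. By Theorem \ref{thm:local-boundedness}, $u\in L^\infty_{\mathrm{loc}}(\mathbb R^N)$, so the singular set $S$ is empty. Corollary \ref{cor:structure} then gives $u\in L^\infty(\mathbb R^N)$. Lemma \ref{lem:Wbounded} gives $W\in L^\infty(\mathbb R^N)$. Since $q>1$, the right-hand side $Wu^{q-1}$ is bounded. The interior estimates of DiBenedetto and Tolksdorf \cite{DiBenedetto1983,Tolksdorf1984} then yield $u\in C^{1,\alpha}_{\mathrm{loc}}(\mathbb R^N)$. Alternatively, this is Lemma \ref{lem:regoffS} with $S=\emptyset$. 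The restriction $\alpha<\min\{1,1/(p-1)\}$ is harmless, since we may always decrease $\alpha$.

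Second, the dichotomy. The right-hand side of \eqref{eq:main} is nonnegative, so $u$ is a continuous nonnegative weak supersolution of $-\Delta_pu\ge0$. Lemma \ref{lem:positivity-alternative} gives $u\equiv0$ or $u>0$ a.e. In the second case, pointwise positivity follows from the weak Harnack inequality on balls $B_r(x)\subset B_{2r}(x)$:
\[
\Big(\frac1{|B_{2r}|}\int_{B_{2r}(x)}u^\theta\Big)^{1/\theta}\le C\inf_{B_r(x)}u .
\]
The left side is positive because $u>0$ a.e., and continuity identifies the essential infimum with the pointwise infimum. Hence $u(x)>0$ at every point.

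Third, the lower bound on compacts. Since $u$ is continuous and strictly positive, it attains a positive minimum on each compact $K$. Equivalently, one can cover $K$ by finitely many balls $B_r(x_i)$ and apply the weak Harnack estimate on each.

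There is no real obstacle here. The only point requiring care is that the weak Harnack inequality, which is an essential-infimum statement, is applied to the continuous representative, and that the $C^{1,\alpha}$ representative from the first step agrees a.e. with $u$.
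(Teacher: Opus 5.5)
Your proposal is correct and follows essentially the same route as the paper. The right-hand side $Wu^{q-1}$ is bounded by Lemma \ref{lem:Wbounded} and $q>1$, so DiBenedetto--Tolksdorf gives $C^{1,\alpha}_{\mathrm{loc}}$; the weak Harnack inequality for the $p$-supersolution $u$ gives the dichotomy; and continuity plus compactness gives $\inf_K u>0$. The only difference is cosmetic: the paper runs the open--closed connectedness argument with the weak Harnack inequality directly, whereas you cite Lemma \ref{lem:positivity-alternative}, which for $S=\emptyset$ already contains the pointwise positivity you then re-derive.
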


\begin{proof}
By Lemma \ref{lem:Wbounded} and $q>1$, the right hand side of \eqref{eq:main} satisfies $Wu^{q-1}\in L^{\infty}(\mathbb{R}^{N})$. Hence $u$ is a bounded weak solution of $-\Delta_{p}u=f$ with $f\in L^{\infty}_{\mathrm{loc}}$, and the regularity results of \cite{DiBenedetto1983,Tolksdorf1984} give $u\in C^{1,\alpha}_{\mathrm{loc}}$.

Since $Wu^{q-1}\ge0$, the function $u$ is $p$-superharmonic. By the weak Harnack inequality \cite{Trudinger1967}, for every ball $B_{2r}(x_{0})$ and every $0<t<N(p-1)/(N-p)$,
\[
\inf_{B_{r}(x_{0})}u\ge c\Big(\frac{1}{|B_{2r}|}\int_{B_{2r}(x_{0})}u^{t}\,dy\Big)^{1/t},
\]
so the set where $u$ vanishes on a ball is open and closed, whence $u\equiv0$ or $u>0$. In the latter case the same inequality, together with continuity and compactness, gives $\inf_{K}u>0$.
\end{proof}

From now on we assume $u>0$. We record the consequence of \eqref{eq:SZ} that we shall use:
\begin{equation}\label{eq:lower}
u(x)\ge c_{1}|x|^{-\gamma_{h}}\qquad\text{for }|x|\ge R_{1},
\end{equation}
for some $c_{1}>0$ and $R_{1}\ge1$.

\subsection{The bootstrap map}

We keep the standing assumptions above and assume $u>0$. Define, for $\delta>\gamma_{e}$,
\begin{equation}\label{eq:adelta}
a(\delta):=\min\{\sigma,\ q\delta+\sigma-N\},
\qquad
T(\delta):=\frac{a(\delta)+(q-1)\delta-p}{p-1}.
\end{equation}

\begin{lemma}\label{lem:Tprops}
Let $N\ge3$, $p>1$ and $2p<\sigma<N$. Then $a$ and $T$ are nondecreasing and continuous on $(\gamma_{e},\infty)$, and
\begin{enumerate}[label=\textup{(\alph*)}]
\item $T(\gamma_{e})=\gamma_{e}$;
\item on the set where $q\delta+\sigma-N<\sigma$, the map $T$ is affine with slope $(2q-1)/(p-1)>1$;
\item on the set where $q\delta+\sigma-N\ge\sigma$, the map $T$ is affine with slope $(q-1)/(p-1)<1$ and unique fixed point $\gamma_{*}$;
\item $q\gamma_{*}>N$, so $\gamma_{*}$ lies in the saturated branch, and $\gamma_{e}$ and $\gamma_{*}$ are the only fixed points of $T$ in $(0,\infty)$;
\item $T$ is strictly increasing, $T(\delta)>\gamma_{e}$ for every $\delta>\gamma_{e}$, and $T(\delta)>\delta$ for every $\delta\in(\gamma_{e},\gamma_{*})$.
\end{enumerate}
\end{lemma}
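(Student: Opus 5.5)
The lemma is elementary. The plan is to treat $T$ as a continuous piecewise-affine function with a single breakpoint and to read every assertion off the two affine branches and their slopes.

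The breakpoint is where $q\delta+\sigma-N=\sigma$, that is, $\delta_0:=N/q$. By \eqref{eq:lb-identities}, $q\gamma_e=N-\sigma/2<N$, so $\gamma_e<\delta_0$ and the unsaturated branch is nonempty.
\begin{itemize}
\item Since $a$ is a minimum of two continuous nondecreasing functions, it is continuous and nondecreasing.
\item Hence $T$, which is $a$ plus $(q-1)\delta$ (with $q>1$) divided by $p-1>0$, is continuous and nondecreasing. It extends continuously to $\delta=\gamma_e$.
\item On $(\gamma_e,\delta_0)$ we have $a(\delta)=q\delta+\sigma-N$.
\item On $[\delta_0,\infty)$ we have $a(\delta)=\sigma$.
\end{itemize}

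\emph{Parts (a)--(c).} For (a), at $\delta=\gamma_e$ we have $a=\sigma/2$, so the claim $T(\gamma_e)=\gamma_e$ reduces to
\[
\sigma/2+(q-1)\gamma_e-p=(p-1)\gamma_e .
\]
Using $q\gamma_e=N-\sigma/2$, this becomes $p\gamma_e=N-p$, which is the definition of $\gamma_e$.

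For (b), on the unsaturated branch the slope is $(2q-1)/(p-1)$. This exceeds $1$ because $2q>p$ by Lemma \ref{lem:exponents}(a), which is also \eqref{eq:lb-identities2}.

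For (c), on the saturated branch the slope is $(q-1)/(p-1)$, which is less than $1$ since $q<p$. The fixed-point equation there reads $\sigma+(q-1)\delta-p=(p-1)\delta$, i.e. $(p-q)\delta=\sigma-p$, i.e. $\delta=\gamma_*$.

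\emph{Part (d), the step needing care.} We must check $q\gamma_*>N$, that is, $q(\sigma-p)>N(p-q)$. Substituting $q=p(2N-\sigma)/(2(N-p))$ and $p-q=p(\sigma-2p)/(2(N-p))$ and clearing the common positive factor turns this into
\[
(2N-\sigma)(\sigma-p)>N(\sigma-2p).
\]
Expanding both sides, this is equivalent to $\sigma(N-\sigma+p)>0$, which holds since $\sigma<N$. So $\gamma_*\ge\delta_0$ lies in the saturated branch and is a genuine fixed point of $T$.

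For uniqueness, each affine branch has slope different from $1$, so each has at most one fixed point. On the unsaturated branch (closed at $\gamma_e$, and continued below $\gamma_e$ by the same formula) that fixed point is $\gamma_e$. On the saturated branch it is $\gamma_*$.

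\emph{Part (e).} Both slopes are strictly positive, so $T$ is strictly increasing. Consequently $T(\delta)>T(\gamma_e)=\gamma_e$ for $\delta>\gamma_e$.

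For the last claim, consider $T(\delta)-\delta$.
\begin{itemize}
\item On $[\gamma_e,\delta_0]$ it is affine with positive slope and vanishes at $\gamma_e$, so it is positive on $(\gamma_e,\delta_0]$.
\item On $[\delta_0,\gamma_*]$ it is affine with negative slope and vanishes at $\gamma_*$, so it is positive on $[\delta_0,\gamma_*)$.
\end{itemize}
Continuity at $\delta_0$ glues the two ranges, giving $T(\delta)>\delta$ on $(\gamma_e,\gamma_*)$.
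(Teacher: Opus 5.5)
Your proposal is correct and follows essentially the same route as the paper: you treat $T$ as a continuous piecewise-affine map with breakpoint $N/q$, read (a)--(c) off the two branches, verify $q\gamma_*>N$ by direct computation (you cross-multiply, where the paper writes $q\gamma_*-N=\sigma(N+p-\sigma)/(\sigma-2p)$), and deduce (e) from the signs of the slopes of $T(\delta)-\delta$ and the location of the fixed points. Like the paper, you need the section's standing assumption $q>1$ for strict monotonicity, which is not among the lemma's listed hypotheses, and you correctly flag it.
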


\begin{proof}
The map $a$ is nondecreasing and continuous, and $\delta\mapsto(q-1)\delta$ is strictly increasing because $q>1$, so $T$ is strictly increasing and continuous. For (a), $q\gamma_{e}=N-\sigma/2<N$, so $a(\gamma_{e})=q\gamma_{e}+\sigma-N=\sigma/2$, and
\[
T(\gamma_{e})=\frac{\sigma/2+(q-1)\gamma_{e}-p}{p-1}
=\frac{\sigma/2+N-\sigma/2-\gamma_{e}-p}{p-1}
=\frac{(p-1)(N-p)}{p(p-1)}=\gamma_{e},
\]
where we used $q\gamma_{e}=N-\sigma/2$. Claims (b) and (c) are immediate from \eqref{eq:adelta}, together with $2q>p$ and $q<p$ from Lemma \ref{lem:exponents}(a), and the computation of the fixed points of the two affine branches: in case (b) it is $(N+p-\sigma)/(2q-p)=\gamma_{e}$, and in case (c) it is $(\sigma-p)/(p-q)=\gamma_{*}$. For (d), a computation gives
\[
q\gamma_{*}-N=\frac{\sigma(N+p-\sigma)}{\sigma-2p}>0
\]
because $\sigma<N$ and $\sigma>2p$, so $\gamma_{*}$ indeed belongs to the saturated branch, while $q\gamma_{e}=N-\sigma/2<N$ places $\gamma_{e}$ in the unsaturated one. Since each branch is affine with a single fixed point, these are the only two. For (e), strict monotonicity was observed above, and $T(\delta)>T(\gamma_{e})=\gamma_{e}$ for $\delta>\gamma_{e}$. On the unsaturated branch $T$ is affine with slope larger than $1$ and fixed point $\gamma_{e}$, so $T(\delta)>\delta$ there for $\delta>\gamma_{e}$. On the saturated branch $T$ is affine with slope smaller than $1$ and fixed point $\gamma_{*}$, so $T(\delta)>\delta$ there exactly for $\delta<\gamma_{*}$.
\end{proof}

\begin{lemma}\label{lem:onestep}
Assume the assumptions of Theorem \ref{thm:bootstrap}, let $\delta>\gamma_{e}$ and suppose that $u(x)\le C_{\delta}|x|^{-\delta}$ for $|x|\ge1$. Then for every
\[
\nu<\min\{\gamma_{h},\ T(\delta)\}
\]
there is $C_{\nu}>0$ with $u(x)\le C_{\nu}|x|^{-\nu}$ for $|x|\ge1$.
\end{lemma}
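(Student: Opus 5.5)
The plan is to estimate the right-hand side of \eqref{eq:main} pointwise at infinity and then compare $u$ with a radial power barrier on an exterior domain. First, from $u\le C_\delta|x|^{-\delta}$ with $\delta>\gamma_e$, Lemma \ref{lem:Wbounded} and Lemma \ref{lem:conv}(ii) give $W[u](x)\le C|x|^{-a(\delta)}$ for $|x|\ge2$. The three cases $q\delta<N$, $q\delta=N$ and $q\delta>N$ produce the exponents $q\delta+\sigma-N$, $\sigma$ with an extra $\log|x|$, and $\sigma$ respectively. We need $q\delta>N-\sigma$, and this holds since $q\gamma_e=N-\sigma/2>N-\sigma$. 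In the logarithmic case we simply lose an arbitrarily small power. This yields
\[
-\Delta_pu=W[u]u^{q-1}\le C|x|^{-a(\delta)-(q-1)\delta+\eta}=C|x|^{-(p-1)T(\delta)-p+\eta}
\]
for any small $\eta\ge0$, with $\eta>0$ needed only in the borderline case.

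Next, fix $\nu<\min\{\gamma_h,T(\delta)\}$. We may assume $\nu>\delta$, since otherwise there is nothing to prove. Then choose $\eta$ so small that $(p-1)\nu+p<(p-1)T(\delta)+p-\eta$. By \eqref{eq:barrier} with $0<\nu<\gamma_h$, the function $v=M|x|^{-\nu}$ satisfies $-\Delta_pv=h(\nu)M^{p-1}|x|^{-(p-1)\nu-p}$ with $h(\nu)>0$. For $|x|\ge\rho$ with $\rho$ large, this dominates $C|x|^{-(p-1)T(\delta)-p+\eta}$ once $M\ge1$, because the exponent gap lets us absorb the constant by enlarging $\rho$. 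Having fixed $\rho$, we then take $M\ge\rho^\nu\max_{|x|=\rho}u$ so that $u\le v$ on $\partial B_\rho$.

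The comparison on $\mathbb R^N\setminus\overline{B_\rho}$ is done with Lemma \ref{lem:comparison}. The hypotheses to check are $\nabla u,\nabla v\in L^p$ there (true, since $\nu>\gamma_e$ gives $(\nu+1)p>N$), and $(u-v)^+\in D^{1,p}$ with zero trace, which follows because $u,v\in D^{1,p}$ of the exterior domain. The differential inequality $-\Delta_pu\le-\Delta_pv$ holds weakly because $u\in C^{1,\alpha}$ and $v$ is smooth away from the origin. Hence $u\le M|x|^{-\nu}$ for $|x|\ge\rho$, and boundedness of $u$ on $B_\rho$ gives the estimate on $|x|\ge1$.

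The main obstacle is keeping the exponents strict. One issue is the logarithmic case $q\delta=N$ in Lemma \ref{lem:conv}(ii). The other is ensuring the barrier's source exponent strictly beats the true one, which is exactly why $\nu<T(\delta)$ is strict. The restriction $\nu<\gamma_h$ is forced because $h(\nu)>0$ requires it.
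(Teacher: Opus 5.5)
Your proposal is correct and follows the paper's proof: it uses the same decay bound for $W[u]$ from Lemma \ref{lem:conv}(ii), with an arbitrarily small loss in the borderline case $q\delta=N$, the same power barrier $M|x|^{-\nu}$ from \eqref{eq:barrier}, and the same exterior comparison via Lemma \ref{lem:comparison}. The only differences are cosmetic. You absorb the constant by enlarging $\rho$ (using the strict exponent gap) rather than by enlarging $M$ with $\rho=2$ fixed. You also reduce to $\nu>\delta$ instead of $\nu>\gamma_e$.
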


\begin{proof}
Since $\delta>\gamma_{e}$ we have $q\delta>q\gamma_{e}=N-\sigma/2>N-\sigma$, so Lemma \ref{lem:conv}(ii) applies with $\beta=\delta$ and gives, for $|x|\ge2$,
\begin{equation}\label{eq:Wdecay}
W(x)\le C|x|^{-a'},
\qquad
a'=
\begin{cases}
a(\delta) &\text{if }q\delta\neq N,\\
\text{any } a'<\sigma &\text{if }q\delta=N,
\end{cases}
\end{equation}
where in the borderline case we absorbed the logarithm. Note that $W\in L^{\infty}$ by Lemma \ref{lem:Wbounded}, and that in all cases $a'$ may be taken arbitrarily close to $a(\delta)$ from below. Consequently the right hand side of \eqref{eq:main} obeys
\begin{equation}\label{eq:fdecay}
f:=Wu^{q-1}\le C|x|^{-m},
\qquad m:=a'+(q-1)\delta,
\qquad |x|\ge2 ,
\end{equation}
where we used $q>1$ and $u\le C_{\delta}|x|^{-\delta}$.

Fix $\nu<\min\{\gamma_{h},T(\delta)\}$ and, using \eqref{eq:Wdecay}, choose $a'$ so close to $a(\delta)$ that $\nu(p-1)+p\le m$. Set $\rho:=2$ and, on $\Omega:=\mathbb{R}^{N}\setminus\overline{B_{\rho}}$, let $v(x):=M|x|^{-\nu}$. By \eqref{eq:barrier}, $h(\nu)>0$, and for $|x|\ge\rho$,
\[
-\Delta_{p}v=h(\nu)M^{p-1}|x|^{-\nu(p-1)-p}
\ge h(\nu)M^{p-1}\rho^{\,m-\nu(p-1)-p}|x|^{-m}
\ge C|x|^{-m}\ge f,
\]
provided $M$ is large enough, where we used $\nu(p-1)+p\le m$ in the first inequality. Enlarging $M$ further we also have $v\ge u$ on $\partial B_{\rho}$, because $u$ is bounded.

We check the assumptions of Lemma \ref{lem:comparison}. We may assume $\nu>\gamma_{e}$, since otherwise the asserted bound is weaker than the assumption $u\le C_{\delta}|x|^{-\delta}$ and there is nothing to prove. Both $u$ and $v$ lie in $W^{1,p}_{\mathrm{loc}}(\overline{\Omega})$, and $\nabla v\in L^{p}(\Omega)$ because
\[
\int_{\Omega}|\nabla v|^{p}\,dx\simeq\int_{\rho}^{\infty}r^{N-1-(\nu+1)p}\,dr<\infty
\qquad\Longleftrightarrow\qquad \nu>\gamma_{e}.
\]
Hence $(u-v)^{+}$ is dominated by $u\in D^{1,p}(\mathbb{R}^{N})$, vanishes on $\partial\Omega$, and has gradient $\nabla u-\nabla v\in L^{p}$ on the set where it is positive, so $(u-v)^{+}\in D^{1,p}(\Omega)$. Since $-\Delta_{p}u=f\le-\Delta_{p}v$ weakly in $\Omega$, Lemma \ref{lem:comparison} gives $u\le v$ in $\Omega$. Together with the boundedness of $u$ on $B_{\rho}$, this is the assertion.
\end{proof}

\begin{proof}[Proof of Theorem \ref{thm:bootstrap}]
By Theorem \ref{thm:powergain}, there are $C>0$ and $\tau>0$ such that $u(x)\le C|x|^{-\gamma_{e}-\tau}$ for $|x|\ge1$. We may therefore start from $\delta_{0}:=\gamma_{e}+\tau>\gamma_{e}$. Fix
\[
\nu<\min\{\gamma_{h},\gamma_{*}\}.
\]
If $\nu\le\delta_{0}$, the conclusion follows immediately from Theorem \ref{thm:powergain}. We therefore assume $\delta_{0}<\nu$ and set
\[
F(\delta):=\min\{T(\delta),\gamma_{h}\}.
\]
As long as $\delta_{k}<\nu$, define
\begin{equation}\label{eq:strict-bootstrap}
\delta_{k+1}:=\frac{\delta_{k}+F(\delta_{k})}{2}.
\end{equation}
Since $\delta_{k}<\nu<\min\{\gamma_{h},\gamma_{*}\}$, Lemma \ref{lem:Tprops}(e) gives $T(\delta_{k})>\delta_{k}$, while also $\gamma_{h}>\delta_{k}$. Hence
\[
\delta_{k}<\delta_{k+1}<F(\delta_{k})=\min\{T(\delta_{k}),\gamma_{h}\}.
\]
Thus Lemma \ref{lem:onestep} applies at every step and, by induction, there is $C_{k}>0$ such that
\[
u(x)\le C_{k}|x|^{-\delta_{k}}\qquad\text{for }|x|\ge1 .
\]

It remains to show that $\delta_{k}\ge\nu$ for some finite $k$. Suppose otherwise. Then $(\delta_{k})$ is strictly increasing and bounded above by $\nu$, hence $\delta_{k}\to\delta_{\infty}$ for some
\[
\delta_{0}<\delta_{\infty}\le\nu<\min\{\gamma_{h},\gamma_{*}\}.
\]
Passing to the limit in \eqref{eq:strict-bootstrap} and using the continuity of $F$ gives $F(\delta_{\infty})=\delta_{\infty}$. Since $\delta_{\infty}<\gamma_{h}$, this implies $T(\delta_{\infty})=\delta_{\infty}$. By Lemma \ref{lem:Tprops}(d), the only positive fixed points of $T$ are $\gamma_{e}$ and $\gamma_{*}$, contradicting
$\gamma_{e}<\delta_{0}<\delta_{\infty}<\gamma_{*}$. Therefore some $\delta_{k}\ge\nu$, and the corresponding decay estimate proves the theorem.
\end{proof}

\section{The classical phase \texorpdfstring{$\Lambda>0$}{Lambda > 0}}\label{sec:classical}

\subsection{Sharp continuation of the Dai--Li--Liu profile}\label{sec:sharp}

\begin{theorem}\label{thm:main}
Let $N\ge3$, $p>1$ and $2p<\sigma<N$, and assume that $q:=p^*_{\sigma}>1$ and $\Lambda>0$; the inequality $q>1$ follows automatically from Lemma \ref{lem:qbigger1}, and $\Lambda>0$ is automatic if $p\ge2$. Let $u$ be a nonnegative $D^{1,p}(\mathbb{R}^{N})$-weak solution of \eqref{eq:main}. Then $u\in C^{1,\alpha}(\mathbb{R}^{N})\cap L^{\infty}(\mathbb{R}^{N})$ for some $0<\alpha<\min\{1,\frac{1}{p-1}\}$, and either $u\equiv0$, or $u>0$ in $\mathbb{R}^{N}$ and the following hold.
\begin{enumerate}[label=\textup{(\roman*)}]
\item The sharp asymptotic estimates \eqref{eq:sharp-u} and \eqref{eq:sharp-grad} hold.
\item The function $u$ is radially symmetric about some point $x_{0}\in\mathbb{R}^{N}$ and strictly decreasing in the radial variable; equivalently, $u(x)=U(|x-x_{0}|)$ for a positive profile $U$ with $U'(r)<0$ for every $r>0$.
\end{enumerate}
\end{theorem}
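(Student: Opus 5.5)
Regularity and the dichotomy are already in hand: Theorem \ref{thm:local-boundedness}, Corollary \ref{cor:structure} and Lemma \ref{lem:regularity} give $u\in C^{1,\alpha}(\mathbb R^N)\cap L^\infty(\mathbb R^N)$ and either $u\equiv0$ or $u>0$. So assume $u>0$. Since $\Lambda>0$, Lemma \ref{lem:exponents}(c) gives $\gamma_*>\gamma_h$. Theorem \ref{thm:bootstrap} then yields $u\le C_\nu|x|^{-\nu}$ for every $\nu<\gamma_h$, and \eqref{eq:lower} gives the lower bound $u\ge c_1|x|^{-\gamma_h}$.

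To close the gap at $\nu=\gamma_h$, fix $\nu<\gamma_h$ close to $\gamma_h$ and feed the bound into Lemma \ref{lem:conv}(ii), so that $W\le C|x|^{-a(\nu)}$. Then $f=Wu^{q-1}\le C|x|^{-m}$ with $m=a(\nu)+(q-1)\nu$. By Lemma \ref{lem:exponents}(d), $\sigma+(q-1)\gamma_h=N+\Lambda$, and in the unsaturated branch the corresponding quantity is even larger. Hence for $\nu$ near $\gamma_h$ we get $m>N+\Lambda/2>N$, so $f\lesssim|x|^{-N-\epsilon}$ is integrable with a strict power margin. Two routes are then available.

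\begin{itemize}
\item \emph{Potential formulation.} Write $-\Delta_pu=V u^{p-1}$ with $V=Wu^{q-p}$. Using the lower bound $u\ge c|x|^{-\gamma_h}$ to control the singular factor $u^{q-p}$, we get $V\le C|x|^{-a+(p-q)\gamma_h}$. By Lemma \ref{lem:exponents}(d) this is at most $C|x|^{-p-\Lambda}$, which is $o(|x|^{-p})$ and lies in $L^{N/p}$ near infinity with a power tail. With $u$ bounded, $V$ bounded on compacts and $V\in L^{N/p}(\mathbb R^N)$ with $|x|^{-p-\Lambda}$ decay, the hypotheses of the Dai--Li--Liu potential theorem \cite[Theorem 1.3]{DaiLiLiu2025} should be met. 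That theorem gives \eqref{eq:sharp-u} and \eqref{eq:sharp-grad} directly.
\item \emph{Barrier argument, as a fallback.} Compare $u$ on $\mathbb R^N\setminus B_\rho$ with $M(|x|^{-\gamma_h}-|x|^{-\gamma_h-\epsilon'})$. This is a supersolution with source of order $|x|^{-\gamma_h(p-1)-p-\epsilon'}$, which dominates $|x|^{-m}$ once $\epsilon'$ is small, using $m>\gamma_h(p-1)+p$ (equivalent to $m>N$). Lemma \ref{lem:comparison} then gives $u\le C|x|^{-\gamma_h}$.
\end{itemize}

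The gradient bounds would follow by rescaling: $u_R(y)=R^{\gamma_h}u(Ry)$ on the annulus $1<|y|<2$ has bounded $C^{1,\alpha}$ norm, and it converges to the multiple of the fundamental solution, via Serrin--type removability / the Kichenassamy--V\'eron asymptotics, since $\int f<\infty$ makes the flux converge. That convergence gives the two-sided $|\nabla u|\simeq|x|^{-\gamma_h-1}$.

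For symmetry, run moving planes in direction $e_1$. For $w=(u-u_\lambda)^+$ on $\Sigma_\lambda$, test the difference of the equations with $w$. The nonlocal term splits at the level of $u^{q-1}$ as
\[
W[u]u^{q-1}-W[u_\lambda]u_\lambda^{q-1}=W[u](u^{q-1}-u_\lambda^{q-1})+(W[u]-W[u_\lambda])u_\lambda^{q-1}.
\]
The second part is controlled by the Hardy--Littlewood--Sobolev inequality on $w^q$ restricted to $\Sigma_\lambda$.

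\begin{itemize}
\item \emph{Case $p\ge2$.} Use the weighted Sobolev inequality (Lemma \ref{lem:weighted-sobolev-local}) near compacts, together with the sharp decay at infinity.
\item \emph{Case $p<2$.} Use \eqref{eq:sharp-monotonicity-singular} and \eqref{eq:cutoff-hardy} with $s=(2-p)(\gamma_h+1)$. The critical-tail term carries a spare factor $|x|^{-\Lambda}$ relative to the Hardy weight, so it is absorbed for $|x|$ large.
\end{itemize}

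This yields the start of the procedure for $\lambda\ll0$ and continuation up to the limiting $\lambda_0$. At $\lambda_0$, strict separation $u>u_{\lambda_0}$ unless $u\equiv u_{\lambda_0}$ comes from the Hartree term (Lemma \ref{lem:strict-separation}): $W[u]-W[u_{\lambda_0}]>0$ whenever $w\not\equiv0$. The main obstacle is uniform smallness of $w$ near the critical set of $u$ when $p<2$ or $p>2$, which is handled by the weighted estimates and compactness. Radial strict monotonicity then follows from the ODE $-(r^{N-1}|U'|^{p-2}U')'=r^{N-1}f>0$, which integrates to $U'<0$.
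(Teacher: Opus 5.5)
Your proposal follows the paper's route: the bootstrap upper bound plus the Serrin--Zou lower bound give $V=W[u]u^{q-p}\le C|x|^{-\beta}$ with $\beta>p$, and then \cite[Theorem 1.3]{DaiLiLiu2025} yields (i), while (ii) comes from moving planes with the splitting \eqref{eq:splitting}, HLS control of the nonlocal difference, Hardy absorption at infinity, the local weighted Sobolev inequality for $p\ge2$, strict separation from the Hartree term at $\lambda_0$, and the radial ODE (your barrier/blow-down fallback for (i) is also sound, and is essentially the argument the paper uses in the resonant phase). A few statements need correcting. In the unsaturated branch $q\gamma_h\le N$ (possible only when $p>2$) the margin is $m-N=(N+p-\sigma)/(p-1)\le\Lambda$, not larger, so $V\le C|x|^{-p-\Lambda}$ may fail there, although $\beta>p$ still holds by Lemma \ref{lem:exponents}(d); the nonlocal difference must be bounded through $u^{q-1}w$ as in Lemmas \ref{lem:nonlocal-diff} and \ref{lem:I2}, since no bound $u^q-u_\lambda^q\le Cw^q$ holds; and the separation at $\lambda_0$ is $u<u_{\lambda_0}$ a.e., coming from $W[u](x)<W[u](x_{\lambda_0})$, after which the continuation comes from $|A_\lambda\cap B_{3R}|\to0$ combined with (weighted) Sobolev, with no critical-set obstruction when $p<2$.
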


We now assume in addition that $\Lambda>0$, and we prove part (i) of Theorem \ref{thm:main}. Recall the potential $V=Wu^{q-p}$ of \eqref{eq:VForm}.

\begin{proposition}\label{prop:Vdecay}
Assume the assumptions of Theorem \ref{thm:main} and $u>0$. Then there are $\beta>p$, $C_{V}>0$ and $R_{2}\ge1$ such that
\begin{equation}\label{eq:Vdecay}
0\le V(x)\le C_{V}|x|^{-\beta}\qquad\text{for }|x|\ge R_{2},
\end{equation}
and $V\in L^{\infty}_{\mathrm{loc}}(\mathbb{R}^{N})\cap L^{N/p}(\mathbb{R}^{N})$.
\end{proposition}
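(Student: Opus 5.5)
The plan is to combine the two-sided information on $u$ already available: the Serrin--Zou lower bound \eqref{eq:lower} and the sharp-minus-$\varepsilon$ upper bound from Theorem~\ref{thm:bootstrap}. Since $q<p$, the factor $u^{q-p}$ in $V=W[u]u^{q-p}$ is controlled from above by the \emph{lower} bound on $u$. The factor $W$ is controlled by the \emph{upper} bound on $u$, through Lemma~\ref{lem:conv}(ii).

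\textbf{Step 1 (bound on $u^{q-p}$).} By \eqref{eq:lower},
\[
u^{q-p}(x)\le c_1^{q-p}|x|^{(p-q)\gamma_h}\qquad\text{for }|x|\ge R_1 .
\]

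\textbf{Step 2 (upper bound on $W$).} Since $\Lambda>0$, Lemma~\ref{lem:exponents}(c) gives $\gamma_*>\gamma_h$. Hence Theorem~\ref{thm:bootstrap} yields $u\le C_\nu|x|^{-\nu}$ for every $\nu<\gamma_h$. Fix such a $\nu$ close to $\gamma_h$, with $\nu>\gamma_e$, so that $q\nu>N-\sigma$. Also choose $\nu$ so that $q\nu\ne N$; this avoids the logarithmic case, or one may absorb a logarithm as in Lemma~\ref{lem:onestep}. Lemma~\ref{lem:conv}(ii) then gives $W(x)\le C|x|^{-a(\nu)}$ for $|x|\ge2$, using $u\in L^\infty$.

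\textbf{Step 3 (the exponent $\beta$).} Multiplying the two bounds gives $V\le C|x|^{-\beta_\nu}$ with $\beta_\nu:=a(\nu)-(p-q)\gamma_h$. Two cases arise.
\begin{itemize}
\item If $q\gamma_h>N$, then for $\nu$ close to $\gamma_h$ we have $q\nu>N$, so $a(\nu)=\sigma$. Lemma~\ref{lem:exponents}(d) then gives $\beta_\nu=\sigma-(p-q)\gamma_h=p+\Lambda>p$.
\item If $q\gamma_h\le N$, then $a(\nu)=q\nu+\sigma-N$. As $\nu\to\gamma_h$, $\beta_\nu\to q\gamma_h+\sigma-N-(p-q)\gamma_h=p+\frac{N+p-\sigma}{p-1}>p$, again by Lemma~\ref{lem:exponents}(d). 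So $\beta_\nu>p$ once $\nu$ is close enough to $\gamma_h$.
\end{itemize}
In either case we fix such a $\nu$ and set $\beta:=\beta_\nu$.

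\textbf{Step 4 (local bound and integrability).} Local boundedness of $V$ follows from Lemma~\ref{lem:Wbounded} and Lemma~\ref{lem:regularity}: $W\in L^\infty$, and $u$ is continuous with $\inf_K u>0$ on compacts, so $u^{q-p}$ is locally bounded. Finally, $V\in L^{N/p}(\mathbb R^N)$ follows because $V$ is bounded on $B_{R_2}$ and $V^{N/p}\le C|x|^{-\beta N/p}$ outside it, with $\beta N/p>N$.

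The only delicate point is Step~3. The loss $\nu<\gamma_h$ in the upper bound must not destroy the strict inequality $\beta>p$, and the borderline $q\nu=N$ of Lemma~\ref{lem:conv}(ii) must be handled. Both are settled by the strict margins in Lemma~\ref{lem:exponents}(d) together with the continuity of $a(\nu)$: one takes $\nu$ close to $\gamma_h$ and $a'$ slightly below $a(\nu)$.
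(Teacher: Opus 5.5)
Your proposal is correct and follows the paper's proof essentially step by step: the Serrin--Zou lower bound \eqref{eq:lower} controls $u^{q-p}$, Theorem~\ref{thm:bootstrap} (available up to $\gamma_h$ because $\gamma_*>\gamma_h$) combined with Lemma~\ref{lem:conv}(ii) controls $W$, and Lemma~\ref{lem:exponents}(d) shows that the limiting exponent exceeds $p$ in both cases $q\gamma_h>N$ and $q\gamma_h\le N$. Your remark that taking $\nu$ close to $\gamma_h$ automatically keeps you away from the logarithmic case $q\nu=N$ is only a cosmetic simplification of the paper's device of taking $a'$ slightly below $a(\nu)$.
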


\begin{proof}
By Lemma \ref{lem:exponents}(c) we have $\gamma_{*}>\gamma_{h}$, so Theorem \ref{thm:bootstrap} provides, for every $\nu<\gamma_{h}$, a constant with $u\le C_{\nu}|x|^{-\nu}$ for $|x|\ge1$. Fix such a $\nu$, to be chosen. As in \eqref{eq:Wdecay}, Lemma \ref{lem:conv}(ii) gives $W\le C|x|^{-a'}$ for $|x|\ge2$ with $a'$ arbitrarily close to $a(\nu)=\min\{\sigma,q\nu+\sigma-N\}$ from below. By \eqref{eq:lower} and $q-p<0$,
\[
u^{q-p}(x)\le c_{1}^{q-p}|x|^{(p-q)\gamma_{h}}\qquad\text{for }|x|\ge R_{1},
\]
so \eqref{eq:Vdecay} holds with $\beta=a'-(p-q)\gamma_{h}$. Letting $\nu\uparrow\gamma_{h}$ and $a'\uparrow a(\nu)$ we get
\[
\beta\ \longrightarrow\ \min\{\sigma,\ q\gamma_{h}+\sigma-N\}-(p-q)\gamma_{h}
=\begin{cases}
p+\Lambda &\text{if }q\gamma_{h}>N,\\[2pt]
p+\dfrac{N+p-\sigma}{p-1} &\text{if }q\gamma_{h}\le N,
\end{cases}
\]
by Lemma \ref{lem:exponents}(d). In the first case the limit exceeds $p$ because $\Lambda>0$, and in the second because $\sigma<N$. Hence $\nu$ and $a'$ may be fixed so that $\beta>p$.

For the integrability, $V\le C_{K}W$ on every compact $K$ by Lemma \ref{lem:regularity}, and $W\in L^{\infty}(\mathbb{R}^{N})$ by Lemma \ref{lem:Wbounded}, so $V\in L^{\infty}_{\mathrm{loc}}$. Consequently $\int_{B_{R_{2}}}V^{N/p}<\infty$, while by \eqref{eq:Vdecay},
\[
\int_{|x|\ge R_{2}}V^{N/p}\,dx\le C\int_{R_{2}}^{\infty}r^{N-1-\beta N/p}\,dr<\infty
\]
because $\beta>p$.
\end{proof}

\begin{proof}[Proof of Theorem \ref{thm:main}(i)]
By Lemma \ref{lem:regularity}, $u\in C^{1,\alpha}_{\mathrm{loc}}\cap L^{\infty}$ and either $u\equiv0$ or $u>0$. Assume $u>0$. By Proposition \ref{prop:Vdecay}, $u$ is a positive $D^{1,p}(\mathbb{R}^{N})$-weak solution of $-\Delta_{p}u=V(x)u^{p-1}$ with $0\le V\in L^{N/p}(\mathbb{R}^{N})$, so \cite[Theorem 1.3]{DaiLiLiu2025} applies once its assumptions are checked.

The requirement $u\in L^{r}_{\mathrm{loc}}$ for all $r<\infty$ holds because $u\in L^{\infty}$. The requirement $V\in L^{t}_{\mathrm{loc}}$ for all $N/p<t<\infty$ holds because $V\in L^{\infty}_{\mathrm{loc}}$, and it gives $u\in C^{1,\alpha}(\mathbb{R}^{N})$.

We verify the decay property \cite[(1.9)]{DaiLiLiu2025} with constants uniform in the exponent $\bar p$, as required there. By Theorem \ref{thm:bootstrap} there are $\nu>\gamma_e$, $C_u>0$ and $R_u\ge1$ such that $u(x)\le C_u|x|^{-\nu}$ for $|x|\ge R_u$. Set
\[
\tau_0:=\frac{\nu-\gamma_e}{2}>0
\]
and choose $\widehat p$ so large that $\nu\widehat p>N$ and
$N/\widehat p\le(\nu-\gamma_e)/2$. Then, for every $\bar p\ge\widehat p$ and $R\ge R_u$,
\begin{align*}
\|u\|_{L^{\bar p}(\mathbb R^N\setminus B_{2R})}
&\le C_u\left(\frac{\omega_{N-1}}{\nu\bar p-N}\right)^{1/\bar p}(2R)^{-\nu+N/\bar p}\\
&\le C R^{-\gamma_e-\tau_0}
 =\frac{C}{R^{\frac{N-p}{p}+\tau_0}},
\end{align*}
where $C$ is independent of both $\bar p\ge\widehat p$ and $R\ge R_u$, because
$\sup_{\bar p\ge\widehat p}(\omega_{N-1}/(\nu\bar p-N))^{1/\bar p}<\infty$.
Thus \cite[(1.9)]{DaiLiLiu2025} holds in its stated uniform form. The uniform bound \cite[(1.10)]{DaiLiLiu2025} holds because \eqref{eq:Vdecay} gives $\|V\|_{L^{t}(B_{1}(x))}\le C|x|^{-\beta}\le C$ for $|x|\ge2R_{2}$. Finally the decay assumption on $V$ required for the two-sided estimates is exactly \eqref{eq:Vdecay} with $\beta>p$. Therefore \cite[Theorem 1.3]{DaiLiLiu2025} yields \eqref{eq:sharp-u} and \eqref{eq:sharp-grad}.
\end{proof}

\subsection{Radial symmetry}\label{sec:mp}

The useful decomposition in the complementary regime is
\begin{equation}\label{eq:splitting}
W[u]u^{q-1}-W[u_\lambda]u_\lambda^{q-1}
=W[u]\big(u^{q-1}-u_\lambda^{q-1}\big)
+\big(W[u]-W[u_\lambda]\big)u_\lambda^{q-1}.
\end{equation}
Unlike the splitting through $u^{q-p}$, this has the correct monotonicity whenever $q>1$.

We prove part (ii) of Theorem \ref{thm:main}. We follow the scheme of \cite[Section 4]{DaiLiLiu2025}, which in turn refines \cite{DamascelliMerchanMontoroSciunzi2014,Sciunzi2016,Vetois2016}, with two changes. The nonlinearity is split as in \eqref{eq:splitting}, and at the limiting position of the plane the strong comparison principle is replaced by Lemma \ref{lem:strict-separation} below, which exploits the nonlocal term. We give the continuation step in full, since the argument is not a verbatim transcription.

For a direction $\nu\in S^{N-1}$ and $\lambda\in\mathbb{R}$ set
\[
\Sigma^{\nu}_{\lambda}:=\{x\in\mathbb{R}^{N}:\langle x,\nu\rangle<\lambda\},
\qquad
x^{\nu}_{\lambda}:=x+2(\lambda-\langle x,\nu\rangle)\nu,
\qquad
u^{\nu}_{\lambda}(x):=u(x^{\nu}_{\lambda}).
\]
We drop the superscript $\nu$ when no confusion arises and write $w:=(u-u_{\lambda})^{+}$.

\begin{lemma}\label{lem:nonlocal-diff}
Let $u$ be as in Theorem \ref{thm:main} with $u>0$, and let $x\in\Sigma_{\lambda}$. Then
\begin{equation}\label{eq:Wdiff}
W(x)-W(x_{\lambda})\le P_{\lambda}(x):=q\int_{\Sigma_{\lambda}}\frac{u^{q-1}(y)w(y)}{|x-y|^{\sigma}}\,dy .
\end{equation}
Moreover there are $C>0$ and $R_{3}\ge1$, independent of $\lambda\le0$, such that, on $\Sigma_{\lambda}\cap\{|x|\ge R_{3}\}$,
\begin{equation}\label{eq:Kbound}
W(x)\big(u^{q-1}(x)-u^{q-1}_{\lambda}(x)\big)\le K(x)\,w(x),
\qquad
K(x)\le\frac{C}{|x|^{\beta+(p-2)\gamma_{h}}},
\end{equation}
with $\beta>p$ as in Proposition \ref{prop:Vdecay}. More generally, for every $L>0$ the same estimate holds uniformly for $|\lambda|\le L$, after replacing $C,R_{3}$ by constants $C_{L},R_{3,L}$.
\end{lemma}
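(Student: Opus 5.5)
For \eqref{eq:Wdiff} I would use the standard reflection splitting of the Riesz potential. Writing $\mathbb R^N=\Sigma_\lambda\cup(\mathbb R^N\setminus\Sigma_\lambda)$ and changing variables $y\mapsto y_\lambda$ on the complement, one gets for $x\in\Sigma_\lambda$
\[
W(x)-W(x_\lambda)=\int_{\Sigma_\lambda}\Big(\frac1{|x-y|^{\sigma}}-\frac1{|x_\lambda-y|^{\sigma}}\Big)\big(u^q(y)-u_\lambda^q(y)\big)\,dy ,
\]
since $|x-y_\lambda|=|x_\lambda-y|$. For $x,y\in\Sigma_\lambda$ the kernel difference lies in $[0,|x-y|^{-\sigma}]$. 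Where $u\le u_\lambda$ the integrand is $\le0$ and may be dropped. Where $u>u_\lambda$, the mean value theorem gives $u^q-u_\lambda^q\le q u^{q-1}(u-u_\lambda)=qu^{q-1}w$, because $q>1$ makes $t\mapsto t^{q-1}$ nondecreasing. Bounding the kernel difference by $|x-y|^{-\sigma}$ then gives $P_\lambda$. The integrals converge absolutely because $W$ is bounded (Lemma \ref{lem:Wbounded}).

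For \eqref{eq:Kbound} I would set $K:=W(u^{q-1}-u_\lambda^{q-1})/(u-u_\lambda)$ on $\{w>0\}$ and $K:=0$ elsewhere. Off $\{w>0\}$ the left side is $\le0$, so the inequality is trivial there. On $\{u>u_\lambda\}$ the mean value theorem gives $u^{q-1}-u_\lambda^{q-1}=(q-1)\xi^{q-2}(u-u_\lambda)$ with $u_\lambda<\xi<u$, so two cases arise.
\begin{enumerate}[label=\textup{(\alph*)}]
\item If $q\ge2$, then $\xi^{q-2}\le u^{q-2}$.
\item If $1<q<2$, then $\xi^{q-2}\le u_\lambda^{q-2}$.
\end{enumerate}
In either case I bound $K\le(q-1)W\,\max\{u,u_\lambda\}^{q-2}$ or $W\min\{u,u_\lambda\}^{q-2}$, as appropriate. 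I then combine the decay of $W$ from the proof of Proposition \ref{prop:Vdecay}, namely $W\le C|x|^{-a'}$ with $\beta=a'-(p-q)\gamma_h$, with the two-sided estimate \eqref{eq:sharp-u} from Theorem \ref{thm:main}(i). This gives
\[
K\le C|x|^{-a'}|x|^{(2-q)\gamma_h}=C|x|^{-\beta-(p-2)\gamma_h},
\]
which is the claimed exponent.

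The main obstacle is uniformity in $\lambda\le 0$ in case (b): the factor $u_\lambda^{q-2}$ requires a lower bound on $u_\lambda$ comparable to $|x|^{-\gamma_h}$. On $\{u>u_\lambda\}$ we have $u_\lambda\ge$ nothing useful a priori, but we may use $\xi\ge u_\lambda$ together with the lower bound $u_\lambda(x)=u(x_\lambda)\ge c_0(1+|x_\lambda|^{\gamma_h})^{-1}$. This only helps if $|x_\lambda|\lesssim|x|$, and for $\lambda\le0$ and $x\in\Sigma_\lambda$ that holds: reflection across a plane $\langle x,\nu\rangle=\lambda\le0$ moves points of $\Sigma_\lambda$ toward the origin side, so $|x_\lambda|\le|x|$. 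Hence $u_\lambda\ge c|x|^{-\gamma_h}$ for $|x|\ge R_3$, with constants independent of $\lambda\le0$. For $|\lambda|\le L$ one has instead $|x_\lambda|\le|x|+2L\le2|x|$ once $|x|\ge 2L$, which gives the constants $C_L,R_{3,L}$.
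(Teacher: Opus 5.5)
Your proposal is correct and follows the paper's proof essentially step by step. It uses the reflection identity with the kernel difference bounded by $|x-y|^{-\sigma}$ and the bound $u^q-u_\lambda^q\le qu^{q-1}w$. It then splits into $q\ge2$ versus $1<q<2$, using $|x_\lambda|\le|x|$ for $\lambda\le0$ (respectively $|x_\lambda|\le|x|+2L$) to feed the lower bound in \eqref{eq:sharp-u}. The only cosmetic difference is that for $q\ge2$ you bound $Wu^{q-2}$ directly via $W\le C|x|^{-a'}$ and the upper bound on $u$, whereas the paper rewrites it as $(q-1)Vu^{p-2}$ and invokes \eqref{eq:Vdecay}; the two are equivalent.
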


\begin{proof}
Splitting $\mathbb{R}^{N}=\Sigma_{\lambda}\cup\Sigma_{\lambda}^{c}$ and changing variables $y\mapsto y_{\lambda}$ in the second part gives the classical identity
\[
W(x)-W(x_{\lambda})=\int_{\Sigma_{\lambda}}\Big(\frac{1}{|x-y|^{\sigma}}-\frac{1}{|x_{\lambda}-y|^{\sigma}}\Big)\big(u^{q}(y)-u^{q}_{\lambda}(y)\big)\,dy .
\]
For $x,y\in\Sigma_{\lambda}$ one has $|x-y|<|x_{\lambda}-y|$, so the kernel factor is positive and bounded by $|x-y|^{-\sigma}$. Dropping the part of the integral where $u\le u_{\lambda}$, which is nonpositive, and using $u^{q}-u^{q}_{\lambda}\le qu^{q-1}(u-u_{\lambda})$ on the remaining set, we obtain \eqref{eq:Wdiff}.

For \eqref{eq:Kbound}, the mean value theorem gives $u^{q-1}-u^{q-1}_{\lambda}\le(q-1)\zeta^{q-2}w$ with $\zeta$ between $u_{\lambda}(x)$ and $u(x)$, so $K=(q-1)W\zeta^{q-2}$. If $q\ge2$ then $\zeta^{q-2}\le u^{q-2}$, hence $K\le(q-1)Wu^{q-p}u^{p-2}=(q-1)Vu^{p-2}$, and \eqref{eq:sharp-u} together with \eqref{eq:Vdecay} gives $K\le C|x|^{-\beta-(p-2)\gamma_{h}}$. If $q<2$ then $\zeta^{q-2}\le u_{\lambda}^{q-2}(x)$. For $\lambda\le0$ and $x\in\Sigma_{\lambda}$ one has $|x_{\lambda}|\le|x|$, so the lower bound in \eqref{eq:sharp-u} gives
\[
u_{\lambda}(x)=u(x_{\lambda})\ge c_{0}(1+|x|^{\gamma_{h}})^{-1}.
\]
For the bounded-parameter version, if $|\lambda|\le L$ then $|x_{\lambda}|\le|x|+2L\le2|x|$ whenever $|x|\ge2L$, and the same lower estimate holds with a modified constant. Thus, uniformly either for all $\lambda\le0$ or for $|\lambda|\le L$ after increasing the radius,
$\zeta^{q-2}\le C|x|^{(2-q)\gamma_{h}}$. By \eqref{eq:Wdecay} with $a'$ as in Proposition \ref{prop:Vdecay},
\[
K\le C|x|^{-a'+(2-q)\gamma_{h}}=C|x|^{-\beta-(p-2)\gamma_{h}},
\]
because $\beta=a'-(p-q)\gamma_{h}$.
\end{proof}

\begin{lemma}\label{lem:I2}
With the notation above, for every $\lambda\in\mathbb R$,
\begin{equation}\label{eq:I2}
\int_{\Sigma_{\lambda}}P_{\lambda}(x)u^{q-1}_{\lambda}(x)w(x)\,dx
\le C\|u\|^{2q-p^{*}}_{L^{p^{*}}(\Sigma_{\lambda})}\int_{\Sigma_{\lambda}}u^{p^{*}-2}w^{2}\,dx .
\end{equation}
\end{lemma}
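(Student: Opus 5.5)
Write the left side as a double integral,
\[
\int_{\Sigma_\lambda}P_\lambda u_\lambda^{q-1}w\,dx
=q\int_{\Sigma_\lambda}\int_{\Sigma_\lambda}\frac{u^{q-1}(y)w(y)\,u_\lambda^{q-1}(x)w(x)}{|x-y|^{\sigma}}\,dy\,dx,
\]
and apply the Hardy--Littlewood--Sobolev inequality with exponent $t'=2N/(2N-\sigma)$ on both factors, exactly as in Lemma \ref{lem:lb-hartree}, where $qt'=p^*$. This bounds the left side by $C\|u^{q-1}w\|_{L^{t'}(\Sigma_\lambda)}\|u_\lambda^{q-1}w\|_{L^{t'}(\Sigma_\lambda)}$. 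On the support of $w$ we have $u_\lambda<u$, so $u_\lambda^{q-1}w\le u^{q-1}w$ because $q>1$. Both factors are therefore controlled by $\|u^{q-1}w\|_{L^{t'}(\Sigma_\lambda)}$, and the left side is at most $C\|u^{q-1}w\|_{L^{t'}(\Sigma_\lambda)}^2$.

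The main step is to estimate this $L^{t'}$ norm by the weighted $L^2$ quantity. Write
\[
u^{q-1}w=u^{q-1-\frac{p^*-2}{2}}\cdot\big(u^{\frac{p^*-2}{2}}w\big),
\]
and apply H\"older with exponents $2$ and $s$, where $1/t'=1/2+1/s$. Since $1/t'=(2N-\sigma)/(2N)=q/p^*$, we get $1/s=q/p^*-1/2=(2q-p^*)/(2p^*)$. This is positive because $2q>p^*$ by \eqref{eq:lb-identities}. The first factor has exponent $q-p^*/2$, and
\[
\big(q-\tfrac{p^*}{2}\big)s=\frac{2q-p^*}{2}\cdot\frac{2p^*}{2q-p^*}=p^*.
\]
Hence $\|u^{q-p^*/2}\|_{L^s(\Sigma_\lambda)}=\|u\|_{L^{p^*}(\Sigma_\lambda)}^{(2q-p^*)/2}$. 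Squaring gives
\[
\|u^{q-1}w\|_{L^{t'}(\Sigma_\lambda)}^2\le\|u\|_{L^{p^*}(\Sigma_\lambda)}^{2q-p^*}\int_{\Sigma_\lambda}u^{p^*-2}w^2\,dx,
\]
which is \eqref{eq:I2}. The same exponent arithmetic shows that $(p^*-2)/2$ is well defined, and no sign condition beyond $2q>p^*$ is used.

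The only points requiring care are the finiteness of the right-hand side and the applicability of HLS on $\Sigma_\lambda$, which I handle by extending functions by zero. If $p^*<2$, the weight $u^{p^*-2}$ grows where $u$ is small. Even then $u^{p^*-2}w^2\le u^{p^*}$ on $\{w>0\}$, since $w\le u$ there, so the integral is finite. I expect the exponent bookkeeping to be the main thing to check. The restriction $u_\lambda\le u$ on $\{w>0\}$ is what lets both HLS factors carry $u$ rather than $u_\lambda$, and this is why the norm on the right is taken over $\Sigma_\lambda$.
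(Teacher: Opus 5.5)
Your proof is correct and matches the paper's argument: you apply Hardy--Littlewood--Sobolev with exponent $2N/(2N-\sigma)$, use $u_\lambda\le u$ on $\{w>0\}$ to put $u^{q-1}$ in both factors, and then apply H\"older to the splitting $u^{q-1}w=u^{q-p^*/2}\cdot u^{(p^*-2)/2}w$ with exponents $s=2N/(N-\sigma)$ and $2$, where $(q-p^*/2)s=p^*$. The only cosmetic difference is that the paper invokes $u_\lambda\le u$ before applying HLS, while you invoke it after.
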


\begin{proof}
Since $u_{\lambda}\le u$ on the support of $w$, the left hand side is bounded by
\[
q\int_{\Sigma_{\lambda}}\int_{\Sigma_{\lambda}}\frac{u^{q-1}(x)w(x)\,u^{q-1}(y)w(y)}{|x-y|^{\sigma}}\,dx\,dy
\le C\big\|u^{q-1}w\big\|^{2}_{L^{t}(\Sigma_{\lambda})},
\qquad t=\frac{2N}{2N-\sigma},
\]
by the Hardy--Littlewood--Sobolev inequality. Write $u^{q-1}w=u^{c}\cdot u^{(p^{*}-2)/2}w$ with $c:=q-p^{*}/2$, which is positive by Lemma \ref{lem:exponents}(a). H\"older's inequality with the exponents $A:=2N/(N-\sigma)$ and $2$, which satisfy $1/A+1/2=1/t$, gives
\[
\big\|u^{q-1}w\big\|_{L^{t}}\le\|u\|^{c}_{L^{cA}}\Big(\int_{\Sigma_{\lambda}}u^{p^{*}-2}w^{2}\Big)^{1/2},
\]
and a computation from \eqref{eq:psigma} shows $cA=p^{*}$. Since $2c=2q-p^{*}$, this is \eqref{eq:I2}. Note that the exponent $(p^{*}-2)/2$ is negative when $p<2N/(N+2)$, which does occur in the range \eqref{eq:new-range}; the H\"older step above is unaffected, and in the subsequent use of \eqref{eq:I2} the bound $u^{p^{*}-2}\le C|x|^{-(p^{*}-2)\gamma_{h}}$ is then furnished by the lower bound in \eqref{eq:sharp-u} rather than by the upper one.
\end{proof}

The limiting position of the plane is handled by the following observation, in which the nonlocal term does the work usually done by a strong comparison principle.

\begin{lemma}[Strict separation from the nonlocal term]\label{lem:strict-separation}
Let $N\ge3$, $p>1$ and $2p<\sigma<N$, and assume that $q:=p^*_{\sigma}>1$. Let $u$ be a positive $D^{1,p}(\mathbb{R}^{N})$-weak solution of \eqref{eq:main}. Let $\lambda\in\mathbb R$ be such that $u\le u_\lambda$ in $\Sigma_\lambda$ and $u\not\equiv u_\lambda$ in $\Sigma_\lambda$. Then
\begin{enumerate}[label=\textup{(\roman*)}]
\item $W[u](x)<W[u](x_\lambda)$ for every $x\in\Sigma_\lambda$;
\item $u<u_\lambda$ almost everywhere in $\Sigma_\lambda$.
\end{enumerate}
\end{lemma}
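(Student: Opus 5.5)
The proof rests on the reflection identity already used in Lemma \ref{lem:nonlocal-diff}. For $x\in\Sigma_\lambda$, splitting $\mathbb R^N=\Sigma_\lambda\cup\Sigma_\lambda^c$ and reflecting the second part gives
\[
W[u](x)-W[u](x_\lambda)=\int_{\Sigma_\lambda}\Big(\frac1{|x-y|^\sigma}-\frac1{|x_\lambda-y|^\sigma}\Big)\big(u^q(y)-u_\lambda^q(y)\big)\,dy .
\]
For $x,y\in\Sigma_\lambda$ the kernel factor is strictly positive. The hypothesis $u\le u_\lambda$ in $\Sigma_\lambda$ makes the second factor nonpositive. Since $u\not\equiv u_\lambda$ in $\Sigma_\lambda$ and both functions are continuous (Corollary \ref{cor:structure} together with Theorem \ref{thm:local-boundedness}), the set $\{y\in\Sigma_\lambda: u(y)<u_\lambda(y)\}$ is open, nonempty, and so has positive measure. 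Hence the integrand is nonpositive everywhere and strictly negative on a set of positive measure, and the integral is strictly negative. The integral converges absolutely: the kernel difference is dominated by $|x-y|^{-\sigma}$, and $|x-y|^{-\sigma}*u^q$ and $|x_\lambda-y|^{-\sigma}*u_\lambda^q$ are finite by Lemma \ref{lem:Wbounded}. This proves (i).

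For (ii), I set $\Omega:=\{x\in\Sigma_\lambda: u(x)=u_\lambda(x)\}$ and argue by contradiction, assuming $|\Omega|>0$. In $\Sigma_\lambda$, $u_\lambda$ solves $-\Delta_p u_\lambda=W[u](x_\lambda)u_\lambda^{q-1}$, because reflection commutes with the convolution: $W[u_\lambda](x)=W[u](x_\lambda)$. On $\Omega$ the function $u-u_\lambda$ vanishes identically, so $\nabla u=\nabla u_\lambda$ a.e. on $\Omega$ by Stampacchia's theorem. I then want a localized version of the equations on $\Omega$. Test both weak formulations with $\varphi\,\psi_\varepsilon(u_\lambda-u)$, where $\psi_\varepsilon(t)=(1-t/\varepsilon)^+$ and $\varphi\ge0$ is a $C_c^\infty(\Sigma_\lambda)$ function, and subtract. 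The gradient terms give
\[
\int\varphi\,\psi_\varepsilon'\,\big\langle|\nabla u_\lambda|^{p-2}\nabla u_\lambda-|\nabla u|^{p-2}\nabla u,\nabla(u_\lambda-u)\big\rangle\le0,
\]
which has a sign by monotonicity, plus terms with $\nabla\varphi$. As $\varepsilon\to0$ these tend to $\int_\Omega\langle\,\cdot\,,\nabla\varphi\rangle=0$, since the two vector fields agree a.e. on $\Omega$. The outcome I aim for is
\[
\int_\Omega\big(W[u](x_\lambda)-W[u](x)\big)u^{q-1}\varphi\,dx\le0 .
\]
This inequality is exactly where the argument could fail, and I expect it to be the main obstacle: the sign bookkeeping in the $\varepsilon$-limit has to be done carefully, and I would check it first.

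If the localized inequality is obtained, the contradiction follows from (i). On $\Omega$ we have $u=u_\lambda>0$, so $u^{q-1}>0$, and (i) says $W[u](x_\lambda)-W[u](x)>0$ everywhere in $\Sigma_\lambda$. The integrand is therefore strictly positive on $\Omega$. Taking $\varphi>0$ on a compact subset of $\Omega$ of positive measure makes the left-hand side positive, contradicting the inequality, so $|\Omega|=0$.

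If the test-function limit proves too delicate, I would use a cleaner route. Write the difference of the two equations on $\Sigma_\lambda$ as $-\Delta_p u_\lambda+\Delta_p u=g$, where $g:=W[u](x_\lambda)u_\lambda^{q-1}-W[u](x)u^{q-1}$. Using $u_\lambda\ge u$ and (i), $g>0$ everywhere in $\Sigma_\lambda$, since the first term strictly dominates the second. The distribution $-\mathrm{div}\big(|\nabla u_\lambda|^{p-2}\nabla u_\lambda-|\nabla u|^{p-2}\nabla u\big)$ is thus a positive $L^1_{\mathrm{loc}}$ function. On the other hand, the vector field inside the divergence vanishes a.e. on $\Omega$, and a divergence-measure field that vanishes on a set has divergence vanishing a.e. on that set. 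Hence $g=0$ a.e. on $\Omega$, which forces $|\Omega|=0$ and gives (ii).
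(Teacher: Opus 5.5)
Your primary argument is exactly the paper's proof. Part (i) uses the reflection identity with a strictly positive kernel and a nonempty open set where $u^q<u_\lambda^q$, and part (ii) tests the difference of the equations with $\eta\,\psi_\varepsilon(u_\lambda-u)$. The sign bookkeeping you flagged does work. With $F:=|\nabla u_\lambda|^{p-2}\nabla u_\lambda-|\nabla u|^{p-2}\nabla u$ and $v:=u_\lambda-u$, the term $\int\eta\,\psi_\varepsilon'(v)\langle F,\nabla v\rangle$ is nonpositive. This gives $\int g\,\eta\,\psi_\varepsilon(v)\le\int\psi_\varepsilon(v)\langle F,\nabla\eta\rangle$, and letting $\varepsilon\to0$ yields $\int_\Omega g\,\eta\le\int_\Omega\langle F,\nabla\eta\rangle=0$, with $g>0$ on $\Omega$ by (i).

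Drop the fallback, however. The principle that a bounded field with $L^1_{\mathrm{loc}}$ divergence has divergence vanishing a.e.\ on its zero set holds for $W^{1,1}_{\mathrm{loc}}$ fields (Stampacchia), but it is false in general. In a planar disc, Alberti's Lusin-type theorem for gradients gives $\psi\in C^1$ with $\nabla\psi=(-x_2/2,x_1/2)$ outside a set of small measure. Then $F=(-\partial_2\psi,\partial_1\psi)+x/2$ is continuous and has divergence $1$, yet it vanishes on a set of positive measure. So that route would need extra regularity of $|\nabla u|^{p-2}\nabla u$, or precisely the monotonicity exploited in your first route.
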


\begin{proof}
By Corollary \ref{cor:structure}, $u\in C^{1,\alpha}(\mathbb R^N)$ and $\nabla u$ is bounded on compact sets.

(i) Splitting $\mathbb R^N$ into $\Sigma_\lambda$ and its complement and reflecting the second part, as in the proof of Lemma \ref{lem:nonlocal-diff},
\[
W[u](x)-W[u](x_\lambda)=\int_{\Sigma_\lambda}k_\lambda(x,y)\big(u^{q}(y)-u_\lambda^{q}(y)\big)\,dy,
\qquad
k_\lambda(x,y):=\frac{1}{|x-y|^{\sigma}}-\frac{1}{|x_\lambda-y|^{\sigma}},
\]
where all integrals are finite because $W[u]$ is finite everywhere. For $x,y\in\Sigma_\lambda$ one has $|x-y|<|x_\lambda-y|$, hence $k_\lambda(x,y)>0$. The factor $u^{q}-u_\lambda^{q}$ is nonpositive in $\Sigma_\lambda$ and, by continuity and $u\not\equiv u_\lambda$, strictly negative on a nonempty open subset of $\Sigma_\lambda$. Hence the integral is negative.

(ii) Put $v:=u_\lambda-u\ge0$ in $\Sigma_\lambda$ and $Z_0:=\{x\in\Sigma_\lambda:v(x)=0\}$. By the reflection invariance of $\Delta_p$, $u_\lambda$ is a weak solution of $-\Delta_pu_\lambda=W[u](x_\lambda)u_\lambda^{q-1}$ in $\Sigma_\lambda$. Set
\[
F:=|\nabla u_\lambda|^{p-2}\nabla u_\lambda-|\nabla u|^{p-2}\nabla u,
\qquad
g(x):=W[u](x_\lambda)\,u_\lambda^{q-1}(x)-W[u](x)\,u^{q-1}(x),
\]
so that $\int_{\Sigma_\lambda}\langle F,\nabla\varphi\rangle=\int_{\Sigma_\lambda}g\varphi$ for every Lipschitz $\varphi$ with compact support in $\Sigma_\lambda$. On $Z_0$ one has $u=u_\lambda>0$, so by (i)
\[
g=\big(W[u](x_\lambda)-W[u](x)\big)u^{q-1}>0\qquad\text{on }Z_0 .
\]
Fix $0\le\eta\in C^{\infty}_{c}(\Sigma_\lambda)$ and, for $\varepsilon>0$, test with $\varphi=\eta\,\psi_\varepsilon(v)$, where $\psi_\varepsilon(t):=(1-t/\varepsilon)^{+}$. Since $\psi_\varepsilon'=-1/\varepsilon$ on $\{0\le v<\varepsilon\}$ and $\langle F,\nabla v\rangle\ge0$ by the monotonicity of $\xi\mapsto|\xi|^{p-2}\xi$,
\[
\int_{\Sigma_\lambda}g\,\eta\,\psi_\varepsilon(v)
=\int_{\Sigma_\lambda}\psi_\varepsilon(v)\langle F,\nabla\eta\rangle-\frac{1}{\varepsilon}\int_{\{v<\varepsilon\}}\eta\,\langle F,\nabla v\rangle
\le\int_{\Sigma_\lambda}\psi_\varepsilon(v)\langle F,\nabla\eta\rangle .
\]
As $\varepsilon\to0$, $\psi_\varepsilon(v)\to\chi_{Z_0}$ pointwise, and $g$ and $F$ are bounded on the support of $\eta$, so dominated convergence gives
\[
\int_{Z_0}g\,\eta\le\int_{Z_0}\langle F,\nabla\eta\rangle=0 .
\]
The last equality holds because $\nabla v=0$ almost everywhere on the level set $Z_0$ by Stampacchia's theorem, so that $\nabla u=\nabla u_\lambda$ and $F=0$ almost everywhere on $Z_0$. Since $g>0$ on $Z_0$ and $\eta\ge0$ is arbitrary, $|Z_0|=0$.
\end{proof}

We record the weight comparison needed at infinity. Put
\[
s:=(2-p)(\gamma_h+1)=-\frac{N-1}{p-1}(p-2),
\qquad
A_\lambda:=\{x\in\Sigma_\lambda:u(x)>u_\lambda(x)\}.
\]
We use the weight
\[
\rho_\lambda(x):=
\begin{cases}
\big(|\nabla u(x)|+|\nabla u_\lambda(x)|\big)^{p-2},
& |\nabla u(x)|+|\nabla u_\lambda(x)|>0,\\
0,& |\nabla u(x)|+|\nabla u_\lambda(x)|=0,\ p\ne2,\\
1,& p=2.
\end{cases}
\]
For $1<p<2$ the value assigned on the common critical set is only a convention for the weighted energy: there $\nabla w=0$, so we set $\rho_\lambda|\nabla w|^2=0$. Every pointwise estimate involving the singular power is understood off the common critical set and then extended to the corresponding energy inequality by this convention.

\begin{lemma}[Tail weight in the classical phase]\label{lem:classical-weight}
Assume the assumptions of Theorem \ref{thm:main} and $u>0$. There are $\kappa\in(0,1]$, $c_w>0$ and $R_w\ge R_0$ such that, for every $\lambda\in\mathbb R$,
\begin{equation}\label{eq:classical-weight}
|x_\lambda|\ge\kappa|x|
\quad\text{and}\quad
\rho_\lambda(x)\ge c_w|x|^{s}
\qquad\text{for }x\in A_\lambda,\ |x|\ge R_w .
\end{equation}
\end{lemma}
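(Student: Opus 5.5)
The plan is to prove the two assertions in \eqref{eq:classical-weight} separately: first the geometric one, from the sharp decay \eqref{eq:sharp-u}, and then the weight bound, from the two-sided gradient estimates \eqref{eq:sharp-grad}. The sign of $p-2$ decides which gradient bound enters the weight.

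\emph{Geometric step.} Let $x\in A_\lambda$, so $u(x)>u(x_\lambda)$. By \eqref{eq:sharp-u},
\[
\frac{c_0}{1+|x|^{\gamma_h}}\le u(x_\lambda)<u(x)\le\frac{C_0}{1+|x|^{\gamma_h}} .
\]
Hence $1+|x_\lambda|^{\gamma_h}>(c_0/C_0)(1+|x|^{\gamma_h})$. For $|x|\ge R_w$ large this gives $|x_\lambda|\ge\kappa|x|$ with $\kappa:=\min\{1,\tfrac12(c_0/C_0)^{1/\gamma_h}\}$. The bound is independent of $\lambda$, because only the values of $u$ at $x$ and $x_\lambda$ are compared. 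In particular $|x_\lambda|\ge\kappa R_w$, so for $R_w$ large (with $\kappa R_w$ beyond the radius in \eqref{eq:sharp-grad}) both $x$ and $x_\lambda$ lie in the region where \eqref{eq:sharp-grad} holds.

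\emph{Weight step.} Note that $|\nabla u_\lambda(x)|=|\nabla u(x_\lambda)|$.

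If $1<p<2$, then $p-2<0$ and we need an upper bound on $|\nabla u|+|\nabla u_\lambda|$. Here we use only $|x_\lambda|\ge\kappa|x|$ together with the upper bound in \eqref{eq:sharp-grad}:
\[
|\nabla u(x)|+|\nabla u(x_\lambda)|\le C|x|^{-(N-1)/(p-1)}+C(\kappa|x|)^{-(N-1)/(p-1)}\le C|x|^{-\gamma_h-1}.
\]
This uses $\gamma_h+1=(N-1)/(p-1)$. Raising to the power $p-2<0$ gives $\rho_\lambda\ge c|x|^{-(p-2)(\gamma_h+1)}=c|x|^{s}$. The sum is also positive, by the lower bound in \eqref{eq:sharp-grad}, so the convention on the critical set never enters.

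If $p>2$, then $p-2>0$ and we need a lower bound. The lower bound in \eqref{eq:sharp-grad} at $x$ alone gives $|\nabla u(x)|+|\nabla u_\lambda(x)|\ge|\nabla u(x)|\ge c_0|x|^{-\gamma_h-1}$, hence $\rho_\lambda\ge c|x|^{s}$.

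If $p=2$, then $s=0$ and $\rho_\lambda\equiv1$, so the bound is trivial.

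The main obstacle is the geometric step in the singular case. Without $|x_\lambda|\gtrsim|x|$ the reflected gradient could be large, for instance when $x_\lambda$ lies near a bounded region, and the singular weight would then degenerate. The membership $x\in A_\lambda$ is exactly what excludes this, uniformly in $\lambda$, via the comparability constants in \eqref{eq:sharp-u}. The only remaining point is to choose $R_w$ so that $\kappa R_w$ exceeds the radius from which \eqref{eq:sharp-grad} is valid.
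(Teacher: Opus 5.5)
Your proof is correct and follows essentially the same argument as the paper. The comparison $u(x)>u(x_\lambda)$ combined with the two-sided bound \eqref{eq:sharp-u} yields $|x_\lambda|\ge\kappa|x|$ uniformly in $\lambda$; the weight bound then comes from the lower gradient estimate at $x$ when $p\ge2$, or from the upper gradient estimates at both $x$ and $x_\lambda$ when $1<p<2$. Your $\kappa$ differs only cosmetically from the paper's $(c_0/(2C_0))^{1/\gamma_h}$ and works equally well once $|x|$ is large.
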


\begin{proof}
Let $x\in A_\lambda$ with $|x|$ large. Then $u(x)>u(x_\lambda)$, and \eqref{eq:sharp-u} gives $c_0(1+|x_\lambda|^{\gamma_h})^{-1}<C_0|x|^{-\gamma_h}$. Once $(c_0/(2C_0))|x|^{\gamma_h}\ge1$, this forces $|x_\lambda|^{\gamma_h}\ge(c_0/(2C_0))|x|^{\gamma_h}$, which is the first assertion with $\kappa=(c_0/(2C_0))^{1/\gamma_h}$. Note that no restriction on $\lambda$ was used. If $p\ge2$ then $s\le0$ and, by the lower bound in \eqref{eq:sharp-grad}, $\rho_\lambda(x)\ge|\nabla u(x)|^{p-2}\ge c_0^{p-2}|x|^{-(\gamma_h+1)(p-2)}=c_0^{p-2}|x|^{s}$. If $1<p<2$, the upper bound in \eqref{eq:sharp-grad} applied at $x$ and at $x_\lambda$, the latter with $|x_\lambda|\ge\kappa|x|\ge R_0$, gives $|\nabla u(x)|+|\nabla u(x_\lambda)|\le C|x|^{-\gamma_h-1}$, hence $\rho_\lambda(x)\ge c|x|^{s}$ because $p-2<0$. All constants are independent of $\lambda$.
\end{proof}

\begin{proof}[Proof of Theorem \ref{thm:main}(ii)]
Assume $u>0$, and write $w=(u-u_\lambda)^+$ and $\mathcal E_\lambda:=\int_{\Sigma_\lambda}\rho_\lambda|\nabla w|^2$. Testing \eqref{eq:weak} for $u$ and for $u_\lambda$ with the standard compactly supported cut-off approximations of $w$ in $\Sigma_\lambda$ (extended by zero across $T_\lambda$), subtracting, and then letting the outer cut-off tend to one, the vector inequality \cite[(2.10)]{DaiLiLiu2025}, the splitting \eqref{eq:splitting} and Lemma \ref{lem:nonlocal-diff} give, for a constant $C_1=C_1(p)>0$,
\begin{equation}\label{eq:classical-energy}
C_1\mathcal E_\lambda
\le\int_{A_\lambda}K\,w^2+\int_{A_\lambda}P_\lambda\,u_\lambda^{q-1}w .
\end{equation}
Here $\mathcal E_\lambda<\infty$: with $M_\lambda:=\max\{|\nabla u|,|\nabla u_\lambda|\}$ one has $\rho_\lambda|\nabla w|^2\le 2^{\max\{p,2\}}M_\lambda^{p}$ for every $1<p<\infty$, since $|\nabla w|\le2M_\lambda$ and $\rho_\lambda\le M_\lambda^{p-2}$ if $p<2$, $\rho_\lambda\le(2M_\lambda)^{p-2}$ if $p\ge2$. So the integrand is bounded, and by \eqref{eq:sharp-grad} and \eqref{eq:classical-weight} it is $O(|x|^{-p(\gamma_h+1)})$ on $A_\lambda$ at infinity, which is integrable because $p(\gamma_h+1)-N=p(\gamma_h-\gamma_e)>0$.

We use three pointwise bounds on $A_\lambda\cap\{|x|\ge R\}$, valid for $R$ large, uniformly for $\lambda\le0$ and, with constants depending on $L$, for $|\lambda|\le L$; only the first one depends on $L$, through Lemma \ref{lem:nonlocal-diff}. First, \eqref{eq:Kbound} together with the identity $-\beta-(p-2)\gamma_h=s-2-(\beta-p)$ gives $K\le C|x|^{s-2}|x|^{-(\beta-p)}$. Second, since $u\simeq|x|^{-\gamma_h}$ by \eqref{eq:sharp-u}, $u^{p^*-2}\le C|x|^{s-2}|x|^{-\beta_2}$ with $\beta_2:=(p^*-2)\gamma_h+s-2>0$. Third, $|x|^{s}\le c_w^{-1}\rho_\lambda$ by \eqref{eq:classical-weight}. Moreover $\beta>p$ by Proposition \ref{prop:Vdecay}, $s>2-N$, and \eqref{eq:cutoff-hardy} with $\ell=0$ applies to $w$ by the discussion following it.

\emph{Step 1: the plane starts.} For $\lambda\le-R_w$ one has $\Sigma_\lambda\subset\{|x|>|\lambda|\}$. Applying \eqref{eq:log-hardy} with $\ell=0$ and $R=|\lambda|$ to $w$ extended by zero, and then Lemma \ref{lem:I2},
\[
\int_{A_\lambda}Kw^2+\int_{A_\lambda}P_\lambda u_\lambda^{q-1}w
\le C\big(|\lambda|^{-(\beta-p)}+\|u\|^{2q-p^*}_{L^{p^*}(\Sigma_\lambda)}|\lambda|^{-\beta_2}\big)\Big(\frac{2}{N+s-2}\Big)^2c_w^{-1}\mathcal E_\lambda .
\]
For $|\lambda|$ large the coefficient is smaller than $C_1$, so \eqref{eq:classical-energy} forces $\mathcal E_\lambda=0$. If $1<p<2$, this gives $\nabla w=0$ a.e. on the complement of the common critical set, while $\nabla w=0$ also on the common critical set itself. If $p\ge2$, the critical set of $u$ has measure zero by \cite[Corollary 2.10, Remark 2.11]{DaiLiLiu2025}, hence $\rho_\lambda>0$ a.e. and again $\nabla w=0$ a.e. Since $w$ vanishes on $T_\lambda$ and at infinity, $w\equiv0$. Thus $u\le u_\lambda$ in $\Sigma_\lambda$ for all $\lambda$ sufficiently negative.

\emph{Step 2: the plane reaches a symmetry position.} Let $\lambda_0:=\sup\{\lambda:u\le u_\mu\text{ in }\Sigma_\mu\text{ for every }\mu\le\lambda\}$. It is finite, since $u>0$ tends to zero at infinity, and $u\le u_{\lambda_0}$ in $\Sigma_{\lambda_0}$ by continuity. Suppose $u\not\equiv u_{\lambda_0}$ in $\Sigma_{\lambda_0}$. By Lemma \ref{lem:strict-separation}, $u<u_{\lambda_0}$ a.e. in $\Sigma_{\lambda_0}$. Since $u_\lambda\to u_{\lambda_0}$ pointwise as $\lambda\to\lambda_0$, every such point lies outside $A_\lambda$ for $\lambda$ close to $\lambda_0$, while $|(\Sigma_\lambda\setminus\Sigma_{\lambda_0})\cap B_{3R}|\to0$. Dominated convergence of characteristic functions therefore gives
\begin{equation}\label{eq:classical-measure-small}
|A_\lambda\cap B_{3R}|\longrightarrow0\qquad\text{as }\lambda\downarrow\lambda_0,\ \text{for every fixed }R .
\end{equation}
Put $L:=|\lambda_0|+1$ and let $\lambda\in(\lambda_0,\lambda_0+1]$. Split both integrals in \eqref{eq:classical-energy} at $|x|=2R$ and set
\[
X_\lambda:=\int_{A_\lambda\cap B_{2R}}w^2 .
\]
On $B_{2R}$ the functions $K$ and $u^{p^*-2}$ are bounded uniformly in $\lambda$, because $u$ and $u_\lambda$ are bounded above and below by positive constants there. On $|x|\ge2R$ we use the three pointwise bounds and \eqref{eq:cutoff-hardy} with $\ell=0$; its annular error is bounded by $C_RX_\lambda$. This gives
\[
C_1\mathcal E_\lambda\le C\big(R^{-(\beta-p)}+R^{-\beta_2}\big)c_w^{-1}\mathcal E_\lambda+C_RX_\lambda .
\]
We now fix $R$ so large that the first coefficient is at most $C_1/2$; the constants depend only on $L$. Hence
\begin{equation}\label{eq:classical-reduced}
\tfrac12C_1\mathcal E_\lambda\le C_RX_\lambda .
\end{equation}
It remains to show $X_\lambda\le\theta_\lambda\mathcal E_\lambda$ with $\theta_\lambda\to0$ as $\lambda\downarrow\lambda_0$.

If $1<p<2$, let $G:=\|\nabla u\|_{L^\infty(\mathbb R^N)}<\infty$. On the set where $|\nabla u|+|\nabla u_\lambda|>0$ one has $\rho_\lambda\ge(2G)^{p-2}$, while on the common critical set $\nabla w=0$. Hence
\[
\int_{\Sigma_\lambda}|\nabla w|^2\le(2G)^{2-p}\mathcal E_\lambda.
\]
Since $w\in D^{1,2}(\Sigma_\lambda)$ by the discussion following \eqref{eq:cutoff-hardy}, H\"older's and Sobolev's inequalities give
\[
X_\lambda\le|A_\lambda\cap B_{2R}|^{2/N}\Big(\int_{\Sigma_\lambda}w^{2^*}\Big)^{2/2^*}\le C(2G)^{2-p}|A_\lambda\cap B_{2R}|^{2/N}\mathcal E_\lambda .
\]

If $p\ge2$, let $\xi\in C^\infty_c(B_{3R})$ with $0\le\xi\le1$, $\xi=1$ on $B_{2R}$ and $|\nabla\xi|\le2/R$, and put $\varrho:=|\nabla u|^{p-2}$. Let $r_R>2$ be furnished by Lemma \ref{lem:weighted-sobolev-local} on $B_{3R}$ and set $\delta_R:=1-2/r_R>0$. Since $\xi w$ is supported in $A_\lambda\cap B_{3R}$, H\"older's inequality followed by Lemma \ref{lem:weighted-sobolev-local} gives
\[
X_\lambda\le\int_{A_\lambda\cap B_{3R}}(\xi w)^2
\le |A_\lambda\cap B_{3R}|^{\delta_R}\|\xi w\|_{L^{r_R}(B_{3R})}^2
\le C_R|A_\lambda\cap B_{3R}|^{\delta_R}\int_{B_{3R}}\varrho|\nabla(\xi w)|^2.
\]
Because $\varrho\le\rho_\lambda$ for $p\ge2$ and $\varrho$ is bounded on $B_{3R}$,
\[
X_\lambda\le C_R|A_\lambda\cap B_{3R}|^{\delta_R}
\big(\mathcal E_\lambda+Y_\lambda\big),
\qquad
Y_\lambda:=\int_{A_\lambda\cap(B_{3R}\setminus B_{2R})}w^2.
\]
Since $s-2<0$, one has $|x|^{s-2}\ge(3R)^{s-2}$ on the annulus, and \eqref{eq:cutoff-hardy} with the third pointwise bound gives $Y_\lambda\le C_R(\mathcal E_\lambda+X_\lambda)$. Therefore
\[
X_\lambda\le C_R|A_\lambda\cap B_{3R}|^{\delta_R}(\mathcal E_\lambda+X_\lambda).
\]
By \eqref{eq:classical-measure-small}, the coefficient tends to zero as $\lambda\downarrow\lambda_0$; for $\lambda$ sufficiently close to $\lambda_0$ it can be absorbed, yielding $X_\lambda\le o(1)\mathcal E_\lambda$.

In both cases $\theta_\lambda\to0$ by \eqref{eq:classical-measure-small}. Inserting into \eqref{eq:classical-reduced} gives $\mathcal E_\lambda=0$ for $\lambda-\lambda_0>0$ small, hence $w\equiv0$ as in Step 1, that is $u\le u_\lambda$ in $\Sigma_\lambda$. This contradicts the definition of $\lambda_0$. Therefore $u\equiv u_{\lambda_0}$ in $\Sigma_{\lambda_0}$.

\emph{Step 3: conclusion.} Since the direction was arbitrary, $u$ is symmetric with respect to a hyperplane orthogonal to every direction, hence radially symmetric about some point $x_0$. Writing $u(x)=U(|x-x_0|)$ with $U\in C^1([0,\infty))$, the function $f=W[u]u^{q-1}$ is radial about $x_0$ and positive, and testing the equation with radial functions gives
\[
r^{N-1}|U'(r)|^{p-2}U'(r)=-\int_0^r t^{N-1}f(t)\,dt<0\qquad(r>0),
\]
the boundary term at $r=0$ vanishing because $U'$ is bounded. Hence $U'(r)<0$ for every $r>0$. This proves Theorem \ref{thm:main}(ii).
\end{proof}

\begin{remark}
The only structural input of the moving planes argument that distinguishes the present range from that of \cite{DaiLiLiu2025} is the monotonicity of $t\mapsto t^{q-1}$, that is, $q>1$, which is supplied by Lemma \ref{lem:qbigger1}. The exponent $p$ plays no role in the sign of either term of \eqref{eq:splitting}.
\end{remark}

\section{The nonclassical regimes \texorpdfstring{$\Lambda=0$ and $\Lambda<0$}{Lambda = 0 and Lambda < 0}}\label{sec:nonclassical}

\begin{remark}\label{rem:qbigger1-bad}
Unlike in Theorem \ref{thm:main}, the condition $q>1$ is a genuine assumption in Theorems \ref{thm:structure} and \ref{thm:slowrate}. At the resonant boundary it is automatic, because $\Lambda=0$ gives
\[
q-1=\frac{N-\sigma}{\gamma_h}>0.
\]
In the strict slow phase, $q\le1$ occurs exactly when $\sigma\ge2N-2N/p+2$, in which case the local factor $u^{q-1}$ becomes singular and \eqref{eq:main} is a different problem. Both nonclassical regimes are nonempty, and both require $p<2N/(N+2)$, since $\Lambda\le0$ forces $\sigma\ge2p/(2-p)$ and $\sigma<N$. For $\Lambda<0$, take $N=10$, $p=3/2$ and $\sigma=7$, which give $q=39/34$, $\gamma_e<\gamma_*<\gamma_h$ and $q\gamma_*>N$. For $\Lambda=0$ with the same $N$ and $p$, take $\sigma=2p/(2-p)=6$.
\end{remark}

\subsection{Saturation of the Hartree potential}

The following elementary consequence of integrability will be used in both nonclassical regimes.

\begin{lemma}\label{lem:Wsat}
Let $q>1$ and let $u$ be a positive $D^{1,p}(\mathbb{R}^{N})$-weak solution of \eqref{eq:main}. Suppose that for some $\delta_{\sharp}>0$,
\[
u(x)\le C_{\sharp}|x|^{-\delta_{\sharp}}\qquad(|x|\ge1),
\qquad q\delta_{\sharp}>N.
\]
Then $m_{0}:=\int_{\mathbb{R}^{N}}u^{q}\,dy$ is finite and positive, and there are $A_{1},A_{2}>0$ and $R_{5}\ge2$ such that
\begin{equation}\label{eq:Wsat-two}
A_{1}|x|^{-\sigma}\le W(x)\le A_{2}|x|^{-\sigma}
\qquad\text{for }|x|\ge R_{5}.
\end{equation}
One may take
\begin{equation}\label{eq:Wsat-low}
A_{1}:=\Big(\frac{2}{3}\Big)^{\sigma}\frac{m_{0}}{2}.
\end{equation}
\end{lemma}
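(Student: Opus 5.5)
Finiteness and positivity of $m_0$ come first. By Corollary \ref{cor:structure} together with Theorem \ref{thm:local-boundedness}, $u$ is bounded, so $\int_{B_1}u^q<\infty$. On $|x|\ge1$ we have $u^q\le C_\sharp^q|x|^{-q\delta_\sharp}$, which is integrable because $q\delta_\sharp>N$. Hence $m_0<\infty$. Since $u>0$ is continuous, $m_0>0$.

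For the upper bound we apply Lemma \ref{lem:conv}(ii) with $\beta=\delta_\sharp$. Its hypothesis $q\beta>N-\sigma$ holds because $q\delta_\sharp>N$, and we are in the case $q\beta>N$. This gives $W(x)\le C|x|^{-\sigma}$ for $|x|\ge2$, which we take as $A_2$.

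For the lower bound with the explicit constant we use tail smallness. Choose $\rho\ge1$ so large that $\int_{B_\rho}u^q\ge m_0/2$, which is possible by monotone convergence. Set $R_5:=\max\{2,2\rho\}$. For $|x|\ge R_5$ and $y\in B_\rho$ we have $|x-y|\le|x|+\rho\le\tfrac32|x|$, so $|x-y|^{-\sigma}\ge(2/3)^\sigma|x|^{-\sigma}$. Discarding the nonnegative contribution from outside $B_\rho$ then gives
\[
W(x)\ge\Big(\frac23\Big)^{\sigma}|x|^{-\sigma}\int_{B_\rho}u^q\ge A_1|x|^{-\sigma},
\]
with $A_1$ exactly as in \eqref{eq:Wsat-low}.

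No step is a real obstacle. The only point needing care is that the upper bound relies on global boundedness of $u$ near the origin. This is supplied by the earlier regularity results (Theorem \ref{thm:local-boundedness} and Corollary \ref{cor:structure}), and it is the hypothesis $u\in L^\infty(B_R)$ required by Lemma \ref{lem:conv}(ii).
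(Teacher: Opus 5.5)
Your proof is correct and matches the paper's. In both, $m_0<\infty$ follows from global boundedness together with $q\delta_\sharp>N$, and the lower bound with $A_1=(2/3)^\sigma m_0/2$ comes from the mass of $u^q$ in a ball of radius $R_5/2$. The only difference is the upper bound: you quote Lemma \ref{lem:conv}(ii) in the case $q\beta>N$. The paper instead splits $W$ by hand into the regions $|y|\le|x|/2$, $\{|y|>|x|/2,\ |x-y|<|x|/4\}$ and the rest, which is the same estimate.
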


\begin{proof}
Since $u\in L^{\infty}(\mathbb{R}^{N})$ by Corollary \ref{cor:structure}, the assumed decay and $q\delta_{\sharp}>N$ give $m_{0}<\infty$, while positivity gives $m_{0}>0$. Choose $R_{5}\ge2$ so large that $\int_{B_{R_{5}/2}}u^{q}\ge m_{0}/2$. For $|x|\ge R_{5}$ and $|y|\le|x|/2$ we have $|x-y|\le3|x|/2$, hence
\[
W(x)\ge\int_{|y|\le|x|/2}\frac{u^{q}(y)}{|x-y|^{\sigma}}\,dy
\ge\Big(\frac{3|x|}{2}\Big)^{-\sigma}\int_{B_{|x|/2}}u^{q}\,dy
\ge A_{1}|x|^{-\sigma}.
\]
For the upper bound split $W=J_{1}+J_{2}+J_{3}$ according to $|y|\le|x|/2$, $\{|y|>|x|/2,\ |x-y|<|x|/4\}$ and the rest. Then
\[
J_{1}\le(|x|/2)^{-\sigma}m_{0},\qquad
J_{3}\le(|x|/4)^{-\sigma}m_{0},
\]
while on the second region $|y|\ge|x|/2$ and therefore
\[
J_{2}\le C_{\sharp}^{q}\Big(\frac{|x|}{2}\Big)^{-q\delta_{\sharp}}
\int_{|z|<|x|/4}|z|^{-\sigma}\,dz
\le C|x|^{N-\sigma-q\delta_{\sharp}}
\le C|x|^{-\sigma}.
\]
This proves \eqref{eq:Wsat-two}.
\end{proof}

The preceding two-sided estimate can be sharpened to an exact far-field coefficient once $u^q$ is integrable.

\begin{lemma}[Mass asymptotics of the Hartree potential]\label{lem:Wmass}
Assume $u^q\in L^1(\mathbb R^N)$ and, for some $\delta>N/q$, $u(x)\le C|x|^{-\delta}$ for large $|x|$. Put
\[
m_0:=\int_{\mathbb R^N}u^q(y)\,dy.
\]
Then
\begin{equation}\label{eq:Wmass}
|x|^{\sigma}W(x)\longrightarrow m_0
\qquad\text{as }|x|\to\infty,
\end{equation}
uniformly with respect to the direction $x/|x|$.
\end{lemma}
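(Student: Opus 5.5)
We would prove \eqref{eq:Wmass} by the standard near/far splitting of the convolution. Write, for $|x|$ large,
\[
|x|^{\sigma}W(x)-m_0=\int_{\mathbb R^N}\Big(\frac{|x|^{\sigma}}{|x-y|^{\sigma}}-1\Big)u^q(y)\,dy ,
\]
and fix a small parameter $\theta\in(0,1/2)$. Split the domain into three regions: $I_1=\{|y|\le\theta|x|\}$, $I_2=\{|y|>\theta|x|,\ |x-y|\ge\theta|x|\}$, and $I_3=\{|x-y|<\theta|x|\}$.

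On $I_1$ we have $(1+\theta)^{-1}\le |x|/|x-y|\le (1-\theta)^{-1}$. Hence the kernel factor there is bounded in absolute value by $\varepsilon(\theta):=\max\{(1-\theta)^{-\sigma}-1,\,1-(1+\theta)^{-\sigma}\}$, which tends to $0$ as $\theta\to0$. The contribution of $I_1$ is therefore at most $\varepsilon(\theta)m_0$.

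On $I_2$ the kernel factor is bounded by $\theta^{-\sigma}+1$. Its contribution is thus at most $(\theta^{-\sigma}+1)\int_{|y|>\theta|x|}u^q$, which tends to $0$ as $|x|\to\infty$ for fixed $\theta$ because $u^q\in L^1$.

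On $I_3$ we have $|y|\ge(1-\theta)|x|\ge|x|/2$, so the decay hypothesis gives $u^q(y)\le C|x|^{-q\delta}$. The integral of the term $-u^q$ over $I_3$ is again a tail of $u^q$ and tends to $0$. The singular part is bounded by
\[
|x|^{\sigma}C|x|^{-q\delta}\int_{|z|<\theta|x|}|z|^{-\sigma}dz\le C|x|^{N-q\delta},
\]
which tends to $0$ since $q\delta>N$. This is the only place where the pointwise decay is used, and it is the step to watch: integrability alone would not control the local singularity of the kernel near $y=x$. This mirrors the estimate of $J_2$ in Lemma \ref{lem:Wsat}.

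All bounds depend only on $|x|$, so they are uniform in the direction $x/|x|$. Thus $\limsup_{|x|\to\infty}\big||x|^\sigma W(x)-m_0\big|\le\varepsilon(\theta)m_0$, and letting $\theta\to0$ yields \eqref{eq:Wmass}. We expect no step to be a genuine obstacle; the only care needed is choosing $\theta$ before sending $|x|\to\infty$.
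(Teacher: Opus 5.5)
Your proof is correct and is essentially the paper's argument. The paper splits at a fixed radius $M$ rather than at $\theta|x|$, so the kernel ratio converges exactly on $B_M$ and the nonsingular far part costs at most $2^{\sigma}\int_{|y|>M}u^q$. It then uses the pointwise decay only on $\{|x-y|<|x|/2\}$ to get $C|x|^{N-q\delta}$, exactly as in your $I_3$, and sends $|x|\to\infty$ followed by $M\to\infty$ in place of your $\theta\to0$.
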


\begin{proof}
Let $r=|x|$. Fix $M>1$ and split the integral defining $W$ into $|y|\le M$ and $|y|>M$. On $B_M$, one has
$r^{\sigma}|x-y|^{-\sigma}\to1$ uniformly in the direction of $x$, so the contribution tends to $\int_{B_M}u^q$. On the part $|y|>M$, split again according to $|x-y|\ge r/2$ and $|x-y|<r/2$. The first contribution is bounded by $2^{\sigma}\int_{|y|>M}u^q$, uniformly in $x$. On the second region one has $|y|\ge r/2$, hence, for $r$ large,
\[
r^{\sigma}\int_{|x-y|<r/2}\frac{u^q(y)}{|x-y|^{\sigma}}\,dy
\le Cr^{\sigma-q\delta}\int_{|z|<r/2}|z|^{-\sigma}\,dz
\le Cr^{N-q\delta}\longrightarrow0.
\]
First let $r\to\infty$ and then $M\to\infty$.
\end{proof}

\subsection{The resonant boundary \texorpdfstring{$\Lambda=0$}{Lambda = 0}}\label{sec:resonance}

\begin{theorem}[Resonant logarithmic profile and symmetry]\label{thm:resonant}
Let $N\ge3$, $p>1$ and $2p<\sigma<N$, and assume that $q:=p^*_{\sigma}>1$ and $\Lambda=0$; here $q>1$ is automatic. Let $u$ be a positive $D^{1,p}(\mathbb{R}^{N})$-weak solution of \eqref{eq:main}. Put
\[
a_{0}:=\frac{1}{p-q}.
\]
There are $c,C>0$ and $R\ge e$ such that
\begin{equation}\label{eq:resonant-rate}
\frac{c(\log|x|)^{a_{0}}}{|x|^{\gamma_h}}
\le u(x)\le
\frac{C(\log|x|)^{a_{0}}}{|x|^{\gamma_h}}
\qquad\text{for }|x|\ge R,
\end{equation}
and
\begin{equation}\label{eq:resonant-grad}
\frac{c(\log|x|)^{a_{0}}}{|x|^{\gamma_h+1}}
\le |\nabla u(x)|\le
\frac{C(\log|x|)^{a_{0}}}{|x|^{\gamma_h+1}}
\qquad\text{for }|x|\ge R.
\end{equation}
Moreover $u$ is radially symmetric about some point $x_0\in\mathbb R^N$ and strictly decreasing in the radial variable.
\end{theorem}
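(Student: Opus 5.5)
Since $\Lambda=0$ means $\gamma_*=\gamma_h$, Theorem \ref{thm:bootstrap} gives $u\le C_\nu|x|^{-\nu}$ for every $\nu<\gamma_h$. Lemma \ref{lem:exponents}(e) gives $q\gamma_h>N$, so for $\nu$ close to $\gamma_h$ we have $q\nu>N$. Lemma \ref{lem:Wsat} and Lemma \ref{lem:Wmass} then give $m_0<\infty$ and $W(x)=(m_0+o(1))|x|^{-\sigma}$. The far-field equation therefore reads
\[
-\Delta_pu=(m_0+o(1))|x|^{-\sigma}u^{q-1}.
\]
We look for radial barriers $\phi(x)=A|x|^{-\gamma_h}(\log|x|)^{a}$. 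Since $\gamma_h$ is the $p$-harmonic exponent, $h(\gamma_h)=0$, and a direct computation gives, to leading order,
\[
-\Delta_p\phi=c_p\,a\,A^{p-1}|x|^{-\gamma_h(p-1)-p}(\log|x|)^{a(p-1)-1}\bigl(1+O(1/\log|x|)\bigr),
\]
with $c_p=(p-1)\gamma_h^{p-2}(N-p)/(p-1)>0$, possibly up to an explicit positive factor. The right-hand side evaluated on $\phi$ is $m_0A^{q-1}|x|^{-\sigma-(q-1)\gamma_h}(\log|x|)^{a(q-1)}$. Because $\Lambda=0$, the power exponents agree: $\sigma+(q-1)\gamma_h=\gamma_h(p-1)+p$. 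The log exponents agree exactly when $a(p-1)-1=a(q-1)$, that is, $a=a_0=1/(p-q)$. Matching coefficients gives $A^{p-q}\simeq m_0/(c_pa_0)$.

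\textbf{Upper bound.} We take $\phi$ with $A$ larger than the balancing constant. Then $\phi$ is a strict supersolution for $|x|\ge R$ (the $o(1)$ errors in $W$ and the $1/\log$ corrections are absorbed), while $u$ is a subsolution of the frozen problem. The difficulty is that the right-hand side depends on $u$ itself, so Lemma \ref{lem:comparison} cannot be applied directly. We handle this as in Lemma \ref{lem:onestep}, with a sublinear twist: since $q<p$, comparison works for $-\Delta_pv=g(x)v^{q-1}$ with $g\ge0$. Set $w=u/\phi$ and test with $(u-t\phi)^+$ for $t\ge1$. Alternatively, one can use the bootstrap ordering: $u\le C_\nu|x|^{-\nu}$ gives a first right-hand side bound, and solving the radial ODE $-(r^{N-1}|U'|^{p-2}U')'=r^{N-1}F(r)$ with a Dirichlet/decay condition yields an explicit iteration $U\mapsto$ new bound. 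Starting from $|x|^{-\nu}$ and iterating this radial integral map converges to the log profile, because the map is monotone and sublinear (power $(q-1)/(p-1)<1$) and has the log profile as its unique attracting envelope. I would implement the radial-majorant ODE iteration, since it avoids nonlinear comparison with the $u$-dependent right-hand side.

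\textbf{Lower bound.} Start from $u\ge c|x|^{-\gamma_h}$ by \eqref{eq:SZ}. The subsolution argument with a small $A$ and the same log exponent, compared on annuli $R<|x|<T$ with $T\to\infty$, works because both sides decay like $|x|^{-\gamma_h}$ times logs and $u$ dominates at $|x|=R$. Monotone sublinearity again lets the lower envelope be pushed up iteratively.

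\textbf{Gradient bounds.} Rescale on annuli: $v_r(z)=u(rz)/(r^{-\gamma_h}(\log r)^{a_0})$ on $1/2<|z|<4$. This is bounded above and below and solves an equation with right-hand side $O(1/\log r)$, so $C^{1,\alpha}$ estimates give the upper gradient bound. For the lower gradient bound, integrate the equation over large balls via the divergence theorem: the flux $\int_{\partial B_r}|\nabla u|^{p-2}\partial_\nu u$ equals $-\int_{B_r}f$, and this grows like $(\log r)^{a_0(p-1)}$, because $f\simeq|x|^{-N}(\log|x|)^{a_0(q-1)}$ and $a_0(q-1)+1=a_0(p-1)$. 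Combining this with the upper bound and the $C^{1,\alpha}$ compactness of $v_r$ (the limit is radial up to lower order) gives the pointwise lower bound. This step is the main obstacle: a pointwise gradient lower bound from an averaged flux needs near-radiality. I would first prove symmetry using only the two-sided bound on $u$ and the upper gradient bound, then derive the lower gradient bound from the radial ODE, exactly as in Step 3 of Theorem \ref{thm:main}(ii).

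\textbf{Symmetry.} Repeat the moving-plane argument of Theorem \ref{thm:main}(ii), with the splitting \eqref{eq:splitting}, Lemmas \ref{lem:nonlocal-diff}, \ref{lem:I2} and \ref{lem:strict-separation}, and the Hardy inequality \eqref{eq:cutoff-hardy} with $\ell$ tracking the log powers. Since $p<2$, the weight satisfies $\rho_\lambda\ge c|x|^s(\log|x|)^{a_0(p-2)}$ from the gradient upper bound. The potential term $K\lesssim|x|^{-\sigma}u^{q-2}$ is comparable to $|x|^{s-2}(\log)^{a_0(q-2)}$ with $a_0(q-2)=a_0(p-2)-1$, which gains one factor of $1/\log$ relative to the Hardy weight. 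This gain gives the smallness at infinity, replacing the power gain $\beta>p$ used in the classical phase.
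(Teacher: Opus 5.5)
The genuine gap is the upper half of \eqref{eq:resonant-rate}, which is exactly where the exponent $a_0$ has to be produced.

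Your first suggestion, testing with $(u-t\phi)^+$, does not close. On $\{u>t\phi\}$ the difference of right-hand sides is $g\,[u^{q-1}-t^{p-q}(t\phi)^{q-1}]$ with $g\simeq|x|^{-\sigma}$. This has no sign unless you already know $u\le t^{(p-1)/(q-1)}\phi$, which is the bound being proved. To exploit $q<p$ in a genuine comparison you would need D\'{\i}az--Saa/Picone test functions such as $(u^p-\phi^p)^+/u^{p-1}$ and $(u^p-\phi^p)^+/\phi^{p-1}$, and you would have to check their admissibility on the exterior domain.

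The route you commit to is the frozen-right-hand-side radial majorant iteration started from $C_\nu|x|^{-\nu}$. Each step is correct, but the claim that it ``converges to the log profile'' because the map is monotone, sublinear and has the log profile as attracting envelope is precisely the missing argument. From $u\le C|x|^{-\delta}$ with $\delta<\gamma_h$, one step only gives $u\le C'|x|^{-T(\delta)}$ with $\gamma_h-T(\delta)=\theta(\gamma_h-\delta)$. The new constant $C'$ carries the factor $(\gamma_h-T(\delta))^{-1/(p-1)}$, coming from $h(T(\delta))\to0$, or in your ODE language from $\int^r t^{-1+(q-1)(\gamma_h-\delta)}\,dt$. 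So no finite number of steps leaves the power class, and sublinearity controls the constants only once you are already in a logarithmic class.

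The paper supplies the quantitative mechanism.
\begin{itemize}
\item It takes $\delta_k=\gamma_h-\varepsilon_0\theta^k$ and uses $h(\delta_{k+1})\ge c_h\varepsilon_0\theta^{k+1}$ to get $C_{k+1}\le\max\{D\theta^{-(k+1)/(p-1)}C_k^{\theta},B\}$.
\item The identity $a_0\theta k+\frac{k+1}{p-1}=a_0(k+1)-a_0\theta$ then gives $C_k\le M\theta^{-a_0k}$.
\item Choosing $k=k(r)$ with $1\le\varepsilon_0\theta^k\log r<\theta^{-1}$ turns $C_kr^{-\gamma_h+\varepsilon_0\theta^k}$ into $C\,r^{-\gamma_h}(\log r)^{a_0}$.
\end{itemize}
You need this bookkeeping, or a real asymptotic analysis of the fixed point of your majorant map. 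As written, the upper bound is not proved.

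A milder form of the same issue affects your lower bound. A barrier with the final exponent $a_0$ cannot be compared in one step. Against the frozen bound $f\ge c|x|^{-N}$ obtained from \eqref{eq:SZ}, comparison forces $b\le 1/(p-1)<a_0$. Against the $u$-dependent right-hand side on annuli, the monotonicity of $v\mapsto v^{q-1}$ has the wrong sign for a plain weak comparison. Moreover, the truncated barrier $A(\phi(|x|)-\phi(T))$ is not even a subsolution near $|x|=T$. What works is the iterative version you allude to, with frozen right-hand sides and $b_{k+1}=(1+(q-1)b_k)/(p-1)\uparrow a_0$; this is the paper's argument.

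The remaining parts are sound.

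Your lower gradient bound takes a different route from the paper. The paper uses Sciunzi's blow-down $R_n^{\gamma_h}(\log R_n)^{-a_0}u(R_n\cdot)\to\alpha|x|^{-\gamma_h}$ together with rigidity of positive $p$-harmonic functions in $\mathbb R^N\setminus\{0\}$. You instead prove symmetry first and read off $|U'(r)|\ge c\,r^{-\gamma_h-1}(\log r)^{a_0}$ from $r^{N-1}|U'|^{p-1}=\int_0^r t^{N-1}f$, using $f\simeq|x|^{-N}(\log|x|)^{a_0(q-1)}$ and $a_0(q-1)+1=a_0(p-1)$. This is legitimate and more elementary. It works because, for $p<2$, the moving-plane weight is bounded below using only upper gradient bounds, and the paper itself notes that the symmetry proof is independent of the lower gradient estimate. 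The cost is that the gradient lower bound becomes a corollary of symmetry, whereas the paper's blow-down argument does not use symmetry at all.

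Your symmetry sketch matches the paper: log-weighted Hardy with $\ell=a_0(p-2)$, the $1/\log$ gain from $a_0(p-q)=1$, Lemma \ref{lem:I2}, strict separation, and small-measure continuation. Two details need fixing:
\begin{itemize}
\item Since $q<2$, $K$ must be bounded through $u_\lambda^{q-2}$, not $u^{q-2}$.
\item The weight lower bound applies the upper gradient estimate at $x_\lambda$, so you need $|x_\lambda|\ge\kappa|x|$ on $A_\lambda$ as $\lambda\to-\infty$. In the resonant case this must be derived from the log-corrected two-sided bound, not quoted from the classical weight lemma.
\end{itemize}
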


\begin{proof}[Proof of the asymptotic and gradient assertions in Theorem \ref{thm:resonant}]
At $\Lambda=0$ one has $\gamma_{*}=\gamma_{h}$ and
\begin{equation}\label{eq:resonance-identities}
\sigma+(q-1)\gamma_{h}=N,
\qquad
q-1=\frac{N-\sigma}{\gamma_h}>0.
\end{equation}
Thus $1<q<p$. Moreover Lemma \ref{lem:exponents}(e) gives $q\gamma_h>N$. Set
\[
\gamma:=\gamma_h,
\qquad
\theta:=\frac{q-1}{p-1}\in(0,1),
\qquad
a_{0}:=\frac1{p-q}.
\]
Then
\begin{equation}\label{eq:a0theta}
a_{0}(1-\theta)=\frac1{p-1}.
\end{equation}
Choose
\[
\max\{\gamma_e,N/q\}<\delta_{0}<\gamma
\]
and write $\varepsilon_{0}:=\gamma-\delta_{0}>0$. Theorem \ref{thm:bootstrap} gives $C_{0}>0$ such that
\begin{equation}\label{eq:res-start-upper}
u(x)\le C_{0}|x|^{-\delta_{0}}
\qquad(|x|\ge1).
\end{equation}
Hence Lemma \ref{lem:Wsat} supplies $A_{1},A_{2}>0$ and $R_{5}\ge2$ for which \eqref{eq:Wsat-two} holds. On the other hand, the Serrin--Zou lower bound \eqref{eq:lower} gives
\begin{equation}\label{eq:res-start-lower}
u(x)\ge c_{0}|x|^{-\gamma}
\qquad(|x|\ge R_{1})
\end{equation}
for some $c_{0}>0$ and $R_{1}\ge1$.

\emph{Upper bound.}
Increase $\rho\ge\max\{R_{1},R_{5},e\}$ if necessary and set
\[
\delta_{k}:=\gamma-\varepsilon_{0}\theta^{k},
\qquad k=0,1,\ldots .
\]
Since $q\delta_{0}>N$, the saturated branch of $T$ is operative, and \eqref{eq:resonance-identities} gives
\[
T(\delta_{k})=\delta_{k+1}.
\]
We claim that there are constants $C_k>0$ such that
\begin{equation}\label{eq:res-upper-k}
u(x)\le C_k|x|^{-\delta_k}\qquad(|x|\ge\rho)
\end{equation}
and
\begin{equation}\label{eq:Ck-growth}
C_k\le M\theta^{-a_{0}k}
\end{equation}
with $M$ independent of $k$. The assertion for $k=0$ follows from \eqref{eq:res-start-upper}. If \eqref{eq:res-upper-k} holds, then by \eqref{eq:Wsat-two}
\[
Wu^{q-1}\le A_{2}C_k^{q-1}|x|^{-\delta_{k+1}(p-1)-p}.
\]
Put
\[
C_{k+1}:=\max\left\{
\left(\frac{A_{2}C_k^{q-1}}{h(\delta_{k+1})}\right)^{1/(p-1)},
\ \big(\max_{|x|=\rho}u\big)\rho^{\delta_{k+1}}
\right\}.
\]
Because $\gamma$ is the zero of $h$,
\[
h(\delta_{k+1})
=(p-1)\delta_{k+1}^{p-1}(\gamma-\delta_{k+1})
\ge c_h\varepsilon_{0}\theta^{k+1},
\qquad
c_h:=(p-1)\delta_0^{p-1}>0.
\]
The exterior comparison Lemma \ref{lem:comparison}, applied exactly as in Lemma \ref{lem:onestep}, gives \eqref{eq:res-upper-k} at level $k+1$. If
\[
D:=\left(\frac{A_{2}}{c_h\varepsilon_{0}}\right)^{1/(p-1)},
\qquad
B:=\big(\max_{|x|=\rho}u\big)\rho^{\gamma},
\]
then
\begin{equation}\label{eq:Crec}
C_{k+1}\le\max\left\{D\theta^{-(k+1)/(p-1)}C_k^{\theta},B\right\}.
\end{equation}
Using \eqref{eq:a0theta}, one checks that
\[
a_{0}\theta k+\frac{k+1}{p-1}=a_{0}(k+1)-a_{0}\theta.
\]
Therefore, after choosing $M\ge\max\{C_0,B\}$ so large that
$D M^{\theta}\theta^{a_{0}\theta}\le M$, induction in \eqref{eq:Crec} proves \eqref{eq:Ck-growth}.

Now let $r=|x|$ be large and choose $k=k(r)$ so that
\begin{equation}\label{eq:kchoice}
1\le\varepsilon_{0}\theta^{k}\log r<\theta^{-1}.
\end{equation}
Then \eqref{eq:res-upper-k}--\eqref{eq:Ck-growth} give
\[
u(x)
\le M r^{-\gamma}\theta^{-a_{0}k}
\exp\!\big(\varepsilon_{0}\theta^{k}\log r\big)
\le C r^{-\gamma}(\log r)^{a_{0}},
\]
which proves the upper half of \eqref{eq:resonant-rate}.

\emph{Lower bound.}
For $b>0$ define
\[
\phi_b(r):=r^{-\gamma}(\log r)^b.
\]
If $L=\log r$, a direct radial computation, using $\gamma+1=(N-1)/(p-1)$, gives
\begin{equation}\label{eq:log-plap-general}
-\Delta_p\phi_b
=H_b(L)r^{-N}L^{b(p-1)-1},
\end{equation}
where
\begin{equation}\label{eq:Hb}
H_b(L):=b(p-1)L^{-(p-1)}
(\gamma L-b)^{p-2}(\gamma L-b+1).
\end{equation}
Set
\[
b_0:=0,
\qquad
b_{k+1}:=\frac{1+(q-1)b_k}{p-1}.
\]
Then
\begin{equation}\label{eq:bk}
b_k=a_{0}(1-\theta^k)\uparrow a_{0},
\qquad
b_{k+1}(p-1)-1=b_k(q-1).
\end{equation}
Choose $\rho$ still larger, if necessary, so that every $\phi_b$ with $b\in[b_1,a_0]$ is decreasing on $[\rho,\infty)$ and
\begin{equation}\label{eq:Hbuniform}
0<H_b(\log r)\le D_{0}
\qquad(r\ge\rho,\ b\in[b_1,a_0])
\end{equation}
for some $D_0<\infty$. We prove inductively that
\begin{equation}\label{eq:res-lower-k}
u(x)\ge c_k|x|^{-\gamma}(\log|x|)^{b_k}
\qquad(|x|\ge\rho)
\end{equation}
with $c_k$ bounded below away from zero. The case $k=0$ is \eqref{eq:res-start-lower}, after reducing $c_0$ if necessary. Suppose \eqref{eq:res-lower-k} holds. By \eqref{eq:Wsat-two}, \eqref{eq:resonance-identities} and \eqref{eq:bk},
\[
Wu^{q-1}\ge
A_{1}c_k^{q-1}r^{-N}(\log r)^{b_k(q-1)}.
\]
Let
\[
\widehat c_{k+1}:=
\min\left\{
\left(\frac{A_{1}c_k^{q-1}}{D_0}\right)^{1/(p-1)},
\frac{\min_{|x|=\rho}u}
{\max_{b\in[b_1,a_0]}\phi_b(\rho)}
\right\}.
\]
For $R>\rho$ put
\[
v_R(x):=\widehat c_{k+1}
\big(\phi_{b_{k+1}}(|x|)-\phi_{b_{k+1}}(R)\big)
\qquad(\rho<|x|<R).
\]
By \eqref{eq:log-plap-general}--\eqref{eq:Hbuniform},
\[
-\Delta_pv_R\le A_{1}c_k^{q-1}r^{-N}(\log r)^{b_k(q-1)}
\le -\Delta_pu.
\]
Moreover $v_R=0\le u$ on $|x|=R$ and $v_R\le u$ on $|x|=\rho$. Lemma \ref{lem:comparison} gives $v_R\le u$ in the annulus, and letting $R\to\infty$ yields \eqref{eq:res-lower-k} at level $k+1$ with $c_{k+1}:=\widehat c_{k+1}$.

Put
\[
D_1:=\left(\frac{A_1}{D_0}\right)^{1/(p-1)},
\qquad
b_{\rho}:=\frac{\min_{|x|=\rho}u}
{\max_{b\in[b_1,a_0]}\phi_b(\rho)}>0.
\]
Then $c_{k+1}=\min\{D_1c_k^{\theta},b_{\rho}\}$. Since $0<\theta<1$, the map $t\mapsto D_1t^{\theta}$ has a positive fixed point $t_*$. Hence
\[
\underline c:=\min\{c_0,t_*,b_{\rho}\}>0
\]
satisfies $c_k\ge\underline c$ for every $k$. Letting $k\to\infty$ in \eqref{eq:res-lower-k} and using \eqref{eq:bk} gives
\[
u(x)\ge\underline c |x|^{-\gamma}(\log|x|)^{a_0}
\qquad(|x|\ge\rho),
\]
which completes \eqref{eq:resonant-rate}.

It remains to prove \eqref{eq:resonant-grad}. Let $R\ge2\rho$, put $L_R:=\log R$, and define on the fixed annulus $\mathcal A:=\{1/2<|y|<2\}$
\[
v_R(y):=R^{\gamma_h}L_R^{-a_{0}}u(Ry).
\]
The two-sided estimate just proved gives $\|v_R\|_{L^{\infty}(\mathcal A)}\le C$. Moreover, using \eqref{eq:Wsat-two},
\begin{align*}
-\Delta_pv_R(y)
&=R^{\gamma_h(p-1)+p}L_R^{-a_{0}(p-1)}
W(Ry)u(Ry)^{q-1}\\
&\le C L_R^{-a_{0}(p-q)}\le C,
\end{align*}
because $\gamma_h(p-1)+p=N=\sigma+(q-1)\gamma_h$ and $a_{0}(p-q)=1$. The interior gradient estimates of \cite{DiBenedetto1983,Tolksdorf1984} therefore give $\|\nabla v_R\|_{L^{\infty}(\{3/4<|y|<3/2\})}\le C$ uniformly in $R$. Scaling back gives the upper bound in \eqref{eq:resonant-grad}.

\emph{Lower gradient bound.}
We adapt the blow-down argument of Sciunzi \cite[Theorem 2.2]{Sciunzi2016}. Since $q\gamma>N$, the rate \eqref{eq:resonant-rate} implies $u^q\in L^1(\mathbb R^N)$; moreover it implies $u(x)\le C|x|^{-\delta}$ for some $\delta>N/q$. Hence Lemma \ref{lem:Wmass} applies. Suppose, by contradiction, that the lower bound in \eqref{eq:resonant-grad} fails. Then there are $x_n$ with $R_n:=|x_n|\to\infty$ such that
\[
R_n^{\gamma+1}(\log R_n)^{-a_0}|\nabla u(x_n)|\longrightarrow0.
\]
Set
\[
v_n(y):=R_n^{\gamma}(\log R_n)^{-a_0}u(R_ny).
\]
For every fixed annulus $0<a<|y|<A<\infty$, \eqref{eq:resonant-rate} gives
\[
c_A|y|^{-\gamma}\le v_n(y)\le C_a|y|^{-\gamma}
\]
for all large $n$. Furthermore, by \eqref{eq:resonance-identities}, $a_0(p-q)=1$ and Lemma \ref{lem:Wmass},
\[
-\Delta_pv_n
=(\log R_n)^{-1}\big(R_n^{\sigma}W(R_ny)\big)v_n^{q-1}
\longrightarrow0
\]
uniformly on compact annuli. The local $C^{1,\alpha}$ estimates therefore yield, by a diagonal argument,
\[
v_n\longrightarrow v_\infty
\qquad\text{in }C^1_{\mathrm{loc}}(\mathbb R^N\setminus\{0\}),
\]
where $v_\infty$ is positive and $p$-harmonic in $\mathbb R^N\setminus\{0\}$. Passing to the two-sided bounds and then letting the annuli expand gives
\[
v_\infty(x)\asymp |x|^{-\gamma_h},
\qquad
v_\infty(x)\to\infty\ (|x|\to0),
\qquad
v_\infty(x)\to0\ (|x|\to\infty).
\]
By the $p$-harmonic rigidity theorem used by Sciunzi \cite[Theorem 2.1]{Sciunzi2016},
\[
v_\infty(x)=\alpha |x|^{-\gamma_h}
\]
for some $\alpha>0$. Let $y_n=x_n/R_n$. After passing to a subsequence $y_n\to y_\infty\in S^{N-1}$. The assumed failure of the gradient lower bound gives $|\nabla v_n(y_n)|\to0$, hence $\nabla v_\infty(y_\infty)=0$, a contradiction. This proves the lower bound in \eqref{eq:resonant-grad}.
\end{proof}

\subsection{The slow phase \texorpdfstring{$\Lambda<0$}{Lambda < 0}}\label{sec:slow}

\begin{theorem}[Slow profile, exact far field and symmetry]\label{thm:slowrate}
Let $N\ge3$, $p>1$ and $2p<\sigma<N$, and assume that $q:=p^*_{\sigma}>1$ and $\Lambda<0$. Let $u$ be a positive $D^{1,p}(\mathbb{R}^{N})$-weak solution of \eqref{eq:main}. Then $u^q\in L^1(\mathbb R^N)$. If
\[
m_0:=\int_{\mathbb R^N}u^q\,dx,
\qquad
A_*:=\left(\frac{m_0}{h(\gamma_*)}\right)^{\!1/(p-q)},
\]
then
\begin{equation}\label{eq:slow-exact-u}
\sup_{|\omega|=1}\left|r^{\gamma_*}u(r\omega)-A_*\right|\longrightarrow0
\qquad\text{as }r\to\infty,
\end{equation}
and
\begin{equation}\label{eq:slow-exact-grad}
\sup_{|\omega|=1}\left|r^{\gamma_*+1}\nabla u(r\omega)+A_*\gamma_*\omega\right|\longrightarrow0.
\end{equation}
In particular, there are $c,C>0$ and $R\ge1$ such that
\begin{equation}\label{eq:slowrate}
\frac{c}{|x|^{\gamma_{*}}}\le u(x)\le\frac{C}{|x|^{\gamma_{*}}},
\qquad
\frac{c}{|x|^{\gamma_*+1}}\le |\nabla u(x)|\le\frac{C}{|x|^{\gamma_*+1}}
\end{equation}
for $|x|\ge R$. Moreover $u$ is radially symmetric about some point $x_0\in\mathbb R^N$ and strictly decreasing in the radial variable.
\end{theorem}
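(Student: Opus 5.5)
We follow the same three-stage scheme as in the resonant case: barrier bounds, blow-down, then moving planes. First we obtain the two-sided rate $u\simeq|x|^{-\gamma_*}$; next we upgrade it to the exact limits \eqref{eq:slow-exact-u}--\eqref{eq:slow-exact-grad}; finally we run the singular moving-plane argument.

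\emph{Two-sided rate.} Since $\Lambda<0$, Lemma \ref{lem:exponents}(c) gives $\gamma_*<\gamma_h$, and Lemma \ref{lem:Tprops}(d) gives $q\gamma_*>N$. Theorem \ref{thm:bootstrap} yields $u\le C_\nu|x|^{-\nu}$ for every $\nu<\gamma_*$. Fixing $\nu$ with $q\nu>N$, Lemma \ref{lem:Wsat} shows that $m_0<\infty$ and that $W\simeq|x|^{-\sigma}$. Lemma \ref{lem:Wmass} then gives $|x|^\sigma W\to m_0$ uniformly in the direction.

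For the upper bound at the endpoint we use the barrier $v=M|x|^{-\gamma_*}$. By \eqref{eq:barrier}, $-\Delta_pv=h(\gamma_*)M^{p-1}|x|^{-\gamma_*(p-1)-p}$ with $h(\gamma_*)>0$. The fixed-point identity $\gamma_*(p-1)+p=\sigma+(q-1)\gamma_*$ lets us compare $v$ with $u$. Suppose $u\le C|x|^{-\gamma_*+\epsilon}$ for some $\epsilon\ge0$. Because the saturated map $T$ contracts with slope $\theta=(q-1)/(p-1)<1$, iterating with constants $C_{k+1}=(A_2C_k^{q-1}/h(\delta_{k+1}))^{1/(p-1)}$ along $\delta_k\uparrow\gamma_*$ keeps $C_k$ bounded. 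This works because $h(\delta_k)\to h(\gamma_*)>0$, unlike the resonant case, and because $\theta<1$ makes the recursion $C\mapsto DC^\theta$ bounded. Letting $k\to\infty$ gives $u\le C|x|^{-\gamma_*}$.

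For the lower bound, take $v=c(|x|^{-\gamma_*}-R^{\gamma_h-\gamma_*}|x|^{-\gamma_h})$ on the annulus $\rho<|x|<R$. Here $|x|^{-\gamma_h}$ is $p$-harmonic, and $v$ vanishes on $|x|=R$. Starting from the Serrin--Zou bound \eqref{eq:lower}, the iteration $b\mapsto T$-type lower exponents decreases from $\gamma_h$ to $\gamma_*$ with positive limit constants, using $W\ge A_1|x|^{-\sigma}$ and the comparison Lemma \ref{lem:comparison}. Alternatively, one lower barrier $c|x|^{-\gamma_*}$ with $c$ small works directly, since $c^{p-1}h(\gamma_*)\le A_1c^{q-1}$ for small $c$ because $q<p$. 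Gradient upper bounds follow by rescaling $v_R(y)=R^{\gamma_*}u(Ry)$ on a fixed annulus: the rescaled right-hand side is bounded, and the DiBenedetto--Tolksdorf estimates apply.

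\emph{Exact profile by blow-down rigidity.} The rescalings $v_R$ satisfy $-\Delta_pv_R=(R^\sigma W(R\cdot))v_R^{q-1}\to m_0|y|^{-\sigma}v^{q-1}$ uniformly on compact annuli. Hence along subsequences $v_R\to v$ in $C^1_{\rm loc}(\mathbb R^N\setminus\{0\})$, where $v$ is a positive solution of $-\Delta_pv=m_0|y|^{-\sigma}v^{q-1}$ with $v\simeq|y|^{-\gamma_*}$ on all of $\mathbb R^N\setminus\{0\}$. The key step, and the main obstacle, is to show that such a $v$ must equal $A_*|y|^{-\gamma_*}$.

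The plan for this step is as follows. Set $\overline A=\sup|y|^{\gamma_*}v$ and $\underline A=\inf|y|^{\gamma_*}v$, both finite and positive. The family $\{v\}$ is invariant under the scaling $\lambda^{\gamma_*}v(\lambda\cdot)$, so a secondary blow-up or blow-down at points nearly realizing the sup produces a limit $\tilde v$ that touches $\overline A|y|^{-\gamma_*}$ from below at some point. If $\overline A>A_*$, then $\overline A|y|^{-\gamma_*}$ is a strict supersolution there, because $h(\gamma_*)\overline A^{p-q}>m_0$. At the touching point the gradients agree and are nonzero, so the strong comparison principle (\cite[Lemma 2.7]{DaiLiLiu2025}) applies locally and yields a contradiction. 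Hence $\overline A\le A_*$, and symmetrically $\underline A\ge A_*$. Since the limit is unique, the full family converges, which gives \eqref{eq:slow-exact-u}. The $C^1$ convergence then gives \eqref{eq:slow-exact-grad}, and the two-sided gradient bounds in \eqref{eq:slowrate} follow.

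\emph{Symmetry.} We repeat the moving-plane proof of Theorem \ref{thm:main}(ii), using the splitting \eqref{eq:splitting}, Lemma \ref{lem:strict-separation}, and tail weights with $\gamma=\gamma_*$ and $s=(2-p)(\gamma_*+1)$. Now $p<2$, so \eqref{eq:sharp-monotonicity-singular} replaces the vector inequality, with weight $(p-1)\max\{|\nabla u|,|\nabla u_\lambda|\}^{p-2}\ge(p-1)(A_*\gamma_*+o(1))^{p-2}|x|^s$. Here the tail terms are critical rather than decaying. The term $Kw^2$ has $K\approx(q-1)m_0A_*^{q-2}|x|^{-\sigma-(q-2)\gamma_*}=(q-1)h(\gamma_*)A_*^{p-2}|x|^{s-2}$. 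We must check the strict Hardy margin $(q-1)h(\gamma_*)(2/(N+s-2))^2<(p-1)\gamma_*^{p-2}$ using \eqref{eq:cutoff-hardy}; the nonlocal term $P_\lambda$ is handled through Lemma \ref{lem:I2}, with smallness coming from $\|u\|_{L^{p^*}(\Sigma_\lambda)}$ together with a decaying power. Verifying this margin is the second delicate point. Strict monotonicity then follows from the radial ODE as in Step 3.
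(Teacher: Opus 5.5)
Your architecture coincides with the paper's: an upper iteration $\tilde\delta_k\uparrow\gamma_*$ with constants controlled by the contraction $t\mapsto Dt^{\theta}$, a lower iteration started from \eqref{eq:lower}, and a blow-down to $-\Delta_pV=m_0|x|^{-\sigma}V^{q-1}$. The rigidity step uses the extremal ratio and the strong comparison principle at a contact point where $\nabla U\neq0$, and the moving planes are closed by a strict Hardy margin.

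\textbf{The lower bound fails as written.}
\begin{itemize}
\item Your barrier $c\bigl(|x|^{-\gamma_*}-R^{\gamma_h-\gamma_*}|x|^{-\gamma_h}\bigr)=c|x|^{-\gamma_*}\bigl(1-(R/|x|)^{\gamma_h-\gamma_*}\bigr)$ is negative on all of $\rho<|x|<R$, because $\gamma_h>\gamma_*$. Comparing it with $u$ therefore gives nothing. Also, for $p\neq2$ you cannot compute its $p$-Laplacian by superposing the $p$-harmonic term.
\item The one-shot barrier $c|x|^{-\gamma_*}$ is circular. Lemma \ref{lem:comparison} compares $-\Delta_pv$ with the actual right-hand side $W[u]u^{q-1}$. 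The inequality $c^{p-1}h(\gamma_*)\le A_1c^{q-1}$ only helps once $u\ge c|x|^{-\gamma_*}$ is already known.
\item The only lower bound you have at that stage is $W[u]u^{q-1}\ge A_1c_1^{q-1}|x|^{-\sigma-(q-1)\gamma_h}$. This decays strictly faster than $-\Delta_p(c|x|^{-\gamma_*})=h(\gamma_*)c^{p-1}|x|^{-\sigma-(q-1)\gamma_*}$. So the required inequality cannot be verified at infinity for any $c>0$.
\item A comparison principle for the sublinear problem $-\Delta_pv=A_1|x|^{-\sigma}v^{q-1}$ does not rescue this. You have no ordering between $c|x|^{-\gamma_*}$ and $u$ at infinity. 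Cutting the barrier off at $|x|=R$ destroys the subsolution property near $|x|=R$, since the right-hand side vanishes there while $-\Delta_pv$ does not.
\end{itemize}

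\textbf{The repair} is the paper's Lemma \ref{lem:slow-step}(i). At each step, with $\nu=T(\delta_k)$, use $v_R=c'(|x|^{-\nu}-R^{-\nu})$ on $\rho<|x|<R$.
\begin{itemize}
\item Because $\Delta_p$ is invariant under adding constants, $-\Delta_pv_R=h(\nu)(c')^{p-1}|x|^{-\nu(p-1)-p}$ exactly.
\item The saturated identity $\nu(p-1)+p=\sigma+(q-1)\delta_k$ matches this power with the known bound $W[u]u^{q-1}\ge A_1c_k^{q-1}|x|^{-\sigma-(q-1)\delta_k}$. Then let $R\to\infty$.
\item The constants satisfy $c_{k+1}\ge\min\{\Phi(c_k),b\}$, where $\Phi(t)=(A_1/h_{\max})^{1/(p-1)}t^{\theta}$ and $b=(\min_{|x|=\rho}u)\rho^{\gamma_*}$. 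Hence they stay above $\min\{c_0,t^*,b\}$, with $t^*$ the fixed point of $\Phi$.
\item The analogous boundary term $(\max_{|x|=\rho}u)\rho^{\gamma_*}$ is also missing from your upper recursion.
\end{itemize}

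\textbf{Two smaller points in the symmetry step.}
\begin{itemize}
\item The sharp weight bound $M_\lambda^{p-2}\ge(1-\varepsilon)(A_*\gamma_*)^{p-2}|x|^{s}$ needs $|x_\lambda|/|x|\to1$ uniformly on $A_\lambda$. This is geometric only for bounded $\lambda$. For $\lambda\to-\infty$ it must be derived from \eqref{eq:slow-exact-u}, as in \eqref{eq:reflection-ratio}. Without it the constant in the weight degrades and the margin can be lost.
\item The margin you left open is immediate. With $d=N-p-(p-1)\gamma_*$ one has $N+s-2=\gamma_*+d$ and $h(\gamma_*)=\gamma_*^{p-1}d$. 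Your inequality then reads $\frac{q-1}{p-1}\cdot\frac{4\gamma_*d}{(\gamma_*+d)^2}<1$, which holds since $q<p$ and $4\gamma_*d\le(\gamma_*+d)^2$.
\end{itemize}
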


Throughout this subsection $N\ge3$, $1<p<2$, $2p<\sigma<N$, $\Lambda<0$ and $q>1$, and $u$ is a positive $D^{1,p}(\mathbb{R}^{N})$-weak solution of \eqref{eq:main}. By Lemma \ref{lem:exponents}(b),(c) and Lemma \ref{lem:Tprops}(d) we then have
\begin{equation}\label{eq:slow-order}
\gamma_{e}<\gamma_{*}<\gamma_{h},
\qquad
q\gamma_{*}>N,
\qquad
q\gamma_{h}>N,
\end{equation}
the last because $\Lambda<0$ forces $\sigma<2N/p$ by Lemma \ref{lem:exponents}(e). We write
\[
\theta:=\frac{q-1}{p-1}\in(0,1),
\qquad
T(\delta)=\frac{\sigma+(q-1)\delta-p}{p-1}
\quad\text{for }q\delta\ge N,
\]
the saturated branch of the map \eqref{eq:adelta}, which is affine with slope $\theta$ and fixed point $\gamma_{*}$ by Lemma \ref{lem:Tprops}(c). In particular, $T$ maps $[\gamma_{*},\gamma_{h}]$ into itself, $T(\delta)<\delta$ for $\delta>\gamma_{*}$ and $T(\delta)>\delta$ for $\delta<\gamma_{*}$, and by \eqref{eq:slow-order} every $\delta\in[\gamma_{*},\gamma_{h}]$ satisfies $q\delta\ge N$, so the saturated formula is the operative one throughout.

\subsection{One step of the iteration}

\begin{lemma}\label{lem:slow-step}
Fix $\rho\ge R_{5}$.
\begin{enumerate}[label=\textup{(\roman*)}]
\item Let $\delta\in[\gamma_{*},\gamma_{h}]$ and set $\nu:=T(\delta)$. If $u(x)\ge c|x|^{-\delta}$ for $|x|\ge\rho$, then $u(x)\ge c'|x|^{-\nu}$ for $|x|\ge\rho$, where
\[
c'=\min\Big\{\Big(\frac{A_{1}c^{\,q-1}}{h(\nu)}\Big)^{\frac{1}{p-1}},\ \big(\min_{|x|=\rho}u\big)\rho^{\nu}\Big\}.
\]
\item Let $\delta>0$ satisfy $q\delta>N$ and
\[
\gamma_{e}<\nu:=T(\delta)<\gamma_{h}.
\]
If \eqref{eq:Wsat-two} holds and $u(x)\le C|x|^{-\delta}$ for $|x|\ge\rho$, then $u(x)\le C'|x|^{-\nu}$ for $|x|\ge\rho$, where
\[
C'=\max\Big\{\Big(\frac{A_{2}C^{\,q-1}}{h(\nu)}\Big)^{\frac{1}{p-1}},\ \big(\max_{|x|=\rho}u\big)\rho^{\nu}\Big\}.
\]
\end{enumerate}
In (i), $q\delta>N$ by \eqref{eq:slow-order}, and $\nu\in[\gamma_{*},T(\gamma_{h})]\subset(0,\gamma_{h})$; in (ii), $h(\nu)>0$ is part of the stated range. Thus all barriers below are admissible.
\end{lemma}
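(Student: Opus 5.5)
Both parts are one-step exterior comparisons with the radial power barrier \eqref{eq:barrier}, modelled on the proof of Lemma \ref{lem:onestep}. The new input is that the Hartree potential is already saturated, $A_1|x|^{-\sigma}\le W\le A_2|x|^{-\sigma}$ on $|x|\ge R_5$ by Lemma \ref{lem:Wsat}. Lemma \ref{lem:Wsat} applies here because Theorem \ref{thm:bootstrap} gives decay $u\le C|x|^{-\nu}$ for some $\nu$ with $q\nu>N$, by \eqref{eq:slow-order}. The basic exponent identity is that for $\nu=T(\delta)$ on the saturated branch,
\[
\nu(p-1)+p=\sigma+(q-1)\delta,
\]
which is the definition of $T$ rewritten. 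Hence the Hartree right-hand side built from a power bound $|x|^{-\delta}$ has exactly the homogeneity of $-\Delta_p(M|x|^{-\nu})=h(\nu)M^{p-1}|x|^{-\nu(p-1)-p}$.

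\emph{Part (i).} On $|x|\ge\rho\ge R_5$ we have
\[
-\Delta_pu=Wu^{q-1}\ge A_1c^{q-1}|x|^{-\sigma-(q-1)\delta}=A_1c^{q-1}|x|^{-\nu(p-1)-p},
\]
using $q>1$. Let $v=c'|x|^{-\nu}$ with $c'$ as stated. Then $-\Delta_pv=h(\nu)c'^{\,p-1}|x|^{-\nu(p-1)-p}\le-\Delta_pu$ by the first entry of the minimum, and $v\le u$ on $|x|=\rho$ by the second entry. The admissibility claims hold because $T$ is increasing and $\delta\in[\gamma_*,\gamma_h]$, so $\nu\in[\gamma_*,T(\gamma_h)]$. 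Moreover $T(\gamma_h)<\gamma_h$ since $\gamma_h>\gamma_*$ and $T(\delta)<\delta$ above $\gamma_*$. Hence $h(\nu)>0$.

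Since $v$ decays only like $|x|^{-\nu}$, it is cleaner to compare on annuli, as in the resonant lower bound. I will use $v_R=c'(|x|^{-\nu}-R^{-\nu})$ on $\rho<|x|<R$. It has the same $p$-Laplacian as $v$, vanishes at $|x|=R$, and satisfies $v_R\le v\le u$ on $|x|=\rho$. Lemma \ref{lem:comparison} on the bounded annulus, applied with the roles reversed ($v_R$ subsolution, $u$ supersolution), gives $v_R\le u$. Letting $R\to\infty$ gives $u\ge c'|x|^{-\nu}$.

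\emph{Part (ii).} The argument mirrors Lemma \ref{lem:onestep}, now with the sharp constant. From \eqref{eq:Wsat-two} and the hypothesis, on $|x|\ge\rho$,
\[
Wu^{q-1}\le A_2C^{q-1}|x|^{-\sigma-(q-1)\delta}=A_2C^{q-1}|x|^{-\nu(p-1)-p}.
\]
With $v=C'|x|^{-\nu}$ and $h(\nu)>0$ (because $\nu<\gamma_h$), the first entry of the maximum gives $-\Delta_pv\ge-\Delta_pu$, and the second gives $v\ge u$ on $|x|=\rho$. Here I use the exterior-domain case of Lemma \ref{lem:comparison}, which needs $\nabla v\in L^p$ outside $B_\rho$. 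This holds precisely because $\nu>\gamma_e$. Also $(u-v)^+\in D^{1,p}$, since it is dominated by $u$ and its gradient lies in $L^p$. The comparison yields $u\le v$.

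\emph{Main obstacle.} The only delicate points are the function-space hypotheses of Lemma \ref{lem:comparison} on unbounded domains. The upper bound (ii) needs $\nu>\gamma_e$, which is assumed. The lower bound (i) avoids the issue by using the truncated barriers on annuli and passing to the limit. Everything else is the exponent bookkeeping recorded in the displayed identity.
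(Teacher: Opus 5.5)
Your proposal is correct and follows the paper's proof essentially step for step. The saturated-branch identity $\nu(p-1)+p=\sigma+(q-1)\delta$ equalizes the powers. Part (i) compares $u$ with the truncated barriers $c'(|x|^{-\nu}-R^{-\nu})$ on bounded annuli and lets $R\to\infty$. Part (ii) applies the exterior-domain comparison to $C'|x|^{-\nu}$, using $\nabla v\in L^p$ because $\nu>\gamma_e$.
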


\begin{proof}
In (i), $q\delta>N$ by \eqref{eq:slow-order}, while this is assumed explicitly in (ii). Hence the saturated formula for $T$ is operative in both cases and gives
\begin{equation}\label{eq:slow-exponents}
\nu(p-1)+p=\sigma+(q-1)\delta .
\end{equation}
Thus the two sides of every inequality below carry the same power of $|x|$ and no radius enters the constants.

(i) By \eqref{eq:Wsat-low} and $q>1$, for $|x|\ge\rho$,
\[
f:=Wu^{q-1}\ge A_{1}c^{\,q-1}|x|^{-\sigma-(q-1)\delta}
=A_{1}c^{\,q-1}|x|^{-\nu(p-1)-p}.
\]
Let $R>\rho$ and put $v_{R}(x):=c'\big(|x|^{-\nu}-R^{-\nu}\big)$ on $\Omega_{R}:=\{\rho<|x|<R\}$. Since adding a constant does not change the $p$-Laplacian, \eqref{eq:barrier} gives
\[
-\Delta_{p}v_{R}=h(\nu)(c')^{p-1}|x|^{-\nu(p-1)-p}\le A_{1}c^{\,q-1}|x|^{-\nu(p-1)-p}\le f
\qquad\text{in }\Omega_{R},
\]
by the first entry in the definition of $c'$. On $|x|=R$ we have $v_{R}=0\le u$, and on $|x|=\rho$ we have $v_{R}\le c'\rho^{-\nu}\le\min_{|x|=\rho}u$ by the second entry. Lemma \ref{lem:comparison} on the bounded domain $\Omega_{R}$, applied to the pair $v_{R},u$ in place of $u,v$, yields $v_{R}\le u$ in $\Omega_{R}$. Letting $R\to\infty$ gives $u\ge c'|x|^{-\nu}$ for $|x|\ge\rho$.

(ii) Symmetrically, by \eqref{eq:Wsat-two}, $f\le A_{2}C^{\,q-1}|x|^{-\nu(p-1)-p}$ for $|x|\ge\rho$. Put $v(x):=C'|x|^{-\nu}$ on $\Omega:=\{|x|>\rho\}$. Then $-\Delta_{p}v=h(\nu)(C')^{p-1}|x|^{-\nu(p-1)-p}\ge f$ by the first entry in the definition of $C'$, and $v\ge u$ on $\partial\Omega$ by the second. Since $\nu>\gamma_{e}$, we have $\nabla v\in L^{p}(\Omega)$, so $(u-v)^{+}\in D^{1,p}(\Omega)$ as in the proof of Lemma \ref{lem:onestep}. The exterior-domain assumptions of Lemma \ref{lem:comparison} are therefore satisfied, and it gives $u\le v$ in $\Omega$.
\end{proof}

The exact coefficient in the slow phase comes from the following rigidity statement for the scale-invariant limit equation.

\begin{lemma}[Rigidity of the slow blow-down equation]\label{lem:slow-rigidity}
Let $1<q<p<2$, let $0<\gamma<\gamma_h$ satisfy
\[
\gamma(p-q)=\sigma-p,
\]
and let $m>0$. Suppose $V\in C^1_{\mathrm{loc}}(\mathbb R^N\setminus\{0\})$ is positive and solves
\begin{equation}\label{eq:limit-slow}
-\Delta_pV=m|x|^{-\sigma}V^{q-1}
\qquad\text{in }\mathbb R^N\setminus\{0\},
\end{equation}
with
\[
c|x|^{-\gamma}\le V(x)\le C|x|^{-\gamma}
\qquad(x\ne0).
\]
Then
\begin{equation}\label{eq:slow-rigid-profile}
V(x)=A|x|^{-\gamma},
\qquad
A=\left(\frac{m}{h(\gamma)}\right)^{1/(p-q)}.
\end{equation}
\end{lemma}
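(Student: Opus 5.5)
The plan is a touching argument with the radial barriers $M|x|^{-\gamma}$, made exact by the scale invariance of \eqref{eq:limit-slow}. The first step is to check invariance. If $V$ solves \eqref{eq:limit-slow}, then so does $V^{\lambda}(y):=\lambda^{\gamma}V(\lambda y)$ for every $\lambda>0$. Indeed,
\[
-\Delta_pV^\lambda=\lambda^{\gamma(p-1)+p}(-\Delta_pV)(\lambda y),
\qquad
\gamma(p-1)+p=\sigma+(q-1)\gamma,
\]
and the latter identity is exactly $\gamma(p-q)=\sigma-p$. The second step is to record the barrier identity. By \eqref{eq:barrier}, $\Phi_M:=M|x|^{-\gamma}$ satisfies
\[
-\Delta_p\Phi_M=h(\gamma)M^{p-1}|x|^{-\sigma-(q-1)\gamma},
\qquad
m|x|^{-\sigma}\Phi_M^{q-1}=mM^{q-1}|x|^{-\sigma-(q-1)\gamma}.
\]
Since $p>q$ and $h(\gamma)>0$ (because $\gamma<\gamma_h$), $\Phi_M$ is a strict supersolution of \eqref{eq:limit-slow} when $M>A$ and a strict subsolution when $M<A$.

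Next I set $M^+:=\sup_{x\ne0}|x|^{\gamma}V(x)$ and $M^-:=\inf_{x\ne0}|x|^{\gamma}V(x)$, which lie in $[c,C]$. I pick $x_n$ with $|x_n|^\gamma V(x_n)\to M^+$ and rescale by $V_n:=V^{|x_n|}$, so that the maximizing points become $y_n=x_n/|x_n|\in S^{N-1}$. Each $V_n$ solves \eqref{eq:limit-slow}, satisfies $c|y|^{-\gamma}\le V_n\le M^+|y|^{-\gamma}$, and has right-hand side bounded on every compact annulus. The interior $C^{1,\alpha}$ estimates of \cite{DiBenedetto1983,Tolksdorf1984} and a diagonal argument then give $V_n\to V_\infty$ in $C^1_{\rm loc}(\mathbb R^N\setminus\{0\})$. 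The limit $V_\infty$ solves \eqref{eq:limit-slow}, satisfies $V_\infty\le\Phi_{M^+}$, and has $V_\infty(y_\infty)=\Phi_{M^+}(y_\infty)$ at some $y_\infty\in S^{N-1}$. Since the two $C^1$ functions touch at an interior point, their gradients agree there, so $\nabla V_\infty(y_\infty)=-M^+\gamma y_\infty\neq0$.

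The main step, and the expected obstacle, is to exclude $M^+>A$. In that case, using $V_\infty\le\Phi_{M^+}$ and $q>1$,
\[
-\Delta_pV_\infty=m|y|^{-\sigma}V_\infty^{q-1}\le m|y|^{-\sigma}\Phi_{M^+}^{q-1}<-\Delta_p\Phi_{M^+}.
\]
Combined with $V_\infty\le\Phi_{M^+}$ and the contact at $y_\infty$, where $\nabla\Phi_{M^+}\ne0$, I would invoke the strong comparison principle \cite[Theorem 1.4]{Damascelli1998} (\cite[Lemma 2.7]{DaiLiLiu2025}). It applies for every $1<p<\infty$ and needs nondegeneracy only at the contact point. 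It gives $V_\infty\equiv\Phi_{M^+}$ on a neighbourhood of $y_\infty$. There the equation forces $h(\gamma)(M^+)^{p-1}=m(M^+)^{q-1}$, i.e.\ $M^+=A$, a contradiction. Hence $M^+\le A$. A cruder alternative near $y_\infty$ would be to treat the difference as solving a linear uniformly elliptic equation, since both gradients are bounded away from zero there, and conclude by the classical strong maximum principle. The delicate points are the regularity of $\Phi_{M^+}-V_\infty$ and checking the hypotheses of the cited comparison principle; I expect this local step to need the most care.

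Symmetrically, I take minimizing sequences for $M^-$ and obtain a limit $V_\infty\ge\Phi_{M^-}$ touching at a unit vector. If $M^-<A$, then
\[
-\Delta_pV_\infty\ge m|y|^{-\sigma}\Phi_{M^-}^{q-1}>-\Delta_p\Phi_{M^-},
\]
and the same strong comparison argument gives a contradiction. Hence $A\le M^-\le M^+\le A$, so $|x|^{\gamma}V\equiv A$. This is \eqref{eq:slow-rigid-profile}.
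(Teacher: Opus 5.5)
Your proposal is correct and follows essentially the same route as the paper: you blow down along an extremizing sequence for $|x|^{\gamma}V$ (the paper normalizes by $V/U$ with $U=A|x|^{-\gamma}$ and rotates the contact point to $e_1$). You then apply Damascelli's strong comparison principle at the contact point, where the gradient is nonzero, which contradicts the strict super/subsolution property of the rescaled barrier unless $\sup|x|^{\gamma}V=\inf|x|^{\gamma}V=A$.
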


\begin{proof}
The function $U(x):=A|x|^{-\gamma}$ with $A$ as in \eqref{eq:slow-rigid-profile} solves \eqref{eq:limit-slow} by \eqref{eq:barrier}. The quotient $V/U$ is bounded above and below by positive constants. Set
\[
t_0:=\sup_{\mathbb R^N\setminus\{0\}}\frac{V}{U}<\infty.
\]
Assume $t_0>1$. Choose $x_k$ with $V(x_k)/(t_0U(x_k))\to1$, put $r_k=|x_k|$, and choose rotations $Q_k$ with $Q_ke_1=x_k/r_k$. The scale-invariance of \eqref{eq:limit-slow} gives
\[
V_k(x):=r_k^{\gamma}V(r_kQ_kx),
\]
which solves the same equation and satisfies the same two-sided bounds. Local regularity yields, up to a subsequence, $V_k\to V_\infty$ in $C^1_{\mathrm{loc}}(\mathbb R^N\setminus\{0\})$. Since $U$ is invariant under the same rotations and scaling,
\[
V_\infty\le t_0U,
\qquad
V_\infty(e_1)=t_0U(e_1).
\]
On the other hand,
\[
-\Delta_p(t_0U)
=t_0^{p-q}m|x|^{-\sigma}(t_0U)^{q-1}
>m|x|^{-\sigma}(t_0U)^{q-1}
\ge -\Delta_pV_\infty .
\]
At this interior contact point, $\nabla V_\infty(e_1)=t_0\nabla U(e_1)\ne0$. Hence both gradients are nonzero in a sufficiently small neighborhood of $e_1$, and the strong comparison principle \cite[Lemma 2.7]{DaiLiLiu2025}, applied with $\tilde\Lambda=0$ in a small ball around $e_1$ to the pair $V_\infty\le t_0U$, gives $V_\infty\equiv t_0U$ near $e_1$. There, however,
\[
-\Delta_pV_\infty=-\Delta_p(t_0U)>m|x|^{-\sigma}(t_0U)^{q-1}=m|x|^{-\sigma}V_\infty^{q-1},
\]
contradicting \eqref{eq:limit-slow}. Thus $t_0\le1$ and $V\le U$.

For the reverse inequality put $s_0:=\sup U/V<\infty$ and assume $s_0>1$. As above, $-\Delta_p(s_0V)=s_0^{p-q}m|x|^{-\sigma}(s_0V)^{q-1}>m|x|^{-\sigma}(s_0V)^{q-1}$. The blow-down is again applied to $V$, along a sequence realising $s_0$, while $U$ is left unchanged because it is invariant under the rescaling. The limit satisfies $U\le s_0V_\infty$ with equality at $e_1$, where $\nabla U(e_1)=s_0\nabla V_\infty(e_1)\ne0$. Since $-\Delta_pU=m|x|^{-\sigma}U^{q-1}\le m|x|^{-\sigma}(s_0V_\infty)^{q-1}<-\Delta_p(s_0V_\infty)$ and $q>1$, the strong comparison principle gives $U\equiv s_0V_\infty$ near $e_1$, which contradicts the strict inequality exactly as before. Hence $U\le V$. Therefore $V\equiv U$.
\end{proof}

\subsection{Proof of Theorem \ref{thm:slowrate}}

\begin{proof}[Proof of the asymptotic and gradient assertions in Theorem \ref{thm:slowrate}]
The constants are chosen in the order $\delta_{\sharp}\to C_{\sharp}\to A_{1},A_{2},R_{5}\to\rho\to b,B$, and no later choice feeds back into an earlier one.

Since $q\gamma_{*}>N$ and $\gamma_{*}>\gamma_{e}$, we may fix
\[
\max\{\gamma_{e},N/q\}<\delta_{\sharp}<\gamma_{*}.
\]
Theorem \ref{thm:bootstrap} then provides $C_{\sharp}$ with $u\le C_{\sharp}|x|^{-\delta_{\sharp}}$ for $|x|\ge1$. Thus Lemma \ref{lem:Wsat} applies in full, and we fix $A_{1},A_{2},R_{5}$ accordingly. Let $R_{1}$ be the radius in \eqref{eq:lower} and set $\rho:=\max\{R_{5},R_{1}\}\ge2$, so that in particular $u\ge c_{0}|x|^{-\gamma_{h}}$ for $|x|\ge\rho$ with $c_{0}:=c_{1}$. Put
\[
b:=\big(\min_{|x|=\rho}u\big)\rho^{\gamma_{*}}>0,
\qquad
B:=\big(\max_{|x|=\rho}u\big)\rho^{\gamma_{*}}<\infty,
\]
which are positive and finite by continuity and positivity of $u$. Every exponent $\nu$ produced below by the lower iteration lies in $[\gamma_{*},\gamma_{h}]$ and every one produced by the upper iteration lies in $(0,\gamma_{*}]$; since $\rho>1$ this gives
\begin{equation}\label{eq:bB}
\big(\min_{|x|=\rho}u\big)\rho^{\nu}\ge b\ \text{ in the first case},
\qquad
\big(\max_{|x|=\rho}u\big)\rho^{\nu}\le B\ \text{ in the second}.
\end{equation}

\emph{The lower bound.} Set $\delta_{0}:=\gamma_{h}$ and $\delta_{k+1}:=T(\delta_{k})$. Since $T$ is affine with slope $\theta\in(0,1)$ and fixed point $\gamma_{*}$,
\[
\delta_{k}=\gamma_{*}+(\gamma_{h}-\gamma_{*})\theta^{k}\ \downarrow\ \gamma_{*},
\qquad
\delta_{k}\in[\gamma_{*},\gamma_{h}]\ \text{ for all }k .
\]
By the choice of $\rho$ we have $u\ge c_{0}|x|^{-\delta_{0}}$ for $|x|\ge\rho$. Define $c_{k+1}$ from $c_{k}$ by Lemma \ref{lem:slow-step}(i), so that $u\ge c_{k}|x|^{-\delta_{k}}$ for $|x|\ge\rho$ and all $k$. Let
\[
h_{\max}:=\max_{[\gamma_{*},\,T(\gamma_{h})]}h>0,
\qquad
\Phi(t):=\Big(\frac{A_{1}}{h_{\max}}\Big)^{\frac{1}{p-1}}t^{\theta},
\]
so that $c_{k+1}\ge\min\{\Phi(c_{k}),b\}$ by \eqref{eq:bB}. The map $\Phi$ is increasing and, because $\theta<1$, it has a unique positive fixed point $t^{*}$ with $\Phi(t)\ge t$ for $0<t\le t^{*}$. Put $\gamma:=\min\{c_{0},t^{*},b\}>0$. If $c_{k}\ge\gamma$ then $\Phi(c_{k})\ge\Phi(\gamma)\ge\gamma$, and $b\ge\gamma$, whence $c_{k+1}\ge\gamma$. By induction $c_{k}\ge\gamma$ for every $k$, so
\[
u(x)\ge\gamma|x|^{-\delta_{k}}\qquad\text{for }|x|\ge\rho\text{ and all }k .
\]
Letting $k\to\infty$ and using $\delta_{k}\downarrow\gamma_{*}$ gives $u(x)\ge\gamma|x|^{-\gamma_{*}}$ for $|x|\ge\rho$.

\emph{The upper bound.} Set $\tilde\delta_{0}:=\delta_{\sharp}$ and $\tilde\delta_{k+1}:=T(\tilde\delta_{k})$, so that $\tilde\delta_{k}\uparrow\gamma_{*}$ and $\tilde\delta_{k}\in[\delta_{\sharp},\gamma_{*}]$ for all $k$; in particular $q\tilde\delta_{k}\ge q\delta_{\sharp}>N$, so the saturated branch is the operative one. Moreover, for every $k$, $\tilde\delta_{k}\ge\delta_{\sharp}>\gamma_{e}$ and $\tilde\delta_{k}<\gamma_{*}$ unless the iteration has already reached its limit, hence Lemma \ref{lem:Tprops}(e) gives
\[
\gamma_{e}<T(\tilde\delta_{k})\le\gamma_{*}<\gamma_{h}.
\]
Thus the assumptions of Lemma \ref{lem:slow-step}(ii) are satisfied at every finite step. Enlarging $C_{\sharp}$ we may assume $u\le C_{\sharp}|x|^{-\tilde\delta_{0}}$ for $|x|\ge\rho$, and we define $C_{k+1}$ from $C_{k}$ by Lemma \ref{lem:slow-step}(ii). With
\[
h_{\min}:=\min_{[T(\delta_{\sharp}),\,\gamma_{*}]}h>0,
\qquad
\Psi(t):=\Big(\frac{A_{2}}{h_{\min}}\Big)^{\frac{1}{p-1}}t^{\theta},
\]
we have $C_{k+1}\le\max\{\Psi(C_{k}),B\}$ by \eqref{eq:bB}. Again $\Psi$ is increasing with a unique positive fixed point $s^{*}$ and $\Psi(t)\le t$ for $t\ge s^{*}$. Put $\Gamma:=\max\{C_{\sharp},s^{*},B\}$. If $C_{k}\le\Gamma$ then $\Psi(C_{k})\le\Psi(\Gamma)\le\Gamma$ and $B\le\Gamma$, so $C_{k+1}\le\Gamma$, and by induction $C_{k}\le\Gamma$ for every $k$. Hence $u(x)\le\Gamma|x|^{-\tilde\delta_{k}}$ for $|x|\ge\rho$ and all $k$, and letting $k\to\infty$ gives $u(x)\le\Gamma|x|^{-\gamma_{*}}$ for $|x|\ge\rho$.

Taking $c=\gamma$, $C=\Gamma$ and $R=\rho$ gives the coarse two-sided bound in \eqref{eq:slowrate}. Since the starting exponent $\delta_\sharp$ was chosen with $q\delta_\sharp>N$, we also have $u^q\in L^1(\mathbb R^N)$. Put $m_0:=\int u^q$. Lemma \ref{lem:Wmass} then gives
\begin{equation}\label{eq:slow-W-exact}
R^{\sigma}W(Ry)\longrightarrow m_0|y|^{-\sigma}
\end{equation}
uniformly on compact subsets of $\mathbb R^N\setminus\{0\}$.

Let $R_n\to\infty$ be arbitrary and define
\[
v_n(y):=R_n^{\gamma_*}u(R_ny).
\]
The coarse estimate gives uniform upper and lower bounds for $v_n$ on every compact annulus. Moreover, by the balance identity
$\gamma_*(p-q)=\sigma-p$,
\[
-\Delta_pv_n(y)
=\big(R_n^{\sigma}W(R_ny)\big)v_n^{q-1}(y).
\]
Thus the local $C^{1,\alpha}$ estimates and \eqref{eq:slow-W-exact} yield, after passing to a subsequence,
\[
v_n\longrightarrow V
\quad\text{in }C^1_{\mathrm{loc}}(\mathbb R^N\setminus\{0\}),
\]
where $V$ is a positive solution of
\[
-\Delta_pV=m_0|x|^{-\sigma}V^{q-1}
\]
and $V(x)\asymp |x|^{-\gamma_*}$. Lemma \ref{lem:slow-rigidity} therefore gives
\[
V(x)=A_*|x|^{-\gamma_*},
\qquad
A_*=\left(\frac{m_0}{h(\gamma_*)}\right)^{1/(p-q)}.
\]
Since the limit is independent of the chosen sequence $R_n$, the whole family converges in $C^1$ on compact annuli. Evaluating on the unit sphere gives precisely \eqref{eq:slow-exact-u} and \eqref{eq:slow-exact-grad}. In particular the two-sided gradient estimate in \eqref{eq:slowrate} follows. This is the same limiting-profile mechanism used by Sciunzi \cite[Theorem 2.2]{Sciunzi2016}, with the $p$-harmonic fundamental solution replaced here by the unique positive scale-invariant solution of the weighted subhomogeneous limit equation.
\end{proof}

\subsection{Moving planes in the nonclassical phases}\label{sec:mp-nonclassical}

We now prove the symmetry assertions in Theorems \ref{thm:resonant} and \ref{thm:slowrate}. Since $\Lambda\le0$ implies $p<2$, only the singular $p$-Laplace case occurs. The point is that in this range an upper estimate for the gradient gives the coercive lower bound for the weight appearing in the monotonicity inequality. Thus the argument is of the same singular type as in the $1<p<2$ symmetry theory underlying \cite{Vetois2016} and \cite[Section 4.2]{DaiLiLiu2025}; no lower gradient estimate is needed to move the plane. The lower gradient estimates proved above are independent consequences of the blow-down analysis.

Fix a direction, which after rotation we take to be $e_1$, and use the notation $\Sigma_\lambda$, $x_\lambda$, $u_\lambda$ and $w=(u-u_\lambda)^+$ from Section \ref{sec:mp}. Put
\[
A_\lambda:=\{x\in\Sigma_\lambda:u(x)>u_\lambda(x)\},
\qquad
M_\lambda(x):=\max\{|\nabla u(x)|,|\nabla u_\lambda(x)|\}.
\]
Testing the equations for $u$ and $u_\lambda$ with the usual cut-off approximations of $w$ and using \eqref{eq:sharp-monotonicity-singular}, then passing to the limit, gives
\begin{equation}\label{eq:nonclassical-energy}
(p-1)\mathcal E_\lambda
\le I_{1,\lambda}+I_{2,\lambda},
\qquad
\mathcal E_\lambda:=\int_{\Sigma_\lambda}M_\lambda^{p-2}|\nabla w|^2,
\end{equation}
Here the integrand is defined to be $0$ on $\{M_\lambda=0\}$; this is consistent because $\nabla w=0$ there. Thus every estimate containing the singular factor $M_\lambda^{p-2}$ is read on $\{M_\lambda>0\}$ and extends to the weighted energy by this convention. The two terms on the right are
\[
I_{1,\lambda}:=\int_{\Sigma_\lambda}W[u]\big(u^{q-1}-u_\lambda^{q-1}\big)w,
\qquad
I_{2,\lambda}:=\int_{\Sigma_\lambda}\big(W[u]-W[u_\lambda]\big)u_\lambda^{q-1}w.
\]
Only $A_\lambda$ contributes. The reflection identity in Lemma \ref{lem:nonlocal-diff} and Lemma \ref{lem:I2} are valid for every $\lambda\in\mathbb R$. Note that $\mathcal E_\lambda<\infty$ and that this uses only upper gradient bounds: since $|\nabla w|\le2M_\lambda$ and $p<2$, one has $M_\lambda^{p-2}|\nabla w|^2\le4M_\lambda^{p}$, which is locally bounded, while at infinity $M_\lambda\le C|x|^{-\gamma-1}$ up to logarithms, with $\gamma\in\{\gamma_h,\gamma_*\}$, and $p(\gamma+1)>N$ because $\gamma>\gamma_e$.

We first isolate the new estimate at infinity.

\begin{lemma}[Critical-tail coercivity]\label{lem:tail-comparison-nonclassical}
Assume either the assumptions of Theorem \ref{thm:resonant} or those of Theorem \ref{thm:slowrate}.
\begin{enumerate}[label=\textup{(\roman*)}]
\item There exists $R_*>1$ such that $u\le u_\lambda$ in $\Sigma_\lambda$ for every $\lambda<-R_*$.
\item Let $I\subset\mathbb R$ be bounded. There exist a \emph{fixed} radius $R_I>1$, a number $\vartheta_I\in(0,1)$ and $C_I>0$ such that, for every $\lambda\in I$,
\begin{equation}\label{eq:tail-with-remainder}
I_{1,\lambda}+I_{2,\lambda}
\le \vartheta_I(p-1)\mathcal E_\lambda
+C_I\int_{A_\lambda\cap B_{2R_I}}w^2.
\end{equation}
Thus the whole exterior contribution is absorbed with a uniform strict margin, while every interaction with the bounded region is kept explicitly in the last term.
\end{enumerate}
\end{lemma}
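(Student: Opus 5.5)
The plan is to estimate both terms on the exterior region $\{|x|>R\}$ against the weighted energy $\mathcal E_\lambda$, using the weight $|x|^{s}$ with $s=(2-p)(\gamma+1)$ (where $\gamma=\gamma_h$ in the resonant case and $\gamma=\gamma_*$ in the slow case) and the Hardy inequalities \eqref{eq:log-hardy}--\eqref{eq:cutoff-hardy}. Everything that touches $B_{2R}$ is kept in the remainder $\int_{A_\lambda\cap B_{2R}}w^2$.

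\emph{Step 1: geometry of $A_\lambda$ at infinity.} Let $x\in A_\lambda$ with $|x|$ large. If $|x_\lambda|$ stayed bounded, then $u(x)>u(x_\lambda)\ge c>0$, which contradicts decay. Otherwise the uniform asymptotics apply: \eqref{eq:slow-exact-u} in the slow phase, and \eqref{eq:resonant-rate} in the resonant phase, where only comparability is available. In the slow phase they give $|x_\lambda|\ge(1-o(1))|x|$ uniformly in $\lambda$, and in the resonant phase $|x_\lambda|\ge\kappa|x|$. Two consequences follow from this.

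First, the upper gradient bounds (\eqref{eq:slow-exact-grad}, respectively \eqref{eq:resonant-grad}) at $x$ and at $x_\lambda$ give, on $A_\lambda\cap\{|x|>R\}$,
\[
M_\lambda^{p-2}\ge (1-o(1))(A_*\gamma_*)^{p-2}|x|^{s}
\quad\text{(slow)},\qquad
M_\lambda^{p-2}\ge c|x|^{s}(\log|x|)^{(p-2)a_0}\quad\text{(resonant)},
\]
where $o(1)\to0$ as $R\to\infty$.

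Second, the mean value point $\zeta$ between $u_\lambda(x)$ and $u(x)$ obeys $\zeta\ge u_\lambda(x)\ge(1-o(1))u(x)$, so $\zeta^{q-2}$ is controlled by the asymptotic profile. This step only needs $q<2$; if $q\ge2$, bound $\zeta^{q-2}\le u^{q-2}$ instead.

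\emph{Step 2: the local term $I_{1,\lambda}$, the critical one.} We have $I_{1,\lambda}\le\int_{A_\lambda}(q-1)W\zeta^{q-2}w^2$.

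In the slow phase, Lemma \ref{lem:Wmass} and \eqref{eq:slow-exact-u} give, on the tail,
\[
(q-1)W\zeta^{q-2}\le(1+o(1))(q-1)m_0A_*^{q-2}|x|^{-\sigma-(q-2)\gamma_*}.
\]
Since $\gamma_*(p-q)=\sigma-p$, the exponent satisfies $-\sigma-(q-2)\gamma_*=s-2$ exactly, so there is no decay margin. Moreover $m_0A_*^{q-2}=h(\gamma_*)A_*^{p-2}$. Apply \eqref{eq:cutoff-hardy} with $\ell=0$, using $N+s-2=\gamma_*+H$ where $H:=N-p-(p-1)\gamma_*>0$. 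Absorption with a strict margin then reduces to
\[
(q-1)\gamma_*^{p-1}H\cdot\frac{4}{(\gamma_*+H)^2}<(p-1)\gamma_*^{p-2},
\quad\text{that is,}\quad 4(q-1)\gamma_*H<(p-1)(\gamma_*+H)^2 .
\]
This holds because $q<p$ and $(\gamma_*+H)^2\ge4\gamma_*H$. This is the strict Hardy coercivity margin. We fix $\varepsilon$ in \eqref{eq:cutoff-hardy} and then $R$ so that the $o(1)$ errors preserve it.

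In the resonant phase the same computation yields $K\lesssim|x|^{s-2}(\log|x|)^{(q-2)a_0}$. Compared with the weight, this carries an extra factor $(\log|x|)^{(q-p)a_0}=(\log|x|)^{-1}$. So \eqref{eq:cutoff-hardy} with $\ell=(p-2)a_0$ absorbs it with an arbitrarily small constant.

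\emph{Step 3: the nonlocal term $I_{2,\lambda}$.} By Lemma \ref{lem:nonlocal-diff} and Lemma \ref{lem:I2}, $I_{2,\lambda}\le C\int_{A_\lambda}u^{p^*-2}w^2$; the factor $\|u\|_{L^{p^*}}$ is bounded. Using the lower bound on $u$ when $p^*<2$, on the tail $u^{p^*-2}\le C|x|^{s-2}|x|^{-\beta_2}$ with
\[
\beta_2=(p^*-p)\gamma-p>(p^*-p)\gamma_e-p=0,
\]
up to logarithms in the resonant case. This gap gives absorption for $R$ large. The part of $\int_{A_\lambda} u^{p^*-2}w^2$ over $B_{2R}$ goes into the remainder, with $u^{p^*-2}$ bounded there.

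\emph{Conclusion.} For (ii), with $\lambda$ in the bounded set $I$, the cut-off annulus term of \eqref{eq:cutoff-hardy} is bounded by $C_R\int_{A_\lambda\cap B_{2R}}w^2$, and $R_I$ is fixed once. This gives \eqref{eq:tail-with-remainder}. For (i), when $\lambda<-R_*$ we have $\Sigma_\lambda\subset\{|x|>R_*\}$ and $w$ vanishes on $T_\lambda$. So \eqref{eq:log-hardy} applies with no remainder, giving $(p-1)\mathcal E_\lambda\le\vartheta(p-1)\mathcal E_\lambda$ and hence $\mathcal E_\lambda=0$. Then $\nabla w=0$ off $\{M_\lambda=0\}$, and also on that set. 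Since $w$ vanishes on $T_\lambda$ and at infinity, $w\equiv0$.

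I expect the main obstacle to be Step 2 in the slow phase: the $o(1)$ errors coming from the exact asymptotics, from $|x_\lambda|/|x|\to1$, and from the Hardy constant must all be made uniform in $\lambda$ so that the strict margin survives.
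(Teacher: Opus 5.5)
Your proposal is correct and follows essentially the same route as the paper. It uses the same geometry on $A_\lambda$ (ratio $|x_\lambda|/|x|\to1$ in the slow phase, comparability $|x_\lambda|\ge\kappa|x|$ in the resonant phase), the same weight $|x|^{s}$ with the cut-off Hardy inequality near the sharp constant giving the margin $\frac{q-1}{p-1}\frac{4\gamma d}{(\gamma+d)^2}<1$ in the slow phase (and the $1/\log R$ gain at resonance), and the same treatment of $I_{2,\lambda}$ through Lemma \ref{lem:I2} with the positive gap $(p^*-p)\gamma-p$. The only cosmetic difference is in (i): you apply the zero-remainder Hardy inequality on $\Sigma_\lambda\subset\{|x|>|\lambda|\}$ and take smallness of $I_{2,\lambda}$ from that decay gap, whereas the paper reuses the fixed-radius estimate of (ii) and invokes $\|u\|_{L^{p^*}(\Sigma_\lambda)}\to0$; both routes work equally well.
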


\begin{proof}
We treat the two phases separately.

\emph{The slow phase.}
Write $\gamma=\gamma_*$, $A=A_*$ and
\[
d:=N-p-(p-1)\gamma>0,
\qquad
s:=(2-p)(\gamma+1)>0.
\]
By \eqref{eq:slow-exact-u}, \eqref{eq:slow-exact-grad} and Lemma \ref{lem:Wmass},
\begin{equation}\label{eq:slow-tail-exact}
u(x)=A|x|^{-\gamma}(1+o(1)),\quad
|\nabla u(x)|=A\gamma |x|^{-\gamma-1}(1+o(1)),\quad
W(x)=m_0|x|^{-\sigma}(1+o(1)),
\end{equation}
uniformly in direction. If $\lambda\to-\infty$ and $x\in A_\lambda$, then both $|x|$ and $|x_\lambda|$ tend to infinity. Indeed, $|x|\ge|\lambda|$, while a bounded reflected point would contradict $u(x)>u(x_\lambda)$ because $u(x)\to0$ and $u$ is positive on compact sets. Since $|x_\lambda|<|x|$ in $\Sigma_\lambda$, the exact first asymptotic in \eqref{eq:slow-tail-exact} gives
\begin{equation}\label{eq:reflection-ratio}
\frac{|x_\lambda|}{|x|}=1+o(1)
\end{equation}
uniformly on $A_\lambda$ as $\lambda\to-\infty$: otherwise a sequence with ratio bounded above by $1-\delta$ would give
$u(x)/u(x_\lambda)\le(1-\delta)^\gamma+o(1)<1$.
For $\lambda$ in a fixed bounded interval, the same relation holds geometrically on $A_\lambda\cap\{|x|>R\}$ as $R\to\infty$.

Since $q<p<2$, on $A_\lambda$ the mean value theorem gives
\[
W(u^{q-1}-u_\lambda^{q-1})w
\le(q-1)Wu_\lambda^{q-2}w^2.
\]
Using \eqref{eq:slow-tail-exact}--\eqref{eq:reflection-ratio}, for every $\varepsilon>0$ and sufficiently far in the tail,
\begin{align}
I_{1,\lambda}(A_\lambda\cap\{|x|>2R\})
&\le (1+\varepsilon)(q-1)m_0A^{q-2}
\int_{|x|>2R}|x|^{s-2}w^2,\label{eq:I1slow-tail}\\
M_\lambda^{p-2}
&\ge (1-\varepsilon)(A\gamma)^{p-2}|x|^s
\qquad\text{on }A_\lambda\cap\{|x|>R\}.\label{eq:weight-slow-tail}
\end{align}
Here $-\sigma-(q-2)\gamma=s-2$, which is equivalent to $\gamma(p-q)=\sigma-p$.
Apply \eqref{eq:cutoff-hardy} with $\ell=0$ to $w$. Its annular error is supported in $R<|x|<2R$ and will be retained in the bounded remainder. Since
\[
N+s-2=\gamma+d,
\qquad
m_0A^{q-p}=h(\gamma)=\gamma^{p-1}d,
\]
the limiting ratio of the exterior part of $I_{1,\lambda}$ to $(p-1)\mathcal E_\lambda$ is
\begin{equation}\label{eq:theta-slow}
\Theta_*:=
\frac{q-1}{p-1}\frac{4\gamma d}{(\gamma+d)^2}<1.
\end{equation}
Indeed $q<p$ gives $(q-1)/(p-1)<1$, while $4\gamma d\le(\gamma+d)^2$. Because the coefficient in \eqref{eq:cutoff-hardy} can be taken arbitrarily close to the sharp Hardy constant, the asymptotic errors and the cut-off loss can be chosen so small that the exterior part of $I_{1,\lambda}$ is bounded by $\vartheta_1(p-1)\mathcal E_\lambda$ with some fixed $\vartheta_1<1$.

For $I_{2,\lambda}$, Lemma \ref{lem:I2} gives
\[
I_{2,\lambda}
\le C\|u\|_{L^{p^*}(\Sigma_\lambda)}^{2q-p^*}
\int_{A_\lambda}u^{p^*-2}w^2.
\]
From \eqref{eq:slow-tail-exact},
$u^{p^*-2}\le C|x|^{-\gamma(p^*-2)}$ in the tail. Put
\[
\beta_*:=(p^*-2)\gamma+s-2=(p^*-p)\gamma-p>0,
\]
because $\gamma>\gamma_e$ and $(p^*-p)\gamma_e=p$. Hence on $|x|>2R$,
\[
u^{p^*-2}w^2
\le CR^{-\beta_*}|x|^{s-2}w^2.
\]
Splitting only the final weighted $w^2$ integral in Lemma \ref{lem:I2} into $B_{2R}$ and its complement, and then applying \eqref{eq:cutoff-hardy} to the latter, shows that, uniformly for $\lambda$ in a bounded interval,
\begin{equation}\label{eq:I2-tail-small}
I_{2,\lambda}
\le o_R(1)\mathcal E_\lambda
+C_R\int_{A_\lambda\cap B_{2R}}w^2.
\end{equation}
Thus no localization of the original double Hartree integral is being used. Here the factor $\|u\|_{L^{p^*}(\Sigma_\lambda)}^{2q-p^*}$ is simply bounded by the global $L^{p^*}$ norm. If instead $\lambda\to-\infty$, this factor tends to zero, so after $R$ has been fixed it supplies the required smallness for the starting position of the plane.

Combining the preceding estimates proves both assertions in the slow phase: for a bounded interval $I$, first choose one fixed $R_I$ so large that the sum of the exterior coefficients is strictly below $1$ and absorb every annular term into $C_I\int_{A_\lambda\cap B_{2R_I}}w^2$. For the starting position choose the same fixed radius and then take $\lambda<-2R_I$ sufficiently negative. Since $A_\lambda\subset\Sigma_\lambda\subset\{|x|>|\lambda|\}$, the bounded remainder then vanishes, and the factor in \eqref{eq:I2} is small; \eqref{eq:nonclassical-energy} forces $\mathcal E_\lambda=0$ and hence $w\equiv0$.

\emph{The resonant phase.}
Set $\gamma=\gamma_h$, $a=a_0$,
\[
s:=(2-p)(\gamma+1),
\qquad
\ell:=a(p-2).
\]
For $\lambda$ in a bounded interval, $|x_\lambda|/|x|\to1$ uniformly as $|x|\to\infty$. For $\lambda\to-\infty$ and $x\in A_\lambda$, both radii again tend to infinity, but the two-sided resonant estimate does not by itself give ratio convergence. What it does give, and what is sufficient, is the uniform comparability
\begin{equation}\label{eq:res-reflection-comparable}
\kappa\le\frac{|x_\lambda|}{|x|}\le1
\qquad\text{on }A_\lambda
\end{equation}
for some $\kappa>0$ once $\lambda$ is sufficiently negative. To prove this, otherwise there would be $x_n\in A_{\lambda_n}$, $\lambda_n\to-\infty$, with $t_n:=|x_{\lambda_n}|/|x_n|\to0$. The two-sided estimate \eqref{eq:resonant-rate} would give
\[
1<\frac{u(x_n)}{u(x_{\lambda_n})}
\le C t_n^\gamma
\left(\frac{\log|x_n|}{\log|x_{\lambda_n}|}\right)^a
\le C t_n^\gamma\bigl(1+\log(1/t_n)\bigr)^a\longrightarrow0,
\]
a contradiction. In particular
$\log|x_\lambda|/\log|x|\to1$ uniformly on the relevant tail. Together with \eqref{eq:resonant-rate}--\eqref{eq:resonant-grad}, this yields
\[
M_\lambda^{p-2}\ge c|x|^s(\log|x|)^\ell,
\]
and, since $q<2$,
\[
W(u^{q-1}-u_\lambda^{q-1})w
\le C|x|^{s-2}(\log|x|)^{\ell-1}w^2.
\]
The loss of exactly one logarithm is the identity $a(p-q)=1$. Applying \eqref{eq:cutoff-hardy}, now with the logarithmic weight, gives
\[
I_{1,\lambda}(\{|x|>2R\})
\le \frac{C}{\log R}\mathcal E_\lambda
+C_R\int_{A_\lambda\cap B_{2R}}w^2.
\]
Thus the local reflected term is arbitrarily small in the tail. For $I_{2,\lambda}$ we again use Lemma \ref{lem:I2}. Since
\[
\beta_h:=(p^*-2)\gamma_h+s-2=(p^*-p)\gamma_h-p>0,
\]
the factor $R^{-\beta_h}$ dominates the logarithmic powers. Splitting, as above, only the final weighted $w^2$ integral supplied by Lemma \ref{lem:I2}, and applying \eqref{eq:cutoff-hardy} to its exterior part, gives
\[
I_{2,\lambda}
\le o_R(1)\mathcal E_\lambda
+C_R\int_{A_\lambda\cap B_{2R}}w^2
\]
uniformly for $\lambda$ in bounded intervals. For $\lambda\to-\infty$ the $L^{p^*}(\Sigma_\lambda)$ factor in Lemma \ref{lem:I2} tends to zero. The same choice procedure as in the slow phase therefore proves (i) and (ii).
\end{proof}

The fixed-radius formulation in Lemma \ref{lem:tail-comparison-nonclassical}(ii) is precisely what is needed for continuation. The next lemma makes that step explicit.

\begin{lemma}[Local continuation after tail absorption]\label{lem:local-continuation}
Assume either the assumptions of Theorem \ref{thm:resonant} or those of Theorem \ref{thm:slowrate}. Let $\lambda_0\in\mathbb R$ satisfy
\[
u\le u_{\lambda_0}\qquad\text{in }\Sigma_{\lambda_0}.
\]
If $u\not\equiv u_{\lambda_0}$ in $\Sigma_{\lambda_0}$, then there exists $\varepsilon_0>0$ such that
\[
u\le u_\lambda\qquad\text{in }\Sigma_\lambda
\quad\text{for every }\lambda\in[\lambda_0,\lambda_0+\varepsilon_0].
\]
\end{lemma}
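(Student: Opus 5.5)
The plan mirrors Step 2 of the proof of Theorem \ref{thm:main}(ii), with the exterior part now handled by Lemma \ref{lem:tail-comparison-nonclassical}(ii). Let $I:=[\lambda_0,\lambda_0+1]$ and take the fixed radius $R_I$, margin $\vartheta_I<1$ and constant $C_I$ from that lemma. For $\lambda\in I$, inserting \eqref{eq:tail-with-remainder} into \eqref{eq:nonclassical-energy} gives
\[
(1-\vartheta_I)(p-1)\mathcal E_\lambda\le C_I X_\lambda,
\qquad X_\lambda:=\int_{A_\lambda\cap B_{2R_I}}w^2 .
\]
It therefore suffices to show that $X_\lambda\le\theta_\lambda\mathcal E_\lambda$ with $\theta_\lambda\to0$ as $\lambda\downarrow\lambda_0$. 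Then $\mathcal E_\lambda=0$ for $\lambda-\lambda_0$ small. Since $\nabla w=0$ also on $\{M_\lambda=0\}$, we get $\nabla w=0$ a.e. As $w$ vanishes on $T_\lambda$ and at infinity, this gives $w\equiv0$, that is, $u\le u_\lambda$. For $\lambda\in I$ the conclusion is immediate at $\lambda=\lambda_0$ itself.

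\emph{Smallness of the bad set.} Because $u\not\equiv u_{\lambda_0}$, Lemma \ref{lem:strict-separation}(ii) gives $u<u_{\lambda_0}$ a.e. in $\Sigma_{\lambda_0}$. The map $\lambda\mapsto u_\lambda(x)$ is continuous for each $x$, so a.e. point of $\Sigma_{\lambda_0}\cap B_{3R_I}$ leaves $A_\lambda$ for $\lambda$ close to $\lambda_0$. Moreover $|(\Sigma_\lambda\setminus\Sigma_{\lambda_0})\cap B_{3R_I}|\to0$. Dominated convergence then gives $|A_\lambda\cap B_{2R_I}|\to0$ as $\lambda\downarrow\lambda_0$, exactly as in \eqref{eq:classical-measure-small}.

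\emph{Local Poincaré-type bound.} Since $p<2$ in both nonclassical regimes, the weight degenerates favourably. Let $G:=\|\nabla u\|_{L^\infty(\mathbb R^N)}$, which is finite by Corollary \ref{cor:structure}. Then $M_\lambda^{p-2}\ge G^{p-2}$ on $\{M_\lambda>0\}$, and $\nabla w=0$ elsewhere, so
\[
\int_{\Sigma_\lambda}|\nabla w|^2\le G^{2-p}\mathcal E_\lambda .
\]
As recorded after \eqref{eq:cutoff-hardy}, $w\in D^{1,2}(\Sigma_\lambda)$, because $\gamma>\gamma_e>(N-2)/2$. Hölder's and Sobolev's inequalities then give
\[
X_\lambda\le|A_\lambda\cap B_{2R_I}|^{2/N}\|w\|_{L^{2^*}}^2\le C G^{2-p}|A_\lambda\cap B_{2R_I}|^{2/N}\mathcal E_\lambda .
\]
So $\theta_\lambda:=CG^{2-p}|A_\lambda\cap B_{2R_I}|^{2/N}\to0$, and we choose $\varepsilon_0$ so that $C_I\theta_\lambda<(1-\vartheta_I)(p-1)$ on $[\lambda_0,\lambda_0+\varepsilon_0]$.

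\emph{Main obstacle.} The delicate point is that the radius $R_I$ and the margin $\vartheta_I$ must not depend on $\lambda$. Otherwise the measure argument, which works only on a fixed ball, could not be combined with the tail absorption. This uniformity is exactly what the fixed-radius formulation of Lemma \ref{lem:tail-comparison-nonclassical}(ii) provides, so it is taken from there and not reproved. A second point needs checking: the energy inequality \eqref{eq:nonclassical-energy} must hold for every $\lambda\in I$, including the justification of the non-compactly supported test function $w$. This uses $\mathcal E_\lambda<\infty$ and the outer cut-off argument noted after \eqref{eq:cutoff-hardy}, both of which rely only on upper bounds for $u$ and $\nabla u$.
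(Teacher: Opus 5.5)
Your proposal is correct and matches the paper's proof essentially step for step. It uses the fixed-radius tail absorption from Lemma~\ref{lem:tail-comparison-nonclassical}(ii), the global bound $G$ turning the singular weight into $\int_{\Sigma_\lambda}|\nabla w|^2\le G^{2-p}\mathcal E_\lambda$, H\"older--Sobolev on $A_\lambda\cap B_{2R_I}$, and the vanishing of $|A_\lambda\cap B_{2R_I}|$ via Lemma~\ref{lem:strict-separation}; your choice $I=[\lambda_0,\lambda_0+1]$ and your citation of Corollary~\ref{cor:structure} for $G<\infty$ are only cosmetic differences.
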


\begin{proof}
Choose a bounded interval $I$ containing $\lambda_0$ in its interior and let $R_I$, $\vartheta_I$ and $C_I$ be furnished by Lemma \ref{lem:tail-comparison-nonclassical}(ii). By the already proved upper gradient estimates and compact regularity,
\[
G:=\|\nabla u\|_{L^\infty(\mathbb R^N)}<\infty.
\]
Hence $M_\lambda\le G$ for every $\lambda$, and since $p<2$,
\begin{equation}\label{eq:weight-global-lower}
M_\lambda^{p-2}\ge G^{p-2}
\end{equation}
wherever $M_\lambda>0$. On $\{M_\lambda=0\}$ one has $\nabla w=0$, so the convention in \eqref{eq:nonclassical-energy} gives directly
\[
\int_{\Sigma_\lambda}|\nabla w|^2
\le G^{2-p}\mathcal E_\lambda.
\]
The usual Sobolev inequality in the half-space, applied first to compactly supported approximations of $w$ and then by passage to the limit, gives
\begin{align}
\int_{A_\lambda\cap B_{2R_I}}w^2
&\le |A_\lambda\cap B_{2R_I}|^{2/N}
\left(\int_{\Sigma_\lambda}|w|^{2^*}\right)^{2/2^*}\notag\\
&\le C G^{2-p}|A_\lambda\cap B_{2R_I}|^{2/N}\mathcal E_\lambda,
\label{eq:small-measure-sobolev}
\end{align}
where $2^*=2N/(N-2)$.

By Lemma \ref{lem:strict-separation}, applied with $\lambda=\lambda_0$, the nonsymmetric alternative gives
\[
u(x)<u_{\lambda_0}(x)
\quad\text{for a.e. }x\in\Sigma_{\lambda_0}.
\]
Exactly as in the proof of \eqref{eq:classical-measure-small},
\begin{equation}\label{eq:A-measure-small}
|A_\lambda\cap B_{2R_I}|\longrightarrow0
\qquad\text{as }\lambda\downarrow\lambda_0.
\end{equation}

Combining \eqref{eq:nonclassical-energy}, \eqref{eq:tail-with-remainder} and \eqref{eq:small-measure-sobolev}, for $\lambda\in I$ close to $\lambda_0$ we obtain
\[
(p-1)(1-\vartheta_I)\mathcal E_\lambda
\le C_I C G^{2-p}|A_\lambda\cap B_{2R_I}|^{2/N}\mathcal E_\lambda.
\]
By \eqref{eq:A-measure-small}, the coefficient on the right is strictly smaller than $(p-1)(1-\vartheta_I)$ for $\lambda-\lambda_0>0$ sufficiently small. Hence $\mathcal E_\lambda=0$, so $w\equiv0$ and $u\le u_\lambda$ in $\Sigma_\lambda$. This proves the lemma.
\end{proof}

\begin{proof}[Completion of the proofs of Theorems \ref{thm:resonant} and \ref{thm:slowrate}: radial symmetry]
By Lemma \ref{lem:tail-comparison-nonclassical}(i), the plane starts from $-\infty$ in every direction. Let
\[
\lambda_0:=\sup\{\lambda:\ u\le u_\mu\text{ in }\Sigma_\mu\text{ for every }\mu\le\lambda\}.
\]
At $\lambda_0$ we have $u\le u_{\lambda_0}$ in $\Sigma_{\lambda_0}$. If equality did not hold identically, Lemma \ref{lem:local-continuation} would move the plane past $\lambda_0$, contradicting the definition of $\lambda_0$. Hence
\[
u\equiv u_{\lambda_0}\quad\text{in }\Sigma_{\lambda_0}.
\]
Repeating the argument from the opposite side and in every direction yields radial symmetry about some point $x_0$. For strict monotonicity write $u(x)=U(|x-x_0|)$ with $U\in C^{1}([0,\infty))$, and note that $f:=W[u]u^{q-1}$ is then radial about $x_0$ and positive. Testing the equation with radial functions gives
\[
r^{N-1}|U'(r)|^{p-2}U'(r)=-\int_{0}^{r}s^{N-1}f(s)\,ds<0\qquad(r>0),
\]
the boundary term at $r=0$ vanishing because $U'$ is bounded. Hence $U'(r)<0$ for every $r>0$. This proves the remaining assertions of Theorems \ref{thm:resonant} and \ref{thm:slowrate}.
\end{proof}

\begin{remark}
In the nonclassical phases, the symmetry proof is logically independent of the lower gradient bounds. Its new ingredient in the slow phase is the strict critical-tail coefficient \eqref{eq:theta-slow}; in the resonant phase the corresponding gain is the factor $1/\log|x|$. The role of the lower gradient analysis is instead to sharpen the asymptotic picture itself.
\end{remark}

\begin{remark}
The proof uses $\Lambda<0$ through $\gamma_{*}<\gamma_{h}$, which guarantees $h(\gamma_{*})>0$ and keeps the power barriers of Lemma \ref{lem:slow-step} admissible. At $\Lambda=0$ this coefficient vanishes exactly at the limiting exponent; the logarithmic factor in Theorem \ref{thm:resonant} is the next-order correction to that degeneration. When $\Lambda>0$ the iteration is instead capped at $\gamma_h$, which is the classical Dai--Li--Liu phase.
\end{remark}

\begin{proof}[Proof of Theorem \ref{thm:three-regime}]
Theorem \ref{thm:local-boundedness} first gives $u\in L^\infty_{\mathrm{loc}}(\mathbb R^N)$ under the assumptions of Theorem \ref{thm:three-regime}. If $\Lambda>0$, the assertions are exactly Theorem \ref{thm:main}. If $\Lambda=0$, they are Theorem \ref{thm:resonant}; in this case $q>1$ is automatic by Lemma \ref{lem:qbigger1}. If $\Lambda<0$, they are Theorem \ref{thm:slowrate}. Since these three cases are mutually exclusive and exhaustive, the proof is complete.
\end{proof}

\section*{Declarations}

%\smallskip
%\noindent\textbf{Funding.} This research did not receive any specific grant from funding agencies in the public, commercial, or not-for-profit sectors.

\smallskip
\noindent\textbf{Conflict of interest.} The author declares that he has no conflict of interest.

\smallskip
\noindent\textbf{Data availability.} Data sharing is not applicable to this article, as no datasets were generated or analyzed during the current study.

%\smallskip
%\noindent\textbf{Use of AI tools.} AI-assisted tools were used in developing and drafting this work; the author has verified its contents and takes full responsibility.

\end{document}